\numberwithin{equation}{section}
\theoremstyle{plain}
\newtheorem{mainthm}{Theorem}
\newtheorem{thm}{Theorem}[section]
\newtheorem{prop}[thm]{Proposition}
\newtheorem{lemma}[thm]{Lemma}
\newtheorem{cor}[thm]{Corollary}
\newtheorem{lemdef}[thm]{Lemma and Definition}
\theoremstyle{definition}
\newtheorem{defn}[thm]{Definition}
\newtheorem{rem}[thm]{Remark}
\newcommand{\R}{\mathbb{R}}
\newcommand{\Max}{\operatorname*{Max}}
\newcommand{\bs}[1]{\boldsymbol{#1}}
\begin{document}
\title[Curvature surfaces]{Curvature surfaces in generic conformally flat hypersurfaces arising from Poincar\'{e} metric \\[1mm]
\small{\textnormal{---Extension and Approximation---}}}
\renewcommand{\thefootnote}{\fnsymbol{footnote}}
\footnotetext[0]{to appear in Osaka Journal of Mathematics.}
\renewcommand{\thefootnote}{\arabic{footnote}}

\author[N. Matsuura]{Nozomu Matsuura}
\address[N. Matsuura]{Fukuoka University,
8-19-1 Nanakuma, Jonan-ku Fukuoka 814-0180, Japan}
\email{nozomu@fukuoka-u.ac.jp}

\author[Y. Suyama]{Yoshihiko Suyama}
\address[Y. Suyama]{Fukuoka University,
8-19-1 Nanakuma, Jonan-ku Fukuoka 814-0180, Japan}
\email{suyama@fukuoka-u.ac.jp}

\subjclass[2020]{53A07, 53C40, 68W25}

\thanks{This work was partially supported by
JSPS KAKENHI Grant Number JP19K03507.}

\maketitle

\begin{abstract}
We study generic conformally flat (analytic-)hypersurfaces in the Euclidean $4$-space $\mathbb{R}^4$. Such a local-hypersurface is obtained as an evolution of surfaces issuing from a certain surface in $\mathbb{R}^4$, and then, in consequence, the original surface is a (principal-)curvature surface of the hypersurface.
The Poincar\'{e} metric ${\check g}_H$ of the upper half plane leads to a $6$-dimensional set of rational Riemannian metrics $g_0$ of $\mathbb{R}^2$: on a simply connected open set in the regular domain of $g_0$, a curvature surface $f^0$ with the metric $g_0$ is determined, which we denote by $(f^0,g_0)$. 
In this paper, we choose a suitable metric $g_0$ of $\mathbb{R}^2$ determined by ${\check g}_H$ to get nice curvature surfaces (but it also has degenerate and divergent points in $\mathbb{R}^2$), and clarify the structure of the curvature surfaces $(f^0,g_0)$: the curvature surfaces $(f^0,g_0)$ extend analytically to what kind of set in $\mathbb{R}^2$ beyond the regular set of $g_0$, and then the extended surface $(f^0,g_0)$ is defined on a certain open set of $\mathbb{R}^2$ and bounded in $\mathbb{R}^4$; for the extended surface $(f^0,g_0)$, we explicitly catch the set of degenerate points and the limits in $\mathbb{R}^4$ of both ends of every principal curvature line, and then the two limits of every line for one principal curvature are parallel small circles in a standard $2$-sphere $\mathbb{S}^2$.
Then, every principal curvature line in the extended surface $(f^0,g_0)$ is expressed by a frame field of $\mathbb{R}^4$ induced on the surface from a hypersurface and it lies on a standard $2$-sphere $\mathbb{S}^2$ with line-dependent radius.
We also provide a general method of constructing an approximation of such frame fields, and obtain the entire pictures of those lines including degenerate points of $(f^0,g_0)$.
\end{abstract}

\vspace{3mm}

\textsc{Key words}: conformally flat hypersurface,
principal curvature surface,
cuspidal edge, envelope,
convergence to parametrized circles,
numerical solution for orthonormal frame field, 
hypergeometric function.

\section{Introduction}\label{sec:intro}

Let us regard the Poincar\'{e} metric ${\check g}_H:=((dx)^2+(dy)^2)/y^2$ as a singular metric on the whole plane $\R^2$.  
We study curvature surfaces of generic conformally flat (analytic-)hypersurfaces in the Euclidean $4$-space $\R^4$, arising from ${\check g}_H$ of $\R^2$.
The metric ${\check g}_H$ of $\R^2$ gives rise to many generic conformally flat local-hypersurfaces, but there is no known explicit property or representation of such hypersurfaces and their curvature surfaces. In this paper, we clarify the structure of the analytically extended curvature surfaces by using a frame field of $\R^4$ induced on the surface from hypersurfaces.
Here, we say that a hypersurface is generic if it has distinct three principal curvatures at each points, and that a surface is curvature if it is woven of the curvature lines for two principal curvatures of some generic conformally flat hypersurface. Indeed, such lines always make a surface as mentioned at the definition of the Guichard net in this section.
For $n$-dimensional hypersurfaces with $n>3$, there are no generic conformally flat hypersurfaces by the result due to Cartan (\cite{ca}, \cite{la}).

Two papers (\cite{he1}, \cite{cpw}) published in 1994 and 1995, respectively, led to the papers
%(\cite{bu}, \cite{bc}, \cite{bhs}, \cite{ct}, \cite{hsuy}, \cite{su4})
(\cite{bu}--\cite{ct}, \cite{hsuy}, \cite{su4}) giving important various general properties on generic conformally flat local-hypersurfaces. However, there are very few explicit examples of such hypersurfaces realized in $\R^4$: the only known large classes are conformal product hypersurfaces (\cite{he1}, \cite{la}) and hypersurfaces with cyclic Guichard net
%(\cite{hs1}, \cite{spt}, \cite{su2}, \cite{su3});
(\cite{hs1}, \cite{spt}--\cite{su3});
in addition, there is a hypersurface in \cite{ct2} and the Guichard net with Bianchi-type (\cite{hs2}). This is indicative of the fact that it is difficult in general to explicitly represent such hypersurfaces in $\R^4$.

Now, we explain the Guichard net for a generic conformally flat hypersurface, mentioned above. It was defined in the paper \cite{he1}. 
Let $f$ be a generic conformally flat hypersurface in $\R^4$ defined on a domain $U$ of $\R^3$, and let $\kappa_i \ (i=1,2,3)$ be the principal curvatures of $f$. 
For the sake of simplicity for the description, we assume that $\kappa_3$ is the middle principal curvature: $\kappa_1>\kappa_3>\kappa_2$ or $\kappa_1<\kappa_3<\kappa_2$. 
Then, a principal curvature line coordinate system $(x,y,z)$ taken in the order $\kappa_i \ (i=1,2,3)$ and a function $\varphi=\varphi(x,y,z)\in (0,\pi/2)$ on $U$ are determined\footnote{The interval $(0,\pi/2)$ can be replaced by each interval in $(-\pi,\pi)$ satisfying $\sin\varphi\cos\varphi\neq 0$.} such that the (non-degenerate) Riemannian metric on $U$
\begin{gather}\label{def:g}
g=\cos^2\varphi(dx)^2+\sin^2\varphi(dy)^2+(dz)^2
\end{gather}
is conformally flat and the first fundamental form $\mathrm{I}_f$ of $f$ is given by
\begin{gather}\label{def:I}
\mathrm{I}_f:=P^2g
=P^2(\cos^2\varphi(dx)^2+\sin^2\varphi(dy)^2+(dz)^2)
\end{gather}
with a function $P=P(x,y,z)\neq 0$ on $U$. 
Furthermore, the principal curvatures $\kappa_1$, $\kappa_2$ are determined from $(\varphi,P,\kappa_3)$ as
\begin{gather}\label{def:kappa12}
\kappa_1=P^{-1}\tan\varphi+\kappa_3,\quad
\kappa_2=-P^{-1}\cot\varphi+\kappa_3,
\end{gather}
where $P^{-1}(x,y,z):=(1/P)(x,y,z)$: in this paper, no inverse function appears.
The conformally flat metric $g$ of \eqref{def:g} is called the (principal) Guichard net\footnote{We call the \emph{canonical} principal Guichard net of $f$ only the Guichard net (see \cite[section 2]{hs3}).} of $f$.
Note that the above coordinate system $(x,y,z)$ and the metric $g$ in \eqref{def:g} are defined only on the domain where $f$ is generic.
Conversely, for any conformally flat metric $g$ on $U$ of \eqref{def:g}, there is a generic conformally flat hypersurface with the Guichard net $g$ uniquely up to a conformal transformation of $\R^4$, if $U$ is simply connected (cf.\ \cite{he1}--\cite{hs1}, \cite{cpw}).
Then, in order to realize the hypersurface in $\R^4$, it is necessary to find out the functions $P$ in \eqref{def:I} and $\kappa_3$ in \eqref{def:kappa12} from $\varphi$ such that the Gauss and the Codazzi equations are satisfied (cf.\ \cite{hs3}, \cite[Proposition 2.3]{su4}).

As mentioned at the beginning, it is difficult to find generic conformally flat hypersurfaces, but is there any way to investigate their structure? For the problem, we proposed in papers \cite{bhs} and \cite{su4} to study curvature surfaces for such hypersurfaces, because they reflect the nature of hypersurfaces.
In fact, the principal curvature line of a curvature surface is also that of the hypersurface including the surface and any generic conformally flat local-hypersurface $f(x,y,z)$ is regarded as a one-parameter family of $(x,y)$-curvature surfaces with the parameter $z$. Although the curvature surfaces in this paper are not explicitly represented, their structures can be studied in detail and the surfaces are visualized through our frame field approximation. Furthermore, in the papers (\cite{fsuy}, \cite{krsuy}), various singularities appearing on surfaces in $3$-dimensional spaces were studied and the explicit criteria for them were given. The curvature surfaces would be also interesting for the problem of singularities on surfaces in $\R^4$ (see (A1)--(A4), (B1), (C1) and (C2) in sub-section \ref{subsec:results}).

To paraphrase the fact that any generic conformally flat local-hypersurface is a one-parameter family of curvature surfaces, it is obtained by an evolution of surfaces issuing from a certain surface in $\R^4$, and then, in consequence, the original surface is a curvature surface of the hypersurface. 
The paper \cite{su4} proved that this method is possible.
A certain analytic (local-)surface $\phi$ in the standard unit $3$-sphere $\mathbb{S}^3$ leads to a curvature surface $f^0$ in $\R^4$ and $\phi$ is the restriction to $f^0$ of the unit normal vector $N$ of a hypersurface including $f^0$: $\phi$ gives rise to an orthonormal frame field of $\R^4$, and the frame field induces a curvature surface (cf.\ \cite[Main Theorem 2]{su4}).
In particular, by \cite[Main Theorem 2]{bhs}, a certain family $\mathrm{Met}^0$, consisting of analytic Riemannian metrics $\check g$ on simply connected open sets $V\subset \R^2$ with constant Gauss curvature $-1$, is determined such that each metric $\check g$ on $V$ leads to a one-parameter family of the Guichard net $g^c$ on some domain $V\times I$ with the parameter $c\neq 0$, where $I$ is an open interval.   
Note that the Cauchy-Kovalevskaya theorem for analytic evolution equations is applied to get $g^c$ on $V\times I$ from $\check g$ on $V$, but $g^c$ is not a direct extension to $V\times I$ of $\check g$ on $V$ (see the sub-section \ref{subsec:results}). Now, for each $g^c$ on $V\times I$ arising from $\check g$ on $V$, the existence of a generic conformally flat hypersurface with the Guichard net $g^c$ on $V\times I$ is theoretically guaranteed as mentioned above. 
When we recognize the fact as the existence on curvature surfaces arising from $\check g$ on $V$ of $\mathrm{Met}^0$, we can directly capture these curvature surfaces from $\check g$ on $V$: any $\check g$ on $V$ leads to a $6$-dimensional set of Riemannian metrics $g_0$ on $V$ such that, for each metric $g_0$ on $V$, a surface $\phi$ in $\mathbb{S}^3$ mentioned above is determined and the curvature surface obtained has the metric $g_0$. Here, the $6$-dimension is the dimension of all Riemannian metrics of the hypersurfaces with the Guichard nets $g^c$ ($c\neq 0$), because, for each $c$, the hypersurfaces with the Guichard net $g^c$ are determined by the arbitrariness of conformal transformations in $\R^4$ and $\check g$ on $V$ specifies one surface for each hypersurface (see Section \ref{subsec:existence}, cf.\ \cite[Main Theorem 1]{su4}).

Now, for the Poincar\'{e} metric ${\check g}_H=((dx)^2+(dy)^2)/y^2$ of $\R^2$, a family $\Phi^c=(\varphi(x,y),c \varphi_z(x,y))$ with parameter $c\neq 0$ consisting of two functions on $\R^2$ is determined (see \eqref{def:phi} in the next subsection \ref{subsec:existence}).
For a simply connected open set $V$ of $\R^2$ satisfying $(\varphi_z(\varphi_z)_x(\varphi_z)_y)(x,y)\neq 0$, the metric ${\check g}_H$ on $V$ belongs to the family $\mathrm{Met}^0$.
Moreover, for each $\Phi^c$, a $5$-dimensional set of rational Riemannian metrics $g_0$ on $\R^2$ is determined. We take $\Phi:=\Phi^1$, then all $5$-dimensional set of the metrics $g_0$ of $\R^2$ was solved in \cite[Example 3.2]{su4}.
In this paper, we choose a suitable metric $g_0$ on $\R^2$ among them to get \emph{nice} curvature surfaces, but it also has degenerate and divergent points in $\R^2$. 
Our first aim is to study an analytic extension of curvature surfaces beyond the regular set in $\R^2$ of the metric $g_0$ and further to clarify the structure of the extended surface including the limits in $\R^4$ of both ends of every principal curvature line, since the extended surface is defined on a certain open set of $\R^2$ and bounded in $\R^4$. Then, all extended principal curvature lines in the surface are expressed by the frame field determining curvature surfaces and they lie on some standard $2$-spheres with line-dependent radii.
The second aim is to construct generally an approximation of such frame fields.
Then, the approximation of each principal curvature line also lies on a standard $2$-sphere.
By the approximation, we give several figures for the extended surface: the entire pictures of some principal curvature lines, the image of the set of degenerate points in the surface and so on.

Now, our pair $\Phi=\Phi^1$ locally determines the functions $\varphi(x,y,z)$ in \eqref{def:g}, $P(x,y,z)$ in \eqref{def:I} and $\kappa_3(x,y,z)$ in \eqref{def:kappa12} for some generic conformally flat hypersurfaces, of which fact we explain in the following sub-section \ref{subsec:existence}.
We summarize the results of this paper in sub-section \ref{subsec:results}.

\subsection{Existence of local curvature surfaces}\label{subsec:existence}

We state the setting in this paper, and briefly review the results for generic conformally flat local-hypersurfaces in the papers \cite{bhs} and \cite{su4} in addition to the matters discussed above, which make our problems clearer.
Let $\check g_H$ be the Poincar\'{e} metric on $\R^2$.
From now on, we assume that the domain $U$, where hypersurfaces $f(x,y,z)$ are defined, is given by $U=V\times I\subset \R^2\times \R$ for a simply connected open set $V\subset \R^2=\R^2_{(x,y)}$ and a suitable open interval $I\subset \R=\R_{(z)}$ with $0\in I$.

(R1)\;
For $\check g_H$ on $\R^2$, a pair $\Phi(=\Phi^1)=(\varphi(x,y), \varphi_z(x,y))$ of functions is determined as
\begin{gather}\label{def:phi}
\cos\varphi(x,y)=
\frac{x^2-y^2}{x^2+y^2},\quad
\sin\varphi(x,y)=\frac{2xy}{x^2+y^2},\quad
\varphi_z(x,y)=
\frac{y}{x^2+y^2},
\end{gather}
which are analytic functions on $\R^2$ with the pole at the origin. Here, $\varphi(x,y)$ is defined on $\R^2$ except for the line $x=0$ as $\varphi(x,y)\in(-\pi,\pi)$ for $x>0$ and $\varphi(-x,y)=-\varphi(x,y)$: explicitly, $\varphi(x,y)$ for $x>0$ is determined as follows, when $x^2-y^2<0$,
\begin{equation*}
\varphi(x,y)\in (-\pi,-\pi/2) \ \ \text{if} \ \ y<0 \ \ \text{and} \ \ \varphi(x,y)\in(\pi/2,\pi) \ \ \text{if} \ y>0;
\end{equation*}
when $x^2-y^2=0$, \ $\varphi(x,y)=-\pi/2$ \ if $y<0$ \ and \ $\varphi(x,y)=\pi/2$ \ if $y>0$; and
\begin{equation*}
\varphi(x,y)\in(-\pi/2,\pi/2) \ \ \text{if} \ x^2-y^2>0.
\end{equation*}

(R2)\;
Let $S_1$ be the set defined by
\begin{gather}\label{def:S1}
S_1:=\left\{(x,y)\in \R^2\left|\; (\varphi_z(\varphi_{z})_x(\varphi_{z})_y)(x,y)=0\right.\right\}
=\left\{(x,y)\in \R^2\;|\; xy(x^2-y^2)=0\right\}.
\end{gather}
For a domain $V$ such that $V\subset \R^2\setminus S_1$, the pair $\Phi$ leads to an analytic function $\varphi(x,y,z)$ on $U=V\times I$ uniquely as an evolution in $z$-direction under the initial conditions $\varphi(x,y,0)=\varphi(x,y)$ and $\varphi_z(x,y,0)=\varphi_z(x,y)$, and then $\varphi(x,y,z)$ defines a conformally flat metric $g$ on $V\times I$ in \eqref{def:g} (by replacing $V\times I$ with a sub-domain $V'\times I$ such that $\varphi_z\varphi_{zx}\varphi_{zy}\sin\varphi\cos\varphi\neq 0$ holds on $V'\times I$, if necessary)\footnote{We shall omit this remark from now on.}. Hence, there is a generic conformally flat hypersurface $f_V(x,y,z)$ on $V\times I$ with the Guichard net $g$ from $\Phi|_V$ theoretically. However, our aim is to study the explicit structure on curvature surfaces in $\R^4$. 

Precisely speaking, the existence of $\varphi(x,y,z)$ on $V\times I$ determining a Guichard net is equivalent to that of a pair $(\varphi,\psi)(x,y,z)$ of functions on $V\times I$ adding a certain function $\psi(x,y,z)$, and then $(\varphi,\psi)(x,y,z)$ satisfies a certain system of evolution equations in $z$ (cf.\ \cite[Theorem 1-(3) and (4)]{bhs}). When we find out $(\varphi,\psi)(x,y,z)$ as a solution of the system evolving from $z=0$, four analytic functions $(\varphi,\varphi_z,\psi,\psi_z)(x,y,0)$ on $V$ are required as the initial condition. These initial functions are determined from $\check g_H|_{V}$ (\cite[Theorem 7]{bhs}). However, we have omitted the explanation of $\psi$, since it does not appear in our argument later.

(R3)\;
As the solutions to a certain system of differential equations defined from $\Phi$, a $5$-dimensional set $\{(\bar P, \bar P_z,\bar\kappa_3)\}$ of triplets consisting of three functions on $\R^2$ is determined and each triplet $(\bar P, \bar P_z,\bar\kappa_3)$ leads to a curvature surface $f^0_V(x,y)$ on $V$ in some hypersurface $f_V(x,y,z)$. Here, we choose a triplet $\mathcal{P} = (\bar P(x,y), \bar P_z(x,y),\bar\kappa_3(x,y))$, which is our object, and explain how the curvature surface $f^0_V(x,y)$ is determined from $\mathcal{P}$.    
In order to describe the triplet $\mathcal{P}$ explicitly, we prepare a generalized hypergeometric function $X_0(x)$ on $\R$ of type ${}_1F_2$ (see Section \ref{subsec:choice} for the choice of $X_0(x)$): for a sequence $\{a_k\}_{k=1}^{\infty}$ given by $a_1=1$ and $2(k+1)(4k^2+5/4)a_{k+1}+(2k-1)a_k=0\ (k\geq1)$, $X_0(x)$ is defined by
\begin{gather}\label{def:X0}
X_0=X_0(x) :=5/2+\textstyle\sum_{k=1}^{\infty}a_kx^{2k}
=5/2+x^2-(1/21)x^4+\cdots.
\end{gather}
Note that $X'_0(x)/x$ is also an analytic function on $\R$
satisfying $(X'_0(x)/x)(0)=2$.
The $\mathcal{P}$ is determined from $X_0$ as follows:
\begin{equation}\label{expression:Pbar}
\begin{split}
\bar P^{-1} (x,y) &:= (1/\bar P) (x,y) = \frac{x}{x^2+y^2}h(x,y),\quad
\bar\kappa_3 (x,y) = -\frac{y}{x^2+y^2}\left(2(X_0-xX_0')+\sqrt{5}\right),\\
(\bar P^{-1})_z (x,y) &:= -\frac{\bar P_z}{\bar P^2} (x,y)=\frac{1}{(x^2+y^2)^2} \left((x^2-y^2)\left(X_0+\textstyle\frac{\sqrt{5}}{2}\right)+2xy^2X_0'\right)+\sqrt{5},
\end{split}
\end{equation}
where $h(x,y)=2X_0-xX_0'+y^2 X_0'/x+\sqrt{5}$ and $h(x,y)>\sqrt{5}$ holds on $\R^2$ (Corollary \ref{cor:ineq4h} in \S\ref{sec:metric}).
For $\mathcal{P}$, a rational Riemannian metric $g_0$ on $\R^2$, which is our object, is determined by
\begin{gather}\label{def:g0}
g_0:=\bar P^2\left(\cos^2\varphi(dx)^2+\sin^2\varphi(dy)^2\right)
=\frac{1}{h^2(x,y)}\left(\frac{(x^2-y^2)^2}{x^2}(dx)^2+4y^2(dy)^2\right).
\end{gather}
Then, $S_1$ in \eqref{def:S1} coincides with the singular set in $\R^2$ of $g_0$, and we have $g_0(x,y)=g_0(-x,y)=g_0(x,-y)$ on $\R^2\setminus S_1$ by the property of $h$ (Corollary \ref{cor:ineq4h}).

Next, we review the relation between the triplet $\mathcal{P}$ and the generic conformally flat hypersurfaces $f_V(x,y,z)$ in (R2).
For $\mathcal{P}$, we define two functions $\bar\kappa_1$ and $\bar\kappa_2$ on $\R^2\setminus S_1$ by
\begin{equation}\label{def:kappabar12}
\begin{split}
\bar\kappa_1(x,y)&:=\bar P^{-1}\tan \varphi+\bar\kappa_3=(2y/(x^2-y^2))\big(X_0+\textstyle\frac{\sqrt{5}}{2}\big),\\
\bar\kappa_2(x,y)&:=-\bar P^{-1}\cot\varphi+\bar\kappa_3=y^{-1}\left(((x^2+y^2)/2)(X_0'/x)-\big(X_0+\textstyle\frac{\sqrt{5}}{2}\big)\right).
\end{split}
\end{equation}
Then, $\bar\kappa_1\bar\kappa_2\neq 0$ holds on $\R^2\setminus S$, where $S$ is the set defined by 
\begin{gather}\label{def:S}
S:=S_1\cup S_2\ \ \text{and}\ \ S_2:=\big\{(x,y)\;\big|\; (x^2+y^2)X_0'=(2X_0+\sqrt{5})x\big\}.
\end{gather}
For a domain $V$ such that $V\subset \R^2\setminus S$, we have the following facts (R4) and (R5):

(R4)\;
For the pair $\Phi$ and the triplet $\mathcal{P}$, an analytic surface $\phi(x,y)$ in $\mathbb{S}^3$ with principal line coordinates is determined on $V$.
Let $F^0_{V}(x,y):=\left[\phi,X^0_{\alpha},X^0_{\beta},\xi\right](x,y)$ be the frame field of $\R^4$, where $X^0_{\alpha}$ and $X^0_{\beta}$ are the unit principal directions corresponding to the coordinates $(x,y)$ and $\xi$ is a unit normal vector field of $\phi$. Then, an analytic surface $f^0_V(x,y)$ in $\R^4$ with the metric $g_0$ of \eqref{def:g0} is determined on $V$ as a certain integral surface of $(X^0_{\alpha},X^0_{\beta})$ (see Theorem \ref{thm:F0df0} in the next sub-section \ref{subsec:results}). In brief, $\Phi$ and $\mathcal{P}$ determine the structure equation $dF^0_V=F^0_V\Omega$ of the surface $f^0_V(x,y)$ and its solution $F^0_V(x,y)$ on $V$ satisfying the integrability condition.   

(R5)\;
There is a generic conformally flat hypersurface $f_{V}(x,y,z)$ in (R2) defined on $V\times I$ such that $f_{V}(x,y,z)$ satisfies the following conditions (1) and (2):

(1)\;
$f_{V}(x,y,0)=f^0_{V}(x,y)$ holds on $V$.

(2)\;
For $f_{V}(x,y,z)$, the conformal element $P^2$ of $\mathrm{I}_{f_{_V}}$ in \eqref{def:I}\footnote{$P^{-1}(x,y,z):=(1/P)(x,y,z)$ satisfies the evolution equation \cite[(2.3.6)]{su4} in $z$-direction.\label{footnote:evoleq}} and the principal curvatures $\kappa_i$ satisfy the equations $P(x,y,0)=\bar P(x,y)$, $P_z(x,y,0)=\bar P_z(x,y)$ and $\kappa_i(x,y,0)=\bar\kappa_i(x,y)$.
Let $F_{V}(x,y,z):=\left[N,X_{\alpha}, X_{\beta},X_{\gamma}\right](x,y,z)$ be the orthonormal frame field determined by $f_{V}(x,y,z)$, where $N(x,y,z)$ is normal and $(X_{\alpha}, X_{\beta},X_{\gamma})(x,y,z)$ are the principal directions corresponding to the coordinates $(x,y,z)$.
Actually, $f_{V}(x,y,z)$ is determined as an evolution of surfaces in $z$ issuing from the surface $f^0_{V}(x,y)$ on $z=0$ under the condition $F_{V}(x,y,0)=F^0_{V}(x,y)$ on $V$, and then the condition $\bar\kappa_1\bar\kappa_2\neq 0$ is necessary (\cite[Theorem 3.1]{su4}).
Here, note that the other triplets $\{(\bar P, \bar P_z, \bar\kappa_3)\}$ determined by $\Phi$ determine conformal transformations of $f_{V}(x,y,z)$ arising from $\mathcal{P}$.

Now, by (R4) and (R5), $f^0_{V}(x,y)$ is an analytic curvature surface with the metric $g_0|_{V}$, and $f^0_{V}(x,y)$ and $F^0_{V}(x,y)$ are determined only by $\Phi$ and $\mathcal{P}$.
Furthermore, each coordinate line of $f^0_{V}(x,y)$ is a principal curvature line of $f_{V}(x,y,z)$. We emphasize that the curvature surfaces $f^0_{V}(x,y)$ are defined only on each domain $V\subset \R^2\setminus S$.
Our first aim is to study the existence and the structure of an extended curvature surface $f^0(x,y)$ including the singularity of $g_0$ and further to study the boundaries of the principal curvature lines of an extended surface  $f^0(x,y)$: for example,   
let us take the interval $(0,1]\times\{y\}\subset \R^2$ for each $y$, then the length of $x$-line $f^0(x,y)$ on the interval with respect to $g_0$ is finite if $y=0$, but it diverges to $\infty$ if $y\neq 0$; hence, it would be interesting to study the curvature surface as $x\rightarrow \pm 0$.

\subsection{The results}\label{subsec:results}

We summarize the results of each section. 
Let $D:=\{(x,y)\in \R^2 \;|\; x>0\}$. In Section \ref{sec:extension}, we study an extended curvature surface only on the singular Riemannian space $(D,g_0)$, since the metric $g_0$ on $\R^2$ of \eqref{def:g0} diverges on the line $x=0$ except for the origin. In Section \ref{sec:structure}, we shall study it on the whole $\R^2$.

Now, the function $X_0(x)$ in \eqref{def:X0} determines the properties of the space $(D,g_0)$ and the curvature surfaces. In Section \ref{sec:metric} we study the property of $X_0(x)$ for later use. Then, we also review how the triplet $\mathcal{P}$ of \eqref{expression:Pbar}, which we have chosen, is determined from $\Phi$ of \eqref{def:phi}, since it is necessary for the study of $X_0(x)$. 

In Section \ref{sec:extension}, we firstly verify that $\Phi$ and $\mathcal{P}$ give rise to an analytic curvature surface $f^0(x,y)$ on $D$ with the metric $g_0$. Here, we say that $f^0(x,y)$ is a curvature surface on $D$ if, for a domain $V$ such that $V\subset D\setminus S$, there is a generic conformally flat hypersurface $f_V(x,y,z)$ on $V\times I$ such that $f_{V}(x,y,0)=f^0(x,y)$ holds on $V$.
%%%%%%%%%%%%%%%%%%%%%%%%%%%%%%
%%%%% Theorem 1
%%%%%%%%%%%%%%%%%%%%%%%%%%%%%%
\begin{mainthm}\label{thm:F0df0}
\begin{enumerate}
\item
The frame field $F^0_V(x,y)$ on $V$ mentioned in (R4) extends analytically to a field $F^0(x,y):=\left[\phi,X^0_{\alpha},X^0_{\beta},\xi\right](x,y)$ on $D$.

\item
An analytic curvature surface $f^0(x,y)$ on $D$ is determined as an integral surface of $(X^0_{\alpha},X^0_{\beta})$ by $df^0= (xh(x,y))^{-1} ((x^2-y^2) X^0_{\alpha}\,dx+ 2xy X^0_{\beta}\,dy)$.
\end{enumerate}
\end{mainthm}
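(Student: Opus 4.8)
The plan is to treat the structure equation $dF^0_V=F^0_V\Omega$ of (R4) as a completely integrable linear Pfaffian system and to push its coefficient matrix, the skew-symmetric connection form $\Omega$, across the singular set $S=S_1\cup S_2$ of \eqref{def:S}. First I would write $\Omega$ out explicitly: its entries are $1$-forms built, through the Gauss--Weingarten relations of the surface $\phi$ in $\mathbb{S}^3$ and the identification of $\phi$ with the hypersurface normal, from $\varphi$, $\bar P$, the $\bar\kappa_i$ and their first derivatives. A priori each entry is analytic only on $D\setminus S$, because the $\bar\kappa_i$ of \eqref{def:kappabar12} carry the factors $1/(x^2-y^2)$ and $1/y$ (apparent poles along $S_1$) and because the passage of (R5) degenerates on $S_2$, where $\bar\kappa_2=0$.

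The heart of Part (1), and the step I expect to be the main obstacle, is to show that \emph{every} entry of $\Omega$ in fact extends analytically to all of $D$. For this I would substitute the explicit expressions \eqref{expression:Pbar} and \eqref{def:kappabar12} together with the properties of $X_0$ and $h$ recorded in Section \ref{sec:metric} (in particular $h>\sqrt5>0$ on $\R^2$ from Corollary \ref{cor:ineq4h}, the parity of $g_0$, and the recursion defining $X_0$ in \eqref{def:X0}), and verify that the apparent poles of the connection coefficients along $y=0$, along $y=\pm x$, and along $S_2$ all cancel. The coefficients of $df^0$ already exhibit this phenomenon: $(x^2-y^2)/(xh)$ and $2xy/(xh)=2y/h$ are manifestly analytic on $D$ since $x>0$ and $h$ never vanishes; the corresponding cancellations for the remaining entries of $\Omega$ are the computational core of the argument.

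Granting the analytic extension of $\Omega$ to $D$, the rest of Part (1) is formal. The integrability (Maurer--Cartan) identity $d\Omega+\Omega\wedge\Omega=0$ holds on the dense open subset $D\setminus S$, where $F^0_V$ already solves the system; since $\Omega$ and $\Omega\wedge\Omega$ are now analytic on the connected set $D$ and $S$ is nowhere dense, the identity propagates to all of $D$ by the identity theorem for real-analytic functions. As $D$ is simply connected, the Frobenius theorem yields a unique analytic solution $F^0$ on $D$ with prescribed value at one point; fixing that value from $F^0_V$ makes $F^0$ extend $F^0_V$. The extended $\Omega$ is skew-symmetric (being so on the dense $D\setminus S$ and analytic), so $d({}^t\!F^0F^0)=0$ and $F^0(x,y)=[\phi,X^0_\alpha,X^0_\beta,\xi](x,y)$ remains an orthonormal frame of $\R^4$ on $D$, proving (1).

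For Part (2) I would set $\omega:=(xh)^{-1}\big((x^2-y^2)X^0_\alpha\,dx+2xy\,X^0_\beta\,dy\big)$, an analytic $\R^4$-valued $1$-form on $D$ by (1). On $D\setminus S$ the form $\omega$ restricts to $df^0_V$, hence is closed there; being analytic on $D$, it is closed on all of $D$ by the same identity-theorem argument. Simple-connectivity of $D$ then gives an analytic $f^0\colon D\to\R^4$ with $df^0=\omega$, unique up to a translation, which I fix so that $f^0$ agrees with $f^0_V$ on a fixed $V$. Since $X^0_\alpha,X^0_\beta$ are orthonormal, $\langle df^0,df^0\rangle=\big((x^2-y^2)/(xh)\big)^2(dx)^2+(2y/h)^2(dy)^2$, which is exactly the metric $g_0$ of \eqref{def:g0}; and for any $V\subset D\setminus S$ the surface $f^0|_V$ coincides with the curvature surface $f^0_V$ of (R4)--(R5), so by (R5) there is a generic conformally flat hypersurface $f_V$ on $V\times I$ with $f_V(\cdot,0)=f^0$. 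Thus $f^0$ is a curvature surface on $D$ in the stated sense.
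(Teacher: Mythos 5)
Your proposal is correct and takes essentially the same route as the paper: its Lemmas \ref{lemma:B-C} and \ref{lemma:dPhi} carry out exactly the pole-cancellation you identify as the computational core (yielding coefficients $a_i$, $b_i$, $c_i$ analytic on all of $D$ because $x>0$, $h>\sqrt{5}$, and $X_0'/x$, $X_0''-X_0'/x$ are analytic on $\R$), and its Theorem \ref{thm:extension} then propagates the Maurer--Cartan equation from an open set to $D$ by real-analyticity, solves the resulting system on the simply connected half-plane $D$, and integrates the closed $1$-form \eqref{def:theta12} to obtain $f^0$ with metric $g_0$, just as you describe. The only difference is that you assert rather than execute the cancellation computation, whereas the paper performs it explicitly; your identification of its ingredients and outcome is accurate.
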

In Theorem \ref{thm:F0df0}, the field $F^0(x,y)$ is uniquely determined up to a transformation $QF^0(x,y)$ by a constant orthogonal matrix $Q$, and the fact (2) implies that the field $F^0(x,y)$ 
is a solution to the structure equation of $f^0(x,y)$ on $D$. 
%%%%%%%%%%%%%%%%%%%%%%%%%%%%%%
%%%%% Theorem 2
%%%%%%%%%%%%%%%%%%%%%%%%%%%%%%
\begin{mainthm}\label{thm:f0}
The curvature surface $f^0(x,y)$ on $D$ equipped with the frame field $F^0(x,y)=\left[\phi,X^0_{\alpha},X^0_{\beta},\xi\right](x,y)$, given in Theorem \ref{thm:F0df0}, satisfies the following conditions (1)--(4):
\begin{enumerate}
\item
The unit analytic vector ${\bs{u}}(y):=\left(1+y^2\right)^{-1/2}[yX^0_{\beta}-\phi](x,y)$ depends only on $y$.

\item
Any $x$-curve $f^0(x,y)$ with fixed $y$ lies on a standard $2$-sphere $\mathbb{S}^2_y$ of radius\\
$(2\sqrt{5})^{-1}\sqrt{(5+4y^2)/(1+y^2)}$ in an affine hyperplane perpendicular to ${\bs{u}}(y)$.

\item
The unit analytic vector $\tilde{\bs{u}}(x):=(B_2^2+C_2^2)^{-1/2}(x) (-B_2X^0_{\alpha}+C_2\xi)(x,y)$ depends only on $x$, where $B_2(x)=-(1/2)(X_0'(x)/x)-\sqrt{5}$ and $C_2(x)=X_0''(x)-X_0'(x)/x$.

\item
Any $y$-curve $f^0(x,y)$ with fixed $x$ lies on a standard $2$-sphere $\mathbb{S}^2_x$ of radius $(B_2^2+C_2^2)^{-1/2}(x)$ in an affine hyperplane perpendicular to $\tilde{\bs{u}}(x)$.
\end{enumerate}
\end{mainthm}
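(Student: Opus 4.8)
The plan is to reduce all four assertions to the explicit structure equation $dF^0=F^0\Omega$ of Theorem~\ref{thm:F0df0}, where $\Omega=(\omega_{ij})$ is the skew-symmetric matrix of connection $1$-forms of the orthonormal frame $F^0=[\phi,X^0_\alpha,X^0_\beta,\xi]$; its entries are explicit functions of $(x,y)$ built from $\Phi$ of \eqref{def:phi} and $\mathcal P$ of \eqref{expression:Pbar}, hence ultimately from $\varphi$ and $X_0$. First I would write out $\Omega$ with the $dx$- and $dy$-parts of each $\omega_{ij}$ separated. Since $\phi$ is an immersion into $\mathbb S^3$ with tangent plane $\operatorname{span}(X^0_\alpha,X^0_\beta)$, one has $\langle d\phi,\xi\rangle=0$, so the $\xi$-row/column of $d\phi$ vanishes; this already trims the bookkeeping.

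For (1), note that $(1+y^2)^{-1/2}$ is independent of $x$, so $\partial_x\bs{u}=0$ is equivalent to $y\,\partial_x X^0_\beta=\partial_x\phi$. Expanding both sides through $\Omega$ and matching the $\phi$-, $X^0_\alpha$-, $X^0_\beta$- and $\xi$-components reduces the identity to three scalar relations: the $dx$-parts of $\omega_{31}$ and $\omega_{43}$ vanish, and $y\,\omega_{23}(\partial_x)=\omega_{21}(\partial_x)$. These I would read off from the explicit $\Omega$ and verify using the closed forms of $\cos\varphi,\sin\varphi$ in \eqref{def:phi} together with the identities for $X_0$ established in \S\ref{sec:metric}. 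That $\bs{u}(y)$ is a unit vector is immediate from $|yX^0_\beta-\phi|^2=y^2+1$. Part (3) is the mirror image under $x\leftrightarrow y$: one checks $\partial_y\tilde{\bs{u}}=0$ by matching the components of $\partial_y(-B_2X^0_\alpha+C_2\xi)$ against the $dy$-parts of $\Omega$, the point being that $B_2$ and $C_2$ are precisely the coefficients producing the cancellation.

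For the sphere statements (2) and (4) I would use that, by the formula for $df^0$ in Theorem~\ref{thm:F0df0}, the $x$-curves have tangent along $X^0_\alpha$ and the $y$-curves along $X^0_\beta$. For (2): as $\bs{u}(y)\in\operatorname{span}(\phi,X^0_\beta)$ is orthogonal to $X^0_\alpha$, we get $\partial_x\langle f^0,\bs{u}\rangle=\langle\partial_x f^0,\bs{u}\rangle=0$, so each $x$-curve lies in an affine hyperplane $H(y)$ perpendicular to $\bs{u}(y)$. To exhibit the sphere I would look for a center $c(y)$, constant in $x$. Since the center must lie in $H(y)$ and the sphere condition forces $f^0-c\perp X^0_\alpha$, it has the form $c=f^0-a\,\xi-b\,\bs{v}$ with $\bs{v}:=(1+y^2)^{-1/2}(X^0_\beta+y\phi)$ the unit vector in $\operatorname{span}(\phi,X^0_\beta)$ orthogonal to $\bs{u}$; imposing $\partial_x c=0$ and using the structure equations determines $a,b$ and verifies all component relations. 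Then $\partial_x|f^0-c|^2=2\langle\partial_x f^0,f^0-c\rangle=0$ since $f^0-c\perp X^0_\alpha$, so $|f^0-c|$ is the constant radius, which a direct evaluation identifies with $(2\sqrt5)^{-1}\sqrt{(5+4y^2)/(1+y^2)}$; the curve thus lies on the $2$-sphere $H(y)\cap\{\,|f^0-c|=r(y)\,\}$. Part (4) runs identically along $y$-curves, with $\tilde{\bs{u}}(x)$ replacing $\bs{u}(y)$ and radius $(B_2^2+C_2^2)^{-1/2}(x)$.

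The main obstacle I expect is organizational rather than conceptual: assembling $\Omega$ in closed form and confirming the nontrivial cancellation identities---most critically $y\,\omega_{23}(\partial_x)=\omega_{21}(\partial_x)$ for (1) and its $y$-analogue for (3)---which is exactly where the special arithmetic of $X_0$ (its defining recursion and the derived relations of \S\ref{sec:metric}) enters. Determining the centers $c(y)$ and $c(x)$ explicitly and checking that the radii collapse to the stated closed forms is the most calculation-heavy step, but it is routine once $\Omega$ is in hand.
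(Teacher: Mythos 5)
Your proposal is correct and takes essentially the same route as the paper: the paper proves this statement in Theorems \ref{thm:xcurve} and \ref{thm:ycurve} by working with exactly the structure equations you describe (Lemmas \ref{lemma:B-C} and \ref{lemma:dPhi} are the entries of $\Omega$), establishing (1) and (3) through the same cancellations ($y c_1+a_1=0$, and $-B_2c_2+C_2b_2=0$ since $b_2=(2y/h)B_2$, $c_2=(2y/h)C_2$), and (2) and (4) by producing a center constant along each coordinate curve. The only cosmetic difference is that where you solve a small linear system for the center coefficients, the paper directly exhibits the combination $X^0_{\beta}-2\xi$, shows its projection $\hat{\bs{f}}$ orthogonal to ${\bs{u}}(y)$ satisfies $\nabla'_{X^0_{\alpha}}\hat{\bs{f}}=2\sqrt{5}\,X^0_{\alpha}$, and concludes $f^0=(2\sqrt{5})^{-1}\hat{\bs{f}}+A(y)$, which yields the same center and radius your ansatz determines.
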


In Theorem \ref{thm:f0}, when we regard the surface $f^0(x,y)$ as a one-parameter family of $x$-curves, the surface $f^0(x,y)$ is expressed as
\begin{gather}\label{eq:f0asxcurves}
f^0(x,y)=(2\sqrt{5})^{-1}\sqrt{(5+4y^2)(1+y^2)^{-1}}\,{\bs{f}}(x,y)+A(y)
\end{gather}
with the unit analytic vector ${\bs{f}}(x,y):=((1+y^2)(5+4y^2))^{-1/2} (X^0_{\beta}-2(1+y^2)\xi+y\phi) (x,y)$.
Similarly, when we regard the surface $f^0(x,y)$ as a one-parameter family of $y$-curves, the surface $f^0(x,y)$ is expressed as
\begin{gather}\label{eq:f0asycurves}
f^0(x,y)=(B_2^2+C_2^2)^{-1/2}(x)\tilde{\bs{f}}(x,y)+{\tilde A}(x)
\end{gather}
with the unit analytic vector $\tilde{\bs{f}}(x,y):=(B_2^2+C_2^2)^{-1/2}(x)\left(B_2\xi+C_2X^0_{\alpha}\right)(x,y)$. Here, $A(y)$ (resp.\ ${\tilde A}(x)$) is a $\R^4$-valued analytic function of $y$ (resp.\ of $x$) determined uniquely up to a parallel translation, and they are the centers of $\mathbb{S}^2_y$ and $\mathbb{S}^2_x$, respectively. Furthermore, the curvature surface $f^0(x,y)$ on $D$ is bounded in $\R^4$. 

Next, in the singular set $S_1\cap D=\{(x,y)\;|\; y(x^2-y^2)=0\}$ of $(D,g_0)$, the surface $f^0(x,y)$ has the following features:

(A1)\;
Any $x$-curve $f^0(x,y)$ with fixed $y\neq 0$ has a cusp of type $(2,3,4)$ at $x=|y|$.

(A2)\;
Any $y$-curve $f^0(x,y)$ with fixed $x$ has a cusp of type $(2,3,4)$ at $y=0$.

(A3)\;
The curves $f^0(|y|,y)$ of $y$ are the envelopes of the family of $y$-curves $f^0(x,y)$ with fixed $x$.

(A4)\;
We translate the surface $f^0(x,y)$ such that $f^0(p_0)={\bf 0}$ holds at some point $p_0:=(x,0)$, where ${\bf 0}$ is the origin of $\R^4$. Then, there exists a reflection $B$ about a hyperplane $\R^3(\ni {\bf 0})$ such that $(B\circ f^0)(x,y)=f^0(x,-y)$, $(B\circ\phi)(x,y)=-\phi(x,-y)$ and $(B\circ\xi)(x,y)=\xi(x,-y)$ hold.

In Section \ref{sec:structure}, we study the limits of $x$-curves $f^0(x,y)$ as $x\rightarrow 0$ and $y$-curves $f^0(x,y)$ as $y\rightarrow\infty$. Then, we can clarify the structure of the surface $f^0(x,y)$ (and also the space $(D,g_0)$): in the case of $x\rightarrow 0$, we replace $x$ with $u$ given by $x=e^{-u}$ and study the case of $u\rightarrow \infty$.
For the vectors ${\bs{u}}(y)$ and $\tilde{\bs{u}}(x)$ in Theorem \ref{thm:f0}, we have the following theorem (see Definition \ref{lemdef:unifconv} for the uniform convergence in the theorem):

%%%%%%%%%%%%%%%%%%%%%%%%%%%%%%
%%%%% Theorem 3
%%%%%%%%%%%%%%%%%%%%%%%%%%%%%%
\begin{mainthm}\label{intro:thm:asymp-u-utilde}
There is an orthonormal frame $[{\bs{b}}^{\infty},{\bs{c}}^{\infty},\tilde{\bs{b}}{}^{\infty},\tilde{\bs{c}}^{\infty}]$ of $\R^4$ (consisting of constant vectors) such that it satisfies the following conditions:
\begin{enumerate}
\item
The vector ${\bs{u}}(y)$ moves on the unit circle in a plane spanned by $\tilde{\bs{b}}{}^{\infty}$ and $\tilde{\bs{c}}^{\infty}$. Let ${\bs{v}}_1(y)$ be a unit vector-valued function in $y$ determined by $(d{\bs{u}}/dy)(y)=(y^2/(1+y^2)){\bs{v}}_1(y)$.
Then, ${\bs{u}}(y)$ (resp.\ ${\bs{v}}_1(y)$) uniformly converges to the circle $\tilde{\bs{b}}(y):=\cos y\, \tilde{\bs{b}}{}^{\infty}+\sin y\,\tilde{\bs{c}}^{\infty}$ (resp.\ $\tilde{\bs{c}}(y):=-\sin y\, \tilde{\bs{b}}{}^{\infty}+\cos y\,\tilde{\bs{c}}^{\infty}$) as $y$ tends to $\infty$. In plain words, ${\bs{u}}(y)$ approaches asymptotically the constant velocity circular motion $\tilde{\bs{b}}(y)$ as $y$ tends to $\infty$. 

\item
The vector $\tilde{\bs{u}}(x)$ moves on the unit circle in a plane spanned by ${\bs{b}}^{\infty}$ and ${\bs{c}}^{\infty}$. Let $\tilde{\bs{v}}_1(x)$ be a unit vector of $x$ determined by $(d\tilde{\bs{u}}/du)(e^{-u})= -T(u)\tilde{\bs{v}}_1(e^{-u})$, where $T(u)>0$. Then, $\tilde{\bs{u}}(e^{-u})$ (resp.\ $\tilde{\bs{v}}_1(e^{-u})$) uniformly converges to the circle ${\bs{b}}(u):=\cos\frac{\sqrt{5}u}{2}{\bs{b}}^{\infty}-\sin\frac{\sqrt{5}u}{2}{\bs{c}}^{\infty}$ (resp.\ $-{\bs{v}}_2(u):=\sin\frac{\sqrt{5}u}{2}{\bs{b}}^{\infty}+\cos\frac{\sqrt{5}u}{2}{\bs{c}}^{\infty}$) as $u$ tends to $\infty$.
\end{enumerate}
\end{mainthm}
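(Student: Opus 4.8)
The plan is to treat $\bs{u}(y)$ and $\tilde{\bs{u}}(x)$ as unit-vector curves on the sphere $\mathbb{S}^3\subset\R^4$, each depending on a single variable by Theorem~\ref{thm:f0}~(1),(3), and to analyze them through the structure equation $dF^0=F^0\Omega$ of Theorem~\ref{thm:F0df0}, whose connection form $\Omega=\Omega_x\,dx+\Omega_y\,dy$ is explicit in terms of $\varphi$, $\bar P$, $\bar\kappa_i$ and $X_0$. The guiding idea is that each of $\bs{u}$, $\tilde{\bs{u}}$ sweeps out a \emph{planar} circular motion whose angular speed tends to a constant, so that the convergence statements reduce to integrating a scalar angle function. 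For each curve the two tasks are therefore: (i) a Frenet-closing identity proving that the moving orthonormal pair rotates rigidly inside a fixed $2$-plane, and (ii) the asymptotic analysis of the accumulated angle.

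First, for $\bs{u}(y)$ I would differentiate $\bs{u}=(1+y^2)^{-1/2}(yX^0_\beta-\phi)$ using $\partial_y F^0=F^0\Omega_y$; this recovers the stated relation $\bs{u}'=\tfrac{y^2}{1+y^2}\bs{v}_1$ and exhibits $\bs{v}_1$ as an explicit frame combination. Differentiating once more, the identity to verify is the Frenet-closing relation
\[
\bs{v}_1'(y)=-\tfrac{y^2}{1+y^2}\,\bs{u}(y),
\]
i.e.\ that $\bs{v}_1'$ has no component outside $\operatorname{span}(\bs{u},\bs{v}_1)$, the $X_0$-dependent terms being expected to cancel and leave this rational coefficient. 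Granting it, $(\bs{u},\bs{v}_1)$ rotates rigidly in a fixed $2$-plane $\Pi_1$, so $\bs{u}(y)=\cos\theta(y)\,\tilde{\bs{b}}{}^\infty+\sin\theta(y)\,\tilde{\bs{c}}^\infty$ for an orthonormal basis $(\tilde{\bs{b}}{}^\infty,\tilde{\bs{c}}^\infty)$ of $\Pi_1$ with $\theta'=y^2/(1+y^2)$, whence $\theta(y)=y-\arctan y=y-\tfrac{\pi}{2}+O(1/y)$. Choosing the phase of the basis to absorb the constant $-\tfrac{\pi}{2}$ yields the uniform convergence of $\bs{u}(y)$ to $\tilde{\bs{b}}(y)$ and of $\bs{v}_1(y)$ to $\tilde{\bs{c}}(y)$ in the sense of Definition~\ref{lemdef:unifconv}.

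For $\tilde{\bs{u}}(x)$ the same scheme is run in the variable $u=-\log x$, using $\tfrac{d}{du}=-x\,\partial_x$ and $\partial_x F^0=F^0\Omega_x$. Differentiating $\tilde{\bs{u}}=(B_2^2+C_2^2)^{-1/2}(-B_2X^0_\alpha+C_2\xi)$ gives the relation $d\tilde{\bs{u}}/du=-T(u)\tilde{\bs{v}}_1$ with $T(u)>0$, and planarity again amounts to the closing identity $d\tilde{\bs{v}}_1/du=T(u)\tilde{\bs{u}}$, placing the motion in a fixed $2$-plane $\Pi_2=\operatorname{span}(\bs{b}^\infty,\bs{c}^\infty)$. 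This is where I expect the main obstacle: unlike the $\bs{u}$-case the coefficients $B_2,C_2$ and the relevant entries of $\Omega_x$ blow up like $1/x$ as $x\to0$, and cancelling the out-of-plane component of $d\tilde{\bs{v}}_1/du$ requires the third-order differential equation (equivalently the recurrence in \eqref{def:X0}) satisfied by $X_0$. The constant $\sqrt{5}/2$ enters precisely here, through the leading $1/x$ behaviour of $\Omega_x$ together with $X_0(0)=5/2$, $(X_0'/x)(0)=2$, $X_0''(0)=2$, which force $B_2\to-1-\sqrt{5}$, $C_2\to0$ and, after rescaling by $u$, the limit $T(u)\to\sqrt{5}/2$.

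It then remains to integrate the angle and assemble the frame. Because $X_0$ is analytic at $x=0$, the deviation $T(u)-\sqrt{5}/2$ is $O(e^{-2u})$ and hence integrable, so $\Theta(u):=\int_0^uT=\tfrac{\sqrt{5}}{2}u+\mathrm{const}+o(1)$; calibrating the phase of $(\bs{b}^\infty,\bs{c}^\infty)$ gives uniform convergence of $\tilde{\bs{u}}(e^{-u})$ to $\bs{b}(u)$ and of $\tilde{\bs{v}}_1(e^{-u})$ to $-\bs{v}_2(u)$. Finally, to see that the four limit vectors form an orthonormal frame of $\R^4$ I would use that $\langle\bs{u}(y),\tilde{\bs{u}}(x)\rangle=0$ identically, which is immediate from the frame expressions since $X^0_\beta,\phi\perp X^0_\alpha,\xi$. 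Since $\bs{u}(y)$ traverses the full unit circle in $\Pi_1$ and this inner product vanishes for every fixed $x$, each $\tilde{\bs{u}}(x)$ is orthogonal to $\Pi_1$; hence $\Pi_2\perp\Pi_1$ and $[\bs{b}^\infty,\bs{c}^\infty,\tilde{\bs{b}}{}^\infty,\tilde{\bs{c}}^\infty]$ is orthonormal, as required.
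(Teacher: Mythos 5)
Your reading of the statement is correct (planar circular motions of $\bs{u}$ and $\tilde{\bs{u}}$, an integrated-angle asymptotic, and orthogonality of the two planes), your angle computations are right ($\theta'=y^2/(1+y^2)$ integrates to $y-\arctan y$; $xT(x)-\sqrt5/2=O(x^2)$ so the deviation is integrable in $u=-\log x$), and your final step — deducing $\Pi_1\perp\Pi_2$ from $\langle\bs{u}(y),\tilde{\bs{u}}(x)\rangle\equiv 0$ plus the fact that $\bs{u}(y)$ sweeps a full circle — is exactly the paper's argument. But the proposal has a genuine gap at its crux: both Frenet-closing identities, $\bs{v}_1'(y)=-\frac{y^2}{1+y^2}\bs{u}(y)$ and $d\tilde{\bs{v}}_1/du=T(u)\tilde{\bs{u}}$, are only asserted, with ``the $X_0$-dependent terms being expected to cancel.'' These identities \emph{are} the theorem: they are what confines each motion to a fixed $2$-plane. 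The paper explicitly warns (Remark \ref{rem:uutilde}-(2)) that checking these facts ``by direct calculation'' is ``very hard,'' and it never performs that calculation. Note that already the first-derivative statement $\|d\bs{u}/dy\|=y^2/(1+y^2)$, which you treat as routine, is a nontrivial identity equivalent to a consequence of $G^{X_0}=-5$; the second-derivative cancellation is substantially worse, and you give no mechanism for it beyond invoking the third-order ODE for $X_0$.

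The paper avoids both computations by indirect arguments, and this is where the routes genuinely diverge. For $\bs{u}$: it first proves (Lemma \ref{lemdef:unifconv}, a Gronwall-type comparison of $\partial M/\partial u=MV$ with the constant-coefficient system $V^0(y)$, using the estimates \eqref{ineq:v2}--\eqref{ineq:v3}) that $M(u,y)=[X^0_{\alpha},\bs{f},\bs{u}_2](e^{-u},y)$ converges to a rigid rotation as $u\to\infty$, with the limit circle of $X^0_{\alpha}$ lying in a plane spanned by \emph{constant} vectors $\bs{b}^{\infty},\bs{c}^{\infty}$ (constancy coming from $\partial X^0_{\alpha}/\partial y\to 0$); since $\bs{u}(y)\perp X^0_{\alpha}(e^{-u},y)$ for every $u$, letting $u\to\infty$ forces $\bs{u}(y)\perp\bs{b}^{\infty},\bs{c}^{\infty}$, so $\bs{u}$ is a unit curve in a fixed $2$-plane and the closing relation is then automatic — no second derivative is ever computed (Theorem \ref{thm:binfcinf}, Corollary \ref{cor:5.2}, with the sign fixed via \eqref{eq:5.14}). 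For $\tilde{\bs{u}}$: the closing relation is obtained by letting $y\to\infty$ in the frame equations (Lemma \ref{lemma:5.3}, with $O(1/y)$ errors), defining $\tilde{\bs{v}}_1(x):=\lim_{y\to\infty}\tilde{\bs{f}}(x,y)$, and the plane and phase are pinned down by orthogonality to $\bs{u}(y)$ and by $\tilde{\bs{u}}\to X^0_{\alpha}$-direction as $x\to 0$ (Theorem \ref{thm:5.3}, Remark \ref{rem:5.1}). Incidentally, your own orthogonality argument, applied earlier, would let you skip the second closing identity entirely (the one you call the main obstacle): once $\bs{u}$ sweeps a full circle in a fixed plane, $\tilde{\bs{u}}(x)$ lies in its orthogonal complement automatically, and only the speed $T$ and its limit are needed. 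But planarity of $\bs{u}$ itself still requires either your uncarried-out cancellation or the paper's $u\to\infty$ frame-limit machinery, so the proposal as written does not prove the theorem.
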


For the limits of $u$-curves $f^0(e^{-u}, y)$ as $u \to \infty$ and $y$-curves $f^0(x,y)$ as $y \to \infty$, we have the following facts (B1)--(B4):

(B1)\;
Any $u$-curve $f^0(e^{-u},y)$ with $y$ uniformly converges to a small circle of $\mathbb{S}^2_y$ as $u$ tends to $\infty$: only the $u$-curve $f^0(e^{-u},0)$ converges to a point of $\mathbb{S}^2_{y=0}$.

(B2)\;
All $u$-curve $X^0_{\alpha}(e^{-u},y)$ with $y$ uniformly converge to the circle ${\bs{b}}(u)$ as $u$ tends to $\infty$.

(B3)\;
Any $y$-curve $f^0(x,y)$ with $x$ uniformly converges to the point $(B_2^2+C_2^2)^{-1/2}(x)\tilde{\bs{v}}_1(x)+{\tilde A}(x)$ as $y$ tends to $\infty$.

(B4)\;
All $y$-curve $X^0_{\beta}(x,y)$ with $x$ uniformly converge to the circle $\tilde{\bs{b}}(y)$ as $y$ tends to $\infty$.

\noindent
In (B1) and (B2), the convergences for those $u$-curves are also uniform with respect to $y\in \R$. In (B3) and (B4), the convergences for those $y$-curves $f^0(x,y)$ are also uniform in the wider sense with respect to $x\in(0,\infty)$.

Furthermore, we have some other results:
(C1) As $x\rightarrow 0$ and $x\rightarrow\infty$, each $x$-curve $f^0(x,y)$ with $y\neq 0$ converges uniformly to the parallel small circles in $\mathbb{S}^2_y$. Then, the centers of these circles lie on the $t$-line $t{\bs v}_1(y)+A(y)$.
(C2) $\lim_{y\rightarrow\infty}f^0(x,y)=\lim_{y\rightarrow -\infty}f^0(x,y)$ holds for any $x\in (0,\infty)$.
(C3) There is a curvature surface $f^0(x,y)$ such that the curve $A(y)$ in \eqref{eq:f0asxcurves} (resp.\ $\tilde A(x)$ in \eqref{eq:f0asycurves}) lies on the plane $H$ spanned by $\tilde{\bs{b}}{}^{\infty}$ and $\tilde{\bs{c}}{}^{\infty}$ (resp.\ on the plane $\tilde{H}$ spanned by ${\bs{b}}^{\infty}$ and ${\bs{c}}^{\infty}$).

In consequence,
the curvature surface on $D$ given at (C3) has the following structure.
The centers of the circles in (B1) (resp.\ the convergence points in (B3)) lie on the plane $H$ (resp.\ $\tilde{H}$).
At the intersection of $\mathbb{S}^2_y$ and the $t$-line $t{\bs v}_1(y)+A(y)$, the two tangent spaces of $\mathbb{S}^2_y$ are spanned by ${\bs{b}}^{\infty}$ and ${\bs{c}}^{\infty}$.
Similarly, at the two points $\pm(B^2_2+C_2^2)^{-1/2}(x)\tilde{\bs{v}}_1(x)+\tilde{A}(x) \in \mathbb{S}^2_x$, the tangent spaces of $\mathbb{S}^2_x$ are spanned by $\tilde{\bs{b}}{}^{\infty}$ and $\tilde{\bs{c}}^{\infty}$. We could imagine clearly the structure of the curvature surface $f^0(x,y)$ if we would assume $A(y)=\tilde A(x)=\bs{0}$.

At the end in Section \ref{sec:structure}, we study the relationship between two curvature surfaces $f^0(x,y)$ on $D$ and $\hat f^0(x,y)$ on $D(-):=\{(x,y)\;|\;x<0\}$. 

In Section \ref{sec:approx}, for each positive integer $n$, we define an approximation $F^{\delta_n}(x,y)$ of the frame field $F^0(x,y)$ from the structure equation $dF^0=F^0\Omega$.
We construct $F^{\delta_n}(x,y)$ on a compact square $E\subset D$, by regarding $\phi$ (or $F^0(x,y)$) as a (singular) surface in the standard $3$-sphere $\mathbb{S}^3$.
Then, the Gauss and the Codazzi equations for $\phi$ are important in the construction.
Let $E:=[x_0,x_0+a]\times[y_0,y_0+a]$, and $x_0<x_1<\dots <x_n=x_0+a$ and $y_0<y_1<\dots <y_n=y_0+a$ be the divisions of $[x_0,x_0+a]$ and $[y_0,y_0+a]$ of equal length $\delta_n:=a/n$.
Then, for an initial orthogonal matrix at $(x_0,y_0)$, an orthonormal frame field $F^{\delta_n}(x,y)$, independently of the width $\delta_n$, is determined on the lattice in $E$ made from the divisions: precisely, $F^{\delta_n}(x,y)$ is defined on each path (see Definition \ref{def:path}).
The $F^{\delta_n}(x,y)$ is constructed by a kind of polygonal line method: on each edge $[x_i,x_{i+1}]\times \{y_j\}$ or $\{x_i\}\times[y_j,y_{j+1}]$, we approximate $F^0(x,y)$ by a rational curve (not by a line).
We further define $F^{\delta_n}(x,y)$ at all points $(x,y)\in E$ by a little change of the divisions.
Then, $F^{\delta_n}(x,y)$ converges to $F^0(x,y)$ uniformly on $E$ as $n\rightarrow \infty$. By using $F^{\delta_n}(x,y)$, we can draw the curves: coordinate lines, the cusps and the enveloping curves and so on, which are given in Sections \ref{sec:extension} and \ref{sec:structure}, since each coordinate curve is expressed by the frame field $F^0(x,y)$ as in \eqref{eq:f0asxcurves} and \eqref{eq:f0asycurves}. Then, similar to the case of coordinate lines in $f^0(x,y)$, those approximations also lies on $2$-spheres $\mathbb{S}^2$.
 
\section{Choice of a singular metric $g_0$ determining our curvature surfaces}\label{sec:metric}

As mentioned in the introduction, for the Poincar\'{e} metric $\check g_H=(dx^2+dy^2)/y^2$ on $\R^2$, we have taken at \eqref{def:phi} the following pair
$\Phi=(\varphi,\varphi_z)$ of functions on $\R^2$,
\begin{gather*}
\cos\varphi(x,y)=\frac{x^2-y^2}{x^2+y^2},\quad
\sin\varphi(x,y)=\frac{2xy}{x^2+y^2},\quad
\varphi_z(x,y)=\frac{y}{x^2+y^2},
\end{gather*}
and chosen at \eqref{expression:Pbar} the following triplet $\mathcal{P}=(\bar P, \bar P_z, \bar\kappa_3)$ of three functions on $\R^2$,
\begin{gather*}
\begin{split}
\bar P^{-1}&=\displaystyle\frac{x}{x^2+y^2}h(x,y),\quad
\bar\kappa_3=-\frac{y}{x^2+y^2}\left(2(X_0-xX_0')+\sqrt{5}\right),\\
(\bar P^{-1})_z&:=\displaystyle-\frac{\bar P_z}{\bar P^2}=\frac{1}{(x^2+y^2)^2}\left((x^2-y^2)\left(X_0+\textstyle\frac{\sqrt{5}}{2}\right)+2xy^2X_0'\right)+\sqrt{5}.
\end{split}
\end{gather*}
Here $X_0=X_0(x)$ is the hypergeometric function on $\R$ given in \eqref{def:X0} and $h(x,y)$ is the function defined by
\begin{gather}\label{def:h}
h(x,y)=2X_0-xX_0'+y^2 (X_0'/x)+\sqrt{5}.
\end{gather}
The triplet $\mathcal{P}$ of \eqref{expression:Pbar} is selected from the $5$-dimensional set of triplets $\{(\bar P, \bar P_z,\bar\kappa_3)\}$ in \cite[Example 3.2]{su4} determined by $\Phi$ and it leads to a singular metric $g_0$ on $\R^2$ of \eqref{def:g0} determining curvature surfaces. In this section, we firstly explain that the choice of the triplet $\mathcal{P}$ is suitable, and next study the property of $X_0(x)$ for the argument later.

\subsection{Choice of the triplet $\mathcal{P}$ and the singular metric $g_0$}\label{subsec:choice}

Each triplet $(\bar P, \bar P_z,\bar\kappa_3)$ is determined from a pair $(X_1(x),Y(y))$ of solutions to the two differential equations \eqref{def:X1Y} below, of which fact we explain in this sub-section. 
Now, we define two differential equations for $1$-variable functions $X_1=X_1(x)$ of $x$ and $Y=Y(y)$ of $y$ by
\begin{gather}\label{def:X1Y}
xX_1'''-X_1''+\left(x+9/(4x)\right)X_1'-X_1=cx^2,\quad
Y''+Y=cy^2,
\end{gather}
respectively, where $c$ is a constant.
Let us take $c=0$ at the equation for $X_1$, then the point $x=0$ of the equation is a regular singularity and its fundamental solutions are given by the power series $x^r\left[1+\sum_{k=1}^{\infty}\alpha_kx^k\right]$ with $r=0, \ 2\pm \sqrt{-5}/2$. The function $X_0(x)$ of \eqref{def:X0} is a solution corresponding to $r=0$ and it is analytic on $\R$. 
The two solutions $q_i(x)$ $(i=1,2)$ other than $X_0(x)$ are the following complex-valued functions: let $r_1:=2+\sqrt{-5}/2$ and $r_2:=2-\sqrt{-5}/2$, then $q_i(x)$ are given by
\begin{gather*}
q_1(x)=x^2\left[\cos((\sqrt{5}/2)\log |x|)+\sqrt{-1}\sin((\sqrt{5}/2)\log |x|)\right]\left[1+\textstyle\sum_{k=1}^{\infty}\beta_{1,k}x^{2k}\right],\\
q_2(x)=\bar q_1(x)=x^2\left[\cos((\sqrt{5}/2)\log |x|)-\sqrt{-1}\sin((\sqrt{5}/2)\log |x|)\right]\left[1+\textstyle\sum_{k=1}^{\infty}\beta_{2,k}x^{2k}\right],
\end{gather*}
where $\beta_{i,1}=-(r_i-1)/[(r_i+2)(r_i^2+5/4)]$ and $\beta_{i,k+1}=-(r_i+2k-1)/[(r_i+2k+2)((r_i+2k)^2+5/4)]\ \beta_{i,k}$ for $k\geq 1$. The $X_0(x)$ and a linearly independent pair of real-valued functions determined from $q_i(x)$ are a base of real-valued solutions to the equation. 
Next, the general solution to the equation for $Y$ involves the linear combination of $\cos y$ and $\sin y$ as terms. In Proposition \ref{prop:class} below, we shall take a particular solution $Y$ that does not include such terms. That is, we choose the simplest solutions of the equations for $X_1$ and $Y$ to get the triplet $\mathcal{P}$.

Now, for the solutions $X_1$ and $Y$ to the equations \eqref{def:X1Y}, we define the functions $G^{X_1}(x)$ and $H^{Y}(y)$ by
\begin{align*}
G^{X_1}(x) &:=(X_1'')^2+4cX_1+\left(1+9/(4x^2)\right)(X_1')^2+\left((2/x)X_2-4cx\right)X_1',\\
H^{Y}(y) &:=(Y'-2cy)^2+(Y-cy^2+2c)^2-4c^2,
\end{align*}
where $X_2:=-X_1''-X_1+cx^2$. Then, both functions $G^{X_1}(x)$ and $H^{Y}(y)$ are constant as mentioned in Proposition \ref{prop:class} below. 
Furthermore, for a pair $(X_1,Y)$ of solutions, let $\bar X=\bar X(x)$, $\bar Y=\bar Y(y)$ and $\mathcal{A}=\mathcal{A}(x,y)$ be the functions defined by
\begin{gather*}
\bar X:=xX_1'-X_1,\quad
\bar Y:=yY'-Y,\quad
\mathcal{A}:=\bar X+\bar Y.
\end{gather*}
 
The following proposition \ref{prop:class} was verified in \cite[Example 3.2]{su4} except for the equations for $G^{X_1}(x)$ and $H^{Y}(y)$ in \eqref{item:GX0;prop:class} and \eqref{item:GX1;prop:class}.
Hence, we only prove these equations.
In only the proposition \ref{prop:class}, for the sake of simplicity for description, we say that a triplet $(\bar P,\bar P_z,\bar\kappa_3)$ is a class for $\Phi$, if curvature surfaces $f^0_{V}(x,y)$ and generic conformally flat hypersurfaces $f_{V}(x,y,z)$ in $\R^4$ are determined by $\Phi$ and the triplet $(\bar P,\bar P_z,\bar\kappa_3)$.
%%%%%%%%%%%%%%%%%%%%%%%%%%%%%%
%%%%% Proposition 2.1
%%%%%%%%%%%%%%%%%%%%%%%%%%%%%%
\begin{prop}\label{prop:class}
Let $(X_1,Y)$ be a pair of solutions to the equations \eqref{def:X1Y}. Then, we have the following facts (1) and (2):
\begin{enumerate}
\item\label{item:GH-const;prop:class}
The functions $G^{X_1}(x)$ and $H^{Y}(y)$, respectively, are constant.

\item\label{item:class;prop:class}
For any pair $(X_1,Y)$ such that $G^{X_1}(x)+H^{Y}(y)=0$, a class $(\bar P,\bar P_z,\bar\kappa_3)$ for $\Phi$ is determined as follows:
\begin{gather*}
\bar P^{-1}=X_1'-\frac{2x}{x^2+y^2}\mathcal{A},\quad
\bar\kappa_3=-Y'+\frac{2y}{x^2+y^2}\mathcal{A},\\
(\bar P^{-1})_z=-\frac{\bar P_z}{\bar P^2}=\frac{1}{x^2+y^2}\left(xX_1'-\mathcal{A}+\frac{2y^2}{x^2+y^2}\mathcal{A}\right).
\end{gather*}
\end{enumerate}
Conversely, all classes $(\bar P, \bar P_z, \bar\kappa_3)$ for $\Phi$ are determined by the above forms from the pairs $(X_1,Y)$ such that $G^{X_1}(x)+H^{Y}(y)=0$. The set of pairs $(X_1,Y)$ satisfying $G^{X_1}(x)+H^{Y}(y)=0$ is $5$-dimensional.

Furthermore, we have the following facts (3) and (4):
\begin{enumerate}
\addtocounter{enumi}{2}
\item\label{item:GX0;prop:class}
$X_0(x)$ in \eqref{def:X0} is a solution to the first equation in \eqref{def:X1Y} with $c=0$, and $G^{X_0}(x)=-5$ holds.

\item\label{item:GX1;prop:class}
Let us take $c=\sqrt{5}$ in two equations of \eqref{def:X1Y}.
Then $X_1(x)=X_0(x)+\sqrt{5}\left(x^2+5/2\right)$ and $Y(y)=\sqrt{5}\left(y^2-2\right)$, respectively, are solutions to the equations, and the pair satisfies $G^{X_1}(x)+H^{Y}(y)=0$.
In particular, we have $G^{X_1}(x)=20$.
\end{enumerate}
\end{prop}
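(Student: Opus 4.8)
The plan is to establish only the claims not already contained in \cite[Example 3.2]{su4}, that is, the constancy in \eqref{item:GH-const;prop:class} together with the evaluations $G^{X_0}(x)=-5$ of \eqref{item:GX0;prop:class} and $G^{X_1}(x)=20$, $G^{X_1}(x)+H^{Y}(y)=0$ of \eqref{item:GX1;prop:class}; the representation formulas of \eqref{item:class;prop:class}, the converse, and the count of dimensions are quoted from \cite{su4}. The guiding idea is that $G^{X_1}(x)$ and $H^{Y}(y)$ are \emph{first integrals} of the two equations in \eqref{def:X1Y}: once this is shown, their (constant) values follow from evaluation at a single convenient argument, so the only real computation left is a short substitution.

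For \eqref{item:GH-const;prop:class} I would verify $dG^{X_1}/dx\equiv0$ by direct differentiation. Substituting $X_2=-X_1''-X_1+cx^2$ into the definition and writing $X:=X_1$ collapses $G^{X_1}$ to
\[
G^{X_1}=(X'')^2+4cX+(X')^2+\frac{9}{4x^2}(X')^2-\frac{2}{x}X''X'-\frac{2}{x}XX'-2cxX'.
\]
Differentiating this and then eliminating $X'''$ through $xX'''=X''-\bigl(x+9/(4x)\bigr)X'+X+cx^2$, the resulting terms cancel in groups sorted by the monomials $(X'')^2$, $X''X'$, $XX''$, $cxX''$, $cX'$, $(X')^2$ and $XX'$, each group summing to zero. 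For $H^{Y}$ the analogue is immediate: a single differentiation factors as $dH^{Y}/dy=2(Y'-2cy)\bigl(Y''+Y-cy^2\bigr)$, whose second factor vanishes by $Y''+Y=cy^2$.

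For \eqref{item:GX0;prop:class} and \eqref{item:GX1;prop:class} I would first confirm that the indicated functions solve \eqref{def:X1Y}. That $X_0$ of \eqref{def:X0} is the $c=0$ solution is the $r=0$ power-series solution already singled out before the proposition, and inserting the even series reproduces precisely the recursion $2(k+1)(4k^2+5/4)a_{k+1}+(2k-1)a_k=0$; the polynomial corrections of \eqref{item:GX1;prop:class} are checked in one line, since $X_1=X_0+\sqrt{5}\,(x^2+5/2)$ adds exactly $\sqrt{5}\,x^2$ to the left-hand side and $Y=\sqrt{5}\,(y^2-2)$ gives $Y''+Y=\sqrt{5}\,y^2$. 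With constancy in hand I would then evaluate $G$ in the limit $x\to0$. The terms $\tfrac{9}{4x^2}(X')^2$ and $\tfrac{2}{x}X_2X'$ are only apparently singular: because $X'(0)=0$, with $X_0'=2x+O(x^3)$ and $X_1'=2(1+\sqrt{5})x+O(x^3)$, each has a finite limit, and adding the four contributions yields $G^{X_0}(0)=4+9-18=-5$ and $G^{X_1}(0)=20$ (all $\sqrt{5}$-parts cancel). Finally, for the chosen $Y$ both squares in $H^{Y}$ vanish identically, so $H^{Y}\equiv-4c^2=-20$, giving $G^{X_1}+H^{Y}=20-20=0$.

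The main obstacle is the constancy computation for $G^{X_1}$: after differentiation one confronts roughly a dozen terms carrying $1/x$, $1/x^2$ and $1/x^3$ factors, and the entire content is that they cancel exactly once $X'''$ is replaced from the ODE. Keeping the bookkeeping honest—collecting by monomial type so that each group visibly telescopes to zero—is the only delicate part; the remaining verifications are routine substitutions, the sole subtlety being the passage to the limit $x\to0$ through the apparently singular terms when evaluating $G$.
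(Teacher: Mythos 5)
Your proposal is correct, and for the claims the paper actually proves it follows the same route: both arguments pin down the constant values by evaluating at $x=0$ through the power series of \eqref{def:X0} (the apparently singular terms $\tfrac{9}{4x^2}(X_1')^2$ and $\tfrac{2}{x}X_2X_1'$ having finite limits because $X_1'(0)=0$), and both dispose of $H^{Y}$ by noting that the two squares vanish identically for $Y=c(y^2-2)$. There are two genuine differences. First, the paper reads the attribution to \cite[Example 3.2]{su4} more broadly than you do: it quotes not only item (2), the converse and the dimension count, but also the constancy statement (1), so its proof contains no differentiation argument at all. Your direct verification that $dG^{X_1}/dx\equiv 0$ after eliminating $X_1'''$ via the ODE, with the terms cancelling in the seven groups you list, and the factorization $dH^{Y}/dy=2(Y'-2cy)(Y''+Y-cy^2)$, is therefore extra work relative to the paper, but it is sound and it makes the proposition self-contained rather than dependent on \cite{su4} for the key structural fact. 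Second, the paper runs the evaluation once for the general one-parameter family $X_1=X_0+c(x^2+5/2)$, $Y=c(y^2-2)$, obtaining
\begin{gather*}
G^{X_1}(0)=4(1+c)^2+10c(1+c)+9(1+c)^2-18(1+c)^2=-5+5c^2,\qquad H^{Y}\equiv -4c^2,
\end{gather*}
from which items (3) and (4) follow simultaneously by setting $c=0$ and $c=\sqrt{5}$; you instead treat the two cases separately. The unified computation is slightly cleaner (the cancellation of the $\sqrt{5}$-parts you observe is just the identity $-5+5c^2$ evaluated at $c=\sqrt{5}$), but your case-by-case evaluation reaches the same numbers, so nothing is lost.
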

\begin{proof}
Firstly, note that $X_1(x)=X_0(x)+c(x^2+5/2)$ and $Y(x)=c(y^2-2)$ are the solutions of the first and the second equations of \eqref{def:X1Y}, respectively.
Now, we verify that $G^{X_1}(x)=-5+5c^2$ holds. From
\begin{gather*}
(X_1''(0))^2=4(1+c)^2,\quad
4cX_1(0)=10c(1+c),\quad
\left((1+9/(4x^2))(X_1')^2\right)(0)=9(1+c)^2,\\
\left((2X_2/x-4cx)X_1'\right)(0)
=\left((2X_2-4cx^2)(X_1'/x)\right)(0)=-18(1+c)^2
\end{gather*}
by \eqref{def:X0}, we have $G^{X_1}(0)=-5+5c^2$.
Then, $G^{X_1}(x)=-5+5c^2$ holds for any $x$, since the function $G^{X_1}(x)$ is constant.
In the same way, we have $H^{Y}(y)=-4c^2$ for $Y(y)=c(y^2-2)$.

In consequence, we also have verified that the pair $X_1(x)=X_0(x)+\sqrt{5}(x^2+5/2)$ and $Y=\sqrt{5}(y^2-2)$ is a solution to \eqref{def:X1Y} and satisfies $G^{X_1}(x)+H^{Y}(y)=0$.
\end{proof}

Let $X_1(x)=X_0(x)+\sqrt{5}\left(x^2+5/2\right)$ and $Y(y)=\sqrt{5}\left(y^2-2\right)$ be a pair of solutions to \eqref{def:X1Y}, given in Proposition \ref{prop:class}-(\ref{item:GX1;prop:class}). Note that $X_0'(x)/x$ is also an analytic function on $\R$ satisfying $(X_0'(x)/x)(0)=2$. 
From the pair, the triplet $\mathcal{P}=(\bar P(x,y), \bar P_z(x,y),\bar\kappa_3(x,y))$ of \eqref{expression:Pbar} is determined by Proposition \ref{prop:class}-(\ref{item:class;prop:class}), and then for the triplet $\mathcal{P}$, the singular metric $g_0$ on $\R^2$ and the functions $\bar\kappa_i \ (i=1,2)$ on $\R^2$ are also determined as the forms in \eqref{def:g0} and \eqref{def:kappabar12}, respectively:
\begin{gather*}
g_0
=h^{-2}(x,y)\left(x^{-2}(x^2-y^2)^2(dx)^2+4y^2(dy)^2\right),\\
\begin{split}
\bar\kappa_1=2y(x^2-y^2)^{-1}\big(X_0+\sqrt{5}/2\big), \ \ 
\bar\kappa_2=y^{-1}\big[((x^2+y^2)/2)
(X_0'/x)-\big(X_0+\sqrt{5}/2\big)\big].
\end{split}
\end{gather*}

\subsection{Property of the function $X_0(x)$}

Since the function $X_0(x)$ of \eqref{def:X0} determines the properties of the metric $g_0$ on $\R^2$ and the functions $\bar\kappa_i$ on $\R^2$, we study the property of $X_0(x)$.
%%%%%%%%%%%%%%%%%%%%%%%%%%%%%%
%%%%% Proposition 2.2
%%%%%%%%%%%%%%%%%%%%%%%%%%%%%%
\begin{prop}\label{prop:X0}
The function $X_0(x)$ on $\R$ of \eqref{def:X0} satisfies the following conditions:
\begin{enumerate}
\item
$X_0(-x)=X_0(x)\geq 5/2$.

\item
$X_0'(x)/x>0$ for $x\in \R$ and $(X_0'(x)/x)(0)=2$.

\item
$( 2X_0-xX_0')(x)> 0$ for $x\in \R$.
\end{enumerate}
\end{prop}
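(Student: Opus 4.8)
The plan is to reduce all three statements to the single conservation law $G^{X_0}(x)=-5$ already recorded in Proposition \ref{prop:class}-(\ref{item:GX0;prop:class}), after rewriting it in a manifestly sign-definite form. Since $X_0$ is a power series in $x^2$, the identity $X_0(-x)=X_0(x)$ is immediate, so throughout I may restrict to $x\ge 0$. Next I would introduce the auxiliary function $W(x):=X_0'(x)/x$, which the paper already notes is analytic on $\R$ with $W(0)=2$. Substituting $X_0'=xW$ and $X_0''=W+xW'$ into the definition of $G^{X_0}$ gives $X_0''-X_0'/x=xW'$, and after completing the square the conservation law becomes the analytic identity
\begin{equation*}
x^2(W')^2+\left(x^2+\tfrac54\right)W^2-2X_0W=-5,
\end{equation*}
valid for all $x\in\R$ (both sides are analytic and the $1/x$-terms cancel; at $x=0$ it reads $\tfrac54\cdot 4-2\cdot\tfrac52\cdot 2=-5$).

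For part (2) I would argue that $W$ has no real zero. Indeed, $W$ is even, analytic, and $W(0)=2>0$; if $W(x_0)=0$ at some $x_0\ne0$, the displayed identity forces $x_0^2\,W'(x_0)^2=-5<0$, which is impossible. Hence $W$ never vanishes, and by continuity $W>0$ on all of $\R$, which is exactly $X_0'(x)/x>0$; the value $W(0)=2$ is the stated normalization. Part (1) then follows at once: for $x>0$ we have $X_0'(x)=xW(x)>0$, so $X_0$ is strictly increasing on $(0,\infty)$ and $X_0(x)\ge X_0(0)=5/2$, with equality only at $x=0$; evenness extends this to all of $\R$.

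For part (3), since $W>0$ I can solve the identity for $2X_0$ and compute
\begin{equation*}
2X_0-xX_0'=2X_0-x^2W=\frac{x^2(W')^2+\tfrac54W^2+5}{W}>0,
\end{equation*}
the numerator being a sum of nonnegative terms plus $5$. This proves (3). The only genuinely delicate step is the algebraic rewriting in the first paragraph: one must recognize that completing the square $(X_0''-X_0'/x)^2$ inside $G^{X_0}$ and passing to $W$ turns the conservation law into an expression whose nonconstant part $x^2(W')^2+(5/4)W^2$ is a sum of squares. Once this definiteness is exposed, (1)--(3) are immediate, and in particular no estimate on the sign-alternating Taylor coefficients $a_k$ is required.
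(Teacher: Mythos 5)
Your proposal is correct and follows essentially the same route as the paper: both rest on the conservation law $G^{X_0}(x)\equiv-5$ from Proposition \ref{prop:class}-(\ref{item:GX0;prop:class}), rewritten via the completed square $(X_0''-X_0'/x)^2$, with positivity of $X_0'/x$ obtained by contradiction at a hypothetical zero and part (3) read off from the resulting sign-definite identity. Your substitution $W=X_0'/x$ merely packages the paper's computation in a form free of $1/x$-terms; the underlying argument is identical.
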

\begin{proof}
$X_0(-x)=X_0(x)$ follows from the definition \eqref{def:X0} of $X_0$.
For $X_0(x)\geq 5/2$ and (2), suppose that there is a point $x_0>0$ such that $X_0'(x_0)=0$.
Then, we have $G^{X_0}(x_0)=(X_0'')^2(x_0)=-5$, which is a contradiction.
Moreover, we have $(X_0'/x)(0)=2$ by \eqref{def:X0}.
Hence, we have $X_0'(x)>0$ for $x>0$ and $(X_0'/x)(x)>0$ for $x\in \R$.
Furthermore, since $X_0'(x)>0$ for $x>0$, $X_0(x)$ is an increasing function on $[0,\infty)$ and $X_0(0)=5/2$.
Hence, we have $X_0(x)\geq 5/2$ for $x\in \R$.
For (3), we have
\begin{gather*}
G^{X_0}(x)=(X_0''-X_0'/x)^2+(1+5/(4x^2))(X_0')^2-2X_0(X_0'/x)=-5.
\end{gather*}
Hence, we have $(X_0')^2-2X_0(X_0'/x)=(X_0'/x)(xX_0'-2X_0)<-5$.
By $X_0'/x> 0$ for $x\in \R$, we have $2X_0-xX_0'> 0$ for $x\in \R$.
\end{proof}
%%%%%%%%%%%%%%%%%%%%%%%%%%%%%%
%%%%% Corollary 2.3
%%%%%%%%%%%%%%%%%%%%%%%%%%%%%%
\begin{cor}\label{cor:ineq4h}
For the function $h(x,y)$ of \eqref{def:h}, we have the following facts. 
\begin{enumerate}
\item
$h(x,y)>\sqrt{5}$ on $\R^2$.

\item
There is a number $t_1 \ (0<t_1<1)$ such that $5+\sqrt{5}+y^2<h(x,y)<6+\sqrt{5}+2y^2$ and $|h(x,y)-h(0,y)|\leq x^2(1+2y^2)/7$ hold for $0\leq x<t_1$ and any $y\in \R$.
\end{enumerate}
\end{cor}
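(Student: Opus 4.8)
The plan is to separate the two variables in $h$. Writing $A(x):=(2X_0-xX_0')(x)$ and $B(x):=(X_0'/x)(x)$, the definition \eqref{def:h} reads
\[
h(x,y)=A(x)+y^2B(x)+\sqrt{5}.
\]
Part (1) is then immediate from Proposition \ref{prop:X0}: part (3) gives $A(x)>0$ and part (2) gives $B(x)>0$, so $y^2B(x)\ge 0$ for every $y$, whence $h(x,y)>\sqrt{5}$ on all of $\R^2$.

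For part (2) the idea is that everything is governed by the two even analytic functions $A$ and $B$ near $x=0$. From the power series \eqref{def:X0}, whose recursion gives $a_1=1$ and $a_2=-1/21$, I would compute the leading Taylor data
\[
A(x)=5+\tfrac{2}{21}x^4+\cdots,\qquad B(x)=2-\tfrac{4}{21}x^2+\cdots,
\]
so that $A(0)=5$, $B(0)=2$, the analytic function $(A(x)-5)/x^2$ tends to $0$ while staying $\ge 0$ for small $x$, and $(B(x)-2)/x^2$ tends to $-4/21$. Since $4/21<2/7$, continuity yields a $t_1\in(0,1)$ such that the sign-and-size estimates
\[
0\le A(x)-5\le \tfrac{1}{7}x^2,\qquad 2-\tfrac{2}{7}x^2\le B(x)\le 2
\]
hold for all $0\le x<t_1$.

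Granting these, the three asserted inequalities are routine. For the difference I would write $h(x,y)-h(0,y)=(A(x)-5)+y^2(B(x)-2)$ and apply the bounds to get $|h(x,y)-h(0,y)|\le \tfrac{1}{7}x^2+\tfrac{2}{7}x^2y^2=\tfrac{1}{7}x^2(1+2y^2)$. For the two-sided bound I would rewrite the two target inequalities as $h(x,y)-(5+\sqrt{5}+y^2)=(A(x)-5)+y^2(B(x)-1)$ and $h(x,y)-(6+\sqrt{5}+2y^2)=(A(x)-6)+y^2(B(x)-2)$. The first expression is nonnegative because $A(x)\ge 5$ and $B(x)\ge 2-\tfrac{2}{7}x^2>1$ (using $t_1<1$), and it is strictly positive once $x>0$; the second is strictly negative because $A(x)-6\le -1+\tfrac{1}{7}x^2<0$ and $B(x)-2\le 0$.

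The only genuinely substantive point, and hence the main obstacle, is securing the \emph{correct signs} $A(x)\ge 5$ and $B(x)\le 2$ near the origin, not merely the smallness of $A(x)-5$ and $B(x)-2$: the upper bound fails for large $y$ unless $B(x)\le 2$, and the lower bound needs $A(x)\ge 5$. Both signs come from the leading Taylor coefficients $+2/21$ and $-4/21$, so the crux is the short computation of these coefficients from the recursion for $\{a_k\}$, after which the existence of $t_1\in(0,1)$ is a soft continuity argument. (At the single point $(0,0)$ the lower inequality degenerates to the equality $h(0,0)=5+\sqrt{5}$, which is harmless since the surface is studied on $D=\{x>0\}$.)
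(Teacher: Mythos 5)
Your proof is correct and takes essentially the same approach as the paper: part (1) from Proposition \ref{prop:X0} applied to the decomposition $h=(2X_0-xX_0')+y^2(X_0'/x)+\sqrt{5}$, and part (2) from the Taylor expansions $2X_0-xX_0'=5+(2/21)x^4+O(x^6)$ and $X_0'/x=2-(4/21)x^2+O(x^4)$ near $x=0$. Your explicit observation that the two-sided bound needs the signs $2X_0-xX_0'\ge 5$ and $X_0'/x\le 2$ (not merely the smallness of the differences), together with your remark on the equality at $(0,0)$, supplies details that the paper's terse proof leaves implicit, but the underlying argument is identical.
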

\begin{proof}
The fact (1) follows from the definition \eqref{def:h} of $h(x,y)$ and Proposition \ref{prop:X0}.
The fact (2) follows from the definition \eqref{def:X0} of $X_0(x)$. In fact, as $x$ tends to $0$, for any $y\in \R$ we have
\begin{gather*}
2X_0-xX_0'=5+(2/21)x^4+O(x^6),\quad
X_0'/x=2-(4/21)x^2+O(x^4),\\
|h(x,y)-h(0,y)|\leq |2X_0-xX_0'-5|+y^2|X_0'/x-2|<(x^2/7)(1+2y^2),
\end{gather*}
where $l(x)=O(x^k)$ for a function $l(x)$ implies that $c_1<\lim_{x\rightarrow 0}(l(x)/x^k)<c_2$ holds for some constants $c_i$.
\end{proof}

The function $h(x,y)$ also satisfies the following equations:
\begin{gather*}
h(x,y)=h(-x,y)=h(x,-y),\quad
h(0,y)=5+\sqrt{5}+2y^2,\\
h(x,0)=(2X_0-xX_0')+\sqrt{5},\quad
h(0,0)=5+\sqrt{5}.
\end{gather*}

Next, we study the property of $X_0(x)$ as $x$ tends to $\infty$. In the following proposition \ref{prop:ineq4X0}-(5), we show that there is a number $t_2>0$ such that $X_0'(x)$ is a bounded oscillating function on $[t_2,\infty]$, like the function $\sin x$. Here, we say that $X_0'(x)$ oscillates at $x=\infty$, if there is a bounded interval $J$ (not one point) satisfying the following condition: for any point $p\in J$, there is a sequence $x_n$ ($x_n\rightarrow\infty$) such that $X_0'(x_n)$ converges to $p$. In Proposition \ref{prop:ineq4X0}, the fact (2) is included in (5) in terms of content, but is described as a proof procedure.

%%%%%%%%%%%%%%%%%%%%%%%%%%%%%%
%%%%% Proposition 2.4
%%%%%%%%%%%%%%%%%%%%%%%%%%%%%%
\begin{prop}\label{prop:ineq4X0}
The function $X_0(x)$ on $\R$ satisfies the following conditions (1)--(5):
\begin{enumerate}
\item
The function $\tau(x):=(X_0+X_0'')(x)/x$ is decreasing in $(0,\infty)$ and $\tau(x)$ converges to a non-negative constant $\tau(\infty):=\lim_{x\rightarrow \infty}\tau(x)$ as $x$ tends to $\infty$. Then, we have $\tau(\infty)>\sqrt{5}$, which implies that $X_0'(x)$ oscillates at $x=\infty$.\footnote{Actually, we can show $\tau(\infty)=\sqrt{5}\coth (\sqrt{5}\pi/4)\approx\sqrt{5}\times 1.0614\; (>\sqrt{5})$, by using the asymptotic expansion formula for large $x$ of generalized hypergeometric functions of type ${}_1F_2$ (cf.\ \cite{lw}, \cite{lu}).}

\item
We have
\begin{gather*}
\tau(\infty)-\sqrt{\tau^2(\infty)-5}\leq \lim_{x\rightarrow\infty}X_0'(x)\leq \tau(\infty)+\sqrt{\tau^2(\infty)-5},
\end{gather*}
where $ \lim_{x\rightarrow\infty}X_0'(x)$ implies the oscillation of $X_0'(x)$ at $x=\infty$.

\item
For $x\geq 0$, we have $(X_0(x)-\tau(x)x)^2=(X_0''(x))^2\leq \tau^2(x)-5$.

\item
We have
\begin{gather*}
\tau(\infty)-\sqrt{\tau^2(\infty)-5}\leq \lim_{x\rightarrow\infty}
((2X_0-xX_0')/x)\leq \tau(\infty)+\sqrt{\tau^2(\infty)-5},
\end{gather*}
where $ \lim_{x\rightarrow\infty}\left((2X_0-xX_0')/x\right)$ implies the oscillation at $x=\infty$.

\item
For $m(x):=\sqrt{1+9/(4x^2)}$, we have $\lim_{x\rightarrow\infty}(\tau/m)(x)=\tau(\infty)>\sqrt{5}$ by (1). Hence, there is a number $t_2>0$ such that $(\tau/m)^2(x)>5$ on $[t_2,\infty]$. Then, there is a monotone (increasing or decreasing) function $\eta(x)$ on $[t_2,\infty)$ such that $\eta(x)$ diverges to $\infty$ or $-\infty$ as $x$ tends to $\infty$ and   
\begin{gather*}
X_0''(x)=\sqrt{(\tau/m)^2(x)-5}\ \cos\eta(x), \ \ X_0'(x)=(1/m)(x)\left[(\tau/m)(x)+\sqrt{(\tau/m)^2(x)-5}\ \sin\eta(x) \right]
\end{gather*}
hold on $[t_2,\infty)$. That is, $X_0'(x)$ and $X_0''(x)$ are bounded oscillating functions on $[t_2,\infty]$.
\end{enumerate}
\end{prop}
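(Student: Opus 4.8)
The backbone of the argument is the first integral $G^{X_0}(x)=-5$ established in Proposition \ref{prop:X0}, which I would first recast as a relation involving only $X_0',X_0''$ and $\tau$. Writing $X_0''=\tau x-X_0$ (the definition of $\tau$) and substituting this into the form $(X_0''-X_0'/x)^2+(1+5/(4x^2))(X_0')^2-2X_0(X_0'/x)=-5$ used in the proof of Proposition \ref{prop:X0}, the inverse-power terms collect into $m^2=1+9/(4x^2)$ and one obtains the clean conserved identity
\[
(X_0'')^2+m^2(X_0')^2-2\tau X_0'=-5 .
\]
Essentially everything then follows by algebra from this single relation together with the third-order equation \eqref{def:X1Y} (with $c=0$).

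For (1), I would compute $\tau'$ directly: differentiating $\tau=(X_0+X_0'')/x$ and eliminating $X_0'''$ by the equation gives $\tau'=-9X_0'/(4x^3)$, which is negative on $(0,\infty)$ by part (2) of Proposition \ref{prop:X0}, so $\tau$ is strictly decreasing. Completing the square in the displayed identity yields $(X_0'')^2+m^2(X_0'-\tau/m^2)^2=(\tau/m)^2-5$, whence $(\tau/m)^2\ge 5$; letting $x\to\infty$ (so $m\to1$) and noting $\tau(0^+)=+\infty$ and that $\tau$ cannot cross the band $(-\sqrt{5},\sqrt{5})$, the function $\tau$ stays $\ge\sqrt{5}>0$ and converges to some $\tau(\infty)\ge\sqrt{5}$. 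This computation also proves (3): multiplying the identity by $x^2$ and completing the square the other way gives $x^2(X_0'')^2+5x^2=(X_0+X_0'')^2-\big[(X_0+X_0''-xX_0')^2+(9/4)(X_0')^2\big]\le(X_0+X_0'')^2=x^2\tau^2$, i.e.\ $(X_0'')^2\le\tau^2-5$.

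For (2) and (4) I would argue with limit points. The identity written as $m^2(X_0')^2-2\tau X_0'+5=-(X_0'')^2\le 0$ bounds $|X_0'|$ and shows that any limit point $L$ of $X_0'$ as $x\to\infty$ satisfies $L^2-2\tau(\infty)L+5\le 0$, i.e.\ $L\in[\tau(\infty)-\sqrt{\tau(\infty)^2-5},\,\tau(\infty)+\sqrt{\tau(\infty)^2-5}]$, which is (2). For (4), the identity $X_0=\tau x-X_0''$ gives $(2X_0-xX_0')/x=2\tau-2X_0''/x-X_0'$; since $X_0''/x\to0$ and the interval in (2) is symmetric about $\tau(\infty)$, the limit points of $(2X_0-xX_0')/x$ fill the same interval. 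For (5), once $\tau(\infty)>\sqrt{5}$ is known there is $t_2$ with $\rho(x):=\sqrt{(\tau/m)^2-5}>0$ on $[t_2,\infty)$, and the completed-square form is the ellipse $(X_0'')^2+(mX_0'-\tau/m)^2=\rho^2$, which I would parametrize by $X_0''=\rho\cos\eta$, $mX_0'-\tau/m=\rho\sin\eta$, reproducing the stated formulas. Setting $P=X_0''$, $Q=mX_0'-\tau/m$ and using the equation in the reduced form $X_0'''=\tau-m^2X_0'$ (again from $X_0=\tau x-X_0''$) gives $P'=-mQ$ and, after a short computation, $\eta'=(PQ'-P'Q)/\rho^2=m+\varepsilon(x)$, where $\varepsilon(x)\to0$ because its numerator is a bounded multiple of $m'$ and $\tau'$; hence $\eta'\to1$, so $\eta$ is eventually strictly monotone and diverges, and $X_0',X_0''$ are bounded oscillating functions.

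The one genuinely non-elementary point is the strict inequality $\tau(\infty)>\sqrt{5}$ in (1), which is also needed to launch (5). The soft arguments above only give $\tau(\infty)\ge\sqrt{5}$, and the degenerate scenario $\tau(\infty)=\sqrt{5}$—in which $X_0'\to\sqrt{5}$ with decaying oscillation—cannot be excluded by the first integral alone. I expect this to be the main obstacle, and I would resolve it through the connection/asymptotic analysis of the generalized hypergeometric function of type ${}_1F_2$ recorded in the footnote, which yields the exact value $\tau(\infty)=\sqrt{5}\coth(\sqrt{5}\pi/4)>\sqrt{5}$ and, equivalently, a nonzero limiting amplitude $\rho(\infty)=\sqrt{\tau(\infty)^2-5}$ for the oscillation of $X_0'$.
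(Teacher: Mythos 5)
Your proposal reproduces the paper's strategy almost verbatim for everything except one step. The first integral $(X_0'')^2+m^2(X_0')^2-2\tau X_0'=-5$, the derivative formula $\tau'=-9X_0'/(4x^3)$, the completed square $(X_0'')^2+m^2\left(X_0'-\tau/m^2\right)^2=(\tau/m)^2-5$, the bound $(X_0'')^2\le\tau^2-5$, the reduction $(2X_0-xX_0')/x=2\tau-2X_0''/x-X_0'$ for (4), and the elliptic parametrization in (5) are exactly the paper's steps; your two-sided bound in (2) via limit points is the same content as the paper's monotone bounds by the roots $\tau\pm\sqrt{\tau^2-5}$. One genuine improvement: your computation $\eta'=m+O(x^{-3})\to 1$, obtained from $P'=-mQ$ and the decay of $m'$ and $\tau'$, is a cleaner and more quantitative justification of the monotonicity and divergence of $\eta$ than the paper's argument, which only checks $X_0'''\ne 0$ at the zeros of $X_0''$ to rule out folding and infers divergence from the oscillation.

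The genuine difference is the step you correctly isolate as the crux, $\tau(\infty)>\sqrt{5}$. The paper does \emph{not} delegate this to hypergeometric asymptotics: the footnote with $\tau(\infty)=\sqrt{5}\coth(\sqrt{5}\pi/4)$ is only a remark, and the body gives an elementary, self-contained proof. It integrates the identity $\left[m^{-1}\tau\right]'=\frac{9}{4x^3}\,m^{-3}X_0'''$ by parts over $[x,C]$, uses $|X_0''|\le 2\tau$ and the monotonicity of $\tau$ to let $C\to\infty$, and obtains $\left|\tau(\infty)-m^{-1}(x)\tau(x)\right|\le (9/x^3)\tau(x)$; then, since $\tau(x)=(9/2)x^{-1}+(9/4)\sum_k b_kx^{2k-1}$ is an alternating series with rapidly decreasing terms, a finite numerical evaluation at $x=10$ yields $\left(m^{-1}(10)-9/10^3\right)\tau(10)\ge 2.35>\sqrt{5}$. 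Your alternative---invoking the large-$x$ expansion of ${}_1F_2$ functions from \cite{lw}, \cite{lu}---is legitimate and buys more (the exact limit, hence the exact asymptotic amplitude $\sqrt{\tau^2(\infty)-5}$), but as written it is an appeal to the literature rather than an argument: one must still realize $X_0$ as an explicit ${}_1F_2$, apply the expansion, and extract the constant. So your proof is complete only modulo that deferred computation; if you want self-containedness, the paper's estimate-plus-numerics slots directly into your outline at exactly this point.
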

\begin{proof}
The function $X_0(x)$ is a solution to the first equation in \eqref{def:X1Y} with $c=0$. Hence, $X_0(x)$ satisfies the following equations:
\begin{gather}
\label{diffeq:X0-3rd}
xX_0'''+\left(x+9/(4x)\right)X_0'-(X_0''+X_0)=0,\\
\label{diffeq:X0-2nd}
G^{X_0}(x)=(X_0'')^2+(1+9/(4x^2))(X_0')^2-2\tau(x)X_0'=-5.
\end{gather}
Now, for (1) and (2), we have $\tau'(x)=-9/(4x^3)X_0'$ by \eqref{diffeq:X0-3rd}. Hence, we have $\tau'(x)<0$ for $x> 0$ by $X'(x)>0$ in Proposition \ref{prop:X0}. Next, we have $\tau(x)X_0'>5/2$ by \eqref{diffeq:X0-2nd}. Hence, there is the limit $\tau(\infty):=\lim_{x\rightarrow \infty}\tau(x)$ by $\tau(x)>0$ for $x\geq 0$: $\tau(\infty)$ is non-negative.
Next, we have $(X_0')^2-2\tau X_0'+5<0$ by \eqref{diffeq:X0-2nd}. Hence, we have
\begin{gather}
\tau(x)\geq\sqrt{5},\quad
\tau(x)-\sqrt{\tau^2(x)-5}
<X_0'(x)<\label{ineq:X0'}
\tau(x)+\sqrt{\tau^2(x)-5},
\end{gather}
since $X_0'(x)$ is a real-valued function and $\tau(x)>0$. Furthermore, $\tau(x)-\sqrt{\tau^2(x)-5}$ (resp.\ $\tau(x)+\sqrt{\tau^2(x)-5}$) is an increasing function (resp.\ a decreasing function). Thus, we have obtained (1) $\tau(\infty)\geq\sqrt{5}$ and (2). We shall verify $\tau(\infty)\neq \sqrt{5}$ after the proofs of (3) and (4). Here, note that $\tau(\infty)=\sqrt{5}$ is equivalent to $\lim_{x\rightarrow \infty}X_0'(x)= \sqrt{5}$ and $\lim_{x\rightarrow\infty}X_0''(x)=0$, respectively, by the argument above. Hence, $\tau(\infty)> \sqrt{5}$ implies that both functions $X_0'(x)$ and $X_0''(x)$ oscillate at $x=\infty$.

For (3), by $X_0''=\tau(x)x-X_0$ and \eqref{diffeq:X0-2nd}, we have
\begin{align*}
(X_0-\tau(x)x)^2&=(X_0'')^2=-(1+9/(4x^2))(X_0')^2+2\tau(x)X_0'-5\\
&<-(X_0')^2+2\tau(x)X_0'-5=-(X_0'-\tau(x))^2+\tau^2(x)-5\leq\tau^2(x)-5.
\end{align*}

For (4), we have $\lim_{x\rightarrow \infty}\left((2X_0-xX_0')/x\right)=\lim_{x\rightarrow \infty}(2\tau(x)-X_0'(x))$ by (3). Hence, we obtain (4) by the existence of $\tau(\infty)$ and (2).

Now, we give an elementary proof of $\tau(\infty)> \sqrt{5}$.
Firstly, we have
\begin{align*}
\big[\left(1+\textstyle\frac{9}{4x^2}\right)^{-1/2}\tau(x)\big]'
=\textstyle\frac{9}{4x^3}\left(1+\textstyle\frac{9}{4x^2}\right)^{-3/2}\big[\tau(x)-\left(1+\frac{9}{4x^2}\right)X_0'\big]
=\frac{9}{4x^3}\left(1+\frac{9}{4x^2}\right)^{-3/2}X_0'''
\end{align*}
by \eqref{diffeq:X0-3rd} and $\tau'(x)=-9/(4x^3)X_0'$.
The equation shows that $\left(1+9/(4x^2)\right)^{-1/2}\tau(x)$ is almost equal to $\tau(\infty)$ for $x\geq 10$, because $X_0'''(x)$ is an oscillating function taking small values around $0$ and $\lim_{x\rightarrow\infty}\left((1+9/(4x^2))^{-1/2}\tau(x)\right)=\tau(\infty)$ holds. We shall precisely verify it below.

We integrate both sides of the equation on the interval $[x,C]$, where $C>x>0$: for the right side, we use the formula of integration by parts. 
Then, we have $(X_0''(x))^2\leq 4\tau^2(x)$ by \eqref{diffeq:X0-2nd}, \eqref{ineq:X0'} and the fact that $\tau(x)$ is decreasing. Hence, as $C \to \infty$, we obtain     
\begin{gather*}
\left|\tau(\infty)-(1+9/(4x^2))^{-1/2}\tau(x)\right|
\leq (9/x^3)\tau(x),
\end{gather*}
for any $x\in (0,\infty)$. That is, for any $x\in (0,\infty)$, the following inequalities are satisfied
\begin{gather*}
\left((1+9/(4x^2))^{-1/2}-9/x^3\right) \tau(x)
\leq \tau(\infty) \leq \left((1+9/(4x^2))^{-1/2}+9/x^3\right) \tau(x).
\end{gather*} 

Now, if there is an $x_0\in (0,\infty)$ such that $\left((1+9/(4x_0^2))^{-1/2}-(9/x_0^3)\right)\tau(x_0)>\sqrt{5}$, then we have $\tau(\infty)>\sqrt{5}.$ Actually, for $x_0=10$, the desired inequality holds. We can make sure the fact as follows. The function $\tau(x)$ is expressed as the following alternating power series:
\begin{gather*}
\tau(x)=(9/2)x^{-1}+(9/4)\textstyle\sum_{k=1}^{\infty}b_kx^{2k-1},\quad
b_k=a_k/(4k^2+5/4),
\end{gather*}
where $a_k$ is the coefficients of $X_0(x)$ given in \eqref{def:X0}. At $x= 10$, the sequence $|b_k|x^{2k-1}$ is strongly decreasing for $k\geq 4$, and $|(9/4)b_{k}10^{2k-1}|\leq (3/2)\cdot10^{-12}$ holds for $k=20$. Hence, we have $|\tau(x)-\left((9/2)x^{-1}+(9/4)\sum_{k=1}^{20}b_kx^{2k-1}\right)|\leq (3/2)\cdot10^{-12}$ at $x=10$. Furthermore, at $x=10$ we have the inequality
\begin{gather*}
\left((1+9/(4x^2))^{-1/2}-9/x^3\right)
\left((9/2)x^{-1}+(9/4)\textstyle\sum_{k=1}^{20}b_kx^{2k-1}\right)
\geq 2.35 \;(>\sqrt{5}),
\end{gather*}
which shows $\tau(\infty)> \sqrt{5}$.

For (5), let $m(x)=(1+9/(4x^2))^{1/2}$. Then, there is a number $t_2>0$ such that $(\tau/m)^2(x)>5$ holds on $[t_2,\infty]$ by $\lim_{x\rightarrow\infty}\tau^2(x)>5$. Using $m(x)$, we can rewrite the equation \eqref{diffeq:X0-2nd} as follows:
\begin{gather*}
(X_0'')^2+m^2(x)\left[X_0'-\tau(x)/m^2(x)\right]^2=(\tau/m)^2(x)-5.
\end{gather*}
Hence, there is a function $\eta(x)$ on $[t_2,\infty)$ satisfying (5) for $X_0'(x)$ and $X_0''(x)$. Furthermore, when $X_0''(x)$ vanishes at $x=x_1\in[t_2,\infty)$, we have 
\begin{gather*}
X_0'''(x_1)=\pm\sqrt{\tau^2(x_1)-5m^2(x_1)}\neq 0
\end{gather*}
by \eqref{diffeq:X0-3rd} and \eqref{diffeq:X0-2nd}, which shows that $X_0''(x)$ and $X_0'(x)$ move monotonically even around $x=x_1$, without folding. Hence, $\eta(x)$ is a monotone function. Furthermore, since $X_0'(x)$ and $X_0''(x)$ oscillate at $x=\infty$ by $\tau(\infty)>\sqrt{5}$, $\eta(x)$ diverges to $\infty$ or $-\infty$ as $x\rightarrow\infty$.

In consequence, the proof of the proposition has been completed. 
\end{proof}

Proposition \ref{prop:ineq4X0}-(4) implies that the first term of $h(x,y)=(2X_0-xX_0')+y^2(X_0'/x)+\sqrt{5}$ satisfies $2X_0-xX_0'=O(x)$ as $x$ tends to $\infty$, and that $(2X_0-xX_0')/x$ oscillates even at $x=\infty$.

In the next section, we study the property of the singular metric $g_0$ on $\R^2$ of \eqref{def:g0}. The singular set in $\R^2$ of $g_0$ is given by $S_1 = \left\{(x,y)\in \R^2\;|\; xy(x^2-y^2)=0\right\}$:
$g_0$ diverges on the line $x=0$ except for the origin $(0,0)$ and degenerates on the lines $y=0$ and $x^2-y^2=0$;
$\lim_{x\rightarrow \pm 0} g_0(x,0)$ totally degenerates.

\section{Extended frame field and analytic curvature surface defined on $D$}\label{sec:extension}

Let $g_0$ be the singular metric on $\R^2$ defined at \eqref{def:g0}:
\begin{gather*}
g_0=\frac{1}{h^2(x,y)}\left(\frac{(x^2-y^2)^2}{x^2}(dx)^2+4y^2(dy)^2\right), 
\end{gather*}
where $h(x,y)=2X_0-xX_0'+y^2(X_0'/x)+\sqrt{5}$ of \eqref{def:h}. 
The metric has been defined from $\Phi=(\varphi,\varphi_z)(x,y)$ of \eqref{def:phi} and $\mathcal{P}=(\bar P,\bar P_z,\bar\kappa_3)(x,y)$ of \eqref{expression:Pbar}. In this section, we consider the metric $g_0$ only on $D=\{(x,y)|\ x>0\}$, because $g_0$ on $\R^2$ diverges on the line $x=0$. Since $h(x,y)>\sqrt{5}$ holds on $\R^2$ by Corollary \ref{cor:ineq4h},
the degenerate set of the Riemannian metric space $(D,g_0)$ is given by $D\cap S_1=\{(x,y)\;|\; y(x^2-y^2)=0\}$. 
We shall verify in this section that a certain orthonormal frame field of $\R^4$ is determined on the whole space $D$ from $\Phi$ and $\mathcal{P}$, and that the frame field leads to an analytic curvature surface in $\R^4$ as a realization of the space $(D,g_0)$.

Now, as mentioned in (R4) and (R5) of the introduction, for a simply connected open set $V$ with $V\subset D\setminus S$ for $S$ in \eqref{def:S}, a curvature surface $f^0_V(x,y)$ on $V$ and a generic conformally flat hypersurface $f_V(x,y,z)$ on $V\times I$ are determined from $\Phi$ and $\mathcal{P}$ such that $f^0_V(x,y)=f_V(x,y,0)$ holds on $V$.
Then, the coordinates $(x,y,z)$ of $f_V(x,y,z)$ is a principal curvature line coordinate system.
Let $F_V(x,y,z)=\left[N,X_{\alpha},X_{\beta},X_{\gamma}\right](x,y,z)$
be the orthonormal frame field on $f_V(x,y,z)$, where $N$ is a unit normal vector field of $f_V(x,y,z)$ and $(X_{\alpha},X_{\beta},X_{\gamma})(x,y,z)$ are the unit principal curvature vectors corresponding to $x$, $y$, and $z$-curves: let $\varphi(x,y,z)$ be the function determined from $\Phi$ by $\varphi(x,y,0)=\varphi(x,y)$ and $\varphi_z(x,y,0)=\varphi_z(x,y)$, and $P(x,y,z)$ be the function determined from $\mathcal{P}$ by $P(x,y,0)=\bar P(x,y)$ and $P_z(x,y,0)=\bar P_z(x,y)$\footnote{For references on the evolution equations for $\varphi(x,y,z)$ and $P(x,y,z)$, see (R2) and footnote \ref{footnote:evoleq}, respectively, in \S\ref{subsec:existence}.}, then the frame field $(X_{\alpha},X_{\beta},X_{\gamma})(x,y,z)$ is given from \eqref{def:I} by
\begin{gather*}
X_{\alpha}:=(P\cos\varphi)^{-1}\partial f_V/\partial x,\quad
X_{\beta}:=(P\sin\varphi)^{-1}\partial f_V/\partial y,\quad
X_{\gamma}:=P^{-1}\partial f_V/\partial z.
\end{gather*}
Hence, the differential $df_V(x,y,z)$ of $f_V(x,y,z)$ is expressed as
\begin{gather*}
df_V(x,y,z)=P(x,y,z)\left((\cos\varphi\,dx) X_{\alpha}+(\sin\varphi\,dy)X_{\beta}+dzX_{\gamma}\right)(x,y,z).
\end{gather*}
For the frame $F_V(x,y,z)$, we put $\phi(x,y):=N(x,y,0)$, $X^0_{\alpha}(x,y):=X_{\alpha}(x,y,0)$, $X^0_{\beta}(x,y):=X_{\beta}(x,y,0)$ and $\xi(x,y):=X_{\gamma}(x,y,0)$.
Then, we have
\begin{gather}\label{def:X0alphabeta}
X^0_{\alpha}:=(\bar P\cos\varphi)^{-1}\partial f^0_V/\partial x,\quad
X^0_{\beta}:=(\bar P\sin\varphi)^{-1}\partial f^0_V/\partial y
\end{gather}
and that $F^0_V(x,y):=\left[\phi,X^0_{\alpha},X^0_{\beta},\xi\right](x,y)$
is an orthonormal frame field on $f^0_V(x,y)$.  
Furthermore, the vector fields $X^0_{\alpha}(x,y)$ and $X^0_{\beta}(x,y)$ are the unit principal curvature vectors on $f^0_V(x,y)$ (see \eqref{DXalpha:Xalphabeta} and \eqref{DXbeta:Xbetaalpha} below): if we regard $\phi(x,y)$ as a surface in $\mathbb{S}^3$, then $X^0_{\alpha}$ and $X^0_{\beta}$ are the unit principal curvature vectors and $\xi$ is a unit normal vector field of $\phi(x,y)$.
Thus, the differential $df^0_V(x,y)$ of $f^0_V(x,y)$ is determined as
\begin{align}
df^0_V(x,y)
&=\nonumber
\bar P(x,y)
\left(\cos\varphi\,dx X^0_{\alpha}+\sin\varphi\,dy X^0_{\beta}\right)(x,y)\\
&=\label{df0}
h^{-1}(x,y)\left(((x^2-y^2)/x) dx X^0_{\alpha}+2y dy X^0_{\beta}\right)(x,y),
\end{align}
and in particular $f^0_V(x,y)$ has the metric $g_0|_V$.
For the functions $\bar\kappa_i(x,y) \ (i=1,2)$ in \eqref{def:kappabar12}, the principal curvatures $\kappa_i(x,y,z) \ (i=1,2,3)$ of $f_V(x,y,z)$ satisfy $\kappa_i(x,y,0)=\bar\kappa_i(x,y) \ (i=1,2,3)$ on $V$.
The structure equation of $f_V(x,y,z)$ (i.e., the equation for $dF_V(x,y,z)$) is determined from $(\varphi(x,y,z),P(x,y,z),\kappa_i(x,y,z))$, and then the structure equation of $f^0_V(x,y)$ (i.e., the equation for $dF^0_V(x,y)$) is determined from $\Phi$ and $\mathcal{P}$, as the restriction to $F_V(x,y,0)$ of those for $F_V(x,y,z)$ (see the proof of Lemma \ref{lemma:B-C} below).

Now, we divide $D$ into several closed sub-domains according to the singularity $D\cap S_1$ of $(D,g_0)$:
\begin{gather}
\label{def:D1D2Di1Di2}
\begin{split}
D_1:=\{(x,y)\in D\;|\; y\geq 0\},&\quad
D_2:=\{(x,y)\in D\;|\; y\leq 0\},\\
D_{i1}:=\{(x,y)\in D_i\;|\; x^2\leq y^2\},&\quad
D_{i2}:=\{(x,y)\in D_i\;|\; x^2\geq y^2\}.
\end{split}
\end{gather}
For each $D_{ij}$, we arbitrarily fix a simply connected open set $V_{ij}$ such that $ V_{ij}\subset D_{ij}\setminus S$. Let $\nabla'$ be the canonical connection of $\R^4$: $\nabla'_{\partial/\partial x}=\partial/\partial x$.  On each domain $V_{ij}$, the structure equation of $f^0_{V_{ij}}(x,y)$ is determined as the following form:
\begin{gather}
\label{DXalpha:Xalphabeta}
\begin{split}
\nabla'_{X_{\alpha}^0}X_{\alpha}^0
&=
\bar\kappa_1\phi-B_1\xi -C_1X_{\beta}^0,\\
\nabla'_{X_{\alpha}^0}\phi=-\bar\kappa_1X_{\alpha}^0,\quad
&\nabla'_{X_{\alpha}^0}\xi=B_1X_{\alpha}^0,\quad
\nabla'_{X_{\alpha}^0}X_{\beta}^0=C_1X_{\alpha}^0,
\end{split}
\end{gather}
and
\begin{gather}
\label{DXbeta:Xbetaalpha}
\begin{split}
\nabla'_{X_{\beta}^0}X_{\beta}^0
&=
\bar\kappa_2\phi-B_2\xi-C_2X_{\alpha}^0,\\
\nabla'_{X_{\beta}^0}\phi=-\bar\kappa_2X_{\beta}^0,\quad
&\nabla'_{X_{\beta}^0}\xi=B_2X_{\beta}^0,\quad
\nabla'_{X_{\beta}^0}X_{\alpha}^0=C_2X_{\beta}^0,
\end{split}
\end{gather}
where $B_k$ and $C_k$ are functions on $V_{ij}$ and $\bar\kappa_i \ (i=1,2)$ are the functions given in \eqref{def:kappa12}:
\begin{gather*}
\bar\kappa_1=(2y/(x^2-y^2))\big(X_0+\textstyle\frac{\sqrt{5}}{2}\big),\quad
\bar\kappa_2=y^{-1}\left(((x^2+y^2)/2)(X_0'/x)-\big(X_0+\textstyle\frac{\sqrt{5}}{2}\big)\right).
\end{gather*}
%%%%%%%%%%%%%%%%%%%%%%%%%%%%%%
%%%%% Lemma 3.1
%%%%%%%%%%%%%%%%%%%%%%%%%%%%%%
\begin{lemma}\label{lemma:B-C}
On each $V_{ij}$ given above, the functions $B_k$, $C_k$ in \eqref{DXalpha:Xalphabeta} and \eqref{DXbeta:Xbetaalpha} are determined as follows:
\begin{gather*}
B_1(x,y) = -(x^2-y^2)^{-1}\big(X_0+\textstyle\frac{\sqrt{5}}{2}\big)-\sqrt{5},\quad
C_1(x,y) = -2(x^2-y^2)^{-1}\big(X_0+\textstyle\frac{\sqrt{5}}{2}\big),\\
B_2(x) = -({1}/{2})\left({X_0'}/{x}\right)-\sqrt{5},\quad
C_2(x) = X_0''-{X_0'}/{x}.
\end{gather*}
Then, for these functions $B_k$ and $C_k$, the equations in \eqref{DXalpha:Xalphabeta} and \eqref{DXbeta:Xbetaalpha} extend to $D\setminus S_1$.
\end{lemma}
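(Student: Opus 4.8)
The plan is to recognize that $B_k,C_k$ are simply the connection coefficients of the induced metric $\mathrm{I}_{f_V}=P^2g$ of the ambient hypersurface, read off at $z=0$. So I would write down the Gauss--Weingarten equations of $f_V(x,y,z)$ and restrict them to $z=0$. Introduce the orthonormal coframe $\theta^1=P\cos\varphi\,dx$, $\theta^2=P\sin\varphi\,dy$, $\theta^3=P\,dz$ dual to $(X_\alpha,X_\beta,X_\gamma)$, and let $\omega^i_j=-\omega^j_i$ be its Levi--Civita connection $1$-forms, determined by $d\theta^i+\omega^i_j\wedge\theta^j=0$. Since $N$ is the unit normal and $(x,y,z)$ are principal, the shape operator is diagonal, whence $\nabla'_{X_i}N=-\kappa_iX_i$ and $\nabla'_{X_i}X_j=\kappa_i\delta_{ij}N+\sum_k\omega^k_j(X_i)X_k$. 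Comparing with \eqref{DXalpha:Xalphabeta}--\eqref{DXbeta:Xbetaalpha} at $z=0$ identifies
\[
B_1=\omega^1_3(X_\alpha),\quad C_1=\omega^1_2(X_\alpha),\quad B_2=\omega^2_3(X_\beta),\quad C_2=-\omega^1_2(X_\beta),
\]
all evaluated at $z=0$.

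First I would verify that the equations really close into the displayed shape. Using the orthogonal-coframe formula $\omega^i_j=h_j^{-1}(\partial_{u^j}h_i)\,du^i-h_i^{-1}(\partial_{u^i}h_j)\,du^j$ (with $h_1=P\cos\varphi$, $h_2=P\sin\varphi$, $h_3=P$ and $u=(x,y,z)$), the fact that $\theta^1,\theta^2$ carry no $dz$ forces $\omega^1_3(X_\beta)=\omega^2_3(X_\alpha)=0$ and $\omega^3_2(X_\alpha)=\omega^3_1(X_\beta)=0$; together with the diagonal shape operator this is exactly why $\nabla'_{X_\alpha^0}\xi$ and $\nabla'_{X_\beta^0}\xi$ have only the $X_\alpha^0$- resp.\ $X_\beta^0$-component, and why the remaining identities in \eqref{DXalpha:Xalphabeta}--\eqref{DXbeta:Xbetaalpha} hold. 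Evaluating the surviving forms at $z=0$ gives
\[
B_1=-(\bar P^{-1})_z-\tan\varphi\,\varphi_z\,\bar P^{-1},\qquad B_2=-(\bar P^{-1})_z+\cot\varphi\,\varphi_z\,\bar P^{-1},
\]
and analogous expressions for $C_1,C_2$ in terms of $(\bar P^{-1})_x,(\bar P^{-1})_y,\varphi_x,\varphi_y$; here I would record $\varphi_x=-2y/(x^2+y^2)$ and $\varphi_y=2x/(x^2+y^2)$ from \eqref{def:phi}.

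The main work is the concluding simplification: substitute the explicit $\Phi$ of \eqref{def:phi} and $\mathcal P$ of \eqref{expression:Pbar} and show that the apparent $y$-dependence and the spurious poles cancel. For $B_1$ I would clear denominators over $(x^2-y^2)(x^2+y^2)^2$; the bracket collapses, using $(x^2-y^2)^2+4x^2y^2=(x^2+y^2)^2$ and the definition \eqref{def:h} of $h$, to $-(X_0+\tfrac{\sqrt5}{2})(x^2+y^2)^2$, giving $B_1=-(x^2-y^2)^{-1}(X_0+\tfrac{\sqrt5}{2})-\sqrt5$. For $B_2$ and $C_2$ the same substitution makes every term containing $h$ cancel identically and leaves a factor $(x^2-y^2)$ that divides out, so $B_2=-\tfrac12(X_0'/x)-\sqrt5$ and $C_2=X_0''-X_0'/x$ depend on $x$ alone; $C_1=-2(x^2-y^2)^{-1}(X_0+\tfrac{\sqrt5}{2})$ follows in the same fashion, with the $X_0'$-terms cancelling. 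I expect these cancellations to be the delicate point, but they are purely algebraic consequences of the shape of $h$ and do not require the differential equation \eqref{diffeq:X0-3rd} for $X_0$.

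Finally, for the extension I would simply inspect the resulting formulas: $B_1,C_1$ and $\bar\kappa_1$ are analytic on $D\setminus\{x^2=y^2\}$, $\bar\kappa_2$ is analytic on $D\setminus\{y=0\}$ (recall $X_0'/x$ is analytic on all of $\R$ with $(X_0'/x)(0)=2$), and $B_2,C_2$ are analytic on all of $D$. Since $S_1\cap D=\{y(x^2-y^2)=0\}$ by \eqref{def:S1}, every coefficient of \eqref{DXalpha:Xalphabeta}--\eqref{DXbeta:Xbetaalpha} is analytic on $D\setminus S_1$. Because these coefficients are the same universal rational expressions on each piece $D_{ij}$, the structure equations obtained on a given $V_{ij}$ continue, by uniqueness of analytic continuation, to the whole connected component of $D\setminus S_1$ containing $V_{ij}$; their union furnishes the asserted extension to $D\setminus S_1$.
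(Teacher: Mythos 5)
Your proposal is correct and takes essentially the same route as the paper's proof: the paper likewise identifies $B_k$, $C_k$ as the connection coefficients of the hypersurface's first fundamental form restricted to $z=0$ (writing, e.g., $B_1=\bar P^{-2}(\bar P\cos\varphi)_z/\cos\varphi=-(\bar P^{-1})_z-\bar P^{-1}\varphi_z\tan\varphi$, citing the structure equations of \cite{su4} where you re-derive them from the orthogonal coframe formulas), then substitutes $\Phi$ and $\mathcal{P}$ and performs exactly the algebraic cancellations you describe. The extension to $D\setminus S_1$ is justified in the paper, as in your argument, by the fact that the resulting coefficient functions are universal analytic expressions independent of the choice of $V_{ij}$.
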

\begin{proof}
The structure equation of the hypersurface $f_{V_{ij}}(x,y,z)$ is given at (\cite[equations (2.2.3) and (2.2.4)]{su4}).
Then, by $F^0_{V_{ij}}(x,y)=F_{V_{ij}}(x,y,0)$, the derivative of $F^0_{V_{ij}}=[\phi,X_{\alpha}^0,X_{\beta}^0,\xi]$ is obtained from the equation by taking as $\varphi(x,y,0)=\varphi(x,y)$, $\varphi_z(x,y,0)=\varphi_z(x,y)$ and $P(x,y,z)=\bar P(x,y)$, $P_z(x,y,0)=\bar P_z(x,y)$.
For $B_1$: We have
\begin{gather*}
B_1=\frac{\bar P^{-2}}{\cos\varphi}(\bar P\cos\varphi)_z
=-\frac{1}{\cos\varphi}\left((\bar P^{-1})_z\cos\varphi+\bar P^{-1}\varphi_z\sin\varphi\right).
\end{gather*}
Then, we obtain $B_1$ by
\begin{gather*}
\varphi_z = y/(x^2+y^2),\quad
(\bar P^{-1})_z=\sqrt{5}+\big((x^2-y^2)\big(X_0+\textstyle\frac{\sqrt{5}}{2}\big)+2xy^2X_0'\big)/(x^2+y^2)^2.
\end{gather*}

For $C_1$: We have
\begin{gather*}
C_1=\frac{\bar P^{-2}}{\sin\varphi\cos\varphi}(\bar P\cos\varphi)_y
=\frac{-1}{\sin\varphi\cos\varphi}\left((\bar P^{-1})_y\cos\varphi+\bar P^{-1}\varphi_y\sin\varphi\right).
\end{gather*}
Then, we obtain $C_1$ by
\begin{gather*}
\varphi_y=2x/(x^2+y^2),\quad
(\bar P^{-1})_y=2xy(x^2+y^2)^{-2}\big(-2X_0+2xX_0'-\sqrt{5}\big).
\end{gather*}
The functions $B_2$ and $C_2$ are obtained in the same way. 

The last statement follows from the fact that the functions $B_k$ and $C_k$ above are independent of the choice of domains $V_{ij}\subset D_{ij}\setminus S$.
\end{proof}

Next, we obtain the following lemma directly from \eqref{def:X0alphabeta} and Lemma \ref{lemma:B-C}:
%%%%%%%%%%%%%%%%%%%%%%%%%%%%%%
%%%%% Lemma 3.2
%%%%%%%%%%%%%%%%%%%%%%%%%%%%%%
\begin{lemma}\label{lemma:dPhi}
On each $V_{ij}$, we have the following equations:
\begin{gather*}
\nabla'_{\partial/\partial x}\phi=-a_1X_{\alpha}^0, \ \ \ \nabla'_{\partial/\partial x}\xi=b_1X_{\alpha}^0, \ \ \ \nabla'_{\partial/\partial x}X_{\beta}^0=c_1X_{\alpha}^0, \\
\nabla'_{\partial/\partial y}\phi=-a_2X_{\beta}^0, \ \ \ \nabla'_{\partial/\partial y}\xi=b_2X_{\beta}^0, \ \ \ 
\nabla'_{\partial/\partial y}X_{\alpha}^0=c_2X_{\beta}^0,
\end{gather*}
where
\begin{gather*}
a_1=a_1(x,y):=\frac{2y}{xh(x,y)}\left(X_0+\textstyle\frac{\sqrt{5}}{2}\right), \ \ 
b_1=b_1(x,y):=\frac{-1}{xh(x,y)}\left(X_0+\textstyle\frac{\sqrt{5}}{2}+\sqrt{5}(x^2-y^2)\right) \\ 
c_1=c_1(x,y):=\frac{-2}{xh(x,y)}\left(X_0+\textstyle\frac{\sqrt{5}}{2}\right), \\
a_2=a_2(x,y):=\frac{-1}{h(x,y)}\left(2X_0+\sqrt{5}-(x^2+y^2)\frac{X_0'}{x}\right), \ \ 
b_2=b_2(x,y):=\frac{-2y}{h(x,y)}\left(\frac{X_0'}{2x}+\sqrt{5}\right), \\ 
c_2=c_2(x,y):=\frac{2y}{h(x,y)}\left(X_0''-\frac{X_0'}{x}\right).
\end{gather*}
Then, all equations above are independent of the choice of domains $V_{ij}\subset D_{ij}\setminus S$ and they are analytic equations defined on $D$.
\end{lemma}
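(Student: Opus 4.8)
The plan is to translate the structure equations of Lemma~\ref{lemma:B-C}, which are written along the unit principal directions $X^0_\alpha$ and $X^0_\beta$, into equations along the coordinate directions $\partial/\partial x$ and $\partial/\partial y$. The bridge is the defining relation \eqref{def:X0alphabeta}, which I rewrite as $\partial f^0_V/\partial x=(\bar P\cos\varphi)\,X^0_\alpha$ and $\partial f^0_V/\partial y=(\bar P\sin\varphi)\,X^0_\beta$. Since $X^0_\alpha$ (resp.\ $X^0_\beta$) is exactly the unit tangent in the $x$-direction (resp.\ $y$-direction), covariant differentiation of any $\R^4$-valued field along $\partial/\partial x$ agrees with differentiation along $X^0_\alpha$ after rescaling: $\nabla'_{\partial/\partial x}=(\bar P\cos\varphi)\,\nabla'_{X^0_\alpha}$ and $\nabla'_{\partial/\partial y}=(\bar P\sin\varphi)\,\nabla'_{X^0_\beta}$.

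First I would feed the six relations $\nabla'_{X^0_\alpha}\phi=-\bar\kappa_1X^0_\alpha$, $\nabla'_{X^0_\alpha}\xi=B_1X^0_\alpha$, $\nabla'_{X^0_\alpha}X^0_\beta=C_1X^0_\alpha$ and $\nabla'_{X^0_\beta}\phi=-\bar\kappa_2X^0_\beta$, $\nabla'_{X^0_\beta}\xi=B_2X^0_\beta$, $\nabla'_{X^0_\beta}X^0_\alpha=C_2X^0_\beta$ from \eqref{DXalpha:Xalphabeta}--\eqref{DXbeta:Xbetaalpha} into these rescalings. This gives at once $a_1=(\bar P\cos\varphi)\bar\kappa_1$, $b_1=(\bar P\cos\varphi)B_1$, $c_1=(\bar P\cos\varphi)C_1$ and $a_2=(\bar P\sin\varphi)\bar\kappa_2$, $b_2=(\bar P\sin\varphi)B_2$, $c_2=(\bar P\sin\varphi)C_2$. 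Using $\bar P\cos\varphi=(x^2-y^2)/(xh)$ and $\bar P\sin\varphi=2y/h$ together with the explicit $\bar\kappa_i$ from \eqref{def:kappabar12} and the $B_k,C_k$ of Lemma~\ref{lemma:B-C}, a short algebraic substitution then recovers the closed forms stated for $a_k,b_k,c_k$.

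The one substantive point, and the step I expect to require the most care, is the claimed analyticity on all of $D$, whereas Lemma~\ref{lemma:B-C} supplies its equations only on $D\setminus S_1$. This is resolved exactly by the multiplicative structure above. The factors $\bar\kappa_1,B_1,C_1$ each carry a pole $(x^2-y^2)^{-1}$ along the degenerate line $x^2=y^2$, while $\bar\kappa_2$ carries a pole $y^{-1}$ along $y=0$; but $\bar P\cos\varphi$ contributes precisely a factor $(x^2-y^2)$ and $\bar P\sin\varphi$ precisely a factor $2y$, so every such apparent singularity cancels (and $B_2,C_2$, depending only on $x$, are regular to begin with). After cancellation the only surviving denominators are $x$ and $h$, both nonvanishing on $D$ since $x>0$ there and $h>\sqrt{5}>0$ by Corollary~\ref{cor:ineq4h}; hence each $a_k,b_k,c_k$ is analytic on the whole of $D$. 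Finally, because none of the resulting expressions depends on the chosen $V_{ij}$, the six equations patch to a single analytic system valid on all of $D$, as asserted.
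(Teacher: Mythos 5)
Your proposal is correct and follows essentially the same route as the paper, which obtains this lemma directly from \eqref{def:X0alphabeta} and Lemma \ref{lemma:B-C} by rescaling the structure equations with the factors $\bar P\cos\varphi=(x^2-y^2)/(xh)$ and $\bar P\sin\varphi=2y/h$. Your explicit verification of the pole cancellations along $S_1$ and the resulting analyticity on $D$ (using $x>0$, $h>\sqrt{5}$, and the analyticity of $X_0'/x$) fills in exactly the computation the paper leaves implicit.
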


We also have $\nabla'_{\partial/\partial x}X_{\alpha}^0=a_1\phi-b_1\xi-c_1X^0_{\beta}$ and $\nabla'_{\partial/\partial y}X_{\beta}^0=a_2\phi-b_2\xi-c_2X^0_{\alpha}$ by \eqref{DXalpha:Xalphabeta} and \eqref{DXbeta:Xbetaalpha}.

For the analytic functions $(a_i,b_i,c_i)$ on $D$ in Lemma \ref{lemma:dPhi}, we define the matrix-valued functions $\Omega_1$ and $\Omega_2$ by
\begin{gather}\label{def:Omega12}
\Omega_1(x,y)=
\begin{bmatrix}
0 & a_1 & 0 & 0\\
-a_1 & 0 & c_1 & b_1\\
0 & -c_1 & 0 & 0\\
0 & -b_1 & 0 & 0
\end{bmatrix}
(x,y),\quad
\Omega_2(x,y)=
\begin{bmatrix}
0 & 0 & a_2 & 0\\
0 & 0 & -c_2 & 0\\
-a_2 & c_2 & 0 & b_2\\
0 & 0 & -b_2 & 0
\end{bmatrix}
(x,y).
\end{gather}
For a frame field $F^0(x,y):=[\phi,X^0_{\alpha},X^0_{\beta},\xi](x,y)$ and the matrix-valued differential $1$-form $\Omega=\Omega_1 dx +\Omega_2 dy$ on $D$, the equations of Lemma \ref{lemma:dPhi} are summarized as
\begin{gather}\label{eq:dF0}
dF^0=F^0\Omega,
\end{gather}
which is an analytic equation on the whole domain $D$, and in particular, it is independent of the choice of $V_{ij}\subset D_{ij}\setminus S$. Now, in Theorem \ref{thm:extension} below, we shall verify that the equation \eqref{eq:dF0} has a solution $F^0(x,y)$ on $D$. Then, the solution $F^0(x,y)$ is uniquely determined up to a transformation $QF^0(x,y)$ by a constant orthogonal matrix $Q$.
For the sake of simplicity for the argument in Theorem \ref{thm:extension}, we determine an initial condition of \eqref{eq:dF0} by
\begin{gather}\label{def:init-F0}
F^0(x_0,y_0)=\mathrm{Id}
\end{gather}
for a while, where $(x_0,y_0)$ is a point of $V_{12}$ and $\mathrm{Id}$ is the unit matrix.
It is possible to take such an initial condition.
In fact, for the frame field $F_{V_{12}}(x,y,z)$ determining the hypersurface $f_{V_{12}}(x,y,z)$, suppose $F_{V_{12}}(x_0,y_0,0)\neq \mathrm{Id}$. We take a constant orthogonal matrix $C$ such that $CF_{V_{12}}(x_0,y_0,0)=\mathrm{Id}$ holds. Then, the frame $CF_{V_{12}}(x,y,z)$ determines the hypersurface $Cf_{V_{12}}(x,y,z)$ by $C df_{V_{12}}=d(Cf_{V_{12}})$.
%%%%%%%%%%%%%%%%%%%%%%%%%%%%%%
%%%%% Theorem 3.3
%%%%%%%%%%%%%%%%%%%%%%%%%%%%%%
\begin{thm}\label{thm:extension}
An analytic orthonormal frame field $F^0(x,y)$ is uniquely determined on $D$ such that it is a solution to the structure equation \eqref{eq:dF0} under the initial condition \eqref{def:init-F0}.
Then, the surface $f^0_{V_{12}}(x,y)$ on $V_{12}$ determined above extends to an analytic surface $f^0(x,y)$ with the metric $g_0$ defined on the whole domain $D$.
Furthermore, for any simply connected domain $V'_{ij}$ satisfying $ V'_{ij}\subset D_{ij}\setminus S$, there is a generic conformally flat hypersurface $ f_{V'_{ij}}(x,y,z)$ on $V'_{ij}\times I$ such that $f^0(x,y)= f_{V'_{ij}}(x,y,0)$ holds on $V'_{ij}$.
\end{thm}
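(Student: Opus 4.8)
The plan is to regard the structure equation \eqref{eq:dF0} as a linear Pfaffian (total differential) system on the half-plane $D=\{x>0\}$ and to produce its global solution by combining the a priori existence of the frame on the small patches $V_{ij}$ with the real-analyticity of $\Omega$ on all of $D$. Concretely, the first and decisive step is to establish the integrability (zero-curvature) condition $d\Omega+\Omega\wedge\Omega=0$ on $D$. On the patch $V_{12}\subset D_{12}\setminus S$ the curvature surface $f^0_{V_{12}}$ of (R4) together with its frame $F^0_{V_{12}}$ already exists and satisfies $dF^0_{V_{12}}=F^0_{V_{12}}\Omega$; hence $\Omega=(F^0_{V_{12}})^{-1}dF^0_{V_{12}}$ there, so $\Omega$ is a logarithmic derivative and $d\Omega+\Omega\wedge\Omega=0$ holds automatically on $V_{12}$. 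By Lemma \ref{lemma:dPhi} the coefficients $a_i,b_i,c_i$ of $\Omega$ are real-analytic on the whole of $D$ — here it is essential that $h>\sqrt{5}>0$ by Corollary \ref{cor:ineq4h}, that $X_0'/x$ is analytic, and that $x>0$ on $D$, so that $\Omega$ has no singularity along the degenerate locus $D\cap S_1$. Since the entries of $d\Omega+\Omega\wedge\Omega$ are real-analytic on the connected set $D$ and vanish on the nonempty open set $V_{12}$, the identity theorem for real-analytic functions forces $d\Omega+\Omega\wedge\Omega=0$ throughout $D$.

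With integrability in hand I would invoke the analytic Frobenius theorem on the simply connected domain $D$: there is a unique analytic $F^0(x,y)$ on $D$ solving \eqref{eq:dF0} with the initial value \eqref{def:init-F0}, simple connectivity guaranteeing that analytic continuation along different paths is monodromy-free and hence single-valued, and analyticity of $\Omega$ guaranteeing analyticity of $F^0$. Because $\Omega$ is skew-symmetric by \eqref{def:Omega12}, the matrix $F^0(F^0)^{T}$ has vanishing derivative, so it stays equal to its initial value $\mathrm{Id}$; thus $F^0$ is orthonormal on all of $D$. This proves the first assertion.

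For the surface I would integrate the analytic $\R^4$-valued $1$-form $\omega:=h^{-1}\big(((x^2-y^2)/x)X^0_\alpha\,dx+2y\,X^0_\beta\,dy\big)$ built from the now-global vector fields $X^0_\alpha,X^0_\beta$. On $V_{12}$ one has $\omega=df^0_{V_{12}}$ by \eqref{df0}, so $\omega$ is closed there; closedness reduces, via Lemma \ref{lemma:dPhi}, to the two scalar identities $\partial_y(h^{-1}(x^2-y^2)/x)=(h^{-1}2y)\,c_1$ and $\partial_x(h^{-1}2y)=(h^{-1}(x^2-y^2)/x)\,c_2$ among analytic functions, so $d\omega=0$ propagates to all of $D$ by the same analyticity argument. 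Simple connectivity of $D$ then yields an analytic $f^0$ with $df^0=\omega$, unique up to translation; fixing the translation so that $f^0=f^0_{V_{12}}$ on $V_{12}$ realises the asserted extension. Since $X^0_\alpha\perp X^0_\beta$ are unit vectors, the induced metric of $df^0$ equals $h^{-2}\big((x^2-y^2)^2/x^2\,(dx)^2+4y^2(dy)^2\big)=g_0$ of \eqref{def:g0}.

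Finally, for the hypersurface statement I would localise again. Fix a simply connected $V'_{ij}\subset D_{ij}\setminus S$; there $\bar\kappa_1\bar\kappa_2\neq0$ (the nonvanishing recorded just before \eqref{def:S}), which is precisely the nondegeneracy needed to run the $z$-evolution. The restriction $F^0|_{V'_{ij}}$ solves the same structure equation determined by $\Phi$ and $\mathcal{P}$ as the frame $F^0_{V'_{ij}}$ of (R4), so by uniqueness up to a constant orthogonal matrix they differ by such a $Q$. Applying (R5), equivalently \cite[Theorem 3.1]{su4}, to $F^0_{V'_{ij}}$ produces a generic conformally flat hypersurface evolving from its $z=0$ slice; composing with the isometry $Q$ of $\R^4$ and adjusting the translation gives a hypersurface $f_{V'_{ij}}(x,y,z)$ whose $z=0$ slice is $f^0$ on $V'_{ij}$. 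The main obstacle I expect is the very first step: one must not attempt to verify the zero-curvature condition by brute force across the singular locus $S_1$, where the geometric meaning of $g_0$ breaks down, but instead exploit that $\Omega$ remains \emph{regular} there and that integrability is inherited from the regular patches by the real-analytic identity theorem — this is exactly what allows the frame, and hence the surface, to cross the degenerate set of $g_0$.
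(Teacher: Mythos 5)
Your proposal is correct and follows essentially the same route as the paper's proof: the Maurer--Cartan equation is obtained on $V_{12}$ from the existing local frame, propagated to all of $D$ by real-analyticity of $\Omega$, then integrated to give $F^0$, after which the surface is recovered by integrating the $1$-form $df^0$ (closed by Lemma \ref{lemma:dPhi}) and the hypersurface statement is reduced to (R4)/(R5). Your additional details -- orthonormality via skew-symmetry of $\Omega$, and the constant orthogonal matrix $Q$ relating $F^0|_{V'_{ij}}$ to the frame of (R4) -- are correct elaborations of steps the paper leaves implicit.
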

\begin{proof}
The frame field $F_{V_{12}}(x,y,z)$ on $V_{12}\times I$ is determined from the hypersurface $f_{V_{12}}(x,y,z)$.
Since $F^0_{V_{12}}(x,y)=F_{V_{12}}(x,y,0)$ and $f^0_{V_{12}}(x,y)=f_{V_{12}}(x,y,0)$, the $1$-form $\Omega(x,y)$ satisfies the Maurer-Cartan equation $d\Omega+\Omega\wedge \Omega=0$ on the open domain $V_{12}$.
Then, since $\Omega$ is analytic on $D$, the Maurer-Cartan equation is satisfied on the whole domain $D$.
Hence, a frame field $F^0(x,y)$ on $D$ is uniquely determined under the condition \eqref{def:init-F0}, which is the extension of $F^0_{V_{12}}(x,y)$ on $V_{12}$.

Furthermore, for the vector fields $X^0_{\alpha}$ and $X^0_{\beta}$ of $F^0(x,y)$ on $D$, an analytic surface $f^0(x,y)$ on $D$ is determined by
\begin{gather}\label{def:theta12}
df^0=\theta_1X^0_{\alpha}+\theta_2X^0_{\beta},\quad
\theta_1:=(x^2-y^2)/(xh(x,y)) dx,\quad
\theta_2:=2y/h(x,y) dy,
\end{gather}
since $f^0_{V_{12}}(x,y)$ satisfies \eqref{df0} on $V_{12}$.
In fact, $df^0(x,y)$ in \eqref{def:theta12} satisfies $d (df^0) = 0$ on $D$ by Lemma \ref{lemma:dPhi}.
Hence, $f^0(x,y)$ is an extension of $f^0_{V_{12}}(x,y)$ to $D$.
Then, $\phi$ and $\xi$ are the normal vector fields of $f^0(x,y)$, which are distinguished by the condition for the hypersurface $f_{V_{12}}(x,y,z)$.
Furthermore, the surface $f^0(x,y)$ on $D$ has the metric $g_0$ by \eqref{def:theta12}.

Finally, the existence of the generic conformally flat hypersurface $f_{V'_{ij}}(x,y,z)$ in the last statement follows from the fact that $F^0(x,y)$ is determined by $\Phi$ and $\mathcal{P}$.
\end{proof}

%%%%%%%%%%%%%%%%%%%%%%%%%%%%%%
%%%%% Definition 3.4
%%%%%%%%%%%%%%%%%%%%%%%%%%%%%%
\begin{defn}[Curvature surface defined on $D$]
By Theorem \ref{thm:extension}, we may recognize that the analytic surface $f^0(x,y)$ on $D$ is an extended curvature surface.
Our aim is to study the property and the structure on $f^0(x,y)$.
We call $f^0(x,y)$ and $F^0(x,y)$ in Theorem \ref{thm:extension}, respectively, a curvature surface defined on $D$ and a frame field determining the curvature surface $f^0(x,y)$ on $D$.
Then, we can arbitrarily take the initial condition of $F^0(x,y)$ by $F^0(x_0,y_0)=Q$ not only \eqref{def:init-F0}, where $(x_0,y_0)$ and $Q$ are a point of $D$ and an orthogonal matrix, respectively.
For a fixed frame field $F^0(x,y)$, the curvature surface $f^0(x,y)$ is determined uniquely up to a parallel translation.
\end{defn}

Next, we study the coordinate (the extended curvature) lines of a curvature surface $f^0(x,y)$ on $D$.
Let $\|{\bs{v}}\|$ be the Euclidean norm for a vector ${\bs{v}}\in \R^4$.
%%%%%%%%%%%%%%%%%%%%%%%%%%%%%%
%%%%% Theorem 3.5
%%%%%%%%%%%%%%%%%%%%%%%%%%%%%%
\begin{thm}\label{thm:xcurve}
Let $f^0(x,y)$ be a curvature surface on $D$ and $F^0(x,y)=[\phi,X^0_{\alpha},X^0_{\beta},\xi](x,y)$ be the frame field determining $f^0(x,y)$. Then, for an $x$-curve $f^0(x,y)$ with fixed $y$, we have the following facts (1), (2) and (3):
\begin{enumerate}
\item
Along any $x$-curve $f^0(x,y)$, the vector $(y X^0_{\beta}-\phi)(x,y)$ is constant. That is, an analytic unit vector ${\bs{u}}(y)$ of $y$ is determined by ${\bs{u}}(y)=(1+y^2)^{-1/2}(y X^0_{\beta}-\phi)(x,y)$.

\item
Let ${\bs{f}}(x,y)$ be an analytic unit vector defined by
\begin{gather*}
{\bs{f}}(x,y):=((1+y^2)(5+4y^2))^{-1/2}\left(X^0_{\beta}-2(1+y^2)\xi+y\phi\right)(x,y).
\end{gather*}
When we regard the surface $f^0(x,y)$ as a one-parameter family of $x$-curves, it is expressed as
\begin{gather*}
f^0(x,y)=(2\sqrt{5})^{-1}\sqrt{(5+4y^2)(1+y^2)^{-1}}{\bs{f}}(x,y)+A(y),
\end{gather*}
where $A(y)$ is a $\R^4$-valued analytic function of $y$ determined uniquely up to a parallel translation.  

\item
Any $x$-curve $f^0(x,y)$ lies on a $2$-sphere $\mathbb{S}^2_y$ of radius $(2\sqrt{5})^{-1}\sqrt{(5+4y^2)(1+y^2)^{-1}}$ in an affine hyperplane $\R^3_y$ perpendicular to ${\bs{u}}(y)$, and $A(y)$ is the center of $\mathbb{S}^2_y$.
\end{enumerate}
\end{thm}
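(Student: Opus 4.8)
The plan is to prove all three assertions by differentiating the relevant combinations of frame vectors in the $x$-direction at a fixed $y$-level, using only the three structure equations of Lemma \ref{lemma:dPhi} that involve $\partial/\partial x$ (which coincides with $\nabla'_{\partial/\partial x}$ since $\nabla'$ is the flat connection of $\R^4$), together with the reading $\partial f^0/\partial x = (x^2-y^2)(xh)^{-1}X^0_{\alpha}$ obtained from \eqref{df0}. Orthonormality of the frame $[\phi,X^0_{\alpha},X^0_{\beta},\xi]$ will be used throughout to compute norms and inner products. For (1) I would compute $\partial_x(yX^0_{\beta}-\phi)=(yc_1+a_1)X^0_{\alpha}$ and then note that the explicit formulas $a_1=(2y/(xh))(X_0+\sqrt{5}/2)$ and $c_1=-(2/(xh))(X_0+\sqrt{5}/2)$ of Lemma \ref{lemma:dPhi} give $yc_1+a_1=0$ identically on $D$. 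Hence $yX^0_{\beta}-\phi$ is constant along each $x$-curve, and $\|yX^0_{\beta}-\phi\|^2=y^2+1$; dividing by $(1+y^2)^{1/2}$ yields the unit vector $\bs{u}(y)$ depending only on $y$.

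For (2) I would first verify that $\bs{f}$ is a unit vector, expanding $\|X^0_{\beta}-2(1+y^2)\xi+y\phi\|^2=1+4(1+y^2)^2+y^2=(1+y^2)(5+4y^2)$, which cancels the normalizing factor. I would then set $A(y):=f^0-(2\sqrt{5})^{-1}\sqrt{(5+4y^2)(1+y^2)^{-1}}\,\bs{f}$ and show $\partial_x A=0$. Rewriting the subtracted term as $(2\sqrt{5}(1+y^2))^{-1}(X^0_{\beta}-2(1+y^2)\xi+y\phi)$, its $x$-derivative equals $(2\sqrt{5}(1+y^2))^{-1}(c_1-2(1+y^2)b_1-ya_1)X^0_{\alpha}$. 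The crux is the algebraic identity $c_1-2(1+y^2)b_1-ya_1=2\sqrt{5}(1+y^2)(x^2-y^2)/(xh)$: the $(X_0+\sqrt{5}/2)$-terms cancel exactly because of the coefficients $-2(1+y^2)$ and $y$ built into $\bs{f}$, leaving only the $\sqrt{5}(x^2-y^2)$ contribution carried by $b_1$. Substituting back gives $\partial_x(r\bs{f})=(x^2-y^2)(xh)^{-1}X^0_{\alpha}=\partial_x f^0$, so $A$ depends only on $y$, which is the claimed representation.

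For (3) I would combine the two preceding facts. From (1), $\langle \partial_x f^0,\bs{u}(y)\rangle=(x^2-y^2)(xh)^{-1}(1+y^2)^{-1/2}\langle X^0_{\alpha},\,yX^0_{\beta}-\phi\rangle=0$, so each $x$-curve lies in an affine hyperplane $\R^3_y$ normal to $\bs{u}(y)$. From (2), $f^0-A(y)=r(y)\bs{f}$ with $\|\bs{f}\|=1$, so the curve lies on the sphere of radius $r(y)$ centered at $A(y)$; intersecting this sphere with $\R^3_y$ produces the $2$-sphere $\mathbb{S}^2_y$. To confirm that $A(y)$ is its center I would check $\langle \bs{f},\bs{u}\rangle=0$, i.e. $\langle yX^0_{\beta}-\phi,\,X^0_{\beta}-2(1+y^2)\xi+y\phi\rangle=y-y=0$, which places $A(y)$ inside $\R^3_y$.

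I expect the sole genuine obstacle to be the cancellation in (2), namely verifying that $c_1-2(1+y^2)b_1-ya_1$ collapses precisely to the coefficient appearing in $\partial_x f^0$. This is where the specific form of $a_1,b_1,c_1$ (ultimately inherited from $\mathcal{P}$ and the Poincar\'e metric through \eqref{expression:Pbar} and Lemma \ref{lemma:B-C}) and the matching coefficients in the definition of $\bs{f}$ are indispensable; every other step is routine orthonormal-frame bookkeeping once this identity is in hand.
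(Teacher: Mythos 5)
Your proof is correct; I checked the key identity $c_1-2(1+y^2)b_1-ya_1=2\sqrt{5}(1+y^2)(x^2-y^2)/(xh)$ against Lemma \ref{lemma:dPhi} and it does collapse exactly as you claim, and the remaining steps (the cancellation $yc_1+a_1=0$, the norm computation $1+4(1+y^2)^2+y^2=(1+y^2)(5+4y^2)$, and the orthogonality $\langle {\bs{f}},{\bs{u}}\rangle=0$ placing $A(y)$ in $\R^3_y$) are all sound. Your route differs from the paper's in its organization: the paper first proves (1)--(3) on a sub-domain $U\subset D_{12}\setminus S$ using Lemma \ref{lemma:B-C}, where the unit-direction derivatives give the clean relations $\nabla'_{X^0_{\alpha}}(yX^0_{\beta}-\phi)=(yC_1+\bar\kappa_1)X^0_{\alpha}=0$ and $\nabla'_{X^0_{\alpha}}(X^0_{\beta}-2\xi)=(C_1-2B_1)X^0_{\alpha}=2\sqrt{5}\,X^0_{\alpha}$, and then \emph{derives} ${\bs{f}}$ as the normalized component of $X^0_{\beta}-2\xi$ orthogonal to ${\bs{u}}(y)$; only afterwards does it extend to all of $D$ by analyticity, citing Lemma \ref{lemma:dPhi}. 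You instead work with Lemma \ref{lemma:dPhi} on the whole of $D$ from the start, which buys you a genuine simplification: since $a_1,b_1,c_1$ are analytic on $D$ (unlike $\bar\kappa_1,B_1,C_1$, which blow up on $x^2=y^2$), no two-stage restriction-plus-extension argument is needed, and in effect you carry out in full the verification the paper only sketches in its second paragraph. The trade-off is motivational rather than logical: in your version the coefficients $-2(1+y^2)$ and $y$ in ${\bs{f}}$ enter as an unexplained ansatz whose efficacy is confirmed by the algebraic identity, whereas the paper's projection argument explains where they come from.
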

\begin{proof}
We firstly verify (1)--(3) on the domain $U:=\{(x,y)\in D_{12}\;|\; x>y\geq 0\}$, and then we use the equations in Lemma \ref{lemma:B-C}.
Now, we have $\nabla'_{X^0_{\alpha}}(yX^0_{\beta}-\phi)=0$.
Hence, we have (1).
Next, we have $\nabla'_{X^0_{\alpha}}(X^0_{\beta}-2\xi)=2\sqrt{5}X^0_{\alpha}$.
The component $\hat{\bs{f}}(x,y)$ of $(X^0_{\beta}-2\xi)(x,y)$ perpendicular to ${\bs{u}}(y)$ is given by
\begin{gather*}
\hat{\bs{f}}(x,y):=X^0_{\beta}-2\xi-y(1+y^2)^{-1/2}{\bs{u}}
=(1+y^2)^{-1}\left(X^0_{\beta}-2(1+y^2)\xi+y\phi\right).
\end{gather*}
Hence, we have $\nabla'_{X^0_{\alpha}}\hat{\bs{f}}=2\sqrt{5}X^0_{\alpha}$.
Then, since ${\bs{f}}(x,y)$ is the normalization of $\hat{\bs{f}}(x,y)$, we have (2) by $X^0_{\alpha}f^0=X^0_{\alpha}$.
Finally, since $X^0_{\alpha}(x,y)\perp {\bs{u}}(y)$ (or ${\bs{f}}(x,y)\perp {\bs{u}}(y)$) and the fact that $\|\hat{\bs{f}}(x,y)\|$ is a function only of $y$, we obtain (3).
By the above argument, we have verified the theorem for $x$-curves on $U$.

Next, all $x$-curves $f^0(x,y)$ on $D$ are also expressed as the form in (2), since all our objects: the frame field $F^0(x,y)$, the surface $f^0(x,y)$ and the vector ${\bs{f}}(x,y)$, are analytic on $D$.
Actually, by Lemma \ref{lemma:dPhi}, we have the following equation,
\begin{gather*}
\nabla'_{\partial/\partial x}\big((2\sqrt{5})^{-1}\sqrt{(5+4y^2)(1+y^2)^{-1}}\,{\bs{f}}_2+A\big)= [(x^2-y^2)/(xh(x,y))]\, X^0_{\alpha}
\end{gather*}
on $D$, which coincides with $\partial f^0/\partial x$ on $D$.
We can verify directly by Lemma \ref{lemma:dPhi} that all $x$-curves on $D$ also satisfy (1) and (3).
In consequence, the proof has been completed.
\end{proof}
%%%%%%%%%%%%%%%%%%%%%%%%%%%%%%
%%%%% Theorem 3.6
%%%%%%%%%%%%%%%%%%%%%%%%%%%%%%
\begin{thm}\label{thm:ycurve}
Let $f^0(x,y)$ be a curvature surface on $D$ and $F^0(x,y)=[\phi,X^0_{\alpha},X^0_{\beta},\xi](x,y)$ be the frame field determining $f^0(x,y)$. Then, for a $y$-curve $f^0(x,y)$ with fixed $x$, we have the following facts (1), (2) and (3):
\begin{enumerate}
\item
Along any $y$-curve $f^0(x,y)$, the vector $(-B_2X^0_{\alpha}+C_2\xi)(x,y)$ is constant, where $B_2(x)=-(1/2)(X_0'/x)-\sqrt{5}$ and $C_2(x)=X_0''-(X_0'/x)$.
That is, an analytic unit vector $\tilde{\bs{u}}(x)$ of $x$ is determined by \ $\tilde{\bs{u}}(x)={(B_2^2+C_2^2)^{-1/2}(-B_2X^0_{\alpha}+C_2\xi)}(x,y)$, and then $(B_2^2+C_2^2)(x)=(X_0'/x) (2X_0-xX_0'-X_0'/x+\sqrt{5})>0$ holds for $x\in \R$.

\item
Let $\tilde{\bs{f}}(x,y)$ be an analytic unit vector defined by
\begin{gather*}
\tilde{\bs{f}}(x,y):=(B_2^2+C_2^2)^{-1/2}(x)(B_2\xi+C_2X^0_{\alpha})(x,y).
\end{gather*}
When we regard the surface $f^0(x,y)$ as a one-parameter family of $y$-curves, it is expressed as
\begin{gather*}
f^0(x,y)=(B_2^2+C_2^2)^{-1/2}(x)\tilde{\bs{f}}(x,y)+\tilde A(x),
\end{gather*}
where $\tilde A(x)$ is a $\R^4$-valued analytic function of $x$ determined uniquely up to a parallel translation.

\item
Any $y$-curve $f^0(x,y)$ lies on a standard $2$-sphere $\mathbb{S}^2_x$ of radius $(B_2^2+C_2^2)^{-1/2}(x)$ in an affine hyperplane $\R^3_x$ perpendicular to $\tilde{\bs{u}}(x)$, and $\tilde A(x)$ is the center of $\mathbb{S}^2_x$.  
\end{enumerate}
\end{thm}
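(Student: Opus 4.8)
The plan is to follow the proof of Theorem \ref{thm:xcurve} almost verbatim, interchanging the roles of $x$ and $y$ (and of $X^0_{\alpha}$ and $X^0_{\beta}$) and relying on the derivative relations along $X^0_{\beta}$ in \eqref{DXbeta:Xbetaalpha} rather than those along $X^0_{\alpha}$. First I would establish (1)--(3) on the subdomain $U=\{(x,y)\in D_{12}\mid x>y\ge0\}$ (or any $V_{ij}\subset D_{ij}\setminus S$), where the local structure equations \eqref{DXbeta:Xbetaalpha} are available, and then extend the resulting identities to all of $D$ by the analyticity argument used at the end of the proof of Theorem \ref{thm:xcurve}, via Lemma \ref{lemma:dPhi}. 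The only relations I actually need are $\nabla'_{X^0_{\beta}}\xi=B_2X^0_{\beta}$ and $\nabla'_{X^0_{\beta}}X^0_{\alpha}=C_2X^0_{\beta}$, neither of which involves the singular function $\bar\kappa_2$.

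For (1): since $B_2(x)$ and $C_2(x)$ depend only on $x$, they are constant along a $y$-curve, so $\nabla'_{X^0_{\beta}}(-B_2X^0_{\alpha}+C_2\xi)=-B_2C_2X^0_{\beta}+C_2B_2X^0_{\beta}=0$; thus $-B_2X^0_{\alpha}+C_2\xi$ is constant along each $y$-curve, and, since $X^0_{\alpha}$ and $\xi$ are orthonormal, its length is $(B_2^2+C_2^2)^{1/2}(x)$, giving the unit vector $\tilde{\bs{u}}(x)$. The closed form $(B_2^2+C_2^2)(x)=(X_0'/x)(2X_0-xX_0'-X_0'/x+\sqrt{5})$ is the one genuinely new computation: expanding $B_2^2+C_2^2$ produces a term $(X_0'')^2$, which I would eliminate using the second-order relation \eqref{diffeq:X0-2nd}; after this substitution the $(X_0'')^2$ term together with the $X_0''$ cross terms cancel, and the remainder factors as stated. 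Positivity on $\R$ is then immediate even without the factorization, since $B_2=-(1/2)(X_0'/x)-\sqrt{5}\le-\sqrt{5}<0$ by Proposition \ref{prop:X0}, whence $B_2^2+C_2^2>0$.

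For (2): using the same two relations I would compute $\nabla'_{X^0_{\beta}}(B_2\xi+C_2X^0_{\alpha})=(B_2^2+C_2^2)X^0_{\beta}=(B_2^2+C_2^2)\,\nabla'_{X^0_{\beta}}f^0$, the last equality because $\nabla'_{X^0_{\beta}}f^0=X^0_{\beta}$ is the unit tangent of the $y$-curve. Since $B_2^2+C_2^2$ is a function of $x$ alone, the vector $(B_2\xi+C_2X^0_{\alpha})-(B_2^2+C_2^2)f^0$ is constant along each $y$-curve; calling it $-(B_2^2+C_2^2)\tilde A(x)$ and normalizing $B_2\xi+C_2X^0_{\alpha}$ (whose length is $(B_2^2+C_2^2)^{1/2}$) to the unit vector $\tilde{\bs{f}}$ yields $f^0=(B_2^2+C_2^2)^{-1/2}\tilde{\bs{f}}+\tilde A(x)$, with $\tilde A$ determined up to a parallel translation.

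For (3): the vectors $\tilde{\bs{u}}$ and $\tilde{\bs{f}}$ are orthogonal, their inner product being proportional to $(-B_2)C_2+C_2B_2=0$; hence along a fixed-$x$ curve the point $f^0-\tilde A(x)=(B_2^2+C_2^2)^{-1/2}\tilde{\bs{f}}$ lies in the affine hyperplane through $\tilde A(x)$ perpendicular to the constant direction $\tilde{\bs{u}}(x)$, at the constant distance $(B_2^2+C_2^2)^{-1/2}(x)$ from $\tilde A(x)$, i.e.\ on the $2$-sphere $\mathbb{S}^2_x$ centered at $\tilde A(x)$. Finally I would pass from $U$ to all of $D$ by the analyticity argument of Theorem \ref{thm:xcurve}, verifying that $\nabla'_{\partial/\partial y}\big((B_2^2+C_2^2)^{-1/2}\tilde{\bs{f}}+\tilde A\big)=(2y/h)\,X^0_{\beta}=\partial f^0/\partial y$ on $D$ through Lemma \ref{lemma:dPhi} and \eqref{def:theta12}. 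I expect the factorization identity in (1), which requires careful use of \eqref{diffeq:X0-2nd}, to be the only non-routine step; everything else is the mirror image of Theorem \ref{thm:xcurve}.
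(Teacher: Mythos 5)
Your proposal is correct and follows essentially the same route as the paper's own proof: you show that $-B_2X^0_{\alpha}+C_2\xi$ and $(B_2\xi+C_2X^0_{\alpha})-(B_2^2+C_2^2)f^0$ are constant along $y$-curves using only the relations $\nabla'_{X^0_{\beta}}\xi=B_2X^0_{\beta}$ and $\nabla'_{X^0_{\beta}}X^0_{\alpha}=C_2X^0_{\beta}$, obtain the factorization of $(B_2^2+C_2^2)(x)$ by eliminating $(X_0'')^2$ via \eqref{diffeq:X0-2nd} and its positivity from $B_2<0$, deduce (3) from the orthogonality $\tilde{\bs{f}}\perp\tilde{\bs{u}}$, and extend from a regular subdomain to all of $D$ by the analyticity argument through Lemma \ref{lemma:dPhi} and \eqref{def:theta12}, exactly as the paper does. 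The only (immaterial) difference is the initial domain: the paper works on $\tilde U=\{(x,y)\in D_1\;|\;y>0\}$, which excludes the cuspidal line $y=0$ where the $X^0_{\beta}$-directional derivative of $f^0$ degenerates, whereas your primary choice $U=\{x>y\geq 0\}$ includes that line; your parenthetical fallback to $V_{ij}\subset D_{ij}\setminus S$ together with the final analytic-extension step covers this point.
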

\begin{proof}
We firstly prove the theorem on the domain $\tilde U:=\{(x,y)\in D_1|\ y>0\}$, and then we use the equations in Lemma \ref{lemma:B-C}.
Now, we have $\nabla'_{X^0_{\beta}}(-B_2X^0_{\alpha}+C_2\xi)=0$ and $(B_2^2+C_2^2)(x)>0$ on $\R$ by $B_2(x)<0$.
Furthermore $(B_2^2+C_2^2)(x)$ is expressed as the form in (1) by \eqref{diffeq:X0-2nd}.
Hence, we have obtained (1).
Next, we have $\nabla'_{X^0_{\beta}}(B_2\xi+C_2X^0_{\alpha})=(B_2^2+C_2^2)X^0_{\beta}$.
Then, by $(B_2^2+C_2^2)(x)\neq 0$ and $X^0_{\beta}f^0=X^0_{\beta}$, we have (2).
Finally, since $X^0_{\beta}\perp\tilde{\bs{u}}(x)$ (or $\tilde{\bs{f}}(x,y)\perp \tilde{\bs{u}}(x)$), we have (3) by $\|(B_2^2+C_2^2)^{-1/2}(x) \tilde{\bs{f}}(x,y)\|=(B_2^2+C_2^2)^{-1/2}(x)$.
Thus, we have verified the theorem for $y$-curves on $\tilde U$.
These results also hold for all $y$-curves on $D$ by Lemma \ref{lemma:dPhi} similarly to the proof of Theorem \ref{thm:xcurve}.
In consequence, the proof has been completed.
\end{proof}
%%%%%%%%%%%%%%%%%%%%%%%%%%%%%%
%%%%% Remark 3.7
%%%%%%%%%%%%%%%%%%%%%%%%%%%%%%
\begin{rem}\label{rem:uutilde}
(1)\;
In Theorem \ref{thm:xcurve} and Theorem \ref{thm:ycurve}, the vectors ${\bs{u}}(y)$ and $\tilde{\bs{u}}(x)$ are perpendicular for all $(x,y)\in D$.
In fact, by the definitions of ${\bs{u}}(y)$ and $\tilde{\bs{u}}(x)$, we have
\begin{gather*}
\langle {\bs{u}}(y),\tilde{\bs{u}}(x)\rangle=C(x,y)\,\langle yX^0_{\beta}-\phi,-B_2X^0_{\alpha}+C_2\xi\rangle(x,y)=0,
\end{gather*}
where $\langle {\bs{u}},\tilde{\bs{u}}\rangle$ is the inner product of $\R^4$ and $C(x,y):=\left((1+y^2)(B_2^2+C_2^2)(x)\right)^{-1/2}$. 

(2)\;
The vector ${\bs{u}}(y)$ of $y$ (or $\tilde{\bs{u}}(x)$ of $x$) moves on the unit circle $\mathbb{S}^1$ in a plane without stopping.
We shall give a simple proof of these facts in the next section (see Theorem \ref{thm:binfcinf}).
Certainly, we can also make sure these facts by direct calculation, but it is very hard.
Here, we only give the norms of the first derivatives of ${\bs{u}}(y)$ and $\tilde{\bs{u}}(x)$:
\begin{gather*}
\|\left({d{\bs{u}}}/{d y}\right)(y)\|=\frac{y^2}{1+y^2},\quad
\|\left({d\tilde{\bs{u}}}/{d x}\right)(x)\|=\frac{\left(X_0'+2\sqrt{5}x\right)\left(2X_0+\sqrt{5}\right)}{4xX_0' \left(2X_0-xX'_0-X_0'/x+\sqrt{5}\right)}=:T(x)
\end{gather*}
These norms show that the length of the curve ${\bs{u}}(y)$ (resp.\ $\tilde{\bs{u}}(x)$) diverges to $\infty$ as $y$ tends to $\pm\infty$ (resp.\ as $x$ tends to $\infty$): in the right hand side of the second equation, we have $(2X_0-xX_0')(x)=O(x)$ and $X_0'(x)$ is a positive bounded function, by Proposition \ref{prop:ineq4X0}.
Furthermore, the length of $\tilde{\bs{u}}(x)$ on $(\varepsilon,1]$ also diverges to $\infty$ as $\varepsilon(>0)\rightarrow 0$, since we have $\|\nabla'_{\partial/\partial x}\tilde{\bs{u}}(x)\|\approx \sqrt{5}/(2x)$ by $X_0(0)=5/2$ and $X_0'(x)\approx 2x$.

(3)\;
The curvature surface $f^0(x,y)$ on $D$ is bounded in $\R^4$. In fact, let us take $\tilde A(1)$ as a point in $\R^4$. In the theorems \ref{thm:xcurve}-(2) and \ref{thm:ycurve}-(2), the first terms of the right hand sides in $f^0(x,y)$ lie in uniformly bounded 2-spheres centered on the origin. Hence, we have only to show that $A(y)$ is a bounded function. As we determine $\tilde A(1)$ as a point in $\R^4$, the curve $f^0(1,y)$ is bounded by Theorem \ref{thm:ycurve}. Hence, the curve $f^0(1,y)$ and $A(y)$ in Theorem \ref{thm:xcurve} are also bounded.  
\end{rem}

By Theorems \ref{thm:xcurve} and \ref{thm:ycurve}, any $x$-curve $f^0(x,y)$ with fixed $y$ (resp.\ any $y$-curve $f^0(x,y)$ with fixed $x$) belongs to an affine hyperplane perpendicular to ${\bs{u}}(y)$ (resp.\ $\tilde{\bs{u}}(x)$).
We can determine the following orthonormal frame fields along the curves: let ${\bs{f}}(x,y)$ and $\tilde{\bs{f}}(x,y)$ be the vectors in Theorems \ref{thm:xcurve} and \ref{thm:ycurve}, respectively; along each $x$-curve, the frame field is given by
\begin{gather}\label{def:ff-u-X0alpha-f-u2}
\left[{\bs{u}}(y), X^0_{\alpha}(x,y),{\bs{f}}(x,y),{\bs{u}}_2(x,y)\right]
\end{gather}
where ${\bs{u}}_2(x,y)$ is defined by ${\bs{u}}_2(x,y):=(5+4y^2)^{-1/2}(2X^0_{\beta}+\xi+2y\phi)$; along each $y$-curve, the frame field is given by
\begin{gather}\label{def:ff-utilde-X0beta-ftilde-phi}
\big[\tilde{\bs{u}}(x),X^0_{\beta}(x,y),\tilde{\bs{f}}(x,y),\phi(x,y)\big].
\end{gather}

Now, in the following theorem, we verify that each $x$-curve with $y\neq 0$ (resp.\ each $y$-curve with $x$) of a curvature surface $f^0(x,y)$ on $D$ has a cusp at the point $x=|y|$ (resp.\ at the point $y=0$).
Then, we say that a curve $p(t)$ in $\R^3$ with $p(0)={\bf 0}$ has a cusp of type $(2,3,4)$ at $t=0$, if $p(t)$ is expressed as $p(t)=(at^2,bt^3,ct^4)$ around $t=0$ with constants $a$, $b$ and $c$ ($abc\neq 0$).
%%%%%%%%%%%%%%%%%%%%%%%%%%%%%%
%%%%% Theorem 3.8
%%%%%%%%%%%%%%%%%%%%%%%%%%%%%%
\begin{thm}\label{thm:cusp}
For any coordinate curve of a curvature surface $f^0(x,y)$ on $D$, we have the following facts (1) and (2):
\begin{enumerate}
\item
Any $x$-curve with $y\neq 0$ has a cusp of type $(2,3,4)$ at $x=|y|$.

\item
Any $y$-curve has a cusp of type $(2,3,4)$ at $y=0$.
\end{enumerate}
\end{thm}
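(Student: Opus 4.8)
The plan is to reduce both statements to a single derivative criterion and then read everything off the structure equations of Lemma \ref{lemma:dPhi}. By Theorems \ref{thm:xcurve} and \ref{thm:ycurve} each coordinate curve lies in a fixed affine hyperplane of $\R^4$, so after translating the singular point to the origin we may regard it as an analytic curve $p(t)$ in $\R^3$. I will use the fact that $p(t)$ is a cusp of type $(2,3,4)$ precisely when $p'(0)=\bs 0$ and the three vectors $p''(0),\,p'''(0),\,p^{(4)}(0)$ are linearly independent: this is exactly the condition guaranteeing, after a linear change of the ambient coordinates and a reparametrization, the normal form $(at^2,bt^3,ct^4)$ with $abc\neq0$. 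Thus the whole proof becomes the computation of four derivatives together with the verification that their leading directions are the three orthonormal frame vectors spanning the relevant hyperplane, namely $[X^0_\alpha,\bs f,\bs u_2]$ of \eqref{def:ff-u-X0alpha-f-u2} for an $x$-curve and $[X^0_\beta,\tilde{\bs f},\phi]$ of \eqref{def:ff-utilde-X0beta-ftilde-phi} for a $y$-curve.

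For (1), fix $y\neq0$ and write the $x$-curve as $p(x)=f^0(x,y)$. By \eqref{def:theta12} its velocity is $p'(x)=\lambda(x)X^0_\alpha$ with $\lambda=(x^2-y^2)/(xh)$; since $x^2-y^2=(x-|y|)(x+|y|)$ and $h>\sqrt5$ by Corollary \ref{cor:ineq4h}, $\lambda$ vanishes to exactly first order at $x=|y|$, so $p'(|y|)=\bs 0$ and $p''(|y|)=\lambda'(|y|)X^0_\alpha\neq\bs 0$ gives the first direction $X^0_\alpha$. Differentiating and substituting $\nabla'_{\partial/\partial x}X^0_\alpha=a_1\phi-b_1\xi-c_1X^0_\beta$ together with the derivatives of $\phi,\xi,X^0_\beta$ from Lemma \ref{lemma:dPhi}, I will show that $p'''(|y|)$ has a nonzero component along $\nabla'_{\partial/\partial x}X^0_\alpha|_{x=|y|}$, which on $x^2-y^2=0$ simplifies to $\big(W(5+4y^2)^{1/2}/(|y|h)\big)\bs u_2$ with $W=X_0+\sqrt5/2>0$; this is the second direction $\bs u_2$. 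Because $\bs f\perp X^0_\alpha$ and $\bs f\perp\bs u_2$, the only contribution to the $\bs f$-component of $p^{(4)}(|y|)$ is the term $3\lambda'(|y|)\,(\nabla'_{\partial/\partial x})^2X^0_\alpha|_{x=|y|}$; projecting $(\nabla'_{\partial/\partial x})^2X^0_\alpha=a_1'\phi-b_1'\xi-c_1'X^0_\beta-(a_1^2+b_1^2+c_1^2)X^0_\alpha$ onto $\bs f$ and using $a_1=-yc_1$ yields a multiple of $(1+y^2)(2b_1'-c_1')$. The decisive identity is $2b_1-c_1=-2\sqrt5(x^2-y^2)/(xh)$, whence $(2b_1-c_1)'|_{x=|y|}=-4\sqrt5/h(|y|,y)\neq0$, so the $\bs f$-component is nonzero and $p''(|y|),p'''(|y|),p^{(4)}(|y|)$ point along the independent directions $X^0_\alpha,\bs u_2,\bs f$.

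For (2) the computation is cleaner. Fix $x$ and write $p(y)=f^0(x,y)$; by \eqref{def:theta12} its velocity is $p'(y)=\mu(y)X^0_\beta$ with $\mu=2y/h$, vanishing to first order at $y=0$, so $p''(0)=\mu'(0)X^0_\beta\neq\bs 0$ gives the direction $X^0_\beta$. At $y=0$ one has $a_2=-1$ (because $2X_0+\sqrt5-(x^2+y^2)X_0'/x=h$ there) and $b_2=c_2=0$, hence $\nabla'_{\partial/\partial y}X^0_\beta|_{y=0}=-\phi$, giving $p'''(0)$ a nonzero $\phi$-component. Since $\tilde{\bs f}\perp X^0_\beta$ and $\tilde{\bs f}\perp\phi$, the only $\tilde{\bs f}$-contribution to $p^{(4)}(0)$ is $3\mu'(0)\,(\nabla'_{\partial/\partial y})^2X^0_\beta|_{y=0}$; projecting $(\nabla'_{\partial/\partial y})^2X^0_\beta=a_2'\phi-b_2'\xi-c_2'X^0_\alpha-(a_2^2+b_2^2+c_2^2)X^0_\beta$ onto $\tilde{\bs f}\propto B_2\xi+C_2X^0_\alpha$ gives a multiple of $b_2'B_2+c_2'C_2$, and at $y=0$ one finds $b_2'=2B_2/h$, $c_2'=2C_2/h$, so $b_2'B_2+c_2'C_2=(2/h)(B_2^2+C_2^2)>0$ by Theorem \ref{thm:ycurve}(1). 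Thus $p''(0),p'''(0),p^{(4)}(0)$ point along the independent directions $X^0_\beta,\phi,\tilde{\bs f}$.

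The main obstacle is the fourth-order step: one must carry the second covariant derivatives of $X^0_\alpha$ and $X^0_\beta$ and recognize that orthogonality of $\bs f$ (resp.\ $\tilde{\bs f}$) to the two lower-order directions collapses the relevant projection to a single scalar, whose nonvanishing rests on the clean identities $2b_1-c_1=-2\sqrt5(x^2-y^2)/(xh)$ and $a_2|_{y=0}=-1$, $b_2|_{y=0}=c_2|_{y=0}=0$. The case $y<0$ is identical (or follows from the reflection in (A4)), and because all coefficients $a_i,b_i,c_i$ in Lemma \ref{lemma:dPhi} are analytic on all of $D$, these derivatives may be evaluated directly at the singular points $x=|y|$ and $y=0$ without passing to the subdomains $V_{ij}$.
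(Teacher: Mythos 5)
Your proposal is correct and takes essentially the same approach as the paper: both Taylor-expand the coordinate curve at the singular point using the structure equations of Lemma \ref{lemma:dPhi}, and both identify the frame directions $X^0_\alpha$, $\bs{u}_2$, $\bs{f}$ (resp.\ $X^0_\beta$, $\phi$, $\tilde{\bs{f}}$) as carrying orders $2$, $3$, $4$ --- your linear-independence criterion for $p''$, $p'''$, $p^{(4)}$ is equivalent to the paper's vanishing-order analysis of the projections $w_i$ (resp.\ $\tilde w_i$) onto those same vectors. If anything, your fourth-order step is more explicit than the paper's, which merely asserts $\langle (\nabla'_{\partial/\partial x})^2 X^0_\alpha, \bs{f}\rangle(|y|,y)\neq 0$, whereas you substantiate it via the identities $2b_1-c_1=-2\sqrt{5}(x^2-y^2)/(xh)$ and $b_2'B_2+c_2'C_2=(2/h)(B_2^2+C_2^2)$.
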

\begin{proof}
Let $F^0(x,y)=[\phi,X^0_{\alpha},X^0_{\beta},\xi](x,y)$ be the frame field determining $f^0(x,y)$ on $D$.
For (1), let $f^0(x,y)$ be an $x$-curve with fixed $y\neq 0$.
We study the curve only in a small neighborhood of $(|y|,y)$.
Now, the first derivative of the curve is given by $f^0_x(x,y)=\left((x^2-y^2)/(xh(x,y))\right) X^0_{\alpha}(x,y)$ and we have
\begin{gather*}
\left((x^2-y^2)/(xh(x,y))\right)_x
=(x^2h^2(x,y))^{-1}\left((x^2+y^2)h(x,y)+(x^2-y^2)^2(X_0''-X_0'/x)\right),\\
f^0_{xx}(x,y)=\left((x^2-y^2)/(xh(x,y))\right)_x X^0_{\alpha}
+\left((x^2-y^2)/(xh(x,y))\right)\left(\nabla'_{\partial/\partial x}X^0_{\alpha}\right)(x,y),\\
f^0_{xxx}(|y|,y)=\left((x^2-y^2)/(xh(x,y))\right)_{xx}{(|y|,y)}X^0_{\alpha}(|y|,y)+(4/h(|y|,y))\left(\nabla'_{\partial/\partial x}X^0_{\alpha}\right)(|y|,y).
\end{gather*}
We define the functions $w_i(x) \ (i=1,2,3)$ of $x$ by
\begin{align*}
w_1(x):=\langle f^0(x,y),X^0_{\alpha}(|y|,y)\rangle,\quad
w_2(x):=\langle f^0(x,y),{\bs{u}}_2(|y|,y)\rangle,\quad
w_3(x):=\langle f^0(x,y),{\bs{f}}(|y|,y)\rangle.
\end{align*}
Then, we directly have
\begin{gather*}
(w_1)_x(|y|)=(w_2)_x(|y|)=(w_3)_x(|y|)=(w_2)_{xx}(|y|)=(w_3)_{xx}(|y|)=0,\quad
(w_1)_{xx}(|y|)\neq 0.
\end{gather*}
Furthermore, we have $(w_2)_{xxx}(|y|)\neq 0$ by $\nabla'_{\partial/\partial x}X^0_{\alpha}=a_1\phi-b_1\xi-c_1X^0_{\beta}$ and Lemma \ref{lemma:dPhi}.
For $w_3$, $(w_3)_{xxx}(|y|)=0$ holds by $\langle\nabla'_{\partial/\partial x}X^0_{\alpha},{\bs{f}}\rangle(|y|,y)=0$, and further we have $(w_3)_{xxxx}(|y|)\neq 0$ by $\langle (\nabla'_{\partial/\partial x})^2X^0_{\alpha}, \ {\bs{f}} \rangle (|y|,y)\neq 0$.
Hence, for $t:=x-|y|$, we have
\begin{gather*}
w_1(|y|+t)\approx w_1(|y|)+(t^2/2)(w_1)_{xx}(|y|),\quad
w_2(|y|+t)\approx w_2(|y|)+(t^3/6)(w_2)_{xxx}(|y|),\\
w_3(y+t)\approx w_3(|y|)+(t^4/24)(w_3)_{xxxx}(|y|),
\end{gather*}
as $t$ tends to 0, which show that the $x$-curve $f^0(x,y)=(w_1,w_2,w_3)(x)$ has the cusp of type $(2,3,4)$ at the point $(|y|,y)$.
 
For (2), let $f^0(x,y)$ be a $y$-curve with fixed $x$. We study the curve only in a small neighborhood of $(x,0)$.
The first derivative of the curve is given by $f^0_y(x,y)=(2y/h(x,y)) X^0_{\beta}(x,y)$, and we have
\begin{gather*}
(2y/h(x,y))_y=(2/h^2(x,y))\left(h(x,y)-2y^2(X_0'/x)\right),\\
f^0_{yy}(x,y)=(2y/h(x,y))_y X^0_{\beta}(x,y) +(2y/h(x,y)) (\nabla'_{\partial/\partial y}X^0_{\beta})(x,y),\\
f^0_{yyy}(x,0)=(2y/h(x,y))_{yy}(x,0) X^0_{\beta}(x,0) +(4/h(x,0)) (\nabla'_{\partial/\partial y}X^0_{\beta})(x,0).
\end{gather*}
We define the functions ${\tilde w}_i(y) \ (i=1,2,3)$ by
\begin{align*}
{\tilde w}_1(y):=\langle f^0(x,y),X^0_{\beta}(x,0)\rangle,\quad
{\tilde w}_2(y):=\langle f^0(x,y),\phi(x,0)\rangle,\quad
{\tilde w}_3(y):=\langle f^0(x,y),\tilde{\bs{f}}(x,0)\rangle.
\end{align*}
Then, we obtain
\begin{gather*}
({\tilde w}_1)_{yy}(0)\neq 0,\quad
({\tilde w}_2)_{yyy}(0)\neq 0,\quad
({\tilde w}_3)_{yyyy}(0)\neq 0,
\end{gather*}
and that the lower derivatives of each $\tilde{w}_i(y)$ vanish at $y=0$, in the same way as in (1).
Hence, we have verified that the $y$-curve also has the cusp of type $(2,3,4)$ at $y=0$.
\end{proof}

By Theorem \ref{thm:cusp}, the curves $f^0(|y|,y)$ of $y$ and $f^0(x,0)$ of $x$ are the cuspidal edges in a curvature surface $f^0(x,y)$ on $D$.
The curve $f^0(x,0)$ of $x$ is also the singular set of the Poincar\'{e} metric $\check{g}_H$ on $D$.
For these curves, we have the following corollary. In the following two corollaries, $F^0(x,y)=[\phi,X^0_{\alpha},X^0_{\beta},\xi](x,y)$ is the frame field determining $f^0(x,y)$ on $D$. 
%%%%%%%%%%%%%%%%%%%%%%%%%%%%%%
%%%%% Corollary 3.9
%%%%%%%%%%%%%%%%%%%%%%%%%%%%%%
\begin{cor}\label{cor:f0x}
(1) The two curves $f^0(|y|,y)$ of $y$ are the envelopes of the family of $y$-curves $f^0(x,y)$ with $x$.
(2) The vector $\phi^0:=\phi(x,0)$ does not depend on $x$. The curve $f^0(x,0)$ of $x$ lies on a $2$-sphere $\mathbb{S}^2_{y=0}$ of radius $1/2$ in an affine hyperplane $\R^3_{y=0}$ perpendicular to $\phi^0$.
\end{cor}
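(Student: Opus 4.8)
The plan is to treat the two assertions separately, both by direct differentiation from the expression $df^0=\theta_1X^0_\alpha+\theta_2X^0_\beta$ in \eqref{def:theta12}, i.e.
\[
f^0_x=\frac{x^2-y^2}{x\,h(x,y)}\,X^0_\alpha,\qquad f^0_y=\frac{2y}{h(x,y)}\,X^0_\beta,
\]
combined with Lemma \ref{lemma:dPhi} and Theorem \ref{thm:xcurve}.

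For part (1), I would first note that the tangent of each member $f^0(x,\cdot)$ of the family (indexed by $x$) is parallel to $X^0_\beta(x,y)$. The candidate envelope is the curve $c(y):=f^0(|y|,y)$, which for $y>0$ is $c(y)=f^0(y,y)$ and for $y<0$ is $c(y)=f^0(-y,y)$; these are the two branches named in the statement. Differentiating along the branch $y>0$ by the chain rule gives $c'(y)=f^0_x(y,y)+f^0_y(y,y)$, and the first term vanishes because $x^2-y^2=0$ at $x=|y|$. Hence $c'(y)=(2y/h(y,y))\,X^0_\beta(y,y)$, which equals the tangent $f^0_y(y,y)$ of the member $f^0(y,\cdot)$ at the contact point $(y,y)$; the branch $y<0$ is identical. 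Since the member with parameter $x_0$ meets $c$ exactly at $y=\pm x_0$ and the two tangents coincide there, $c$ is tangent to every member of the family, which is the envelope condition.

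For part (2), to see that $\phi^0:=\phi(x,0)$ is independent of $x$, I would use $\nabla'_{\partial/\partial x}\phi=-a_1X^0_\alpha$ from Lemma \ref{lemma:dPhi} and observe that $a_1(x,0)=0$ because of the factor $2y$ in $a_1$; thus $\partial_x\phi(x,0)=0$ and $\phi(x,0)$ is a constant vector $\phi^0$. The sphere statement then follows from Theorem \ref{thm:xcurve} evaluated at $y=0$: the $x$-curve $f^0(x,0)$ lies on $\mathbb{S}^2_0$ of radius $(2\sqrt{5})^{-1}\sqrt{(5+0)/(1+0)}=(2\sqrt{5})^{-1}\sqrt{5}=1/2$ inside the affine hyperplane perpendicular to ${\bs u}(0)$. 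Since ${\bs u}(0)=(1)^{-1/2}(0\cdot X^0_\beta-\phi)(x,0)=-\phi^0$, this hyperplane is perpendicular to $\phi^0$, giving exactly the claimed $2$-sphere of radius $1/2$ in $\R^3_{y=0}$.

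I do not expect a serious obstacle, since both parts reduce to the vanishing of a single factor ($x^2-y^2$ at $x=|y|$ for (1), and $a_1$ at $y=0$ for (2)) together with results already established. The only point deserving care is the meaning of \emph{envelope} at the points $x=|y|$, which lie in the degenerate set $S_1\cap D$ where $g_0$ is singular and where, by Theorem \ref{thm:cusp}(1), the $x$-curves have cusps. However, the frame field $F^0$ and the map $f^0$ are analytic on all of $D$, so the tangency computation is valid there verbatim; moreover the contact points $(|y|,y)$ with $y\neq0$ avoid the cuspidal locus $y=0$ of the $y$-curves, so each member is smooth at its contact with $c$.
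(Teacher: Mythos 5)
Your proposal is correct and follows essentially the same route as the paper: part (1) is exactly the paper's argument that the derivative of the diagonal curve $f^0(|y|,y)$ reduces to $(\partial f^0/\partial y)(|y|,y)$ because the coefficient $(x^2-y^2)/(xh)$ of $X^0_{\alpha}$ in \eqref{def:theta12} vanishes at $x=|y|$, forcing tangency with the $y$-curves; part (2) is the paper's observation that $a_1(x,0)=0$ gives $(\nabla'_{\partial/\partial x}\phi)(x,0)=0$, with the sphere statement then read off from Theorem \ref{thm:xcurve} at $y=0$. Your extra details (the radius computation $(2\sqrt{5})^{-1}\sqrt{5}=1/2$, the identification ${\bs{u}}(0)=-\phi^0$, and the remark that analyticity of $F^0$ and $f^0$ on $D$ justifies the computation at the singular set) are all consistent with, and implicit in, the paper's proof.
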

\begin{proof}
(1) For the sake of simplicity, we assume $y>0$.
The derivative of the $y$-curve $f^0(y,y)$ is given by $\left(\left(\partial/\partial x+\partial/\partial y\right)f^0\right)(y,y) =(\partial f^0/\partial y)(y,y)$ by \eqref{def:theta12}.
Hence, the tangent vectors of both $y$-curves $f^0(x,y)$ and $f^0(y,y)$ coincide at $(y,y)$, which implies that the $y$-curve $f^0(y,y)$ is the envelope of the family of $y$-curves $f^0(x,y)$.

(2) We have $(\nabla'_{\partial/\partial x}\phi)(x,0)=0$ by Lemma \ref{lemma:dPhi}.
The other statement is already proved in Theorem \ref{thm:xcurve}.
\end{proof}

\small
Now, let us visualize the curvature surface $f^0 (x,y)$ around the curve $f^0(y,y)$ by using the approximation $f^{\delta_n} (x,y)$ of $f^0 (x,y)$, where $\delta_n:=1/n$, that will be defined for each positive integer $n$ in Section \ref{sec:approx}. In Figures \ref{fig:diagonal-x} and \ref{fig:diagonal-y} below, we illustrate $f^{\delta_n} (x,y)$ via the projection $\pi: \R^4\rightarrow \R^3$, where $f^{\delta_n} (y_0,y_0)={\bf 0}$ and $\pi \left((r_1,r_2,r_3,r_4)\right) = (r_1,r_2,r_3)$ for the coordinates $(r_1,r_2,r_3,r_4)$ of $\R^4$ with respect to the frame $F^0 (y_0,y_0) = \mathrm{Id}$.
\normalsize

%%%%%%%%%%%%%%%%%%%%%%%%%%%%%%
%%%%% Figure 1
%%%%%%%%%%%%%%%%%%%%%%%%%%%%%%
\noindent
\begin{figure}[H]
\hfill
\begin{minipage}{0.45\linewidth}
\centering
\includegraphics[width=\linewidth,keepaspectratio]{./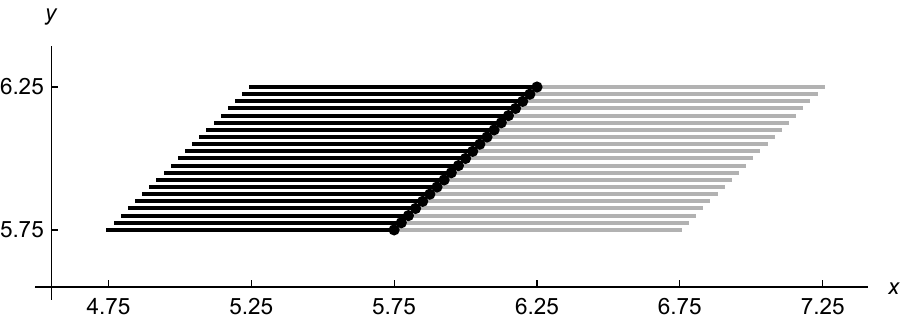}
\end{minipage}
\hfill
\begin{minipage}{0.06\linewidth}
$\xrightarrow{\pi\circ f^{1/20}}$
\end{minipage}
\hfill
\begin{minipage}{0.45\linewidth}
\centering
\includegraphics[width=\linewidth,keepaspectratio]{./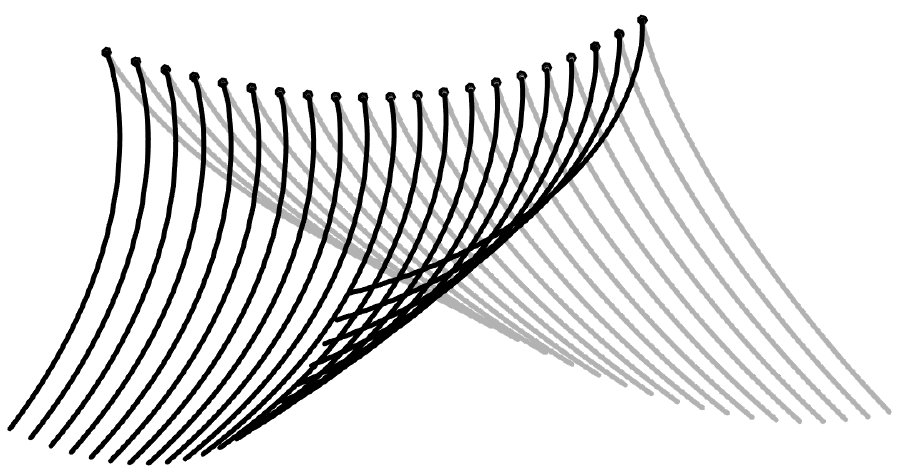}
\end{minipage}
\hfill
\caption{
\small
This shows $x$-curves passing through the points $\left\{f^{1/20} (y_k,y_k)\right\}$ on the cuspidal edge: the lines in the domain are given by $y_k = 5.75 + 0.025 k$\, $(k=0,1,\dots,20)$ and $y_k-1 \leq x \leq y_k+1$.
Each $x$-curve is drawn with black for $y_k-1 \leq x < y_k$ and with gray for $y_k < x \leq y_k+1$.
\normalsize}
\label{fig:diagonal-x}
\end{figure}

%%%%%%%%%%%%%%%%%%%%%%%%%%%%%%
%%%%% Figure 2
%%%%%%%%%%%%%%%%%%%%%%%%%%%%%%
\noindent
\begin{figure}[H]
\hfill
\begin{minipage}{0.45\linewidth}
\centering
\includegraphics[width=\linewidth,keepaspectratio]{./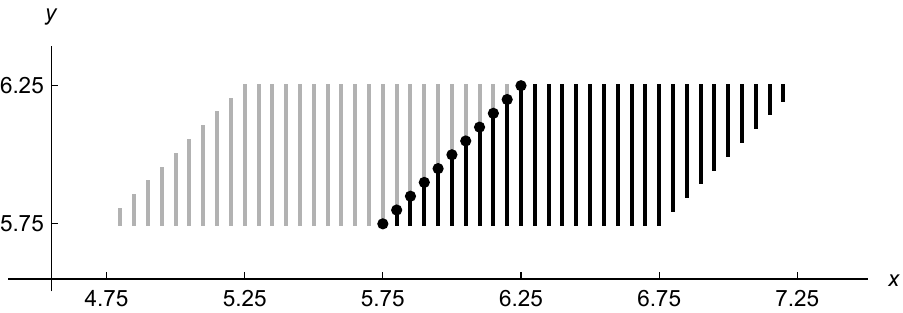}
\end{minipage}
\hfill
\begin{minipage}{0.06\linewidth}
$\xrightarrow{\pi\circ f^{1/20}}$
\end{minipage}
\hfill
\begin{minipage}{0.45\linewidth}
\centering
\includegraphics[width=\linewidth,keepaspectratio]{./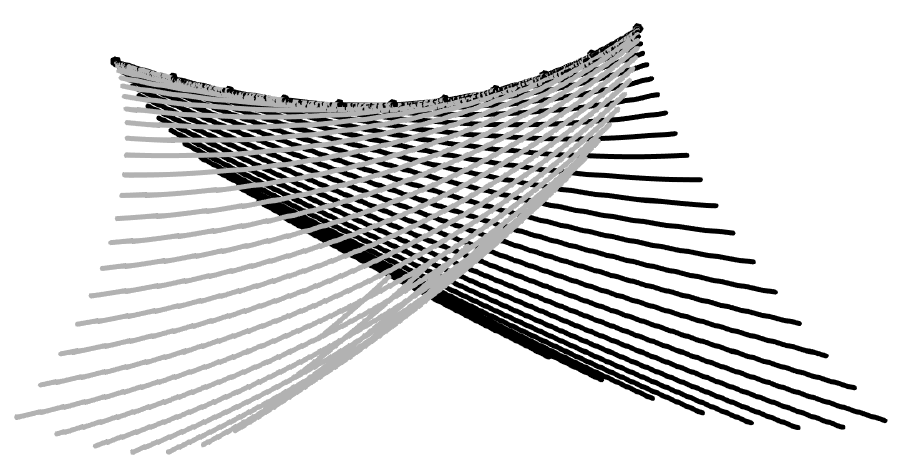}
\end{minipage}
\hfill
\caption{
\small
This shows the envelope made by $y$-curves passing through the points $\left\{f^{1/20} (x_k,x_k)\right\}$ from the same domain as in Figure \ref{fig:diagonal-x}: the lines in the domain are given by $x_k = 4.75 + 0.05 k$\, $(k=1,2,\dots,49)$, $x_k-1 \leq y \leq x_k+1$ and $5.75 \leq y \leq 6.25$.
\normalsize}
\label{fig:diagonal-y}
\end{figure}

\normalsize
Next, for a curvature surface $f^0(x,y)$ on $D$, we study the relation with $f^0|_{D_1}(x,y)$ and $f^0|_{D_2}(x,-y)$.
To the end, we translate the surface $f^0(x,y)$ such that the obtained surface $f^0(x,y)$ satisfies $f^0(p_0)=\bs{0}$ at some point $p_0:=(x_0,0)$.
Then, for the sake of simplicity, we recognize that the frame of $\R^4$ containing the new $f^0(x,y)$ is given by $F^0(p_0)=\mathrm{Id}$.
That is, $[X^0_{\alpha},X^0_{\beta},\xi](p_0)$ is the standard frame of $\R^3_{y=0}=\{(0,x_1,x_2,x_3)^T|x_i\in \R\}$, where $\bs{a}^T$ is the transposed vector of $\bs{a}$.  
%%%%%%%%%%%%%%%%%%%%%%%%%%%%%%
%%%%% Corollary 3.10
%%%%%%%%%%%%%%%%%%%%%%%%%%%%%%
\begin{cor}\label{cor:Af0}
Let $f^0(x,y)$ and $F^0(x,y)$ be the curvature surface and the frame field, expressed by the above coordinate system of $\R^4$.
Then, for the orthogonal matrix $B=[b_{ij}]$ such that $b_{11}=-1$, $b_{ii}=1 \ (2\leq i\leq 4)$ and $b_{ij}=0 \ (i\neq j)$, we have $(B\circ f^0)(x,y)=f^0(x,-y)$, $(B\circ \phi)(x,y)=-\phi(x,-y)$ and $(B\circ \xi)(x,y)=\xi(x,-y)$ for $y\geq 0$.
Furthermore, $(B\circ\tilde{\bs{u}})(x)=\tilde{\bs{u}}(x)$ and $(B\circ\tilde A)(x)=\tilde A(x)$ hold.
\end{cor}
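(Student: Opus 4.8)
The plan is to reduce all six identities to the single frame-level relation
\begin{gather*}
F^0(x,-y)=BF^0(x,y)B \quad\text{on } D,
\end{gather*}
and then to read off the columns and integrate. The starting point is the parity under $y\mapsto -y$ of the coefficient functions of Lemma \ref{lemma:dPhi}: since $h(x,-y)=h(x,y)$ and $X_0=X_0(x)$, the functions $b_1,c_1,a_2$ are even in $y$, while $a_1,b_2,c_2$ are odd in $y$. Note also that $D=\{x>0\}$ is simply connected and invariant under $\sigma(x,y):=(x,-y)$, so the uniqueness in Theorem \ref{thm:extension} applies globally on $D$.

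Next I would set $\Psi(x,y):=F^0(x,-y)=F^0\circ\sigma$. Differentiating $dF^0=F^0\Omega$ and pulling back by $\sigma$ gives
\begin{gather*}
d\Psi=\Psi\big(\Omega_1(x,-y)\,dx-\Omega_2(x,-y)\,dy\big).
\end{gather*}
The heart of the argument is then the two conjugation identities
\begin{gather*}
B\,\Omega_1(x,-y)\,B=\Omega_1(x,y),\qquad -\,B\,\Omega_2(x,-y)\,B=\Omega_2(x,y),
\end{gather*}
which follow at once from \eqref{def:Omega12} and the parities above: conjugation by $B=\mathrm{diag}(-1,1,1,1)$ flips the sign of exactly those entries lying in the first row or column (the $a_1$- and $a_2$-entries), and these are precisely the entries whose odd/even behaviour forces the required sign change, while the remaining entries ($b_1,c_1,b_2,c_2$) are left untouched and match by their own parities. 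Consequently $\Psi B=F^0(x,-y)B$ solves the same structure equation $d(\Psi B)=(\Psi B)\Omega$ as $F^0$. Since $\Psi(x_0,0)B=F^0(x_0,0)B=B$, uniqueness yields $F^0(x,-y)B=BF^0(x,y)$, that is $F^0(x,-y)=BF^0(x,y)B$. Reading off the four columns gives $B\phi(x,y)=-\phi(x,-y)$, $BX^0_\alpha(x,y)=X^0_\alpha(x,-y)$, $BX^0_\beta(x,y)=X^0_\beta(x,-y)$ and $B\xi(x,y)=\xi(x,-y)$, which already contains two of the asserted relations.

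Finally I would obtain the surface and centre relations by integration. Applying $B$ to $df^0=\theta_1X^0_\alpha+\theta_2X^0_\beta$ of \eqref{def:theta12} and comparing with $\sigma^*(df^0)$—using that both $\theta_1$ and $\theta_2$ are $\sigma$-invariant together with the column relations just obtained—shows $d(Bf^0)=d(f^0\circ\sigma)$; since $\sigma(p_0)=p_0$ and $f^0(p_0)=\bs{0}$, the constant of integration vanishes and $(B\circ f^0)(x,y)=f^0(x,-y)$. For the last two claims I would feed the column relations into the data of Theorem \ref{thm:ycurve}: because $B_2,C_2$ depend only on $x$, $B$ fixes $\tilde{\bs{u}}(x)=(B_2^2+C_2^2)^{-1/2}(-B_2X^0_\alpha+C_2\xi)$ and satisfies $B\tilde{\bs{f}}(x,y)=\tilde{\bs{f}}(x,-y)$; substituting into $f^0=(B_2^2+C_2^2)^{-1/2}\tilde{\bs{f}}+\tilde A$ and comparing with $f^0(x,-y)=Bf^0(x,y)$ forces $B\tilde A(x)=\tilde A(x)$.

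The only genuine obstacle is bookkeeping: correctly pinning down the parities of $a_i,b_i,c_i$ and verifying that $B$ (and not some other sign pattern $\mathrm{diag}(\pm1,\dots,\pm1)$) is exactly the matrix conjugating $\Omega_1(x,-y)$ and $-\Omega_2(x,-y)$ back to $\Omega_1,\Omega_2$. Once this conjugation identity is in place, the frame relation is immediate from uniqueness, and every remaining assertion is a formal consequence of it via the integration and the explicit formulas of Theorems \ref{thm:xcurve} and \ref{thm:ycurve}.
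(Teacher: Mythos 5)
Your proof is correct, and its core coincides with the paper's: the paper defines $\bar F^0(x,y):=[-\phi,X^0_{\alpha},X^0_{\beta},\xi](x,-y)$, which is precisely your $\Psi B=F^0(x,-y)B$, checks via Lemma \ref{lemma:dPhi} that it solves the same structure equation $d\bar F^0=\bar F^0\Omega$ for $y\geq 0$ (your conjugation identities $B\,\Omega_1(x,-y)\,B=\Omega_1(x,y)$ and $-B\,\Omega_2(x,-y)\,B=\Omega_2(x,y)$ are exactly this parity bookkeeping, packaged as matrix identities), and then invokes uniqueness together with evaluation at $p_0$ to conclude $\bar F^0=BF^0$; the surface relation $(B\circ f^0)(x,y)=f^0(x,-y)$ and the identity $(B\circ\tilde A)(x)=\tilde A(x)$ are then obtained essentially as you do, by integrating \eqref{def:theta12} and comparing the two expressions of $f^0$ as a family of $y$-curves. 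The one point where you genuinely diverge is $(B\circ\tilde{\bs{u}})(x)=\tilde{\bs{u}}(x)$: the paper argues geometrically that $B$ must preserve the hyperplane $\R^3_x$ because the $y$-curve on $\mathbb{S}^2_x$ is not a circle (a fact it supports by pointing to a figure), whereas you read the identity off algebraically from $\tilde{\bs{u}}=(B_2^2+C_2^2)^{-1/2}(-B_2X^0_{\alpha}+C_2\xi)$, the $y$-independence of this expression, and the column relations $BX^0_{\alpha}(x,y)=X^0_{\alpha}(x,-y)$, $B\xi(x,y)=\xi(x,-y)$. Your route is cleaner there: it makes the statement a purely formal consequence of the frame relation, with no appeal to non-circularity of the $y$-curves.
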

\begin{proof}
Let $\bar y=-y$ for $y\geq 0$.
We put $\bar\phi(x,y):=-\phi(x,\bar y)$, $\bar X^0_{\alpha}(x,y):=X^0_{\alpha}(x,\bar y)$, $\bar X^0_{\beta}(x,y):=X^0_{\beta}(x,\bar y)$ and $\bar\xi(x,y):=\xi(x,\bar y)$.
We study the derivative in $y\geq 0$ of $\bar F^0:=[\bar\phi, \bar X^0_{\alpha}, \bar X^0_{\beta},\bar\xi]$ by Lemma \ref{lemma:dPhi}.
Then, we have $d\bar F^0(x,y)=\bar F^0(x,y)\Omega(x,y)$ for $y\geq 0$, where $\Omega(x,y)$ is the differential $1$-form in \eqref{eq:dF0} such that $d F^0(x,y)= F^0(x,y)\Omega(x,y)$.
Thus, $\bar F^0(x,y)$ is another solution to \eqref{eq:dF0} in $y\geq 0$, and hence there is an orthogonal matrix $B$ such that $BF^0(x,y)=\bar F^0(x,y)$.
Then, $B$ is determined at $p_0$ by $F^0(p_0)=[\phi,X^0_{\alpha},X^0_{\beta},\xi](p_0)=\mathrm{Id}$ and $\bar F^0(p_0)=[-\phi,X^0_{\alpha},X^0_{\beta},\xi](p_0)$ as the matrix in the statement.
Furthermore, $f^0(x,\pm y)$ is the integral surface of $(X^0_{\alpha},X^0_{\beta})(x,\pm y)$ in \eqref{def:theta12}, respectively, and $(B\circ f^0)(x,0)=f^0(x,0)$ holds by $f^0(x,0)\in \R^3_{y=0}$.
Hence, we have $(B\circ f^0)(x,y)=f^0(x,-y)$.
In consequence, we have verified the first statement in the corollary. 

For $(B\circ\tilde{\bs{u}})(x)=\tilde{\bs{u}}(x)$: Any $y$-curve $f^0(x,y)$ lies on a $2$-sphere $\mathbb{S}^2_x$ in $\R^3_x$ perpendicular to $\tilde{\bs{u}}(x)$ and it is not a circle (for example, see Figure \ref{fig:ycurvelong} in the next section).
Hence, we have $B(\R^3_x)=\R^3_x$ by $(B\circ f^0)(x,y)=f^0(x,-y)$, which shows $(B\circ\tilde{\bs{u}})(x)=\tilde{\bs{u}}(x)$ by $(B\circ\tilde{\bs{u}})(x_0)=\tilde{\bs{u}}(x_0)$.

For $(B\circ\tilde A)(x)=\tilde A(x)$: We have $f^0(x,y)=(B_2^2+C_2^2)^{-1/2}(x)\tilde{\bs{f}}(x,y)+\tilde A(x)$ and $f^0(x,-y)=(B_2^2+C_2^2)^{-1/2}(x)\tilde{\bs{f}}(x,-y)+\tilde A(x)$ for $y\geq 0$.
Then, since $(B\circ f^0)(x,y)=f^0(x,-y)$ and $(B\circ\tilde{\bs{f}})(x,y)=\tilde{\bs{f}}(x,-y)$ by the definition of $\tilde{\bs{f}}(x,y)$, we obtain $(B\circ\tilde A)(x)=\tilde A(x)$.
In consequence, we have verified the corollary.

Here, we remark the relation with $F^0(x,y)$ and $F^0(x,-y)$ for $y\geq 0$, explicitly.
For $\phi(x,\pm y)$, the first coordinate elements are equal and the other elements have the different sign.
For each $X^0_{\alpha}(x,\pm y)$, $X^0_{\beta}(x,\pm y)$ and $\xi(x,\pm y)$, the first elements have the different sign and the other elements are equal.
\end{proof}

\section{Structure of the extended curvature surface}\label{sec:structure}

Let $f^0(x,y)$ be a curvature surface on $D=\{(x,y)|\ x>0\}$ and $F^0(x,y)=[\phi,X^0_{\alpha},X^0_{\beta},\xi](x,y)$ be the frame field determining $f^0(x,y)$, defined in the previous section.
In this section, we study the limit of $x$-curves $f^0(x,y)$ with fixed $y$ as $x$ tends to $0$ and $\infty$, and the limit of $y$-curves $f^0(x,y)$ with fixed $x$ as $y$ tends to $\pm\infty$.
For $x$-curves $f^0(x,y)$ as $x$ tends to $0$, we change $x$ for a new parameter $u$ by $x=e^{-u}$: the change is reasonable for the metric $g_0$ on $D$ of \eqref{def:g0} and the equations of Lemma \ref{lemma:dPhi}.
Then, any $u$-curve $f^0(e^{-u},y)$ uniformly converges to a circle $\mathbb{S}^1(y)$ parametrized by $u$ as $u$ tends to $\infty$, and any $y$-curve uniformly converges to a point $p(x)$ as $y$ tends to $\pm\infty$.
Furthermore, the convergence of $u$-curves is uniform with respect to $y\in (-\infty,\infty)$ and the convergence of $y$-curves is also uniform in the wider sense with respect to $x\in (0,\infty)$ (see Definition \ref{lemdef:unifconv} in this section for the uniform convergence).
Thus, $\mathbb{S}^1(y)$ and $p(x)$, respectively, are continuous for $y$ and $x$.
Through further study for these convergences, we can understand the structure on the surface $f^0(x,y)$ in $\R^4$ in detail as mentioned in the introduction, and can connect two curvature surfaces defined on $D$ and $D(-):=\{(x,y)\;|\; x<0\}$ continuously at the origin $(0,0)\in \R^2$ in a sense. In this section, we write $\nabla'_{\partial/\partial x}$ as $\partial/\partial x$, since $\nabla'$ is the canonical connection of $\R^4$. 

Now, let $f^0(x,y)$ be an $x$-curve with fixed $y$ in a curvature surface on $D$.
The $x$-curve $f^0(x,y)$ is included in an affine hyperplane $\R^3_y$ perpendicular to ${\bs{u}}(y)$ by Theorem \ref{thm:xcurve}, and the orthonormal frame field of $\R^3_y$ along the $x$-curve is given by $[X^0_{\alpha},{\bs{f}},{\bs{u}}_2](x,y)$ in \eqref{def:ff-u-X0alpha-f-u2}.
We change the parameter $x$ for $u$ by $x=e^{-u}$, and then, for a vector $Z(x)$ of $x$, we denote by $Z(e^{-u})$ the vector $\bar Z(u):= Z(e^{-u})$ of $u$.
The vectors ${\bs{f}}(e^{-u},y)$ and ${\bs{u}}_2(e^{-u},y)$ satisfy the equations
\begin{gather*}
(\partial{\bs{f}}/\partial u)(e^{-u},y)=-v_3(e^{-u},y)X^0_{\alpha}(e^{-u},y), \ \ 
(\partial{\bs{u}}_2/\partial u)(e^{-u},y)=v_2(e^{-u},y)X^0_{\alpha}(e^{-u},y),
\end{gather*}
by $\partial/\partial u=-x\,\partial/\partial x$ and Lemma \ref{lemma:dPhi}, where
\begin{align*}
v_2(x,y) &:= (5+4y^2)^{-1/2} h^{-1}(x,y)\big((5+4y^2)(X_0+{\sqrt{5}}/{2})+\sqrt{5}(x^2-y^2)\big),\\
v_3(x,y) &:= 2\sqrt{5}(x^2-y^2) h^{-1}(x,y) \sqrt{(1+y^2)(5+4y^2)^{-1}}.
\end{align*}
Hence we have $\partial X^0_{\alpha}/\partial u=v_3{\bs{f}}-v_2{\bs{u}}_2$.
For the functions $v_i(x,y) \ (i=2,3)$, $v_i(0,y)$ are also well defined: we have $v_2(0,0)=\sqrt{5}/2$, $v_2(0,y)>0$ for $y\in \R$ and $v_3(0,0)=0$, $v_3(0,y)<0$ for $y\neq 0$.
As $x$ tends to $0$, the functions $v_2(x,y)$ and $v_3(x,y)$, respectively, converge to  $v_2(0,y)$ and $v_3(0,y)$ uniformly with respect to $y\in \R$.
In fact, we have the following fact in the same way as in Corollary \ref{cor:ineq4h}-(2): there is a number $t_1 \ (0<t_1 <1)$ such that
\begin{gather}
|v_2(x,y)-v_2(0,y)|
<\frac{3}{2} \frac{5+\sqrt{5}+4y^2}{\sqrt{5+4y^2}} \frac{x^2}{h(x,y)}
<\frac{3}{2} \frac{5+\sqrt{5}+4y^2}{\sqrt{5+4y^2}} \frac{x^2}{5+\sqrt{5}+y^2},
\label{ineq:v2}\\
|v_3(x,y)-v_3(0,y)|
<\frac{2\sqrt{5}}{7} \sqrt\frac{1+y^2}{5+4y^2} \frac{x^2(7+y^2)}{h(x,y)}
<\frac{2\sqrt{5}}{7} \sqrt\frac{1+y^2}{5+4y^2} x^2
\label{ineq:v3}
\end{gather}
hold for $0<x<t_1$ and any $y\in \R$.
By the above equations, $M(u,y):=[X^0_{\alpha},{\bs{f}},{\bs{u}}_2](e^{-u},y)$ satisfies the following equation for $(u,y)\in \R^2$:
\begin{gather}\label{diffeq:M}
\frac{\partial M}{\partial u}(u,y) = M(u,y)V(u,y),\quad
V(u,y):=
\begin{bmatrix}
0 & -v_3(x(u),y) & v_2(x(u),y)\\
v_3(x(u),y) & 0 & 0\\
-v_2(x(u),y) & 0 & 0
\end{bmatrix}.
\end{gather}

In $V(u,y)$, we replace the functions $v_2(x,y)$ and $v_3(x,y)$ with $v_2^0(y):=v_2(0,y)$ and $v_3^0(y):=v_3(0,y)$, respectively, and denote by $V^0(y)$ the new $V(u,y)$.
Then, we define another matrix $N(u,y)= \left[{\bs{b}}(u,y),{\bs{a}}(u,y),{\bs{c}}(u,y)\right]$, by the differential equation 
\begin{gather}\label{diffeq:N}
\frac{\partial N}{\partial u}(u,y)
=N(u,y)V^0(y),\quad
V^0(y):=
\begin{bmatrix}
0 & -v^0_3 & v^0_2\\
v^0_3 & 0 & 0\\
-v^0_2 & 0 & 0
\end{bmatrix}
(y).
\end{gather}
%%%%%%%%%%%%%%%%%%%%%%%%%%%%%%
%%%%% Lemma 4.1
%%%%%%%%%%%%%%%%%%%%%%%%%%%%%%
\begin{lemma}\label{lemma:v0}
Let $N(u,y)=\left[{\bs{b}},{\bs{a}},{\bs{c}}\right](u,y)$ be a solution to \eqref{diffeq:N}.
Then, ${\bs{v}}^0(y):=(v_2^0{\bs{a}}+v_3^0{\bs{c}})(u,y)$ does not depend on $u$.
The solution $N(u,y)$ is a rotation with respect to the axis ${\bs{v}}^0(y)$ and the speed of its rotation is $\| {\bs{v}}^0(y)\|$.
\end{lemma}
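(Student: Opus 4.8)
The plan is to translate the matrix equation \eqref{diffeq:N} into three vector equations for the columns of $N$ and then verify both assertions by a short direct computation. Writing $N=[\bs{b},\bs{a},\bs{c}]$ and multiplying out $NV^0(y)$ column by column, the equation $\partial N/\partial u=NV^0$ is equivalent to
\[
\frac{\partial\bs{b}}{\partial u}=v_3^0\bs{a}-v_2^0\bs{c},\qquad
\frac{\partial\bs{a}}{\partial u}=-v_3^0\bs{b},\qquad
\frac{\partial\bs{c}}{\partial u}=v_2^0\bs{b},
\]
where I use that $v_2^0=v_2^0(y)$ and $v_3^0=v_3^0(y)$ are constant in $u$.

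First I would establish that $\bs{v}^0$ is $u$-independent by differentiating: $\partial\bs{v}^0/\partial u=v_2^0(-v_3^0\bs{b})+v_3^0(v_2^0\bs{b})=\bs{0}$, which is just the algebraic cancellation and yields the first assertion. Then I would identify $\bs{v}^0$ as the axis. Since $V^0(y)$ is skew-symmetric, the frame $N(u,y)$ stays orthonormal in $u$, and as $V^0$ is constant in $u$ the solution is the uniform one-parameter rotation $N(u,y)=N(0,y)\exp(uV^0(y))$. The kernel of $V^0$ is spanned by the coordinate vector $(0,v_2^0,v_3^0)^{T}$, whose image $N(u,y)\,(0,v_2^0,v_3^0)^{T}$ is exactly $\bs{v}^0=v_2^0\bs{a}+v_3^0\bs{c}$; equivalently, orthonormality of $(\bs{b},\bs{a},\bs{c})$ gives $\langle\bs{v}^0,\bs{b}\rangle=0$, and together with the constancy just shown this exhibits $\bs{v}^0$ as the fixed rotation axis.

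For the speed I would use the unit vector $\bs{b}$, which is perpendicular to the axis by $\langle\bs{v}^0,\bs{b}\rangle=0$. Its velocity has norm
\[
\left\|\frac{\partial\bs{b}}{\partial u}\right\|=\left\|v_3^0\bs{a}-v_2^0\bs{c}\right\|=\sqrt{(v_2^0)^2+(v_3^0)^2}=\|\bs{v}^0\|,
\]
using orthonormality of $\bs{a}$ and $\bs{c}$. Since a unit vector moving on the unit circle perpendicular to a fixed axis with speed $\|\bs{v}^0\|$ sweeps angle at exactly that rate, the angular speed of the rotation $N(u,y)$ is $\|\bs{v}^0(y)\|$, as claimed.

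I do not expect a genuine obstacle here: the whole lemma collapses to the cancellation $v_2^0(-v_3^0)+v_3^0v_2^0=0$ together with the standard fact that a constant skew generator rotates an orthonormal frame uniformly about its kernel. The only point needing a word of care is that the axis be well defined, i.e.\ $(v_2^0,v_3^0)\neq(0,0)$; this holds because $v_2^0(y)>0$ for all $y\in\R$, as noted just before \eqref{diffeq:M}.
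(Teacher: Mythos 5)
Your proof is correct and takes essentially the same route as the paper: the $u$-independence of $\bs{v}^0$ is exactly the paper's cancellation $(-v_2^0v_3^0+v_3^0v_2^0)\bs{b}=\bs{0}$, and the rotation statement rests in both cases on the fact that the constant skew-symmetric generator $V^0(y)$ produces a uniform rotation about its kernel direction $(0,v_2^0,v_3^0)^T$, whose image under $N$ is $\bs{v}^0(y)$. The only difference is presentational: the paper writes $\exp(uV^0)$ explicitly as the conjugated coordinate rotation $B^0(y)\Phi_u(y)(B^0)^{-1}(y)$ in \eqref{sol:N} --- a closed form it reuses afterwards (e.g.\ in Lemma and Definition \ref{lemdef:unifconv} and \eqref{eq:bv2-abcinfty}) --- whereas you invoke the matrix exponential abstractly and read off the angular speed from $\|\partial\bs{b}/\partial u\|=\|\bs{v}^0\|$, both arguments sharing the same (implicit) normalization that the initial frame is orthonormal.
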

\begin{proof}
In this proof, we fix the parameter $y$ arbitrarily.
We have $\partial(v_2^0{\bs{a}}+v_3^0{\bs{c}})/\partial u=(-v_2^0v_3^0+v_3^0v_2^0){\bs{b}}=0$ by \eqref{diffeq:N}.
Hence, ${\bs{v}}^0(y)$ does not depend on $u$. 
Now, we set
\begin{gather*}
B^0(y):=\frac{1}{\|{\bs{v}}^0(y)\|}
\begin{bmatrix}
0 & \|{\bs{v}}^0\| & 0\\
v^0_2 & 0 & v^0_3\\
v^0_3 & 0 & -v^0_2
\end{bmatrix}
(y),\quad
\Phi_u(y):=
\begin{bmatrix}
1 & 0 & 0\\
0 & \cos(u\|{\bs{v}}^0(y)\|) & -\sin(u\| {\bs{v}}^0(y)\|)\\
0 & \sin(u\|{\bs{v}}^0(y)\|) & \cos(u\| {\bs{v}}^0(y)\|)
\end{bmatrix},
\end{gather*}
and then $B^0(y)$ and $\Phi_u(y)$ belong to the special orthogonal group $\mathrm{SO}(3)$.
Then, we have
\begin{gather}\label{ode:Phiu}
\left(B^0\Phi_u^{-1} \left(\partial\Phi_u/\partial u\right) (B^0)^{-1}\right)(y)=V^0(y)
\end{gather}
by direct calculation.
Next, with any orthogonal matrix $ N^{\infty}(y)$ depending on $y$, the solution $ N(u,y)$ to \eqref{diffeq:N} will be given by
\begin{gather}\label{sol:N}
N(u,y)=N^{\infty}(y)\left(B^0(y)\Phi_u(y)(B^0)^{-1}(y)\right).
\end{gather}
The equation implies that the lemma holds good.
We can verify that $N(x,y)$ in \eqref{sol:N} is a solution to \eqref{diffeq:N} as follows: taking the derivative of $N(u,y)$ by $u$, we obtain
\begin{gather*}
\partial N/\partial u=N^{\infty}B^0\Phi_u(B^0)^{-1}\left(B^0\Phi_u^{-1}(\partial\Phi_u/\partial u)(B^0)^{-1}\right)=NV^0
\end{gather*}
by \eqref{ode:Phiu}.
In consequence, the proof of Lemma \ref{lemma:v0} has been completed.
\end{proof}

Now, we return to the equation \eqref{diffeq:M} for $M$ on $\R^2$. In the following lemma, we use the notations in the proof of Lemma \ref{lemma:v0}, and for a square matrix $A=[a_{ij}]$, we define the norm $\| A\|$ by $\| A\|:=\sqrt{\sum (a_{ij})^2}$.
%%%%%%%%%%%%%%%%%%%%%%%%%%%%%%
%%%%% Lemma and Definition 4.2
%%%%%%%%%%%%%%%%%%%%%%%%%%%%%%
\begin{lemdef}\label{lemdef:unifconv}
\begin{enumerate}
\item
Let $\bar M(u,y):=M(u,y)\left(B^0\Phi_{-u}(B^0)^{-1}\right)(y)$ for $(u,y)\in \R^2$.
Then, there is an orthogonal matrix $ N^{\infty}(y)$ continuous for $y\in \R$ such that $\bar M(u,y)$ with fixed $y$ converges to $N^{\infty}(y)$ as $u$ tends to $\infty$.
Furthermore, the convergence for $u$-curves $\bar M(u,y)$ is uniform with respect to $y\in \R$.

\item
Let $ N^{\infty}(y)$ be the matrix of $y$ in (1).
Then, the frame field $M(u,y)$ with fixed $y$ uniformly converges to the rotation $N(u,y)=N^{\infty}(y)(B^0\Phi_u(B^0)^{-1})(y)$ as $u$ tends to $\infty$, and further the convergence for $u$-curves $M(u,y)$ is also uniform with respect to $y\in \R$.
\end{enumerate}
Here, as $u$ tends to $\infty$, we say that $M(u,y)$ with fixed $y$ uniformly converges to $N(u,y)$, if there is a real number $U$ for any $\varepsilon>0$ such that $\| M(u,y)-N(u,y)\|<\varepsilon$ holds for $u>U$.
Then, we say that the convergence for $u$-curves $M(u,y)$ is uniform with respect to $y\in \R$, if we can take the above $U$ independently of $y\in \R$ for any $\varepsilon>0$.
\end{lemdef}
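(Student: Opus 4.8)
The plan is to control the defect matrix $\bar M(u,y) = M(u,y)\,(B^0\Phi_{-u}(B^0)^{-1})(y)$ by a single differential estimate and then close the argument with a uniform Cauchy criterion. Write $R(u,y) := (B^0\Phi_u(B^0)^{-1})(y)$, which is orthogonal (since $B^0,\Phi_u\in\mathrm{SO}(3)$) with $R^{-1} = (B^0\Phi_{-u}(B^0)^{-1})(y)$, so that $\bar M = MR^{-1}$. First I would record that $R$ solves $\partial_u R = RV^0$: this is \eqref{ode:Phiu} rewritten, because it gives $\partial_u\Phi_u = \Phi_u(B^0)^{-1}V^0B^0$, whence $\partial_u R = B^0(\partial_u\Phi_u)(B^0)^{-1} = RV^0$. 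Consequently $\partial_u(R^{-1}) = -R^{-1}(\partial_u R)R^{-1} = -V^0R^{-1}$. Differentiating $\bar M = MR^{-1}$ and using $\partial_u M = MV$ from \eqref{diffeq:M} then yields the key identity
\[
\partial_u\bar M = (\partial_u M)R^{-1} + M\,\partial_u(R^{-1}) = M(V - V^0)R^{-1}.
\]

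The decisive observation is that $M(u,y) = [X^0_\alpha,{\bs f},{\bs u}_2](e^{-u},y)$ has orthonormal columns (it is the frame field of $\R^3_y$) and $R^{-1}$ is orthogonal, so the Frobenius norm is invariant under multiplying by them on the left and on the right; hence $\|\partial_u\bar M(u,y)\| = \|V(u,y) - V^0(y)\|$. The matrix $V - V^0$ carries only the entries $\pm(v_2 - v_2^0)$ and $\pm(v_3 - v_3^0)$, so its norm is controlled by $|v_2(e^{-u},y) - v_2^0(y)| + |v_3(e^{-u},y) - v_3^0(y)|$. Here I would invoke the estimates \eqref{ineq:v2} and \eqref{ineq:v3}, noting that the $y$-dependent coefficients appearing there are \emph{bounded} functions of $y\in\R$ (each is finite at $y=0$ and stays bounded as $y\to\pm\infty$). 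Together with $x = e^{-u}$ this produces a constant $K>0$ and a threshold $U_0$ (from the requirement $e^{-u}<t_1$) with $\|\partial_u\bar M(u,y)\| \le Ke^{-2u}$ for all $u>U_0$ and all $y\in\R$. This single bound, uniform in $y$, is the engine of the whole lemma; I expect the only genuine (though routine) obstacle to be verifying that the coefficients in \eqref{ineq:v2}--\eqref{ineq:v3} are uniformly bounded in $y$.

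Given the bound I would finish part (1) by integration: for $U_0 < u_1 < u_2$,
\[
\|\bar M(u_2,y) - \bar M(u_1,y)\| \le \int_{u_1}^{u_2}\|\partial_u\bar M(u,y)\|\,du \le \tfrac{K}{2}e^{-2u_1},
\]
which tends to $0$ as $u_1\to\infty$ uniformly in $y$. Thus $\bar M(\cdot,y)$ satisfies the Cauchy criterion uniformly in $y$, so it converges as $u\to\infty$ to a limit $N^\infty(y)$ with $\|\bar M(u,y)-N^\infty(y)\|\le\tfrac{K}{2}e^{-2u}$, the convergence being uniform with respect to $y\in\R$ in the sense of the stated definition. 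Since each $\bar M(u,\cdot)$ is continuous and the convergence is uniform, $N^\infty$ is continuous in $y$; as a limit of orthogonal matrices it is orthogonal. This establishes (1).

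For part (2), I would simply write $M = \bar M R$ and $N = N^\infty R$ with the common orthogonal factor $R = (B^0\Phi_u(B^0)^{-1})(y)$, so that $M(u,y) - N(u,y) = (\bar M(u,y) - N^\infty(y))\,R(u,y)$. Right multiplication by the orthogonal matrix $R$ preserves the Frobenius norm, whence $\|M(u,y) - N(u,y)\| = \|\bar M(u,y) - N^\infty(y)\|$, and the uniform convergence proved in (1) transfers verbatim. This yields the uniform convergence of $M(u,y)$ to the rotation $N(u,y)$, uniformly in $y\in\R$, completing the proof.
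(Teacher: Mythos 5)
Your proposal is correct and follows essentially the same route as the paper: the identity $\partial_u\bar M = M(V-V^0)\left(B^0\Phi_{-u}(B^0)^{-1}\right)$ via \eqref{ode:Phiu}, the norm identity $\|\partial_u\bar M\|=\|V-V^0\|$, the uniform-in-$y$ bound from \eqref{ineq:v2}--\eqref{ineq:v3} giving $O(e^{-2u})$ decay, integration plus the Cauchy criterion for part (1), and transfer to part (2) by right multiplication with the orthogonal factor. Your explicit check that the $y$-dependent coefficients in \eqref{ineq:v2}--\eqref{ineq:v3} stay bounded is a useful bit of added care, but it does not change the argument, which is the paper's own.
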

\begin{proof}
(1)\ Taking the derivative of $\bar M(u,y)$ by $u$, we have
\begin{align*}
\partial\bar M/\partial u
&=(\partial M/\partial u) B^0\Phi_{-u}(B^0)^{-1}+
MB^0(\partial \Phi_{-u}/\partial u)(B^0)^{-1}\\
&=MV\left(B^0\Phi_{-u}(B^0)^{-1}\right) - M\left(B^0(\partial \Phi_{-u}/\partial (-u))\Phi_u(B^0)^{-1}\right) \left(B^0\Phi_{-u}(B^0)^{-1}\right).
\end{align*}
Then, we have $B^0(\partial \Phi_{-u}/\partial (-u))\Phi_u(B^0)^{-1}=V^0$ by \eqref{ode:Phiu}. Hence, we obtain
\begin{gather}\label{pde:Mtilde}
\partial\bar{M}/\partial u=M(V-V^0)\left(B^0\Phi_{-u}(B^0)^{-1}\right).
\end{gather}

Now, we have
\begin{gather}\label{norm-dMtilde}
\left\|\partial\bar{M}/\partial u(\log(1/x),y)\right\|
= \|V(x,y)-V(0,y)\|
\end{gather}
by \eqref{pde:Mtilde}, and hence we have
\begin{align*}
\|\bar M(u_1,y)-\bar M(u_2,y)\|
\leq \left|\displaystyle\int_{u_1}^{u_2} \| V(x(u),y)-V(0,y)\| du\right|
=\left|\displaystyle\int_{e^{-u_1}}^{e^{-u_2}} \| V(x,y)-V(0,y)\|\frac{dx}{x}\right|
\end{align*}
by \eqref{pde:Mtilde} and \eqref{norm-dMtilde}.
Then, by \eqref{ineq:v2}--\eqref{ineq:v3} there is an orthogonal matrix $ N^{\infty}(y)$ for each $y\in \R$ such that
\begin{gather}\label{ineq:Mtilde}
\|\bar M(\log(1/x),y)-N^{\infty}(y)\|\leq Cx^2
\end{gather}
holds as $x\searrow 0$.
In particular, $N^{\infty}(y)$ is continuous for $y\in \R$, since the continuous matrix $\bar M(u,y)$ of $y$ converges to $N^{\infty}(y)$ uniformly with respect to $y\in \R$ as $x\searrow 0$.

(2) \ We have $\| M(u,y)-N^{\infty}(y)\left(B^0\Phi_u(B^0)^{-1}\right)(y)\|=\| M(u,y)\left(B^0\Phi_{-u}(B^0)^{-1}\right)(y)-N^{\infty}(y)\|\leq Ce^{-2u}$ by \eqref{ineq:Mtilde}.
Then, since $C$ is independent of $y$, we have the assertion.
\end{proof}

In Lemma \ref{lemdef:unifconv}-(2), we put $N^{\infty}(y)=[{\bs{b}}^{\infty},{\bs{a}}^{\infty},{\bs{c}}^{\infty}](y)$ and $N(u,y)=[{\bs{b}},{\bs{a}},{\bs{c}}](u,y)$.
Since $N(u,y)$ is a matrix in \eqref{sol:N} determined by $N^{\infty}(y)$, the vector ${\bs{v}}^0(y):=(v^0_2{\bs{a}}+v^0_3{\bs{c}})(u,y)$
does not depend on $u$ by Lemma \ref{lemma:v0}.
Furthermore, we have $\| {\bs{v}}^0(y)\|=\sqrt{5}/2$ by direct calculation.
We define ${\bs{v}}_1(y):=-(2/\sqrt{5}){\bs{v}}^0(y)$ and another unit vector ${\bs{v}}_2(u,y)$ by
\begin{gather*}
{\bs{v}}_2(u,y):=(2/\sqrt{5})\left(v^0_3(y){\bs{a}}(u,y)-v^0_2(y){\bs{c}}(u,y)\right),
\end{gather*}
which is perpendicular to ${\bs{b}}(u,y)$ and ${\bs{v}}_1(y)$.
Then, we have
\begin{gather}\label{eq:bv2-abcinfty}
\left[{\bs{b}},{\bs{v}}_2\right](u,y)=\left[{\bs{b}}^{\infty},\frac{2}{\sqrt{5}}\left(v^0_3{\bs{a}}^{\infty}-v^0_2{\bs{c}}^{\infty}\right)\right](y)
\begin{bmatrix}
\cos(\sqrt{5}u/2) & -\sin(\sqrt{5}u/2)\\
\sin(\sqrt{5}u/2) & \cos(\sqrt{5}u/2)
\end{bmatrix}
\end{gather}
by \eqref{sol:N}, and
\begin{gather*}
{\bs{a}}(u,y)=\textstyle\frac{2}{\sqrt{5}}\left(-v^0_2(y){\bs{v}}_1(y)+v^0_3(y){\bs{v}}_2(u,y)\right),\quad
{\bs{c}}(u,y)=-\textstyle\frac{2}{\sqrt{5}}\left(v^0_3(y){\bs{v}}_1(y)+v^0_2(y){\bs{v}}_2(u,y)\right)
\end{gather*}
by the definitions of ${\bs{v}}_i \ (i=1,2)$.
Here, any $u$-curve ${\bs{a}}(u,y)$ with fixed $y$ is a circle of center $-(2/\sqrt{5})v^0_2(y){\bs{v}}_1(y)$ and radius $(2/\sqrt{5})|v^0_3(y)|$ by \eqref{eq:bv2-abcinfty}.
In particular, in the case $y=0$, the circle degenerates into one point $-{\bs{v}}_1(0)$ by $v^0_3(0)=0$.

Furthermore, we have $(\partial{\bs{b}}/\partial y)(u,y)=0$.
In fact, as $u$ tends to $\infty$, $X_{\alpha}^0(e^{-u},y)$ and $(\partial X_{\alpha}^0/\partial y)(e^{-u},y)$ with fixed $y$, respectively, converge uniformly to ${\bs{b}}(u,y)$ and zero-vector, and these convergences for $u$-curves are also uniform with respect to $y\in \R$.
Here, the first convergence follows from Lemma \ref{lemdef:unifconv} and the second one follows from $X_0''-X_0'/x=O(x^2)$ in the equation of Lemma \ref{lemma:dPhi}.
Hence, the vectors ${\bs{b}}^{\infty}(y)$ and $(2/\sqrt{5})(v^0_3{\bs{a}}^{\infty}-v^0_2{\bs{c}}^{\infty})(y)$ are constant by \eqref{eq:bv2-abcinfty}: we write these constant vectors as
\begin{gather}\label{def:bcinfty}
{\bs{b}}^{\infty}:={\bs{b}}^{\infty}(y),\quad
-{\bs{c}}^{\infty}:=-{\bs{c}}^{\infty}(0)=(2/\sqrt{5})((v^0_3{\bs{a}}^{\infty}-v^0_2{\bs{c}}^{\infty}))(y)
\end{gather}
by $v^0_2(0)>0$ and $v^0_3(0)=0$.
In consequence, $[{\bs{b}},{\bs{v}}_2](u,y)$ in \eqref{eq:bv2-abcinfty} does not depend on $y$.
Then, since ${\bs{v}}_1(y)$ is perpendicular to ${\bs{u}}(y)$, ${\bs{b}}^{\infty}$ and ${\bs{c}}^{\infty}$, the pair $({\bs{u}}(y),{\bs{v}}_1(y))$ is an orthonormal frame field of the plane perpendicular to the vectors ${\bs{b}}^{\infty}$ and ${\bs{c}}^{\infty}$.
In particular, ${\bs{u}}(y)$ moves on a circle $\mathbb{S}^1$, of which fact we have mentioned in Remark \ref{rem:uutilde}-(2).

By the argument above, we can write $[{\bs{b}},{\bs{v}}_2](u,y)$ as $[{\bs{b}},{\bs{v}}_2](u)$, and then by \eqref{eq:bv2-abcinfty} and \eqref{def:bcinfty} we have
\begin{gather}\label{eq:5.12}
\begin{split}
{\bs{b}}(u)=
\cos(\sqrt{5}u/2){\bs{b}}^{\infty}-\sin(\sqrt{5}u/2){\bs{c}}^{\infty},\quad
{\bs{v}}_2(u)=
-\sin(\sqrt{5}u/2){\bs{b}}^{\infty}-\cos(\sqrt{5}u/2){\bs{c}}^{\infty},\\
{\bs{a}}(u,y)=
(2/\sqrt{5})\left(-v^0_2(y){\bs{v}}_1(y)+v^0_3(y){\bs{v}}_2(u)\right),\quad
{\bs{a}}(u,0)=-{\bs{v}}_1(0).
\end{split}
\end{gather}
In the expression of ${\bs{a}}(u,y)$, $y$ is a parameter for the family $\mathbb{S}^1(y)$ of circles and $u$ is a rotation parameter for each circle $\mathbb{S}^1(y)$: ${\bs{v}}_1(y)$ is perpendicular to the circle $\mathbb{S}^1(y)$.

Next, ${\bs{f}}(e^{-u},y)$ converges uniformly to ${\bs{a}}(u,y)$ as $u$ tends to $\infty$, by Lemma \ref{lemdef:unifconv}.
For each $y\in \R$, let $\Gamma(u,y)$ be the following circle in $\R^4$ with rotation parameter $u$:
\begin{gather}\label{def:Gamma}
\Gamma(u,y):=
\begin{cases}
(2\sqrt{5})^{-1}\sqrt{(5+4y^2)(1+y^2)^{-1}}{\bs{a}}(u,y)+A(y) & y\neq 0,\\
-(1/2){\bs{v}}_1(0)+A(0) & y=0,
\end{cases}
\end{gather}
where $A(y)$ is the $\R^4$-valued function in Theorem \ref{thm:xcurve}-(2).
Each circle $\Gamma(u,y)$ with $y$ has the radius of $2y^2/(\sqrt{5}\,h(0,y))$.

In the following theorem, we use the notations in \eqref{def:bcinfty}--\eqref{def:Gamma}.
%%%%%%%%%%%%%%%%%%%%%%%%%%%%%%
%%%%% Theorem 4.3
%%%%%%%%%%%%%%%%%%%%%%%%%%%%%%
\begin{thm}\label{thm:binfcinf}
Let $f^0(x,y)$ be a curvature surface defined on $D$ and $F^0(x,y)=[\phi,X^0_{\alpha},X^0_{\beta},\xi](x,y)$ be the orthonormal frame field determining $f^0(x,y)$.
Let ${\bs{u}}(y)$ and ${\bs{f}}(x,y)$ be the analytic vectors in Theorem \ref{thm:xcurve}.
Then, there is an orthonormal pair $({\bs{b}}^{\infty},{\bs{c}}^{\infty})$ of constant vectors such that it satisfies the following conditions (1), (2) and (3):
\begin{enumerate}
\item
The vector ${\bs{u}}(y)$ moves on the unit circle in a plane perpendicular to ${\bs{b}}^{\infty}$ and ${\bs{c}}^{\infty}$.
Let ${\bs{v}}_1(y)$ be a vector determined by $d{\bs{u}}/dy=(y^2/(1+y^2)){\bs{v}}_1(y)$.
Then, $[{\bs{u}}(y), {\bs{v}}_1(y), {\bs{b}}^{\infty},{\bs{c}}^{\infty}]$ is an orthonormal frame field of $\R^4$ depending on $y$.

\item
As $u$ tends to $\infty$, all $u$-curves $X^0_{\alpha}(e^{-u},y)$ with $y$ uniformly converge to a circle ${\bs{b}}(u)$, and each $u$-curve ${\bs{f}}(e^{-u},y)$ with fixed $y$ uniformly converges to the circle ${\bs{a}}(u,y)$.
These convergences for $u$-curves are also uniform with respect to $y\in \R$.
In particular, the circles ${\bs{a}}(u,y)$ deform continuously for $y$ and the circle ${\bs{a}}(u,0)$ degenerates to one point.

\item
Any $u$-curve $f^0(e^{-u},y)$ with fixed $y$ uniformly converges to the circle $\Gamma(u,y)$ as $u$ tends to $\infty$.
The convergence for $u$-curves is also uniform with respect to $y\in \R$.
In particular, the circles $\Gamma(u,y)$ deform continuously for $y$ and the circle $\Gamma(u,0)$ degenerates to one point.
\end{enumerate}
\end{thm}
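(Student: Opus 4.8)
The plan is to assemble the three assertions from the rotation analysis carried out just before the theorem, where the limiting frame $N(u,y)=[\bs{b},\bs{a},\bs{c}](u,y)$ of $M(u,y)=[X^0_{\alpha},\bs{f},\bs{u}_2](e^{-u},y)$ was already produced by Lemma \ref{lemma:v0} and Lemma \ref{lemdef:unifconv}, with the explicit forms \eqref{eq:5.12} for $\bs{b}(u)$, $\bs{v}_2(u)$ and $\bs{a}(u,y)$. The first thing to secure is the existence of the \emph{constant} orthonormal pair $(\bs{b}^{\infty},\bs{c}^{\infty})$. I would obtain this from $\partial\bs{b}/\partial y=0$: since $X^0_{\alpha}(e^{-u},y)\to\bs{b}(u,y)$ uniformly in $y$ by Lemma \ref{lemdef:unifconv}, while $\partial X^0_{\alpha}/\partial y=c_2X^0_{\beta}$ with $c_2=(2y/h)(X_0''-X_0'/x)$ tends to $0$ uniformly in $y$ as $x\to0$ --- because $X_0''-X_0'/x=O(x^2)$ by \eqref{def:X0} and $2|y|/h(x,y)$ is bounded uniformly in $y$ --- the $u$-limit and the $y$-derivative may be interchanged, giving $\partial\bs{b}/\partial y=0$. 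Together with \eqref{def:bcinfty} and $v_2^0(0)=\sqrt5/2$ this makes $\bs{b}^{\infty}$ and $\bs{c}^{\infty}$ well-defined constant unit vectors, orthonormal as a limit of columns of the orthogonal matrices $N^{\infty}(y)$.

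For (1), I would note $\bs{u}(y)$ is a unit vector by Theorem \ref{thm:xcurve}(1), so $d\bs{u}/dy\perp\bs{u}(y)$; the theorem's $\bs{v}_1(y)$ is then the normalization of $d\bs{u}/dy$, well-defined and analytic once one knows $\|d\bs{u}/dy\|=y^2/(1+y^2)$ from Remark \ref{rem:uutilde}(2). The substantive step is to identify it with the vector $\bs{v}_1(y)=-(2/\sqrt5)\bs{v}^0(y)$ built before the theorem: differentiating $\bs{u}(y)=(1+y^2)^{-1/2}(yX^0_{\beta}-\phi)$ and inserting the $y$-derivatives of $X^0_{\beta}$ and $\phi$ from Lemma \ref{lemma:dPhi} yields $d\bs{u}/dy$ explicitly; comparing it with $\bs{v}^0(y)=(v_2^0\bs{a}+v_3^0\bs{c})(u,y)$, which is $u$-independent of norm $\sqrt5/2$ by Lemma \ref{lemma:v0}, produces exactly the factor $y^2/(1+y^2)$ and fixes the sign. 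Orthonormality of $[\bs{u}(y),\bs{v}_1(y),\bs{b}^{\infty},\bs{c}^{\infty}]$ then follows: $\bs{u}\perp\bs{v}_1$ by unit length; $\bs{u}\perp\bs{b}^{\infty},\bs{c}^{\infty}$ because $X^0_{\alpha}\perp(yX^0_{\beta}-\phi)$ forces $\bs{b}(u)\perp\bs{u}(y)$ for all $u$ and $\bs{b}(u)$ sweeps the whole circle in $\mathrm{span}(\bs{b}^{\infty},\bs{c}^{\infty})$; and $\bs{v}_1\perp\bs{b}^{\infty},\bs{c}^{\infty}$ because $\bs{v}^0$ is the rotation axis, perpendicular to that rotating plane. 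Hence $\bs{u}(y)$ traces the unit circle in the plane orthogonal to $\bs{b}^{\infty},\bs{c}^{\infty}$.

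For (2) and (3) I would read off Lemma \ref{lemdef:unifconv}(2) columnwise: the convergence of $M(u,y)$ to $N(u,y)$, uniform also in $y\in\R$, gives $X^0_{\alpha}(e^{-u},y)\to\bs{b}(u)$ (the $y$-independent circle of \eqref{eq:5.12}) and $\bs{f}(e^{-u},y)\to\bs{a}(u,y)$ with the same uniformity; formula \eqref{eq:5.12} exhibits $\bs{a}(u,y)$ as a circle of center $-(2/\sqrt5)v_2^0(y)\bs{v}_1(y)$ and radius $(2/\sqrt5)|v_3^0(y)|$, degenerating to the point $\bs{a}(u,0)=-\bs{v}_1(0)$ since $v_3^0(0)=0$, and continuity in $y$ is clear from that of $v_2^0,v_3^0,\bs{v}_1$ and $\bs{v}_2(u)$. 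This is (2). For (3), I would combine this with Theorem \ref{thm:xcurve}(2), namely $f^0(x,y)=(2\sqrt5)^{-1}\sqrt{(5+4y^2)(1+y^2)^{-1}}\,\bs{f}(x,y)+A(y)$: the scalar factor and $A(y)$ are independent of $x$, so $u\to\infty$ sends $f^0(e^{-u},y)$ to $\Gamma(u,y)$ as in \eqref{def:Gamma}, the $y=0$ case giving the point $-(1/2)\bs{v}_1(0)+A(0)$ since the factor is then $1/2$. Uniformity in $y$ transfers from $\bs{f}$ to $f^0$ because the factor $(2\sqrt5)^{-1}\sqrt{(5+4y^2)/(1+y^2)}$ lies in $(1/\sqrt5,1/2]$, hence is uniformly bounded.

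The step I expect to be the main obstacle is the constancy of $\bs{b}^{\infty},\bs{c}^{\infty}$ in the setup, since it is the one point requiring a genuine exchange of the limit $u\to\infty$ with $y$-differentiation; this rests on the precise vanishing order $X_0''-X_0'/x=O(x^2)$ and on the uniform-in-$y$ bound for $2|y|/h(x,y)$. Once that interchange is justified, parts (2) and (3) are bookkeeping on top of Lemma \ref{lemdef:unifconv}, and the only remaining calculation is the explicit evaluation of $d\bs{u}/dy$ in part (1).
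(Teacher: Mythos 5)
Your proposal follows the paper's proof essentially step for step: the constancy of $({\bs{b}}^{\infty},{\bs{c}}^{\infty})$ is obtained exactly as in the paper by interchanging the $u$-limit with $\partial/\partial y$ (via $X_0''-X_0'/x=O(x^2)$ and the uniform bound on $2|y|/h$), parts (2) and (3) are read off columnwise from Lemma \ref{lemdef:unifconv} together with \eqref{eq:5.12}, \eqref{def:Gamma} and Theorem \ref{thm:xcurve}-(2), and part (1) comes from the perpendicularity of ${\bs{u}}(y)$ to the rotating plane plus the norm in Remark \ref{rem:uutilde}-(2). The only difference is in fixing the identification $d{\bs{u}}/dy=+(y^2/(1+y^2)){\bs{v}}_1(y)$: you propose comparing the explicitly differentiated ${\bs{u}}(y)$ with ${\bs{v}}^0(y)$ in the limit frame at general $y$, whereas the paper carries out this comparison only at a single point — evaluating $y^{-2}$ times the expression \eqref{eq:5.14} as $y\to0$ and then $x\to0$, matching it against $-\lim_{x\to0}{\bs{f}}(x,0)={\bs{v}}_1(0)$ — and then extends to all $y$ by continuity/analyticity; this is the same idea, just executed more economically.
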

\begin{proof}
Almost all facts have been verified in the argument above.
Now, since the vector ${\bs{u}}(y)$ of $y$ is perpendicular to ${\bs{b}}^{\infty}$ and ${\bs{c}}^{\infty}$, we have $(d{\bs{u}}/dy)(y)=\pm(y^2/(1+y^2)){\bs{v}}_1(y)$ for (1) by Remark \ref{rem:uutilde}-(2): we shall determine the sign $\pm$ at the end of this proof.
The convergences in (2) follow from Lemma \ref{lemdef:unifconv}, \eqref{eq:bv2-abcinfty}, \eqref{def:bcinfty} and \eqref{eq:5.12}: the frame field $M(u,y)=[X^0_{\alpha},{\bs{f}},{\bs{u}}_2](e^{-u},y)$ with fixed $y$ uniformly converges to the rotation $N(u,y)=[{\bs{b}},{\bs{a}},{\bs{c}}](u,y)$ as $u\rightarrow \infty$, and the converge for $u$-curves is also uniform with respect to $y\in \R$.
Then, ${\bs{a}}(u,y)$ is analytic for $y$ and ${\bs{a}}(u,0)$ is one point.
We obtain (3) by Theorem \ref{thm:xcurve} and (2).

Now, we verify $(d{\bs{u}}/dy)(y)=(y^2/(1+y^2)){\bs{v}}_1(y)$.
Then, we use the fact (2): $\lim_{x\rightarrow 0}{\bs{f}}(x,0)=-{\bs{v}}_1(0)$.
Firstly, we have 
\begin{gather}\label{eq:5.14}
(d{\bs{u}}/dy)(y)=(1+y^2)^{-3/2}\left[(1+(1+y^2)a_2)(X^0_{\beta}+y\phi)-y(1+y^2)(b_2\xi+c_2X^0_{\alpha})\right](x,y)
\end{gather}
by Lemma \ref{lemma:dPhi}: note that $(d{\bs{u}}/dy)(y)$ is independent of $x$.  
Next, we define ${\bs{p}}(x,0)$ for each $x$ by the limit ${\bs{p}}(x,0):=\lim_{y\rightarrow 0}\ y^{-2}(\text{the right hand side of \eqref{eq:5.14}})$.
Then, we have $\lim_{x\rightarrow 0}{\bs{p}}(x,0)=-\lim_{x\rightarrow 0}{\bs{f}}(x,0)={\bs{v}}_1(0)$ by \eqref{eq:5.14} and the definition \eqref{def:X0} of $X_0(x)$.
In consequence, we obtain the equation desired for all $y\in \R$ by the continuity of $(d{\bs{u}}/dy)(y)$ and ${\bs{v}}_1(y)$.
\end{proof}
%%%%%%%%%%%%%%%%%%%%%%%%%%%%%%
%%%%% Remark 4.4
%%%%%%%%%%%%%%%%%%%%%%%%%%%%%%
\begin{rem}\label{rem:5.1}
For the vector $\tilde{\bs{u}}(x)$ in Theorem \ref{thm:ycurve}, $\langle {\bs{u}}(y),\tilde{\bs{u}}(x)\rangle=0$ holds by Remark \ref{rem:uutilde}-(1).
Hence, $\tilde{\bs{u}}(x)$ is expressed as a linear combination of $X^0_{\alpha}$, ${\bs{f}}$ and ${\bs{u}}_2$: explicitly, we have
\begin{gather*}
\tilde{\bs{u}}(e^{-u})=\frac{1}{\sqrt{B_2^2+C_2^2}(e^{-u})}\left(-B_2X^0_{\alpha}+\frac{C_2}{\sqrt{5+4y^2}}\left({\bs{u}}_2-2\sqrt{1+y^2}\,{\bs{f}}\right)\right)(e^{-u},y),
\end{gather*}
where $B_2$ and $C_2$ are the functions in Lemma \ref{lemma:B-C}.
Then, since $\lim_{u\rightarrow \infty}\tilde{\bs{u}}(e^{-u})=\lim_{u\rightarrow\infty} X^0_{\alpha}(e^{-u},y)$ by $B_2(x)<0$ and $C_2(x)=O(x^2)$, $\tilde{\bs{u}}(e^{-u})$ uniformly converges to the circle ${\bs{b}}(u)=\cos(\sqrt{5}u/2){\bs{b}}^{\infty}-\sin(\sqrt{5}u/2){\bs{c}}^{\infty}$ as $u$ tends to $\infty$.
\end{rem}

Now, for each $y\in \R$ and $0<\varepsilon<1$, let $l^y(x)$ be an $x$-curve given by $l^y(x):=f^0(x,y)$ for $x\in[\varepsilon,1]$ and $L(l^y)$ be the length of $l^y(x)$ by the metric $g_0$.
As $\varepsilon\rightarrow 0$, $L(l^y)$ diverges to $\infty$ if $y\neq 0$ and $L(l^{y=0})$ converges to a finite value.
However, we have the following corollary:
%%%%%%%%%%%%%%%%%%%%%%%%%%%%%%
%%%%% Corollary 4.5
%%%%%%%%%%%%%%%%%%%%%%%%%%%%%%
\begin{cor}\label{cor:seq-f0}
Let $\{(x_n,y_n)\}_{n=1}^{\infty}$ be a sequence in $D$ convergent to the origin $(0,0)$ with respect to the Euclidean distance of $\R^2$.
Then, the sequence $f^0(x_n,y_n)$ converges to the point $\Gamma(u,0)=-(1/2){\bs{v}}_1(0)+A(0)$.
That is, when we define $f^0(0,0):=\Gamma(u,0)$, $f^0(x,y)$ on $D$ extends to $D\cup \{(0,0)\}$ continuously in this sense.
\end{cor}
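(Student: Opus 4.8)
The plan is to read off $f^0$ as a one-parameter family of $x$-curves via Theorem \ref{thm:xcurve}-(2) and then pass to the limit term by term. Writing $x_n=e^{-u_n}$, so that $u_n\to\infty$ as $x_n\searrow 0$, I would start from
\[
f^0(x_n,y_n)=(2\sqrt5)^{-1}\sqrt{(5+4y_n^2)(1+y_n^2)^{-1}}\,{\bs{f}}(x_n,y_n)+A(y_n).
\]
Since $A(y)$ is analytic and $y_n\to 0$, the last term tends to $A(0)$, while the scalar prefactor tends to $(2\sqrt5)^{-1}\sqrt5=1/2$. Hence the whole problem reduces to showing that the unit vector ${\bs{f}}(x_n,y_n)$ converges to $-{\bs{v}}_1(0)$; granting this, the limit becomes $-(1/2){\bs{v}}_1(0)+A(0)$, which is exactly $\Gamma(u,0)$ in \eqref{def:Gamma}.

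The core of the argument is therefore the simultaneous double limit ${\bs{f}}(e^{-u_n},y_n)\to-{\bs{v}}_1(0)$ as $u_n\to\infty$ and $y_n\to 0$. I would split it by the triangle inequality,
\[
\|{\bs{f}}(e^{-u_n},y_n)+{\bs{v}}_1(0)\|\le\|{\bs{f}}(e^{-u_n},y_n)-{\bs{a}}(u_n,y_n)\|+\|{\bs{a}}(u_n,y_n)+{\bs{v}}_1(0)\|,
\]
and control the two pieces with the two available uniformities. The first term tends to $0$ because, by Theorem \ref{thm:binfcinf}-(2), ${\bs{f}}(e^{-u},y)$ converges to ${\bs{a}}(u,y)$ as $u\to\infty$ \emph{uniformly with respect to} $y\in\R$, so this estimate is insensitive to how $y_n$ moves. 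For the second term I use the closed form ${\bs{a}}(u,y)=(2/\sqrt5)(-v^0_2(y){\bs{v}}_1(y)+v^0_3(y){\bs{v}}_2(u))$ from \eqref{eq:5.12}: since $\|{\bs{v}}_2(u)\|=1$, $v^0_3(0)=0$, $v^0_2(0)=\sqrt5/2$, and ${\bs{v}}_1(y)$ is continuous, the bound
\[
\|{\bs{a}}(u,y)+{\bs{v}}_1(0)\|\le\|{\bs{v}}_1(0)-(2/\sqrt5)v^0_2(y){\bs{v}}_1(y)\|+(2/\sqrt5)|v^0_3(y)|
\]
shows that ${\bs{a}}(u,y)\to-{\bs{v}}_1(0)$ as $y\to 0$ \emph{uniformly in} $u$. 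Combining the two estimates gives ${\bs{f}}(e^{-u_n},y_n)\to-{\bs{v}}_1(0)$, and substituting back into the $x$-curve representation yields $f^0(x_n,y_n)\to\Gamma(u,0)$, independently of $u$.

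The main obstacle is precisely the interplay of the two variables: neither single limit suffices, since $(x_n,y_n)$ may approach the origin along an arbitrary path in $D$. The decisive point is that the limit $x\to 0$ already carries uniformity in $y$ (proved in Theorem \ref{thm:binfcinf}), whereas the limit $y\to 0$ must be made uniform in $u$; the latter is supplied cheaply by the explicit expression \eqref{eq:5.12} for ${\bs{a}}(u,y)$ together with $v^0_3(0)=0$, which kills the only $u$-dependent contribution in the limit. Once both uniformities are in hand, the triangle-inequality splitting above closes the argument and justifies defining $f^0(0,0):=\Gamma(u,0)$, extending $f^0$ continuously to $D\cup\{(0,0)\}$.
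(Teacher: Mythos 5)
Your proof is correct and takes essentially the same route as the paper: both arguments split the double limit by a triangle inequality through the limiting circle, using the uniform-in-$y$ convergence of Theorem \ref{thm:binfcinf} to kill the $u_n\to\infty$ discrepancy and the continuous degeneration of the circles to the point $-\tfrac{1}{2}{\bs{v}}_1(0)+A(0)$ to kill the $y_n\to 0$ discrepancy. The only difference is bookkeeping: you work with the unit vector ${\bs{f}}$ and the circle ${\bs{a}}(u,y)$ from \eqref{eq:5.12}, peeling off the scalar factor and $A(y)$ by continuity, whereas the paper estimates $\| f^0(e^{-u_n},y_n)-\Gamma(u_n,y_n)\|$ and the maximal distance $\bar d(\mathbb{S}^1_{y_n},p_0)$ directly, your explicit computation with $v^0_2$, $v^0_3$ being precisely what justifies the paper's asserted degeneration.
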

\begin{proof}
Let $(x_n,y_n)\in D$ be a convergent sequence to $(0,0)$ with respect to the Euclidean distance.
Let $u_n=\exp(1/x_n)$ and $\mathbb{S}^1_y:=\{\Gamma(u,y)|u\in \R\}$, where $\mathbb{S}^1_0$ is one point $p_0:=\Gamma(u,0)$.
We define a kind of distance $\bar d(\mathbb{S}^1_y,p_0)$ between $\mathbb{S}^1_y$ and $p_0$ by $\bar d(\mathbb{S}^1_y,p_0):=\Max\{d(p,p_0)\;|\; p\in \mathbb{S}^1_y\}$ with respect to the distance $d$ of $\R^4$.
Then, there is a number $N_1$ for any $\varepsilon>0$ such that $\bar d(\mathbb{S}^1_{y_n},p_0)<\varepsilon$ holds for $n>N_1$, because $\mathbb{S}^1_y$ degenerates to $p_0$ continuously as $y\rightarrow 0$.
Furthermore, $f^0(e^{-u_n},y)$ converges to $\Gamma(u_n,y)$ uniformly with respect to $y\in \R$ as $n\rightarrow \infty$.
Hence, there is a number $N_2$ for any $\varepsilon>0$ such that $\| f^0(e^{-u_n},y)-\Gamma(u_n,y)\|<\varepsilon$ holds for any $y\in \R$ if $n>N_2$.
In consequence, for any $\varepsilon>0$ and $n>\Max\{N_1,N_2\}$ we have
\begin{gather*}
\| f^0(e^{-u_n},y_n)-p_0\|<\| f^0(e^{-u_n},y_n)-\Gamma(u_n,y_n)\|+\bar d(\mathbb{S}^1_{y_n},p_0)<2\varepsilon,
\end{gather*}
which shows that the corollary holds.
\end{proof}
%%%%%%%%%%%%%%%%%%%%%%%%%%%%%%
%%%%% Corollary 4.6
%%%%%%%%%%%%%%%%%%%%%%%%%%%%%%
\begin{cor}\label{cor:5.2}
The vector ${\bs{u}}(y)$ determines an orthonormal pair $(\tilde{\bs{b}}{}^{\infty},\tilde{\bs{c}}^{\infty})$ of constant vectors uniquely such that it satisfies the following conditions (1) and (2):
\begin{enumerate}
\item
The system $[{\bs{b}}^{\infty},{\bs{c}}^{\infty},\tilde{\bs{b}}{}^{\infty},\tilde{\bs{c}}^{\infty}]$ is an orthonormal base of $\R^4$.

\item
${\bs{u}}(y)$ and ${\bs{v}}_1(y)$ satisfy the following equations:
\begin{align*}
{\bs{u}}(y)&=-\sin (y-\arctan y)\,\tilde{\bs{b}}{}^{\infty}+\cos (y-\arctan y)\,\tilde{\bs{c}}^{\infty},\\
{\bs{v}}_1(y)&=-\cos(y-\arctan y)\,\tilde{\bs{b}}{}^{\infty}-\sin(y-\arctan y)\,\tilde{\bs{c}}^{\infty}.
\end{align*}
\end{enumerate}
In particular, as $y\rightarrow\infty$, ${\bs{u}}(y)$ and ${\bs{v}}_1(y)$, respectively, converge uniformly to the following circles $\tilde{\bs{b}}(y)$ and $\tilde{\bs{c}}(y)$:\ $\tilde{\bs{b}}(y):=\cos y\,\tilde{\bs{b}}{}^{\infty}+\sin y\,\tilde{\bs{c}}^{\infty},\ \tilde{\bs{c}}(y):=-\sin y\,\tilde{\bs{b}}{}^{\infty}+\cos y\,\tilde{\bs{c}}^{\infty}$.
\end{cor}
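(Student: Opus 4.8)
The plan is to reduce the statement to the integration of a planar rotation with a prescribed angular speed. By Theorem \ref{thm:binfcinf}-(1), the system $[{\bs{u}}(y),{\bs{v}}_1(y),{\bs{b}}^{\infty},{\bs{c}}^{\infty}]$ is an orthonormal frame of $\R^4$ for every $y$, and ${\bs{b}}^{\infty},{\bs{c}}^{\infty}$ are constant. Consequently both ${\bs{u}}(y)$ and ${\bs{v}}_1(y)$ lie, for all $y$, in the single fixed $2$-plane $\Pi$ orthogonal to ${\bs{b}}^{\infty}$ and ${\bs{c}}^{\infty}$, where they form an orthonormal pair. This is what makes a planar normal form available.

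First I would record the companion equation for ${\bs{v}}_1$. Differentiating $\langle{\bs{v}}_1,{\bs{v}}_1\rangle=1$ gives $d{\bs{v}}_1/dy\perp{\bs{v}}_1$, and since ${\bs{v}}_1\in\Pi$ with $\Pi$ fixed we also have $d{\bs{v}}_1/dy\in\Pi$; hence $d{\bs{v}}_1/dy$ is a scalar multiple of ${\bs{u}}(y)$. Differentiating $\langle{\bs{u}},{\bs{v}}_1\rangle=0$ and substituting $d{\bs{u}}/dy=(y^2/(1+y^2)){\bs{v}}_1$ from Theorem \ref{thm:binfcinf}-(1) fixes the multiple, yielding $d{\bs{v}}_1/dy=-(y^2/(1+y^2)){\bs{u}}(y)$. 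Writing $\psi(y):=y-\arctan y$, so that $\psi'(y)=y^2/(1+y^2)$ and $\psi(0)=0$, the pair $({\bs{u}},{\bs{v}}_1)$ thus solves the linear rotation system ${\bs{u}}'=\psi'{\bs{v}}_1$, ${\bs{v}}_1'=-\psi'{\bs{u}}$ in the fixed plane $\Pi$.

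Next I would define the pair by its initial data, setting $\tilde{\bs{c}}^{\infty}:={\bs{u}}(0)$ and $\tilde{\bs{b}}{}^{\infty}:=-{\bs{v}}_1(0)$. These are an orthonormal pair inside $\Pi$, so $[{\bs{b}}^{\infty},{\bs{c}}^{\infty},\tilde{\bs{b}}{}^{\infty},\tilde{\bs{c}}^{\infty}]$ is an orthonormal base, which is (1). For (2), the explicit vectors $-\sin\psi\,\tilde{\bs{b}}{}^{\infty}+\cos\psi\,\tilde{\bs{c}}^{\infty}$ and $-\cos\psi\,\tilde{\bs{b}}{}^{\infty}-\sin\psi\,\tilde{\bs{c}}^{\infty}$ solve the same rotation system and agree with $({\bs{u}},{\bs{v}}_1)$ at $y=0$ (using $\psi(0)=0$); uniqueness of solutions of the linear ODE then forces equality for all $y$. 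The uniqueness of the pair $(\tilde{\bs{b}}{}^{\infty},\tilde{\bs{c}}^{\infty})$ itself follows by evaluating (2) at $y=0$: any admissible pair must satisfy $\tilde{\bs{c}}^{\infty}={\bs{u}}(0)$ and $\tilde{\bs{b}}{}^{\infty}=-{\bs{v}}_1(0)$.

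For the limit as $y\to\infty$ I would use $\arctan y\to\pi/2$, so $\psi(y)-(y-\pi/2)=\pi/2-\arctan y\to 0$. Rewriting the target circles in the same form via $\cos y=-\sin(y-\pi/2)$ and $\sin y=\cos(y-\pi/2)$, the vectors ${\bs{u}}(y)$ and $\tilde{\bs{b}}(y)$ (likewise ${\bs{v}}_1(y)$ and $\tilde{\bs{c}}(y)$) differ only by the angle $\pi/2-\arctan y$, whence $\|{\bs{u}}(y)-\tilde{\bs{b}}(y)\|\le|\pi/2-\arctan y|\to 0$; since this bound is a function of $y$ alone, the convergence is uniform in the sense of Definition \ref{lemdef:unifconv}. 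I do not expect a real obstacle once Theorem \ref{thm:binfcinf}-(1) is invoked: the substance is just integrating a rotation of angular speed $y^2/(1+y^2)$. The only delicate point is the sign-and-constant bookkeeping needed to land on the exact normal form, in particular anchoring the orientation of $(\tilde{\bs{b}}{}^{\infty},\tilde{\bs{c}}^{\infty})$ by the data at $y=0$ rather than at $y=\infty$, where the motion never settles to a fixed vector.
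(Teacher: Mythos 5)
Your proposal is correct and takes essentially the same approach as the paper: the paper's own proof is a single sentence invoking Theorem \ref{thm:binfcinf}-(1) — that ${\bs{u}}(y)$ lies in the fixed plane orthogonal to ${\bs{b}}^{\infty}$ and ${\bs{c}}^{\infty}$ and satisfies $(d{\bs{u}}/dy)(y)=(y^2/(1+y^2)){\bs{v}}_1(y)$ — leaving the integration of the planar rotation implicit. You have simply supplied the details the paper omits (the companion equation $d{\bs{v}}_1/dy=-(y^2/(1+y^2)){\bs{u}}$, the anchoring of $(\tilde{\bs{b}}{}^{\infty},\tilde{\bs{c}}^{\infty})$ at $y=0$, uniqueness, and the angle estimate $\pi/2-\arctan y\to 0$ for the limit), and these details are all correct.
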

\begin{proof}
The unit vector ${\bs{u}}(y)$ belongs to the plane perpendicular to ${\bs{b}}^{\infty}$ and ${\bs{c}}^{\infty}$, and it satisfies the equation $(d{\bs{u}}/dy)(y)=(y^2/(1+y^2)){\bs{v}}_1(y)$.
\end{proof}

Now, for the function $b_1(x,y)$ in Lemma \ref{lemma:dPhi}, we have $b_1(x,y)\rightarrow-\sqrt{5}x/(2X_0-xX'_0)$ as $x\rightarrow\infty$ for any fixed $y$.
Here, $x/(2X_0-xX'_0)$ is a positive and oscillating function for $x\in[t_2,\infty]$ satisfying $(x/(2X_0-xX'_0))(0)=0$ and $x/(2X_0-xX'_0)=O(1)$ as $x\rightarrow\infty$, by Propositions \ref{prop:X0} and \ref{prop:ineq4X0}.
Hence, the integral $w(x):=\sqrt{5}\int_0^x[x/(2X_0-xX_0')]dx$ is an increasing function such that $w(x)=O(x)$ as $x\rightarrow\infty$.
Then, we have the following theorem, where, for $\mathbb{S}^2_y$ and $A(y)$, see Theorem \ref{thm:xcurve}. 
%%%%%%%%%%%%%%%%%%%%%%%%%%%%%%
%%%%% Theorem 4.7
%%%%%%%%%%%%%%%%%%%%%%%%%%%%%%
\begin{thm}\label{thm:5.2}
Let $f^0(x,y)$ be a curvature surface defined on $D$ and $F^0(x,y)=[\phi,X^0_{\alpha},X^0_{\beta},\xi](x,y)$ be the orthonormal frame field determining $f^0(x,y)$.
Let ${\bs{n}}(x,y):=(1+y^2)^{-1/2}(X^0_{\beta}+y\phi)(x,y)$.
Then, we have the following facts as $x\rightarrow\infty$:
\begin{enumerate}
\item
Each $x$-curve ${\bs{n}}(x,y)$ for $x\in[1,\infty)$ with fixed $y$ has infinite length and the curve approaches the point $-{\bs{v}}_1(y)$ while winding uniformly around the point.
Then, the curve never reaches $-{\bs{v}}_1(y)$ at any finite $x$.

\item
There is a constant orthonormal base $[\hat{\bs{b}},\hat{\bs{c}}]$ of the plane spanned by ${\bs{b}}^{\infty}$ and ${\bs{c}}^{\infty}$ such that all $x$-curves $\xi(x,y)$ and $X^0_{\alpha}(x,y)$ with $y$, respectively, converge uniformly to the circles expressed as $\bar{\bs{b}}(x)=\cos w(x)\,\hat{\bs{b}}-\sin w(x)\,\hat{\bs{c}}$ and $\bar{\bs{c}}(x)=\sin w(x)\,\hat{\bs{b}}+\cos w(x)\,\hat{\bs{c}}$.

\item
Each $x$-curve $f^0(x,y)$ converges uniformly to the following circle of $\mathbb{S}^2_y$:
\begin{gather*}
-(2\sqrt{5})^{-1}(1+y^2)^{-1/2}
\left({\bs{v}}_1(y)+2(1+y^2)^{1/2}\,\bar{\bs{b}}(x)\right)+A(y).
\end{gather*}
\end{enumerate}
The convergences for the $x$-curves in (2) and (3) are uniform in the wider sense with respect to $y\in \R$.
\end{thm}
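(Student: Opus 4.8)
The plan is to reduce all three assertions to the asymptotic behaviour, as $x\to\infty$, of a single moving orthonormal frame of the fixed $3$-plane $\R^3_y$ perpendicular to ${\bs u}(y)$. With ${\bs n}(x,y):=(1+y^2)^{-1/2}(X^0_\beta+y\phi)(x,y)$ as in the statement, the vectors $\xi,X^0_\alpha,{\bs n}$ are pairwise orthonormal and each is orthogonal to ${\bs u}(y)$ (for $\xi,X^0_\alpha$ this is immediate, and ${\bs n}\perp{\bs u}$ by construction), so $[\xi,X^0_\alpha,{\bs n}]$ is an orthonormal basis of $\R^3_y$ for every fixed $y$. First I would differentiate this frame in $x$ by Lemma \ref{lemma:dPhi}. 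The decisive simplification is that the stray terms close up: a direct computation gives $a_1\phi-c_1X^0_\beta=\lambda\,{\bs n}$ and $(\partial/\partial x){\bs n}=-\lambda X^0_\alpha$, so that $[\xi,X^0_\alpha,{\bs n}]$ satisfies the closed rotation equation
\begin{gather*}
(\partial/\partial x)[\xi,X^0_\alpha,{\bs n}]=[\xi,X^0_\alpha,{\bs n}]\,\Theta,\qquad
\Theta=\begin{bmatrix}0&-b_1&0\\ b_1&0&-\lambda\\ 0&\lambda&0\end{bmatrix},\qquad
\lambda:=\frac{2\bigl(X_0+\tfrac{\sqrt5}{2}\bigr)\sqrt{1+y^2}}{xh}.
\end{gather*}
Thus everything is governed by the two coefficients $b_1$ and $\lambda$ of a genuine $\mathrm{SO}(3)$-motion in $\R^3_y$.

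Next I would record the asymptotics from Propositions \ref{prop:X0} and \ref{prop:ineq4X0}: since $2X_0-xX_0'=O(x)$ is bounded between positive multiples of $x$, one has $b_1(x,y)\to-w'(x)$ with $w'(x)=\sqrt5\,x/(2X_0-xX_0')$ bounded between two positive constants, while $\lambda(x,y)=\Theta(1/x)\to0$; all estimates are locally uniform in $y$. The convergence of ${\bs n}$ is then the first real point. Because $\int_1^\infty\lambda\,dx=\infty$, the naive ``integrate the generator'' bound used for $u\to\infty$ in Lemma \ref{lemdef:unifconv} fails, and I would instead exploit that the $\xi$--$X^0_\alpha$ rotation is fast ($w'$ bounded below) while the drift of ${\bs n}$ is slow: writing $X^0_\alpha=b_1^{-1}(\partial/\partial x)\xi$ (valid for large $x$, where $b_1$ is bounded away from $0$) and integrating by parts,
\begin{gather*}
\int^{X}\lambda\,X^0_\alpha\,dx=\Bigl[\tfrac{\lambda}{b_1}\,\xi\Bigr]^{X}-\int^{X}\bigl(\tfrac{\lambda}{b_1}\bigr)'\,\xi\,dx ,
\end{gather*}
with $\lambda/b_1=\Theta(1/x)\to0$ and $(\lambda/b_1)'=O(1/x^2)$ integrable; hence ${\bs n}(x,y)$ converges to a unit vector ${\bs n}_\infty(y)$, locally uniformly in $y$. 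To pin down ${\bs n}_\infty$, I would use $(\partial/\partial y)\xi=b_2X^0_\beta$ and $(\partial/\partial y)X^0_\alpha=c_2X^0_\beta$ from Lemma \ref{lemma:dPhi}: since $b_2,c_2\to0$ as $x\to\infty$, the limiting directions of $\xi$ and $X^0_\alpha$ are $y$-independent, and a constant unit vector perpendicular to ${\bs u}(y)$ for all $y$ must lie in the span of ${\bs b}^\infty,{\bs c}^\infty$, because by Corollary \ref{cor:5.2} the circle ${\bs u}(y)$ sweeps the whole complementary $2$-plane. This forces the limit plane to be $\mathrm{span}\{{\bs b}^\infty,{\bs c}^\infty\}$ and ${\bs n}_\infty(y)=\pm{\bs v}_1(y)$; the sign is fixed to $-{\bs v}_1(y)$ by continuity together with the boundary value $\lim_{x\to0}{\bs f}(x,0)=-{\bs v}_1(0)$ already used in Theorem \ref{thm:binfcinf}.

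Granting ${\bs n}\to-{\bs v}_1(y)$, assertion (1) follows: the length is infinite since $\int_1^\infty\|(\partial/\partial x){\bs n}\|\,dx=\int_1^\infty\lambda\,dx=\infty$, and the curve never reaches $-{\bs v}_1(y)$ at finite $x$ because $\lambda>0$ everywhere, so ${\bs n}$ has nonzero speed and winds monotonically (the driving direction $X^0_\alpha$ rotates with accumulated angle $\to\infty$) around its limit. For assertion (2), the plane $\{{\bs n}(x,y)\}^\perp\cap\R^3_y$ converges to $\mathrm{span}\{{\bs b}^\infty,{\bs c}^\infty\}$, and inside it the pair $(\xi,X^0_\alpha)$ rotates at rate $b_1$; the \emph{main obstacle} is to show that the accumulated phase agrees with $w(x)$ in the stated form, i.e.\ that $\int^{x}\bigl(b_1+w'\bigr)\,dx$ stays bounded so that the discrepancy can be absorbed into the constant frame $[\hat{\bs b},\hat{\bs c}]$. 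This is the delicate step: the crude leading-order relation $b_1\sim-w'$ is not by itself enough, and the fine cancellation requires the refined asymptotics of $X_0',X_0''$ on $[t_2,\infty)$ from Proposition \ref{prop:ineq4X0}-(5) (ultimately the asymptotic expansion of the ${}_1F_2$-function indicated in its footnote). Once the phase is controlled, $\xi\to\bar{\bs b}(x)$ and $X^0_\alpha\to\bar{\bs c}(x)$ uniformly, uniformly in the wider sense in $y$ (the bound being $y$-independent on compact $y$-ranges).

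Finally, assertion (3) is purely algebraic. Inverting the relations expressing ${\bs n}$ and $\xi$ through the frame $[X^0_\alpha,{\bs f},{\bs u}_2]$ of $\R^3_y$ gives the identity ${\bs f}(x,y)=(5+4y^2)^{-1/2}\bigl({\bs n}-2(1+y^2)^{1/2}\xi\bigr)(x,y)$. Substituting the limits ${\bs n}\to-{\bs v}_1(y)$ and $\xi\to\bar{\bs b}(x)$ into $f^0(x,y)=(2\sqrt5)^{-1}\sqrt{(5+4y^2)(1+y^2)^{-1}}\,{\bs f}(x,y)+A(y)$ from Theorem \ref{thm:xcurve}-(2) yields exactly the circle
$-(2\sqrt5)^{-1}(1+y^2)^{-1/2}\bigl({\bs v}_1(y)+2(1+y^2)^{1/2}\bar{\bs b}(x)\bigr)+A(y)$ of $\mathbb{S}^2_y$, and the uniformity in the wider sense in $y$ is inherited from that of the frame convergence together with the continuity of the scalar factors $(1+y^2)^{1/2}$, $(5+4y^2)^{1/2}$ and of $A(y)$.
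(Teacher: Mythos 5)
Your reduction to the closed $\mathrm{SO}(3)$ system for $[\xi,X^0_{\alpha},{\bs{n}}]$ in $\R^3_y$ is exactly the paper's starting point (it is \eqref{eq:5.16}), and your part (3) is the same algebraic substitution into Theorem \ref{thm:xcurve}-(2) that the paper performs. The gap is in how the limit of ${\bs{n}}$ is obtained. The paper never integrates the frame equation asymptotically: since ${\bs{u}}(y)$, hence $d{\bs{u}}/dy=(y^2/(1+y^2)){\bs{v}}_1(y)$, does not depend on $x$, the identity \eqref{eq:5.14} (a consequence of Lemma \ref{lemma:dPhi}) expresses this \emph{fixed} vector in the moving frame as $-\frac{y^2}{1+y^2}{\bs{n}}(x,y)$ plus terms whose coefficients are $O(1/x)$; this yields \eqref{eq:5.15}, $\|{\bs{v}}_1(y)+{\bs{n}}(x,y)\|=O(1/x)$, i.e.\ convergence, identification of the limit \emph{with its sign}, and a rate, in one stroke, and also the non-attainment (if ${\bs{n}}(x_0,y)=-{\bs{v}}_1(y)$, then \eqref{eq:5.14} would force a nonzero $\xi$-component of ${\bs{v}}_1(y)$, impossible since $b_2\neq 0$). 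Your substitutes fail at precisely these points. (i) Your integration by parts needs $(\lambda/b_1)'$ integrable, but $(\lambda/b_1)'=\lambda'/b_1-\lambda b_1'/b_1^2$, and $b_1'$ does \emph{not} decay: $b_1$ tends to $-\sqrt{5}x/(2X_0-xX_0')$, a bounded function that keeps oscillating with amplitude bounded away from zero (Proposition \ref{prop:ineq4X0}), so $\lambda b_1'/b_1^2=O(1/x)$ is not absolutely integrable; moreover the coefficient oscillation has asymptotically the same frequency as the mean rotation rate of $(\xi,X^0_{\alpha})$, and this resonance blocks a naive iteration of the trick. (ii) The sign determination via ``continuity together with $\lim_{x\to 0}{\bs{f}}(x,0)=-{\bs{v}}_1(0)$'' is a non sequitur: that is an $x\to 0$ datum and says nothing about the $x\to\infty$ limit; your plane argument only gives ${\bs{n}}_{\infty}(y)=\pm{\bs{v}}_1(y)$. (iii) ``Nonzero speed'' does not prevent the curve from passing through $-{\bs{v}}_1(y)$ at some finite $x$.

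For (2) you explicitly concede the decisive step --- that the accumulated phase of the $(\xi,X^0_{\alpha})$ rotation differs from $w(x)$ by a convergent amount --- and defer it to unspecified refined asymptotics of $X_0$; but that \emph{is} the content of assertion (2), so the proposal does not prove it. The paper's route avoids estimating $\int(b_1+w')\,dx$ altogether: once \eqref{eq:5.15} is available, the tangent plane at ${\bs{n}}(x,y)$ converges to $T_{-{\bs{v}}_1(y)}\mathbb{S}^2$, the parallel-translated vectors $\xi,X^0_{\alpha}$ converge to curves $\bar{\bs{b}},\bar{\bs{c}}$ on the unit circle of that fixed plane, and by \eqref{eq:5.16} together with $c_1=O(1/x)$ and $b_1\to -w'$ these limit curves satisfy the exact equations $\partial\bar{\bs{b}}/\partial x=-w'(x)\bar{\bs{c}}$ and $\partial\bar{\bs{c}}/\partial x=w'(x)\bar{\bs{b}}$, which integrate to the stated circles; constancy of $(\hat{\bs{b}},\hat{\bs{c}})$ then follows, as in your sketch, from $\partial\xi/\partial y,\,\partial X^0_{\alpha}/\partial y\to 0$. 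In short: your frame reduction and part (3) agree with the paper, but your parts (1) and (2) are not proofs as written --- (1) rests on a false estimate and an unjustified sign, (2) is missing its key step --- and both are repaired by the paper's single observation that ${\bs{v}}_1(y)$ can be read off from the frame at every finite $x$ through \eqref{eq:5.14}.
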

\begin{proof}
We consider the $x$-curves ${\bs{n}}(x,y)$ as $x\rightarrow\infty$.
Before the proof, we note that the vector ${\bs{n}}(x,y)$ is perpendicular to ${\bs{u}}(y)$ for any $x$. 
Now, the Propositions \ref{prop:X0} and \ref{prop:ineq4X0} are important for the proof, which give the property of the function $X_0(x)$.

For (1), we firstly have the following equation for any fixed $y\neq 0$\footnote{For $y=0$, see the proof of Theorem \ref{thm:binfcinf}.},
\begin{gather}\label{eq:5.15}
\|{\bs{v}}_1(y)+{\bs{n}}(x,y)\|=O(1/x)
\end{gather}
by \eqref{eq:5.14} and $(d{\bs{u}}/dy)(y)=(y^2/(1+y^2)){\bs{v}}_1(y)$.
Hence, the $x$-curve ${\bs{n}}(x,y)$ converges uniformly to the point $-{\bs{v}}_1(y)$ as $x\rightarrow\infty$.
Then, the equation ${\bs{n}}(x,y)=-{\bs{v}}_1(y)$ does not holds at any finite $x$, since we have $b_2(x,y)\neq 0$ for $(x,y)\in D$ in \eqref{eq:5.14}.
Note that the convergence of the $x$-curve is also uniform with respect to $y$ of any bounded interval $[-y_1,y_1]$, since \eqref{eq:5.15} holds uniformly for $y\in[-y_1,y_1]$.
Next, we have
\begin{gather}\label{eq:5.16}
\partial{\bs{n}}/\partial x=(1+y^2)^{1/2}c_1X^0_{\alpha},\quad
\partial X^0_{\alpha}/\partial x=-(1+y^2)^{1/2}c_1{\bs{n}}-b_1\xi,\quad
\partial \xi/\partial x=b_1X^0_{\alpha}
\end{gather}
by Lemma \ref{lemma:dPhi}.
The length $L(x)$ on $[1,x]$ of the $x$-curve ${\bs{n}}(x,y)$ diverges to $\infty$ as $x\rightarrow\infty$, from $(\partial{\bs{n}}/\partial x)(x,y)=O(1/x)$ by Proposition \ref{prop:ineq4X0}.

Now, we regard ${\bs{n}}(x,y)$ as an analytic curve on the unit $2$-sphere $\mathbb{S}^2$, and then $X^0_{\alpha}$ and $\xi$ are the unit tangent and the unit normal vectors of ${\bs{n}}(x,y)$, respectively.
The curvature $\kappa(x,y)=(1+y^2)^{-1/2}(b_1/c_1)(x,y)$ of ${\bs{n}}(x,y)$ diverges to $\infty$ as $x\rightarrow\infty$, by $\kappa(x,y)=O(x)$.
In consequence, when we take the geodesic $m_x$ on $\mathbb{S}^2$ for each $x$ connecting ${\bs{n}}(x,y)$ with $-{\bs{v}}_1(y)$ and put $t_1:=\inf\{x>t_0\;|\; {\bs{n}}(x,y)\in m_{t_0}\}$ for a fixed $x=t_0$, the length $L(t_1)-L(t_0)$ of the curve ${\bs{n}}(x,y) \ (t_0\leq x\leq t_1)$ converges to $0$ and further the curvature of the curve ${\bs{n}}(x,y) \ (t_0\leq x\leq t_1)$ is almost constant $\kappa(t_0,y)$, which diverges to $\infty$ as $t_0\rightarrow\infty$.
This fact implies that, as $x$ increases, the curve ${\bs{n}}(x,y)$ gradually approaches a smaller and smaller nearly circle of central axis $-{\bs{v}}_1(y)$, which shows (1) (see Figure \ref{fig:xcurve-n} below). 

For (2) and (3), we firstly verify them for an arbitrarily fixed $y$.
For (2): At first, note that every tangent plane at ${\bs{n}}(x,y)$ converges to $T_{-{\bs{v}}_1(y)}\mathbb{S}^2$ uniformly as $x\rightarrow\infty$, by (1).
We regard the normal vector $\xi(x,y)$ and the tangent vector $X^0_{\alpha}(x,y)$ of the curve ${\bs{n}}(x,y)$ as the vectors at $-{\bs{v}}_1(x)$ by the parallel translation of $\R^4$.
Then, these vectors $\xi(x,y)$ and $X^0_{\alpha}(x,y)$ converge uniformly to the curves parametrized by $x$ on the unit circle of $T_{-{\bs{v}}_1(y)}\mathbb{S}^2$ by (1), of which curves we denote by $\bar{\bs{b}}(x,y)$ and $\bar{\bs{c}}(x,y)$, respectively.
Then, we have $\partial\bar{\bs{b}}/\partial x=-w'(x)\bar{\bs{c}}$ and $\partial\bar{\bs{c}}/\partial x=w'(x)\bar{\bs{b}}$ from \eqref{eq:5.16} and the definition of $w(x)$ by $c_1=O(1/x)$, which implies the fact (2) for each $y$.
That is, there is an orthonormal pair $(\hat{\bs{b}}(y),\hat{\bs{c}}(y))$ of vectors depending on $y$ and perpendicular to ${\bs{u}}(y)$ and ${\bs{v}}_1(y)$ such that 
\begin{gather*}
\bar{\bs{b}}(x,y)=\cos w(x)\,\hat{\bs{b}}(y)-\sin w(x)\,\hat{\bs{c}}(y),\quad
\bar{\bs{c}}(x,y)=\sin w(x)\,\hat{\bs{b}}(y)+\cos w(x)\,\hat{\bs{c}}(y)
\end{gather*}
hold (see Figure \ref{fig:xcurve-xi} below). 
The fact (3) follows from (1) and (2) directly by ${\bs{f}}(x,y)=(5+4y^2)^{-1/2}[{\bs{n}}-2(1+y^2)^{1/2}\xi](x,y)$ in Theorem \ref{thm:xcurve}.

Finally, we obtain that $\hat{\bs{b}}(y)$ and $\hat{\bs{c}}(y)$ are constant vectors.
In fact, as $x\rightarrow\infty$, $\partial\xi/\partial y$ and $\partial X^0_{\alpha}/\partial y$ converges to zero-vector uniformly with respect to $y$ of any bounded interval $[-y_1,y_1]$, by Lemma \ref{lemma:dPhi} and Proposition \ref{prop:ineq4X0}.

In consequence, we have completed the proof.
\end{proof}

%%%%%%%%%%%%%%%%%%%%%%%%%%%%%%
%%%%% Figure 3
%%%%%%%%%%%%%%%%%%%%%%%%%%%%%%
\noindent
\begin{figure}[H]
\begin{minipage}{0.5\linewidth}
\centering
\includegraphics[width=.9\linewidth]{./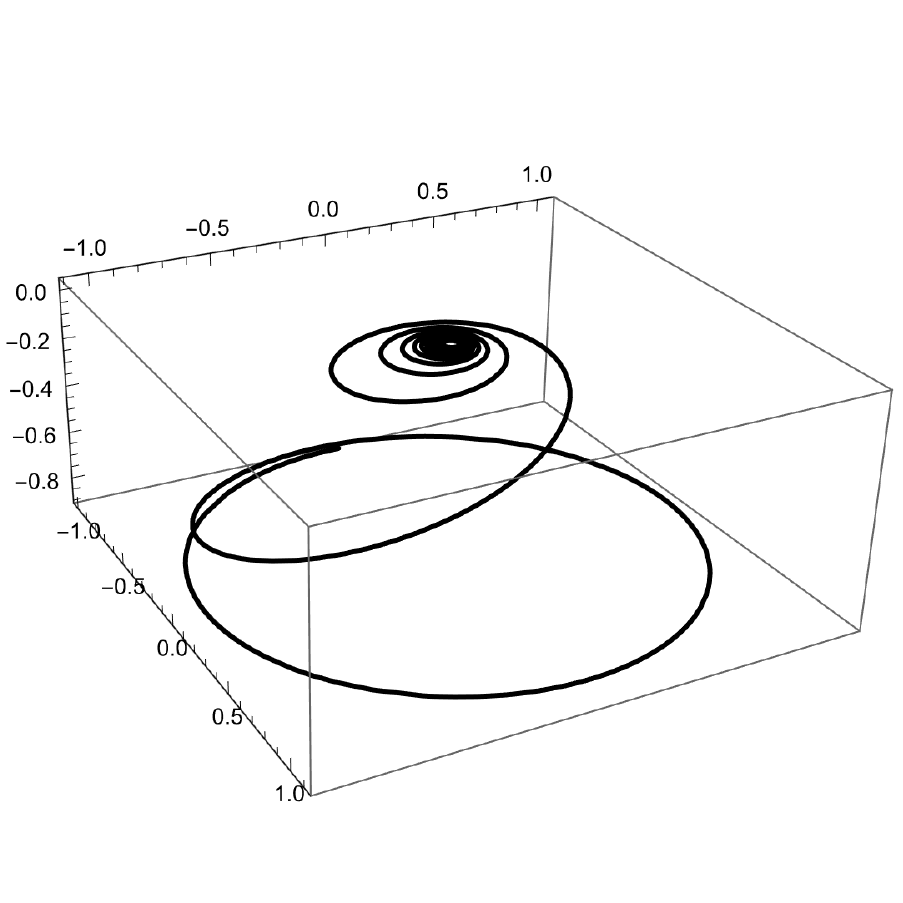}
\vspace{0pt}
\end{minipage}
\begin{minipage}{0.4\linewidth}
\centering
\includegraphics[width=.9\linewidth]{./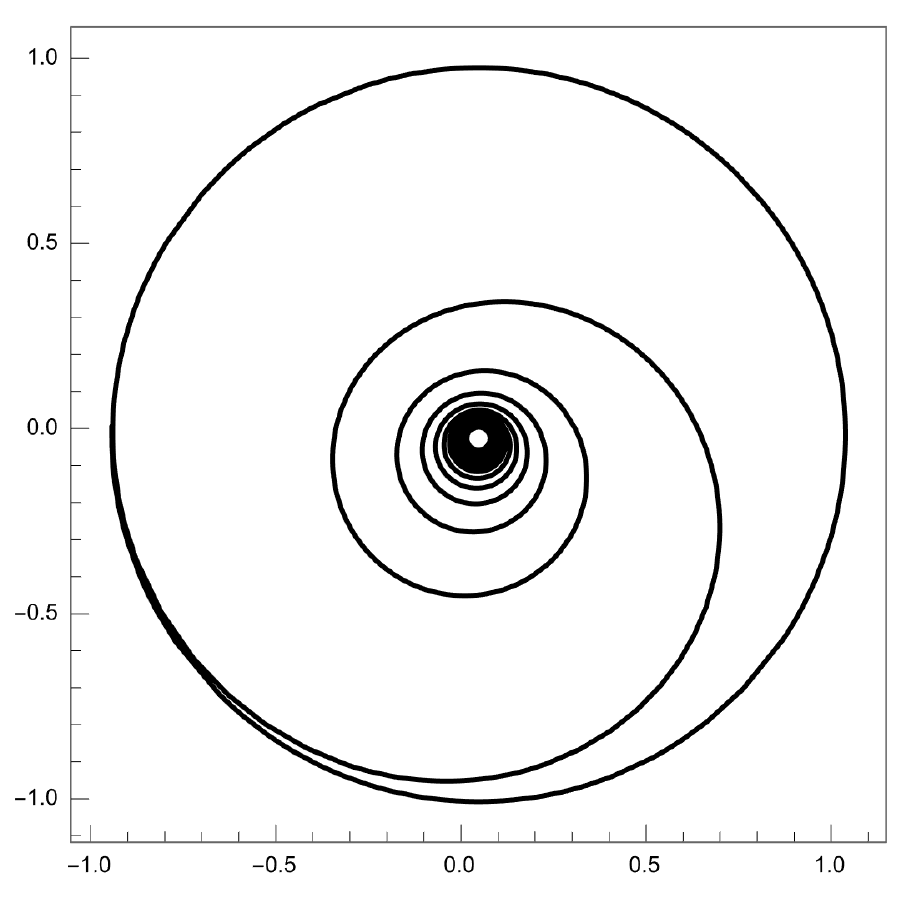}
\end{minipage}
\caption{
\small
These show the $x$-curve $\bs{n}^{1/20}(x,1)$ on $[1/500,100]$: the right hand-side is a bird's-eye view of the left.
\normalsize}
\label{fig:xcurve-n}
\end{figure}

%%%%%%%%%%%%%%%%%%%%%%%%%%%%%%
%%%%% Figure 4
%%%%%%%%%%%%%%%%%%%%%%%%%%%%%%
\noindent
\begin{figure}[H]
\begin{minipage}{0.5\linewidth}
\centering
\includegraphics[width=.9\linewidth]{./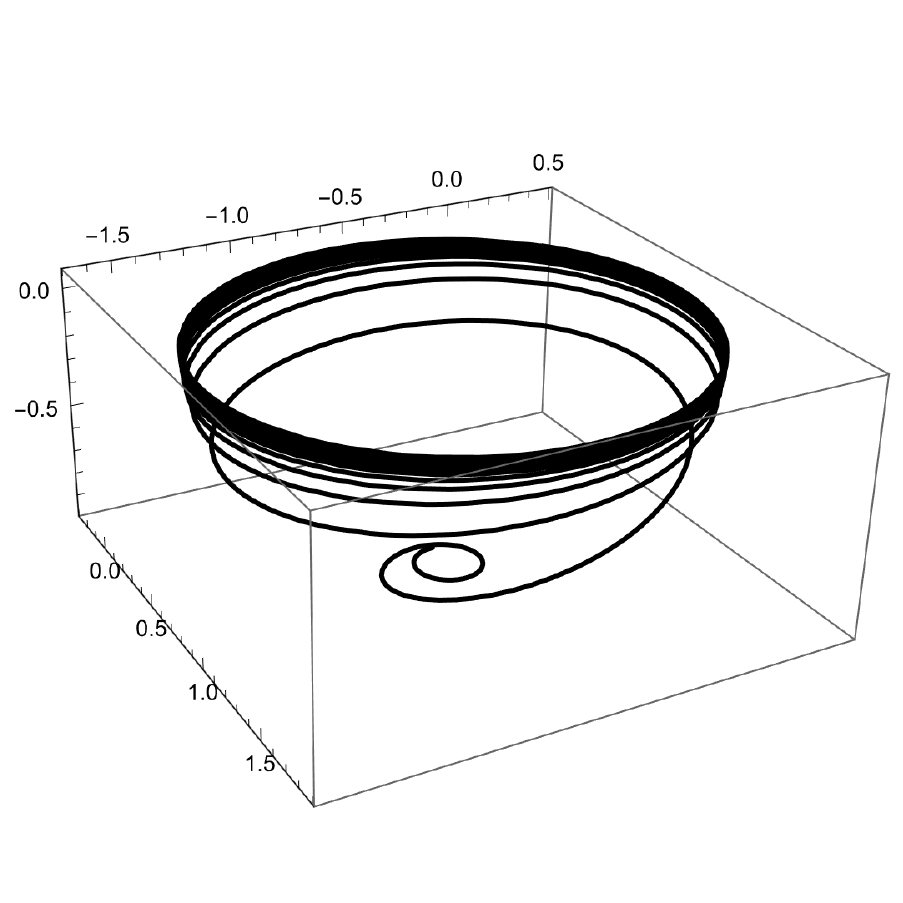}
\vspace{0pt}
\end{minipage}
\begin{minipage}{0.4\linewidth}
\centering
\includegraphics[width=.9\linewidth]{./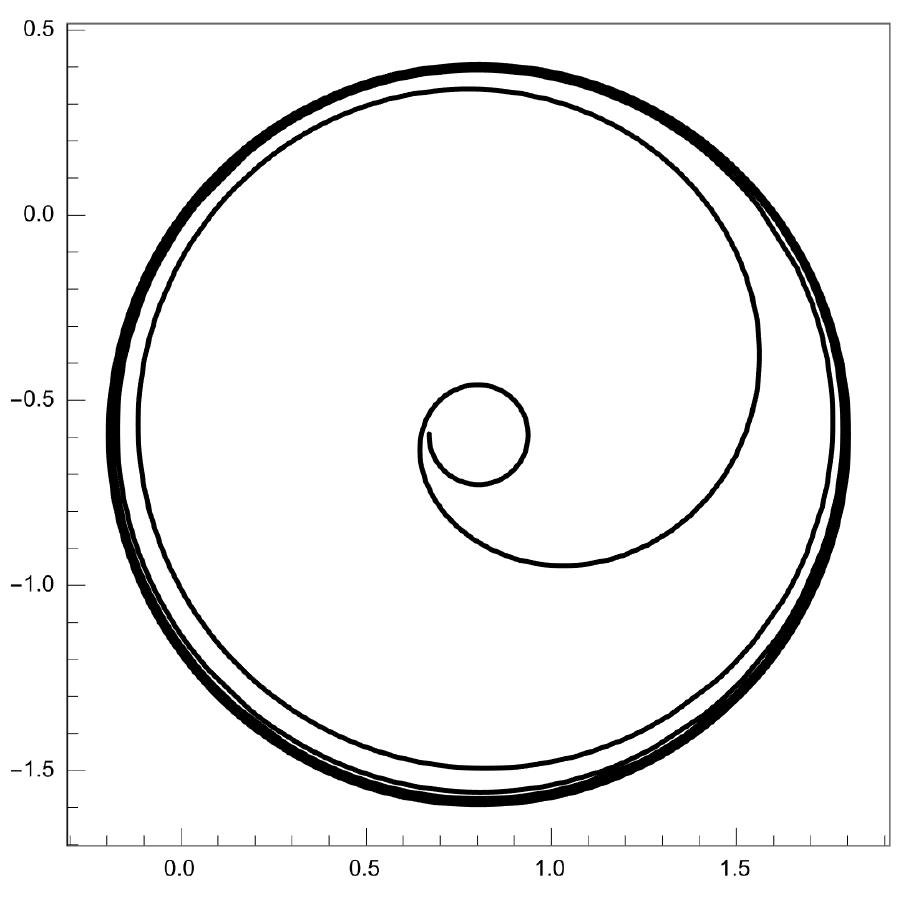}
\end{minipage}
\caption{
\small
These show the $x$-curve $\xi^{1/20}(x,1)$ on $[1/500,100]$: the right hand-side is a bird's-eye view of the left.
\normalsize}
\label{fig:xcurve-xi}
\end{figure}

By Theorem \ref{thm:binfcinf}-(3) and Theorem \ref{thm:5.2}-(3), each $x$-curve $f^0(x,y)$ with $y\neq 0$ converges uniformly to the parallel small circles in $\mathbb{S}^2_y$ as $x \to 0$ and $x \to \infty$. Furthermore, the two points $\pm(2\sqrt{5})^{-1}\sqrt{(5+4y^2)(1+y^2)^{-1}}{\bs{v}}_1(y)+A(y)$ are the antipodal points of $\mathbb{S}^2_y$ to each other and the tangent spaces at these points are spanned by the vectors ${\bs{b}}^{\infty}$ and ${\bs{c}}^{\infty}$.

Next, we study $y$-curves $f^0(x,y)$ with fixed $x$ as $y$ tends to $\infty$.
Along any $y$-curve $f^0(x,y)$, we have an orthonormal frame field $[\tilde{\bs{u}}(x),X^0_{\beta}(x,y), \tilde{\bs{f}}(x,y),\phi(x,y)]$ as in \eqref{def:ff-utilde-X0beta-ftilde-phi}: any $y$-curve lies on $\mathbb{S}^2_x$ of an affine hyperplane $\R^3_x$ perpendicular to $\tilde{\bs{u}}(x)$, where $(\tilde{\bs{u}}(x), \tilde{\bs{f}}(x,y))$ is defined in Theorem \ref{thm:ycurve}.
In particular, the vector $\tilde{\bs{f}}=(B_2^2+C_2^2)^{-1/2}(C_2X^0_{\alpha}+B_2\xi)$ is analytic and $(B_2^2+C_2^2)(x)=(X_0'/x)(2X_0-xX_0'-X_0'/x+\sqrt{5})>0$ holds.
By Lemma \ref{lemma:dPhi}, we have
\begin{gather*}
\partial\tilde{\bs{f}}/\partial y=-{\tilde v}_3X^0_{\beta},\quad
\partial\phi/\partial y={\tilde v}_2X^0_{\beta}~(=-a_2X^0_{\beta}~),\quad
\partial X^0_{\beta}/\partial y=\tilde v_3\tilde {\bs{f}}-\tilde v_2\phi,\quad
\text{where}
\end{gather*}
${\tilde v}_2(x,y):=h^{-1}(x,y)[2X_0+\sqrt{5}-(x^2+y^2)(X_0'/x)]$,\ %
${\tilde v}_3(x,y):=-2(y/h(x,y))(B_2^2+C_2^2)^{1/2}(x)$.
%%%%%%%%%%%%%%%%%%%%%%%%%%%%%%
%%%%% Lemma 4.8
%%%%%%%%%%%%%%%%%%%%%%%%%%%%%%
\begin{lemma}\label{lemma:5.3}
\begin{enumerate}
\item
For an arbitrarily fixed $x_1>0$, let $(0,x_1]$ be the bounded interval.
Then, there is a constant $C>0$ independent of $x\in(0,x_1]$ such that $|\tilde v_2(x,y)+1|<Cy^{-2}$ and $|\tilde v_3+q(x)/y|<Cy^{-3}$ hold for $x\in(0,x_1]$ as $y$ tends to $\infty$, where
\begin{gather*}
q(x):=2(x/X_0')(B_2^2+C_2^2)^{1/2}(x).
\end{gather*}

\item
For arbitrarily fixed two numbers $x_1>x_0>0$, let $[x_0,x_1]$ be the bounded interval.
Then, with $T(x)$ given in Remark \ref{rem:uutilde}, there is a constant $C>0$ independent of $x\in[x_0,x_1]$ such that
\begin{gather*}
\|(d\tilde{\bs{u}}/d x)(x)- T(x)\tilde{\bs{f}}(x,y)\| <C/y,\quad
\|(\partial\tilde{\bs{f}}/\partial x)(x,y)+ T(x)\tilde{\bs{u}}(x)\| <C/y
\end{gather*}  
hold for $x\in[x_0,x_1]$ as $y$ tends to $\infty$. 
\end{enumerate}
\end{lemma}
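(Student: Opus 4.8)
The plan is to handle the two parts by quite different means: part (1) is a direct algebraic estimate on the scalar functions $\tilde v_2,\tilde v_3$, whereas part (2) is a computation of the frame derivatives $d\tilde{\bs{u}}/dx$ and $\partial\tilde{\bs{f}}/\partial x$ expanded in the orthonormal frame $[\tilde{\bs{u}},X^0_{\beta},\tilde{\bs{f}},\phi]$, keeping track of the order in $1/y$ of each component. The single most useful observation throughout is the splitting $h(x,y)=h_0(x)+y^2(X_0'/x)$, where $h_0(x):=h(x,0)=2X_0-xX_0'+\sqrt{5}$. Here $h_0(x)>\sqrt{5}>0$ by Proposition \ref{prop:X0}(3), and $X_0'/x$ is continuous and bounded below by a positive constant on any $[0,x_1]$ because $(X_0'/x)(0)=2>0$ by Proposition \ref{prop:X0}(2). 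I write $\rho(x):=(B_2^2+C_2^2)^{1/2}(x)$, which is continuous and positive.

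For part (1) I would first record the two algebraic identities $\tilde v_2+1=2h_0/h$ and $\tilde v_3+q/y=2\rho\,x h_0/(y\,h\,X_0')$, the latter obtained from $xh-y^2X_0'=xh_0$. Then, bounding $h\ge y^2(X_0'/x)$ and using that $h_0$ and $\rho$ are bounded while $X_0'/x$ is bounded below on $[0,x_1]$, one gets $|\tilde v_2+1|\le 2(\sup h_0)/(\inf(X_0'/x))\,y^{-2}$ and $|\tilde v_3+q/y|\le C\,\rho\,h_0\,(x/X_0')^2\,y^{-3}$, with constants independent of $x\in(0,x_1]$. The point that the interval may descend to $x\to 0$ is precisely that only $x/X_0'=1/(X_0'/x)$, a bounded quantity, enters the surviving factor.

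For part (2) I would differentiate $\tilde{\bs{u}}=\rho^{-1}(-B_2X^0_{\alpha}+C_2\xi)$ and $\tilde{\bs{f}}=\rho^{-1}(C_2X^0_{\alpha}+B_2\xi)$ using Lemma \ref{lemma:dPhi}, namely $\partial_x X^0_{\alpha}=a_1\phi-b_1\xi-c_1X^0_{\beta}$ and $\partial_x\xi=b_1X^0_{\alpha}$, and expand the results in $[\tilde{\bs{u}},X^0_{\beta},\tilde{\bs{f}},\phi]$. The part parallel to $\tilde{\bs{u}}$ drops out, and the $X^0_{\beta}$- and $\phi$-components turn out to be proportional to $c_1$ and $a_1$. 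From $h\sim y^2X_0'/x$ one reads off the asymptotics $a_1=O(1/y)$, $c_1=O(1/y^2)$, and $b_1=\sqrt{5}/X_0'+O(1/y^2)$, with constants uniform on $[x_0,x_1]$ because there $X_0'\ge\mathrm{const}>0$; hence the $X^0_{\beta}$- and $\phi$-components of $d\tilde{\bs{u}}/dx$ are $O(1/y)$. Since $\|d\tilde{\bs{u}}/dx\|=T(x)$ by Remark \ref{rem:uutilde}, and $T$ is bounded and bounded below on $[x_0,x_1]$, the surviving $\tilde{\bs{f}}$-component must equal $\pm T(x)+O(1/y^2)$; the sign is pinned to $+T(x)$ by its explicit limit $b_1+\rho^{-2}(B_2C_2'-B_2'C_2)\to\sqrt{5}/X_0'+\rho^{-2}(B_2C_2'-B_2'C_2)>0$. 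This gives the first inequality. For $\partial_x\tilde{\bs{f}}$, differentiating $\langle\tilde{\bs{f}},\tilde{\bs{u}}\rangle=0$ yields $\langle\partial_x\tilde{\bs{f}},\tilde{\bs{u}}\rangle=-\langle\tilde{\bs{f}},d\tilde{\bs{u}}/dx\rangle=-(T(x)+O(1/y^2))$, the $\tilde{\bs{f}}$-component vanishes, and a direct computation shows the $X^0_{\beta}$- and $\phi$-components are again proportional to $c_1$ and $a_1$, hence $O(1/y)$; together these give $\|\partial_x\tilde{\bs{f}}+T\tilde{\bs{u}}\|<C/y$.

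The main obstacle is the bookkeeping in part (2): correctly isolating which frame components survive the limit $y\to\infty$ and confirming that the surviving $\tilde{\bs{f}}$-component is exactly $+T(x)$ rather than $-T(x)$. This either requires verifying the algebraic identity $\sqrt{5}/X_0'+\rho^{-2}(B_2C_2'-B_2'C_2)=T(x)$ outright, or, more economically, combining the positivity of its leading term with the already-known norm $\|d\tilde{\bs{u}}/dx\|=T$. A secondary but essential point is uniformity: part (2) genuinely requires $x_0>0$, since $T(x)\sim\sqrt{5}/(2x)\to\infty$ as $x\to0$ by Remark \ref{rem:uutilde}, while part (1) survives all the way to $x=0$ precisely because only the bounded factor $x/X_0'$ appears in its estimates.
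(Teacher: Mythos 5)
Your proposal is correct in substance and follows essentially the same route as the paper. For part (1), your splitting $h=h_0+y^2(X_0'/x)$ with $h_0(x)=2X_0-xX_0'+\sqrt{5}$, and the exact identities $\tilde v_2+1=2h_0/h$ and $\tilde v_3+q/y=2\rho\,x h_0/(y\,h\,X_0')$ (both of which I checked), are precisely the content of the paper's identity $1/h=(1/y^2)(x/X_0')-(x/X_0')(2X_0-xX_0'+\sqrt{5})/(y^2h)$; your bounds via $h\geq y^2(X_0'/x)$ and the boundedness of $h_0$, $\rho$, $x/X_0'$ on $[0,x_1]$ supply exactly what the paper leaves implicit. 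For part (2), your expansion of $d\tilde{\bs{u}}/dx$ and $\partial_x\tilde{\bs{f}}$ in the frame $[\tilde{\bs{u}},X^0_{\beta},\tilde{\bs{f}},\phi]$ using Lemma \ref{lemma:dPhi}, together with the uniform asymptotics $a_1=O(1/y)$, $c_1=O(1/y^2)$, $b_1=\sqrt{5}/X_0'+O(1/y^2)$ on $[x_0,x_1]$, is the paper's ``direct calculation,'' and your identification of the $\tilde{\bs{f}}$-component as $b_1+\rho^{-2}(B_2C_2'-B_2'C_2)$ matches the paper's formula for $T(x)$.

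The one genuine soft spot is your ``more economical'' sign argument. Setting $L(x):=\sqrt{5}/X_0'+\rho^{-2}(B_2C_2'-B_2'C_2)$, what your norm argument actually yields is only $|L(x)|=T(x)$; the positivity of the leading term $\sqrt{5}/X_0'$ does not by itself pin the sign of $L$ at a given $x$, since nothing you have said excludes $\rho^{-2}(B_2C_2'-B_2'C_2)<-\sqrt{5}/X_0'$ there. Two short patches are available: (a) verify the algebraic identity $L=T$ outright from $B_2=-(1/2)(X_0'/x)-\sqrt{5}$, $C_2=X_0''-X_0'/x$ and the explicit rational expression for $T$ in Remark \ref{rem:uutilde} --- this is what the paper does; or (b) note that $|L|=T>0$ on all of $(0,\infty)$ and $L$ is continuous, so $L$ has constant sign, and as $x\to 0$ one has $\sqrt{5}/X_0'\sim\sqrt{5}/(2x)\to+\infty$ while $\rho^{-2}(B_2C_2'-B_2'C_2)\to 0$ (from the power series of $X_0$: $C_2=O(x^2)$, $B_2'=O(x)$, $C_2'=O(x)$, $\rho^2\to(1+\sqrt{5})^2$), whence $L>0$ near $0$ and hence everywhere. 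With either patch inserted, the remainder of your part (2) --- including the treatment of $\partial_x\tilde{\bs{f}}$ via $\langle\partial_x\tilde{\bs{f}},\tilde{\bs{u}}\rangle=-\langle\tilde{\bs{f}},d\tilde{\bs{u}}/dx\rangle$ and the vanishing $\tilde{\bs{f}}$-component --- goes through, and the uniformity claims on $[x_0,x_1]$ are justified exactly as you state.
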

\begin{proof}
We have $h(x,y)=2X_0-xX_0'+y^2(X_0'/x)+\sqrt{5}>\sqrt{5}$ and $B_2^2+C_2^2=(X_0'/x)(2X_0-xX_0'-X_0'/x+\sqrt{5})>0$.
Here, $X_0'/x$ is a bounded positive functions on $\R$ satisfying $(X_0'/x)(0)=2$, and $2X_0-xX_0'$ is also a positive function satisfying $(2X_0-xX_0')(0)=5$ and $(2X_0-xX_0)(x)=O(x)$ as $x\rightarrow\infty$, by Propositions \ref{prop:X0} and \ref{prop:ineq4X0}.
Now, we have
\begin{gather*}
1/h(x,y)=(1/y^2)(x/X_0')-(x/X_0')(2X_0-xX_0'+\sqrt{5})/(y^2h(x,y)).
\end{gather*}
The fact (1) follows from the equation. For (2), we have
\begin{gather*}
T(x)=[(B_2^2+C_2^2)^{-1}(-B_2'C_2+B_2C_2') +\sqrt{5}/X_0'](x).
\end{gather*}
Then, we obtain (2) from Lemma \ref{lemma:dPhi} by direct calculation.
\end{proof}

In the study of $y$-curves $f^0(x,y)$ as $y\rightarrow\infty$, we can not adopt the way of the proof for Theorem \ref{thm:binfcinf} by the fact $\tilde v_3(x)\rightarrow q(x)/y$ in Lemma \ref{lemma:5.3}-(1).

Now, for the function $T(x)$ in Lemma \ref{lemma:5.3}, we define its integral $\tilde w(x)$ by $\tilde w(x):=(\sqrt{5}/2)\log x+\int_0^x[T(x)-(\sqrt{5}/2)(1/x)]dx$,~since $T(x)=(\sqrt{5}/2)(1/x)+O(x)$ as $x\rightarrow0$ and $T(x)=O(1)$ as $x\rightarrow\infty$ by the definition of $X_0(x)$ and Propositions \ref{prop:X0} and \ref{prop:ineq4X0}.

For the function $\tilde w(x)$ and the orthonormal frame $[{\bs{b}}^{\infty},{\bs{c}}^{\infty},\tilde{\bs{b}}{}^{\infty},\tilde{\bs{c}}^{\infty}]$ in Theorem \ref{thm:binfcinf} and Corollary \ref{cor:5.2}, we have the following theorem, where, for $\tilde A(x)$, see Theorem \ref{thm:ycurve}. 
%%%%%%%%%%%%%%%%%%%%%%%%%%%%%%
%%%%% Theorem 4.9
%%%%%%%%%%%%%%%%%%%%%%%%%%%%%%
\begin{thm}\label{thm:5.3}
Let $f^0(x,y)$ be a curvature surface defined on $D$ and $F^0(x,y)=[\phi,X^0_{\alpha},X^0_{\beta},\xi](x,y)$ be the orthonormal frame field determining $f^0(x,y)$.
Then, we have the following facts (1)--(3):
\begin{enumerate}
\item
There is a unit vector $\tilde{\bs{v}}_1(x)$ such that $(d\tilde{\bs{u}}/dx)(x)=T(x)\tilde{\bs{v}}_1(x)$ and $(d\tilde{\bs{v}}_1/dx)(x)=-T(x)\tilde{\bs{u}}(x)$ hold. Furthermore, $\tilde{\bs{u}}(x)$ and $\tilde{\bs{v}}_1(x)$, respectively, are expressed as follows:
\begin{gather*}
\tilde{\bs{u}}(x)=\cos \tilde w(x)\,{\bs{b}}^{\infty}+ \sin \tilde w(x)\,{\bs{c}}^{\infty},\quad
\tilde{\bs{v}}_1(x)=-\sin \tilde w(x)\,{\bs{b}}^{\infty}+ \cos \tilde w(x)\,{\bs{c}}^{\infty}.
\end{gather*}

\item
As $y$ tends to $\infty$, each $y$-curve $f^0(x,y)$ with fixed $x$ converges uniformly to the point $(B_2^2+C_2^2)^{-1/2}(x)\tilde{\bs{v}}_1(x)+{\tilde A}(x)$.

\item
As $y$ tends to $\infty$, every $y$-curve $X^0_{\beta}(x,y)$ (resp.\ every $y$-curve $\phi(x,y)$) with $x$ uniformly converges to the circle $\tilde{\bs{b}}(y)=\cos y\,\tilde{\bs{b}}{}^{\infty}+\sin y\,\tilde{\bs{c}}^{\infty}$ (resp.\ $-\tilde{\bs{c}}(y)=\sin y\,\tilde{\bs{b}}{}^{\infty}-\cos y\,\tilde{\bs{c}}^{\infty}$).
\end{enumerate}
Furthermore, the convergences for these $y$-curves in (2) and (3) are uniform in the wider sense with respect to $x\in (0,\infty)$.
\end{thm}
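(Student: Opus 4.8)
The plan is to handle the three parts in order, using as the engine the two first-order estimates of Lemma \ref{lemma:5.3} together with the exact algebraic relations between the moving frame $[\tilde{\bs u}(x),X^0_{\beta}(x,y),\tilde{\bs f}(x,y),\phi(x,y)]$ and the vectors ${\bs u}(y),{\bs v}_1(y)$ already produced earlier in this section. For (1), I would first set $\tilde{\bs v}_1(x):=T(x)^{-1}(d\tilde{\bs u}/dx)(x)$; this is a well-defined analytic unit vector depending only on $x$ and orthogonal to $\tilde{\bs u}(x)$, since $\|d\tilde{\bs u}/dx\|=T(x)>0$ by Remark \ref{rem:uutilde}. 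By definition $d\tilde{\bs u}/dx=T\tilde{\bs v}_1$, and the crux is to prove $d\tilde{\bs v}_1/dx=-T\tilde{\bs u}$. For this I would pass to the limit $y\to\infty$ in Lemma \ref{lemma:5.3}-(2): the first estimate there gives $\tilde{\bs f}(x,y)\to\tilde{\bs v}_1(x)$ and the second gives $\partial_x\tilde{\bs f}(x,y)\to-T(x)\tilde{\bs u}(x)$, both uniformly on any compact $[x_0,x_1]\subset(0,\infty)$ because the constant there is $x$-independent. Uniform convergence of the functions together with uniform convergence of their $x$-derivatives permits differentiation under the limit, yielding $d\tilde{\bs v}_1/dx=-T\tilde{\bs u}$ on every such interval, hence on all of $(0,\infty)$. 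Then $(\tilde{\bs u},\tilde{\bs v}_1)$ solves the planar rotation system $\tilde{\bs u}'=T\tilde{\bs v}_1$, $\tilde{\bs v}_1'=-T\tilde{\bs u}$, so $\mathrm{span}\{\tilde{\bs u}(x),\tilde{\bs v}_1(x)\}$ is independent of $x$, and, using the phase $\tilde w$ (which satisfies $\tilde w'=T$ by its definition), $\tilde{\bs u}$ and $\tilde{\bs v}_1$ acquire the asserted $\cos\tilde w/\sin\tilde w$ form in a constant orthonormal basis of that plane. I would identify the basis with $({\bs b}^{\infty},{\bs c}^{\infty})$ and fix the integration constant by letting $x\to0$: since $\tilde w(x)=(\sqrt5/2)\log x+o(1)$ and $\tilde{\bs u}(e^{-u})\to{\bs b}(u)=\cos(\sqrt5 u/2){\bs b}^{\infty}-\sin(\sqrt5 u/2){\bs c}^{\infty}$ by Remark \ref{rem:5.1}, the two expressions match exactly.

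Part (2) is then immediate. By Theorem \ref{thm:ycurve}-(2), $f^0(x,y)=(B_2^2+C_2^2)^{-1/2}(x)\tilde{\bs f}(x,y)+\tilde A(x)$, and the first estimate of Lemma \ref{lemma:5.3}-(2) gives $\|\tilde{\bs f}(x,y)-\tilde{\bs v}_1(x)\|\le C/(T(x)y)$. Since $T$ is bounded below by a positive constant on each compact $[x_0,x_1]$, this is $O(1/y)$ uniformly in $x$ there, so $f^0(x,y)$ converges to $(B_2^2+C_2^2)^{-1/2}(x)\tilde{\bs v}_1(x)+\tilde A(x)$ uniformly in the wider sense with respect to $x$.

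For (3) I would obtain the $X^0_{\beta}$-limit algebraically. The unit vector ${\bs n}(x,y)=(1+y^2)^{-1/2}(X^0_{\beta}+y\phi)(x,y)$ of Theorem \ref{thm:5.2} is orthogonal to ${\bs u}(y)$ and completes an orthonormal basis of $\mathrm{span}\{X^0_{\beta},\phi\}$; inverting gives $X^0_{\beta}=(1+y^2)^{-1/2}(y{\bs u}+{\bs n})$, whence $\|X^0_{\beta}(x,y)-{\bs u}(y)\|\to0$ as $y\to\infty$ uniformly in $x$ (because ${\bs n}$ is a unit vector for every $(x,y)$). Combined with ${\bs u}(y)\to\tilde{\bs b}(y)$ from Corollary \ref{cor:5.2}, this gives $X^0_{\beta}(x,y)\to\tilde{\bs b}(y)$, uniformly in the wider sense in $x$. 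For the $\phi$-limit I would argue by orthonormality rather than integrate the $y$-frame equations directly: $[\tilde{\bs u},\tilde{\bs f},X^0_{\beta},\phi]$ is orthonormal, and by part (1) the plane $\mathrm{span}\{\tilde{\bs u}(x),\tilde{\bs v}_1(x)\}$ equals $\mathrm{span}\{{\bs b}^{\infty},{\bs c}^{\infty}\}$; since $\tilde{\bs f}(x,y)\to\tilde{\bs v}_1(x)$, the pair $\{\tilde{\bs u},\tilde{\bs f}\}$ asymptotically spans that plane, forcing $\phi$ into the complementary plane $\mathrm{span}\{\tilde{\bs b}^{\infty},\tilde{\bs c}^{\infty}\}$. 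As $\phi\perp X^0_{\beta}\to\tilde{\bs b}(y)$ there, $\phi(x,y)$ converges to the unit vector of that plane orthogonal to $\tilde{\bs b}(y)$, i.e. to $-\tilde{\bs c}(y)$; the sign is pinned by the fixed orientation of $F^0$, equivalently by the explicit equation $\partial\phi/\partial y=\tilde v_2X^0_{\beta}$ with $\tilde v_2\to-1$. The $x$-uniformity in the wider sense again follows from the $x$-independence of the constants in Lemma \ref{lemma:5.3}.

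I expect the main obstacle to be the rigorous passage in part (1) from the one-sided estimates of Lemma \ref{lemma:5.3}-(2) to the closed rotation system, that is, justifying $d\tilde{\bs v}_1/dx=-T\tilde{\bs u}$ by differentiating under the $y\to\infty$ limit, which is precisely where the $x$-uniformity of Lemma \ref{lemma:5.3} is indispensable. Relatedly, the careful bookkeeping of the integration constant in $\tilde w$ and of the sign in the $\phi$-limit must both be reconciled with the constant frame $[{\bs b}^{\infty},{\bs c}^{\infty},\tilde{\bs b}^{\infty},\tilde{\bs c}^{\infty}]$ fixed in Theorem \ref{thm:binfcinf} and Corollary \ref{cor:5.2}; the remaining estimates are routine consequences of the asymptotics of $X_0$ in Propositions \ref{prop:X0} and \ref{prop:ineq4X0}.
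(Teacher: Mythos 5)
Your treatment of (1), (2), and the $X^0_{\beta}$-limit in (3) is correct and is in substance the paper's own proof: $\tilde{\bs{v}}_1(x)$ arises as $\lim_{y\to\infty}\tilde{\bs{f}}(x,y)$ (by Lemma \ref{lemma:5.3}-(2) this is the same as your $T^{-1}\,d\tilde{\bs{u}}/dx$), the closed rotation system follows by passing Lemma \ref{lemma:5.3}-(2) to the limit on compact intervals, the constant basis is pinned to $({\bs{b}}^{\infty},{\bs{c}}^{\infty})$ by matching the $x\to 0$ asymptotics against Remark \ref{rem:5.1}, (2) is immediate from Theorem \ref{thm:ycurve}-(2), and $X^0_{\beta}={\bs{u}}(y)+O(1/y)\to\tilde{\bs{b}}(y)$ via Corollary \ref{cor:5.2}. (The paper locates the plane of $\tilde{\bs{u}}$ by the perpendicularity $\tilde{\bs{u}}(x)\perp{\bs{u}}(y)$ of Remark \ref{rem:uutilde}-(1) before the $x\to0$ matching, whereas you get it from the rotation system plus the same matching; this is an inessential variation.)

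The genuine problem is the sign of the $\phi$-limit in (3). Your pinning criterion, $\partial\phi/\partial y=\tilde v_2X^0_{\beta}$ with $\tilde v_2\to-1$, produces the \emph{opposite} sign to the one you assert. Suppose, as your orthonormality argument gives, $\phi(x,y)\to\epsilon\,\tilde{\bs{c}}(y)$ with a fixed $\epsilon=\pm1$. Since $\frac{d}{dy}\tilde{\bs{c}}(y)=-\tilde{\bs{b}}(y)$ and $X^0_{\beta}\to\tilde{\bs{b}}(y)$, the frame equation forces
\begin{gather*}
-\epsilon\,\tilde{\bs{b}}(y)\;\approx\;\partial\phi/\partial y\;=\;\tilde v_2X^0_{\beta}\;\approx\;-\tilde{\bs{b}}(y),
\end{gather*}
hence $\epsilon=+1$; that is, your argument proves $\phi\to+\tilde{\bs{c}}(y)=-\sin y\,\tilde{\bs{b}}{}^{\infty}+\cos y\,\tilde{\bs{c}}^{\infty}$, not $-\tilde{\bs{c}}(y)$. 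The same conclusion follows rigorously from the paper's identity \eqref{eq:5.14}: writing $\alpha:=2X_0-xX_0'+\sqrt{5}>0$ and $\beta:=X_0'/x>0$, one has $a_2=-\tilde v_2=(\beta y^2-\alpha)/(\beta y^2+\alpha)$, so $1+(1+y^2)a_2=y^2(\beta y^2+2\beta-\alpha)/(\beta y^2+\alpha)>0$ for large $y$, and then \eqref{eq:5.14} yields $(d{\bs{u}}/dy)(y)=+\phi(x,y)+O(1/y)$, whence $\phi={\bs{v}}_1(y)+O(1/y)\to\tilde{\bs{c}}(y)$ by Corollary \ref{cor:5.2}. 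Note that the paper's own proof of (3) proceeds differently, by asserting $(d{\bs{u}}/dy)(y)=-\phi(x,y)+O(1/y)$ and concluding $\phi\to-\tilde{\bs{c}}(y)$; that assertion is incompatible with \eqref{eq:5.14} and with Lemma \ref{lemma:5.3}-(1), i.e.\ your mechanics expose a sign inconsistency in the paper rather than reproduce its proof. As written, however, your proposal asserts the printed sign while actually deriving its negation; you must either locate an error in the computation above or conclude that the sign of the $\phi$-limit in statement (3) is wrong — you cannot have both your argument and the stated conclusion.
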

\begin{proof}
Let $(0,x_1]$ or $[x_0,x_1]$ be an arbitrarily fixed bounded interval, as in Lemma \ref{lemma:5.3}.

For (1) and (2): There is a vector $\tilde{\bs{v}}_1(x):=\lim_{y\rightarrow \infty}\tilde{\bs{f}}(x,y)$ on $[x_0,x_1]$ by Lemma \ref{lemma:5.3}-(2) such that $(d\tilde{\bs{u}}/dx)(x)=T(x)\tilde{\bs{v}}_1(x)$ and $(d\tilde{\bs{v}}_1/dx)(x)=-T(x)\tilde{\bs{u}}(x)$ hold.
Then, since $[x_0,x_1]$ is any bounded interval in $(0,\infty)$, these equations hold on $(0,\infty)$.
Hence, there is a constant orthonormal base $[{\bs{b}},{\bs{c}}]$ of the plane spanned by ${\bs{b}}^{\infty}$ and ${\bs{c}}^{\infty}$ such that
\begin{gather*}
\tilde{\bs{u}}(x)=\cos \tilde w(x)\,{\bs{b}}+ \sin \tilde w(x)\,{\bs{c}},\quad
\tilde{\bs{v}}_1(x)=-\sin \tilde w(x)\,{\bs{b}}+ \cos \tilde w(x)\,{\bs{c}}
\end{gather*}
hold, since $\tilde{\bs{u}}(x)$ and ${\bs{u}}(y)$ are perpendicular to each other for all $(x,y)\in D$ by Remark \ref{rem:uutilde}-(1).
Then, $\tilde{\bs{u}}(x)$ converges uniformly to the circle $\cos ((\sqrt{5}/2)\log x){\bs{b}}+ \sin ((\sqrt{5}/2)\log x){\bs{c}}$ as $x\rightarrow 0$.
On the other hand, the circle is expressed as $\cos (\sqrt{5}u/2){\bs{b}}^{\infty}- \sin (\sqrt{5}u/2){\bs{c}}^{\infty}$ by Remark \ref{rem:5.1}.
Hence, we obtain ${\bs{b}}={\bs{b}}^{\infty}$ and ${\bs{c}}={\bs{c}}^{\infty}$ by $u=-\log x$, which shows (1).
Furthermore, since any $y$-curve $f^0(x,y)$ is given by $(B_2^2+C_2^2)^{-1/2}(x)\tilde{\bs{f}}(x,y)+{\tilde A}(x)$, we obtain (2).
  
For (3): For $x\in[x_0,x_1]$, we have ${\bs{u}}(y)=(1+y^2)^{-1/2}(yX^0_{\beta}-\phi)(x,y)=X^0_{\beta}(x,y)+O(1/y)$ and $(d{\bs{u}}/dy)(y)=-\phi(x,y)+O(1/y)$ as $y\rightarrow\infty$.
Furthermore, by Corollary \ref{cor:5.2}, ${\bs{u}}(y)$ and ${\bs{v}}_1(y)$ converge uniformly to the following circles as $y\rightarrow\infty$:
\begin{gather*}
{\bs{u}}(y)\rightarrow \tilde{\bs{b}}(y)=\cos y\,\tilde{\bs{b}}{}^{\infty}+\sin y\,\tilde{\bs{c}}^{\infty},\quad
{\bs{v}}_1(y)\rightarrow \tilde{\bs{c}}(y)=-\sin y\,\tilde{\bs{b}}{}^{\infty}+ \cos y\,\tilde{\bs{c}}^{\infty}.
\end{gather*}
Hence, we obtain (3).
In consequence, the proof has been completed.
\end{proof}
%%%%%%%%%%%%%%%%%%%%%%%%%%%%%%
%%%%% Corollary 4.10
%%%%%%%%%%%%%%%%%%%%%%%%%%%%%%
\begin{cor}\label{cor:5.3}
We have $\lim_{y\rightarrow\infty}f^0(x,y)=\lim_{y\rightarrow -\infty}f^0(x,y)$.
\end{cor}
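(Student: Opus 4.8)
The plan is to reduce everything to the limit $\lim_{y\to\infty}f^0(x,y)$ already identified in Theorem \ref{thm:5.3}-(2), and then to recover the limit as $y\to-\infty$ from the reflection symmetry recorded in Corollary \ref{cor:Af0}. First I would note that the asserted equality of two points is invariant under any rigid motion of $\R^4$; since a curvature surface on $D$ together with its frame field is determined only up to a parallel translation and a constant orthogonal matrix, I may assume without loss of generality that $f^0(x,y)$ has been normalized so that Corollary \ref{cor:Af0} applies, i.e.\ $f^0(p_0)=\bs{0}$ at some $p_0=(x_0,0)$ and $F^0(p_0)=\mathrm{Id}$, so that the reflection $B$ there is a linear orthogonal map.

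Next I would use the substitution $s=-y$. By Corollary \ref{cor:Af0} one has $f^0(x,-s)=Bf^0(x,s)$ for $s\geq 0$, hence
\begin{gather*}
\lim_{y\to-\infty}f^0(x,y)=\lim_{s\to\infty}f^0(x,-s)=\lim_{s\to\infty}Bf^0(x,s)=B\Big(\lim_{s\to\infty}f^0(x,s)\Big),
\end{gather*}
the last equality by continuity of $B$. By Theorem \ref{thm:5.3}-(2), $\lim_{s\to\infty}f^0(x,s)=(B_2^2+C_2^2)^{-1/2}(x)\tilde{\bs{v}}_1(x)+\tilde A(x)$, so it remains to show that $B$ fixes this point. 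Since $B$ is linear, this amounts to $B\tilde A(x)=\tilde A(x)$ and $B\tilde{\bs{v}}_1(x)=\tilde{\bs{v}}_1(x)$; the former is exactly the content of Corollary \ref{cor:Af0}.

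The only point requiring a short argument is $B\tilde{\bs{v}}_1(x)=\tilde{\bs{v}}_1(x)$. Here I would differentiate the identity $B\tilde{\bs{u}}(x)=\tilde{\bs{u}}(x)$ (Corollary \ref{cor:Af0}) in $x$; since $B$ is a constant linear map this gives $B\,(d\tilde{\bs{u}}/dx)(x)=(d\tilde{\bs{u}}/dx)(x)$. By Theorem \ref{thm:5.3}-(1), $(d\tilde{\bs{u}}/dx)(x)=T(x)\tilde{\bs{v}}_1(x)$, and $T(x)>0$ on $(0,\infty)$ by the explicit formula in Remark \ref{rem:uutilde}-(2) together with Propositions \ref{prop:X0} and \ref{prop:ineq4X0} (numerator and denominator are strictly positive for $x>0$). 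Dividing by $T(x)$ yields $B\tilde{\bs{v}}_1(x)=\tilde{\bs{v}}_1(x)$, and the computation closes: $\lim_{y\to-\infty}f^0(x,y)=(B_2^2+C_2^2)^{-1/2}(x)\tilde{\bs{v}}_1(x)+\tilde A(x)=\lim_{y\to\infty}f^0(x,y)$.

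I expect the main obstacle to be bookkeeping rather than a genuine difficulty: namely, checking that the normalization underlying Corollary \ref{cor:Af0} costs no generality (because the claim concerns only equality of limit points, which is preserved by rigid motions) and that $T(x)$ is genuinely nonzero, so that the direction vector $\tilde{\bs{v}}_1$ is well defined and the division is legitimate. Alternatively, one could bypass Corollary \ref{cor:Af0} entirely and redo the asymptotic analysis of Theorem \ref{thm:5.3} directly for $y\to-\infty$, tracking the signs in Lemma \ref{lemma:5.3}; but the symmetry argument is considerably shorter and I would prefer it.
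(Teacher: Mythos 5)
Your proposal is correct and follows essentially the same route as the paper: both normalize so that Corollary \ref{cor:Af0} applies, reduce $\lim_{y\to-\infty}f^0(x,y)$ to the reflection $B$ applied to the limit from Theorem \ref{thm:5.3}-(2), and conclude from $B\tilde A(x)=\tilde A(x)$ together with $B\tilde{\bs{v}}_1(x)=\tilde{\bs{v}}_1(x)$, the latter obtained exactly as in the paper by differentiating $B\tilde{\bs{u}}(x)=\tilde{\bs{u}}(x)$. Your explicit check that $T(x)>0$ (so the division defining $\tilde{\bs{v}}_1$ is legitimate) is a small point the paper leaves implicit, but otherwise the two arguments coincide.
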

\begin{proof}
For the sake of simplicity, we translate the curvature surface $f^0(x,y)$ such that the obtained surface $f^0(x,y)$ satisfies $f^0(p_0)=\bs{0}$ for some point $p_0=(x_0,0)$, and take the same coordinate system of $\R^4$ as that in Corollary \ref{cor:Af0}: $F^0(p_0)=\mathrm{Id}$, and hence $\phi^0(=\phi(x,0))=(1,0,0,0)^T$ and $[X^0_{\alpha},X^0_{\beta},\xi](p_0)$ is the standard frame of $\R^3_{y=0}=\{(0,x_1,x_2,x_3)^T|x_i\in\R\}$. 

Now, let $B$ be the reflection in Corollary \ref{cor:Af0}. We firstly use the equations $B [\phi,X^0_{\alpha},X^0_{\beta},\xi](x,-y)=[-\phi,X^0_{\alpha},X^0_{\beta},\xi](x,y)$ and $B(f^0(x,-y)):=f^0(x,y)$. Then, $\hat f^0(x,y):=f^0(x,y)$ ($y\geq 0)$ is a curvature surface determined by the frame field $[-\phi,X^0_{\alpha},X^0_{\beta},\xi](x,y)$. Hence, we have $\lim_{y\rightarrow\infty}\hat f^0(x,y)=\lim_{y\rightarrow\infty}f^0(x,y)=(B_2^2+C_2^2)^{-1/2}(x)\tilde{\bs{v}}_1(x)+\tilde A(x)$ by Theorem \ref{thm:5.3}.
Next, by the equations $B(\hat f^0(x,y))=f^0(x,-y)$, $B(\tilde A(x))=\tilde A(x)$ and $B(\tilde{\bs{v}}_1(x))=\tilde{\bs{v}}_1(x)$, we obtain $\lim_{y\rightarrow\infty} f^0(x,-y)=(B_2^2+C_2^2)^{-1/2}(x)\tilde{\bs{v}}_1(x)+\tilde A(x)$. 
Here, $B(\tilde{\bs{v}}_1(x))=\tilde{\bs{v}}_1(x)$ follows from $B((d\tilde{\bs{u}}/dx)(x))=(d\tilde{\bs{u}}/dx)(x)$.
\end{proof}

By Theorem \ref{thm:5.2} and Corollary \ref{cor:5.2}, every tangent space of $\mathbb{S}^2_x$ at the point $\lim_{y\rightarrow\infty}f^0(x,y)$ is spanned by the vectors $\tilde{\bs{b}}{}^{\infty}$ and $\tilde{\bs{c}}^{\infty}$ independently of $x\in (0,\infty)$.

Now, for the centers $A(y)$ and ${\tilde A}(x)$ of $\mathbb{S}^2_y$ and $\mathbb{S}^2_x$ in Theorems \ref{thm:xcurve} and \ref{thm:ycurve}, we have the following facts:
%%%%%%%%%%%%%%%%%%%%%%%%%%%%%%
%%%%% Corollary 4.11
%%%%%%%%%%%%%%%%%%%%%%%%%%%%%%
\begin{cor}\label{cor:AAtilde}
(1) The curve $A(y)$ lies on an affine plane spanned by $\tilde{\bs{b}}{}^{\infty}$ and $\tilde{\bs{c}}^{\infty}$.

(2) The curve ${\tilde A}(x)$ lies on an affine plane spanned by ${\bs{b}}^{\infty}$ and ${\bs{c}}^{\infty}$.
\end{cor}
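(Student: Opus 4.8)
The plan is to reduce each assertion to a pair of orthogonality conditions on a derivative, and then to evaluate that derivative in whichever limit makes the frame vectors explicit. Since $[{\bs{b}}^{\infty},{\bs{c}}^{\infty},\tilde{\bs{b}}{}^{\infty},\tilde{\bs{c}}^{\infty}]$ is an orthonormal base of $\R^4$ by Corollary \ref{cor:5.2}-(1), statement (1) is equivalent to $\langle A'(y),{\bs{b}}^{\infty}\rangle=\langle A'(y),{\bs{c}}^{\infty}\rangle=0$ for all $y$, and statement (2) to $\langle\tilde A'(x),\tilde{\bs{b}}{}^{\infty}\rangle=\langle\tilde A'(x),\tilde{\bs{c}}^{\infty}\rangle=0$ for all $x$. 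The key observation is that, writing $\rho(y):=(2\sqrt{5})^{-1}\sqrt{(5+4y^2)(1+y^2)^{-1}}$, Theorem \ref{thm:xcurve}-(2) gives $A(y)=f^0(x,y)-\rho(y){\bs{f}}(x,y)$ \emph{independently of $x$}, so that $A'(y)=f^0_y-\rho'{\bs{f}}-\rho{\bs{f}}_y$ holds identically in $x$; likewise, with $\tilde\rho(x):=(B_2^2+C_2^2)^{-1/2}(x)$, Theorem \ref{thm:ycurve}-(2) gives $\tilde A'(x)=f^0_x-\tilde\rho'\tilde{\bs{f}}-\tilde\rho\tilde{\bs{f}}_x$ independently of $y$. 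Thus I may project these $x$-independent (resp.\ $y$-independent) vectors onto the constant frame and pass to the limit $x\to\infty$ (resp.\ $y\to\infty$).

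For (1) I would let $x\to\infty$. By Theorem \ref{thm:5.2}-(1) both $X^0_{\beta}$ and $\phi$ tend into $\mathrm{span}({\bs{u}}(y),{\bs{v}}_1(y))=\mathrm{span}(\tilde{\bs{b}}{}^{\infty},\tilde{\bs{c}}^{\infty})$, so their ${\bs{b}}^{\infty}$- and ${\bs{c}}^{\infty}$-components vanish in the limit, while by Theorem \ref{thm:5.2}-(2) $\xi\to\bar{\bs{b}}(x)$ and $X^0_{\alpha}\to\bar{\bs{c}}(x)$ lie in $\mathrm{span}({\bs{b}}^{\infty},{\bs{c}}^{\infty})$. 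Expanding ${\bs{f}}_y$ in the frame by Lemma \ref{lemma:dPhi} and projecting $A'(y)=f^0_y-\rho'{\bs{f}}-\rho{\bs{f}}_y$ onto ${\bs{b}}^{\infty}$, the term $f^0_y=(2y/h)X^0_{\beta}$ drops out (its coefficient $\to0$ and $\langle X^0_{\beta},{\bs{b}}^{\infty}\rangle\to0$), every contribution carrying $X^0_{\beta}$ or $\phi$ drops out, the $X^0_{\alpha}$-contribution drops out because its coefficient $-\tilde{}\rho\,c_2$ has $c_2\to0$, and only the $\xi$-contribution survives, the total coefficient of $\langle\xi,{\bs{b}}^{\infty}\rangle\to\langle\bar{\bs{b}}(x),{\bs{b}}^{\infty}\rangle$ being $-(\rho g)'(y)$ with $g(y):=-2(1+y^2)\big((1+y^2)(5+4y^2)\big)^{-1/2}$. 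A direct computation yields $\rho g\equiv-1/\sqrt{5}$, hence $(\rho g)'\equiv0$ and $\langle A'(y),{\bs{b}}^{\infty}\rangle=0$; the identical computation with ${\bs{c}}^{\infty}$ gives $\langle A'(y),{\bs{c}}^{\infty}\rangle=0$, proving (1).

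For (2) I would let $y\to\infty$. By Theorem \ref{thm:5.3}-(1) together with the inversions $\xi=\tilde\rho(C_2\tilde{\bs{u}}+B_2\tilde{\bs{f}})$ and $X^0_{\alpha}=\tilde\rho(-B_2\tilde{\bs{u}}+C_2\tilde{\bs{f}})$, and using $\tilde{\bs{f}}\to\tilde{\bs{v}}_1(x)\in\mathrm{span}({\bs{b}}^{\infty},{\bs{c}}^{\infty})$, the vectors $\xi$ and $X^0_{\alpha}$ tend into $\mathrm{span}({\bs{b}}^{\infty},{\bs{c}}^{\infty})$, so their $\tilde{\bs{b}}{}^{\infty}$- and $\tilde{\bs{c}}^{\infty}$-components vanish, whereas $X^0_{\beta},\phi$ tend to the circles $\tilde{\bs{b}}(y),-\tilde{\bs{c}}(y)$ of Theorem \ref{thm:5.3}-(3). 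Projecting $\tilde A'(x)=f^0_x-\tilde\rho'\tilde{\bs{f}}-\tilde\rho\tilde{\bs{f}}_x$ onto $\tilde{\bs{b}}{}^{\infty}$: in $f^0_x=\tfrac{x^2-y^2}{xh}X^0_{\alpha}$ the coefficient tends to $-1/X_0'(x)$ but $\langle X^0_{\alpha},\tilde{\bs{b}}{}^{\infty}\rangle\to0$; $\langle\tilde{\bs{f}},\tilde{\bs{b}}{}^{\infty}\rangle\to0$ since $\tilde{\bs{f}}$ involves only $\xi,X^0_{\alpha}$; and in $\tilde{\bs{f}}_x$ expanded via Lemma \ref{lemma:dPhi} the only $\tilde\Pi$-surviving terms, those in $\phi$ and $X^0_{\beta}$, carry coefficients $\tilde\rho\,C_2a_1$ and $-\tilde\rho\,C_2c_1$ with $a_1,c_1\to0$. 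Hence every term projects to $0$, giving $\langle\tilde A'(x),\tilde{\bs{b}}{}^{\infty}\rangle=0$ and, symmetrically, $\langle\tilde A'(x),\tilde{\bs{c}}^{\infty}\rangle=0$, proving (2).

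The main obstacle is bookkeeping rather than conceptual: one must verify the stated asymptotics of the coefficients $a_i,b_i,c_i$ of Lemma \ref{lemma:dPhi} and of $h$ (all governed by $2X_0-xX_0'=O(x)$ and the boundedness of $X_0'/x$ and $X_0''$ from Propositions \ref{prop:X0} and \ref{prop:ineq4X0}), and must confront the fact that in case (1) the factor $\langle\bar{\bs{b}}(x),{\bs{b}}^{\infty}\rangle$ does \emph{not} converge. This is exactly why the identity $\rho g\equiv-1/\sqrt{5}$ is indispensable: its vanishing derivative is the only mechanism by which the $x$-independent scalar $\langle A'(y),{\bs{b}}^{\infty}\rangle$ can be reconciled with an oscillating limit. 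The uniform convergences (uniform in the wider sense) of Theorems \ref{thm:5.2} and \ref{thm:5.3} legitimize passing to the frame-component limits used throughout, so that differentiating the exact expressions for $A$ and $\tilde A$ and only then taking limits avoids any illegitimate interchange of limit and derivative.
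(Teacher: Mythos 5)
Your strategy is genuinely different from the paper's, and in outline it works, but part (1) as you wrote it contains a concrete computational error. The paper's proof takes no limits at all: writing $A'(y)$ in the frame via $f^0_y=(2y/h)X^0_{\beta}$ and Lemma \ref{lemma:dPhi}, it finds $A'(y)=(2\sqrt{5}(1+y^2))^{-1}(b_2\xi+c_2X^0_{\alpha})(x,y)$ up to terms in $X^0_{\beta}$ and $\phi$; since $b_2=(2y/h)B_2$ and $c_2=(2y/h)C_2$ by Lemmata \ref{lemma:B-C} and \ref{lemma:dPhi}, the exact identity $-c_2B_2+b_2C_2=0$ gives $\langle A'(y),\tilde{\bs{u}}(x)\rangle=0$ at \emph{every} $(x,y)$, and then Theorem \ref{thm:5.3}-(1) (the vector $\tilde{\bs{u}}(x)$ sweeps a circle in the plane of ${\bs{b}}^{\infty}$ and ${\bs{c}}^{\infty}$) immediately yields $A'(y)\perp{\bs{b}}^{\infty},{\bs{c}}^{\infty}$. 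You replace this algebraic identity by the asymptotics of Theorems \ref{thm:5.2} and \ref{thm:5.3}; that is legitimate, since $\langle A'(y),{\bs{b}}^{\infty}\rangle$ is $x$-independent, and it buys you independence from spotting the exact proportionality $(b_2,c_2)=(2y/h)(B_2,C_2)$ --- you only need the $\xi$- and $X^0_{\alpha}$-coefficients to vanish in the limit and the $X^0_{\beta}$- and $\phi$-coefficients to stay bounded. The price is more rate-checking, which the paper's identity makes unnecessary. Your part (2) is correct as written.

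The error in (1): the $\xi$-coefficient of $A'(y)$ is not $-(\rho g)'(y)$. Expanding ${\bs{f}}_y$ requires $\partial_y X^0_{\beta}=a_2\phi-b_2\xi-c_2X^0_{\alpha}$, and its $-b_2\xi$ term contributes, inside $-\rho\,{\bs{f}}_y$, an additional $(2\sqrt{5}(1+y^2))^{-1}b_2\,\xi$. So the total $\xi$-coefficient is $-(\rho g)'+(2\sqrt{5}(1+y^2))^{-1}b_2=(2\sqrt{5}(1+y^2))^{-1}b_2$, which is \emph{not} identically zero. Consequently your structural claim that $(\rho g)'\equiv 0$ is ``the only mechanism'' compatible with the non-convergence of $\langle\xi,{\bs{b}}^{\infty}\rangle$ is false: the surviving $\xi$-term is handled by exactly the same mechanism as your $X^0_{\alpha}$-term, namely a coefficient tending to $0$ against a merely bounded oscillating factor. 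Indeed $b_2=-\frac{2y}{h}\left(\frac{X_0'}{2x}+\sqrt{5}\right)\to 0$ as $x\to\infty$, because $h\geq 2X_0-xX_0'+\sqrt{5}$ grows like a positive multiple of $x$ by Proposition \ref{prop:ineq4X0}-(4). With that correction your limit argument for (1) closes. Note finally that the identity you found, $\rho g\equiv -1/\sqrt{5}$, is the shadow of the paper's stronger identity $-c_2B_2+b_2C_2=0$, which kills the whole $(\xi,X^0_{\alpha})$-contribution against $\tilde{\bs{u}}(x)$ at every finite $x$, with no limiting process required.
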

\begin{proof}
We only verify the fact (1), since (2) is obtained in the same way. A curvature surface $f^0(x,y)$ on $D$ is expressed as
\begin{gather*}
f^0(x,y)=(2\sqrt{5}(1+y^2))^{-1}\left(X^0_{\beta}-2(1+y^2)\xi+y\phi\right)+A(y),
\end{gather*}
(as a family of $x$-curves $f^0(x,y)$ with $y$).
The vector $\tilde{\bs{u}}(x)=(B_2^2+C_2^2)^{-1/2}(-B_2X^0_{\alpha}+C_2\xi)(x,y)$ moves on the circle of a plane spanned by ${\bs{b}}^{\infty}$ and ${\bs{c}}^{\infty}$.
Hence, we have only to show that $\langle A'(y),(-B_2X^0_{\alpha}+C_2\xi)(x,y)\rangle=0$ holds for any $(x,y)$.
Now, by $f^0_y(x,y)=(2y/h(x,y))X^0_{\beta}$, we have $A'(y)= (2\sqrt{5}(1+y^2))^{-1}(b_2\xi+c_2X^0_{\alpha})(x,y)$ except for the terms of $X^0_{\beta}(x,y)$ and $\phi(x,y)$.
Then, we have $-c_2B_2+b_2C_2=0$ by Lemmata \ref{lemma:B-C} and \ref{lemma:dPhi}, which implies $\langle A'(y),(-B_2X^0_{\alpha}+C_2\xi)(x,y)\rangle=0$.
\end{proof}

Let $\{\bs{b}^{\infty}, \bs{c}^{\infty}\}_{\R}$ (resp.\ $\{\tilde{\bs{b}}{}^{\infty}, \tilde{\bs{c}}{}^{\infty}\}_{\R}$) be the plane spanned by $\bs{b}^{\infty}$ and $\bs{c}^{\infty}$ (resp. $\tilde{\bs{b}}{}^{\infty}$ and $\tilde{\bs{c}}{}^{\infty}$), including the origin.
Then, there is a curvature surface on $D$ such that the curves $A(y)$ and ${\tilde A}(x)$ for the surface lie in $\{\tilde{\bs{b}}{}^{\infty}, \tilde{\bs{c}}^{\infty}\}_{\R}$ and $\{\bs{b}^{\infty}, \bs{c}^{\infty}\}_{\R}$, respectively.
In fact, let $f^0(x,y)$ be a curvature surface.
By Corollary \ref{cor:AAtilde}, we can denote $A(y)$ and ${\tilde A}(x)$ for $f^0(x,y)$ by $A(y)=B(y)+\bs{p}$ and ${\tilde A}(x)={\tilde B}(x)+\tilde{\bs{p}}$, where $B(y)$ (resp.\ ${\tilde B}(x)$) is a curve in $\{\tilde{\bs{b}}{}^{\infty}, \tilde{\bs{c}}{}^{\infty}\}_{\R}$ (resp.\ in $\{\bs{b}^{\infty}, \bs{c}^{\infty}\}_{\R}$) and $\bs{p}$ (resp.\ $\tilde{\bs{p}}$) is a constant vector of $\{\bs{b}^{\infty}, \bs{c}^{\infty}\}_{\R}$ (resp.\ of $\{\tilde{\bs{b}}{}^{\infty}, \tilde{\bs{c}}{}^{\infty}\}_{\R}$).
Then, by Theorem \ref{thm:xcurve}-(2) and Theorem \ref{thm:ycurve}-(2), we have
\begin{align*}
f^0(x,y) - \bs{p} - \tilde{\bs{p}}
&= (2 \sqrt{5})^{-1}
\sqrt{(5+4y^2)(1+y^2)^{-1}} \bs{f}(x,y) + B(y) - \tilde{\bs{p}}\\
&= (B_2^2+C_2^2)^{-1/2}(x) \tilde{\bs{f}}(x,y) + {\tilde B}(x) - \bs{p}.
\end{align*}
In consequence, for the curvature surface $f^0(x,y)-\bs{p}-\tilde{\bs{p}}$, we have $A(y)=B(y)-\tilde{\bs{p}}$ and ${\tilde A}(x)={\tilde B}(x)-\bs{p}$ by definition, which shows the assertion.

%%%%%%%%%%%%%%%%%%%%%%%%%%%%%%
%%%%% Figure 5
%%%%%%%%%%%%%%%%%%%%%%%%%%%%%%
\noindent
\begin{figure}[H]
\renewcommand{\thesubfigure}{\alph{subfigure}}
\hfill
\begin{minipage}{0.45\linewidth}
\centering
\vspace{0pt}
\includegraphics[width=.85\linewidth]{./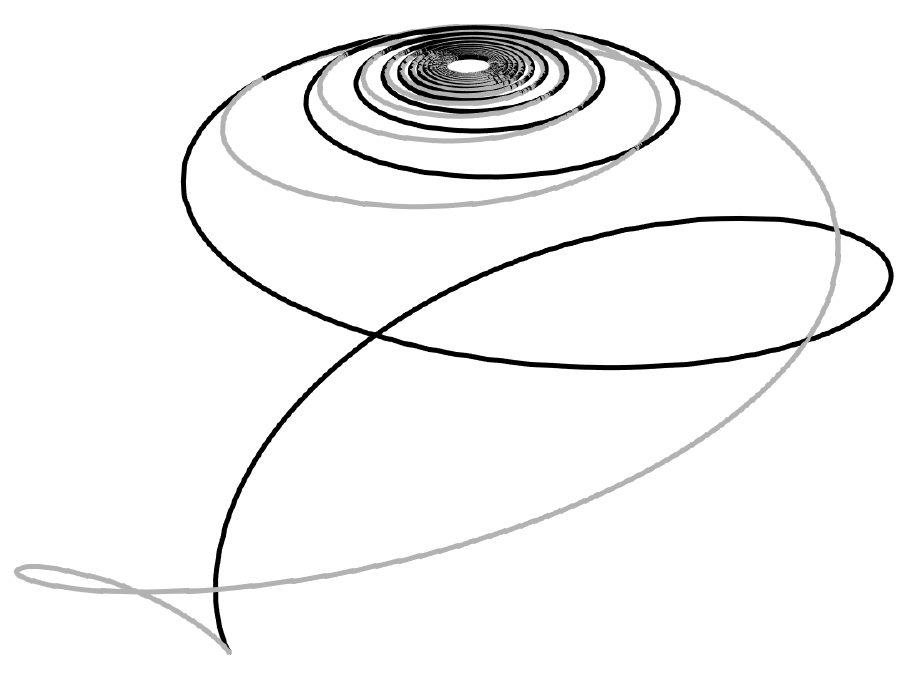}
\subcaption{$y$-curve at $x_0 = 2$.}
\label{fig:ycurvelong-2}
\end{minipage}
\hfill
\begin{minipage}{0.45\linewidth}
\centering
\includegraphics[width=.85\linewidth]{./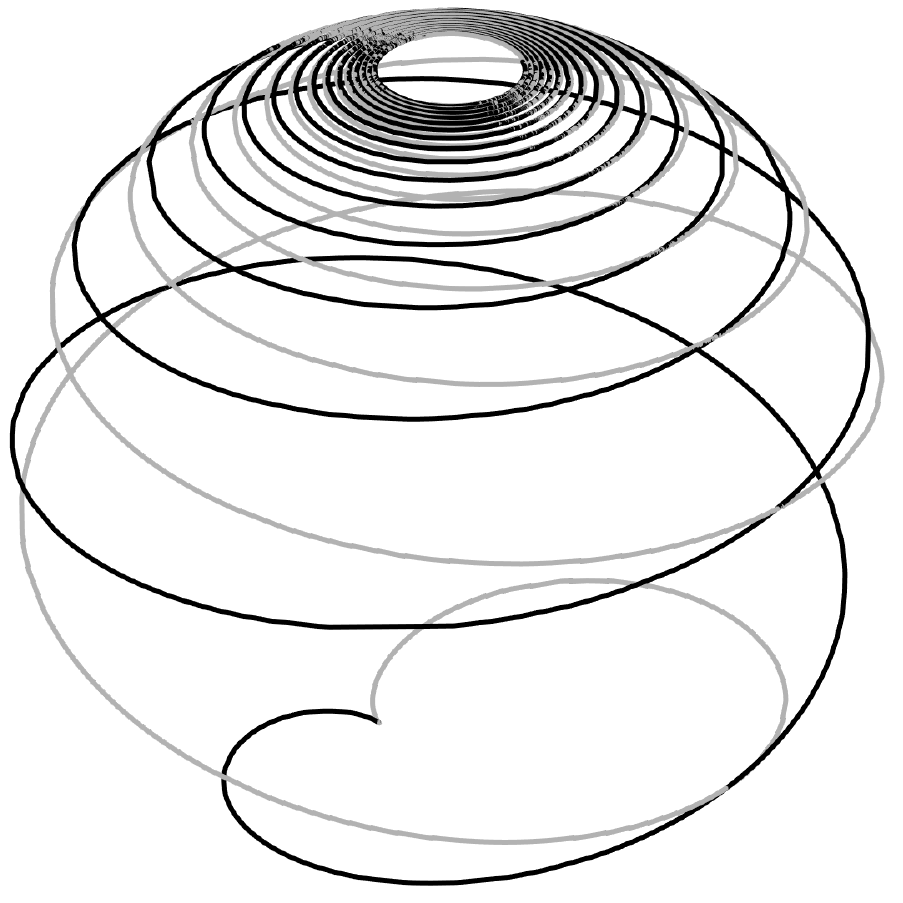}
\subcaption{$y$-curve at $x_0 = 5$.}
\label{fig:ycurvelong-5}
\end{minipage}
\hfill
\caption{
\small
These are the $y$-curves $f^{1/20}(x_0,y)$ on $[-100,100]$.
Each curve is in a sphere $\mathbb{S}^2_{x_0}$, and has a cusp at $y=0$.
We change its color from gray to black at $y=0$.
As for the asymptotic behaviors, each curve converges to one point as $y \to \pm \infty$.
\normalsize
}
\label{fig:ycurvelong}
\end{figure}

%%%%%%%%%%%%%%%%%%%%%%%%%%%%%%
%%%%% Figure 6
%%%%%%%%%%%%%%%%%%%%%%%%%%%%%%
\noindent
\begin{figure}[H]
\hfill
\begin{minipage}{0.45\linewidth}
\centering
\includegraphics[width=.85\linewidth]{./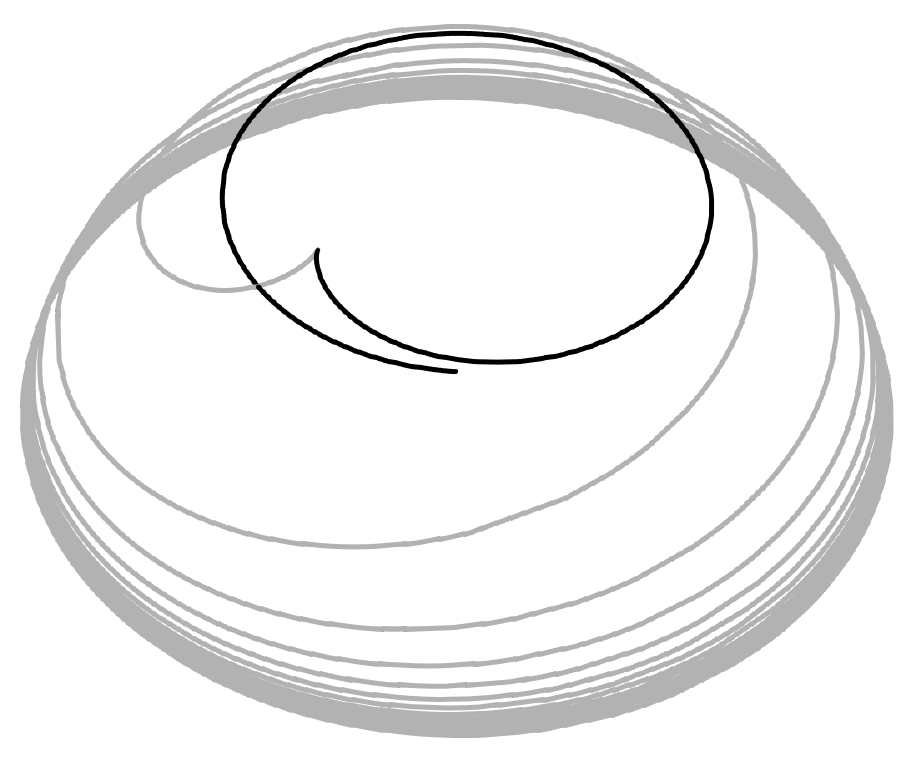}
\end{minipage}
\hfill
\begin{minipage}{0.45\linewidth}
\centering
\includegraphics[width=.85\linewidth]{./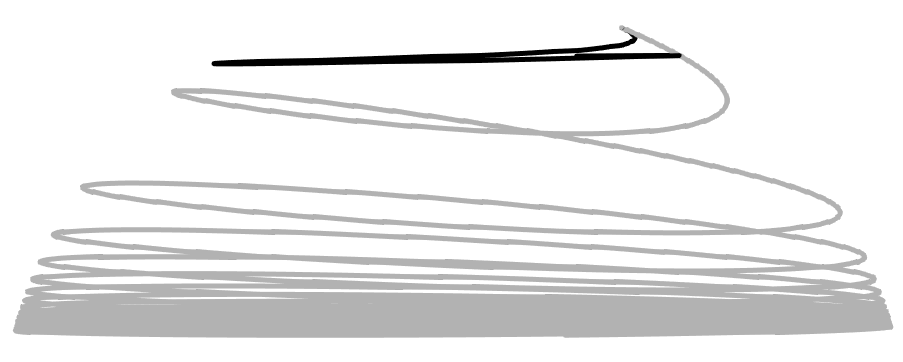}
\end{minipage}
\hfill
\caption{
\small
This shows the $x$-curve $f^{1/20}(x,2)$ on $[1/500,100]$: the figure on the right hand-side is a side view of the figure on the left.
This curve is in a sphere $\mathbb{S}^2_{y=2}$, and has a cusp at $x=2$.
We change its color from black to gray at $x=2$.
As for the asymptotic behaviors, the curve converges uniformly to parallel small circles in $\mathbb{S}^2_{y=2}$ as $x \to 0$ and $x \to \infty$.
\normalsize}
\label{fig:xcurvelong}
\end{figure}

%%%%%%%%%%%%%%%%%%%%%%%%%%%%%%
%%%%% Figure 7
%%%%%%%%%%%%%%%%%%%%%%%%%%%%%%
\noindent
\begin{figure}[H]
\hfill
\begin{minipage}{0.45\linewidth}
\centering
\includegraphics[width=.85\linewidth]{./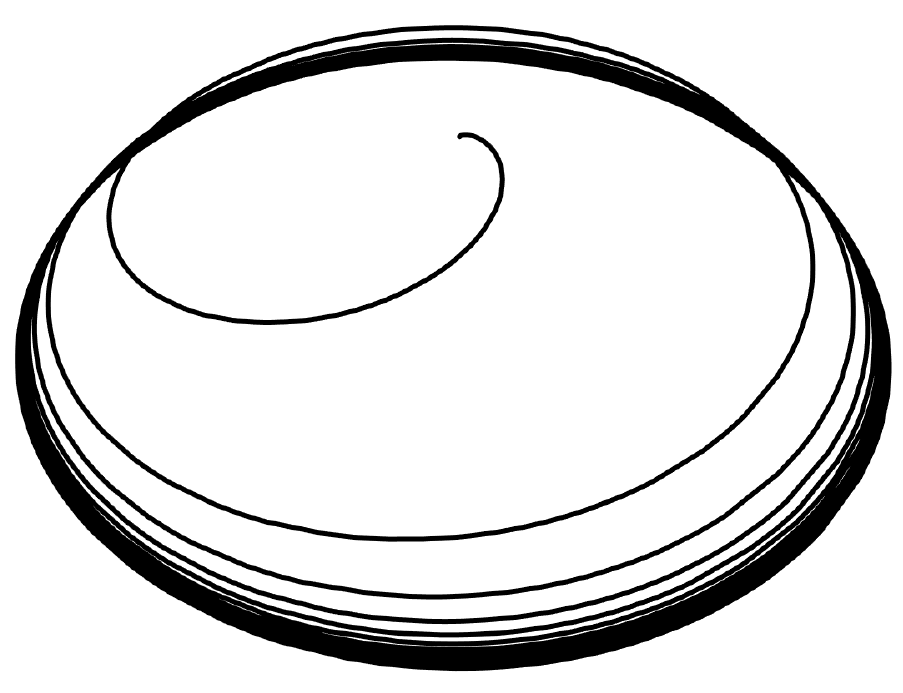}
\end{minipage}
\hfill
\begin{minipage}{0.45\linewidth}
\centering
\includegraphics[width=.85\linewidth]{./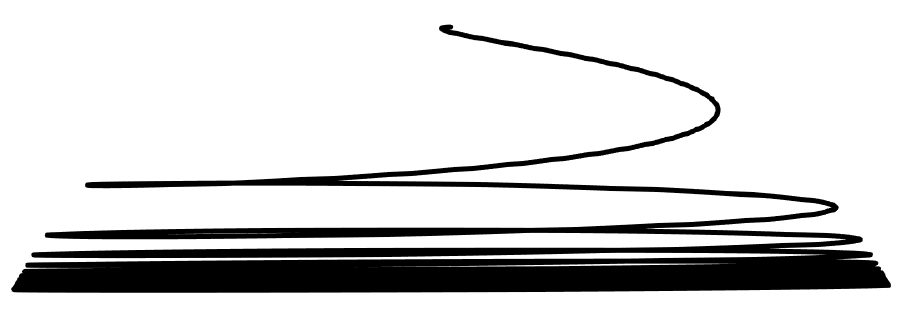}
\end{minipage}
\hfill
\caption{
\small
This shows the $x$-curve $f^{1/20}(x,0)$ on $[1/100,100]$: the figure on the right hand-side is a side view of the figure on the left.
The curve is in a sphere $\mathbb{S}^2_{y=0}$. The curve converges to a point as $x \to 0$, and converges uniformly to a small circle as $x \to \infty$.
This curve has no cusp.
\normalsize}
%\label{fig:}
\end{figure}

In consequence of the all results above, we have obtained the simple structure on the curvature surface $f^0(x,y)$ on $D$, as mentioned in the introduction: in a sense, $f^0(0,y)$ is a curve on the plane $\{\tilde{\bs{b}}{}^{\infty},\tilde{\bs{c}}^{\infty}\}_{\R}$ and $f^0(x,\infty)=f^0(x,-\infty)$ is a curve on the plane $\{{\bs{b}}^{\infty},{\bs{c}}^{\infty}\}_{\R}$; these planes are orthogonal to each other.

At the end of this section, we study a curvature surface defined on $D(-)=\{(x,y)\;|\;x<0\}$ in relation to a curvature surface $f^0(x,y)$ on $D=\{(x,y)\;|\;x>0\}$.
The singular metric $g_0$ in \eqref{def:g0} is defined on $D\cup\{(0,0)\}\cup D(-)$, and the map $j:D\ni (x,y)\leftrightarrow (-x,y)\in D(-)$ is an isometry between the spaces $(D,g_0|_D)$ and $(D(-),g_0|_{D(-)})$.
Although $\bar P$ in \eqref{expression:Pbar} is a negative function on $D(-)$, the equations of Lemma \ref{lemma:B-C} and Lemma \ref{lemma:dPhi} also hold on $D(-)$ for $\Phi$ in \eqref{def:phi} and $\mathcal{P}$ in \eqref{expression:Pbar}, and hence these equations also determine a curvature surface $\hat f^0(x,y)$ defined on $D(-)$.

Now, the following lemma is verified in the same way as the proof of Corollary \ref{cor:Af0}.
%%%%%%%%%%%%%%%%%%%%%%%%%%%%%%
%%%%% Lemma 4.12
%%%%%%%%%%%%%%%%%%%%%%%%%%%%%%
\begin{lemma}\label{lemma:F0hat}
Let $F^0(x,y)=[\phi,X^0_{\alpha},X^0_{\beta},\xi](x,y)$ be a frame field on $D$ satisfying the equations of Lemma \ref{lemma:dPhi}.
Then, for $x>0$, the frame field $\hat F^0(-x,y):=F^0(x,y)$ also satisfies the equations in Lemma \ref{lemma:dPhi} on $D(-)$.
\end{lemma}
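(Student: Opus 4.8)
The plan is to realize $\hat F^0$ as the pullback of $F^0$ by the reflection $r\colon D(-)\to D$, $r(x,y)=(-x,y)$, and to reduce the claim to a parity statement for the coefficient $1$-form $\Omega$. By the definition in the statement, for $(x,y)\in D(-)$ one has $-x>0$ and $\hat F^0(x,y)=F^0(-x,y)=(F^0\circ r)(x,y)$, so $\hat F^0=F^0\circ r$ on $D(-)$. Since $F^0$ satisfies the equations of Lemma \ref{lemma:dPhi} on $D$ by hypothesis, equivalently $dF^0=F^0\Omega$ with $\Omega=\Omega_1\,dx+\Omega_2\,dy$ as in \eqref{eq:dF0}, functoriality of the pullback gives $d\hat F^0 = r^*(dF^0) = r^*(F^0\Omega) = (F^0\circ r)\,(r^*\Omega) = \hat F^0\,(r^*\Omega)$. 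Writing $\hat\Omega := \Omega_1(x,y)\,dx+\Omega_2(x,y)\,dy$ for the $1$-form obtained by reading the coefficients $a_i,b_i,c_i$ of Lemma \ref{lemma:dPhi} at points of $D(-)$, it suffices to prove $r^*\Omega=\hat\Omega$; this is exactly the assertion that $\hat F^0$ satisfies the equations of Lemma \ref{lemma:dPhi} on $D(-)$.

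First I would record the relevant parities in $x$. As $X_0$ is even by \eqref{def:X0}, the derivative $X_0'$ is odd, while $X_0''$ and $X_0'/x$ are even; consequently $h(-x,y)=h(x,y)$, as already noted after Corollary \ref{cor:ineq4h}. Substituting these into the explicit formulas of Lemma \ref{lemma:dPhi} and keeping track of the factors $1/x$ shows, by a direct check, that $a_1,b_1,c_1$ are odd in $x$ whereas $a_2,b_2,c_2$ are even in $x$. Since every entry of $\Omega_1$ in \eqref{def:Omega12} is $\pm a_1,\pm b_1,\pm c_1$ and every entry of $\Omega_2$ is $\pm a_2,\pm b_2,\pm c_2$, it follows that $\Omega_1(-x,y)=-\Omega_1(x,y)$ and $\Omega_2(-x,y)=\Omega_2(x,y)$.

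Finally I would combine these parities with the elementary relations $r^*(dx)=-dx$ and $r^*(dy)=dy$. Then $r^*\Omega = \Omega_1(-x,y)\,(-dx) + \Omega_2(-x,y)\,dy = \bigl(-\Omega_1(x,y)\bigr)(-dx)+\Omega_2(x,y)\,dy = \Omega_1(x,y)\,dx+\Omega_2(x,y)\,dy = \hat\Omega$, so that $d\hat F^0=\hat F^0\hat\Omega$ on $D(-)$, which is the lemma; that $\hat F^0$ remains orthonormal is automatic, being merely a reparametrization of the orthonormal frame $F^0$. There is no serious obstacle here: the argument is structurally identical to that of Corollary \ref{cor:Af0}, where the transformed frame is again shown to solve the same structure equation. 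The only point demanding attention is the bookkeeping in the last step, where the sign coming from $r^*(dx)=-dx$ must cancel precisely against the oddness of $\Omega_1$, while the $dy$-part is matched directly by the evenness of $\Omega_2$.
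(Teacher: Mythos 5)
Your proposal is correct and follows essentially the same route as the paper, which proves the lemma ``in the same way as the proof of Corollary \ref{cor:Af0}'': one checks that the reflected frame solves the same structure equation $dF^0=F^0\Omega$ by tracking how the coefficients of Lemma \ref{lemma:dPhi} behave under $x\mapsto -x$, and your parity bookkeeping ($a_1,b_1,c_1$ odd, $a_2,b_2,c_2$ even in $x$, so the sign of $r^*(dx)=-dx$ cancels against the oddness of $\Omega_1$) is exactly that computation, phrased in pullback language. The only cosmetic difference is that the $x$-reflection needs no sign change on any frame component, unlike the $y$-reflection in Corollary \ref{cor:Af0} where $\phi$ must be negated, and your argument correctly reflects this.
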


By Lemma \ref{lemma:F0hat}, a curvature surface $\hat f^0(x,y)$ on $D(-)$ is obtained by $\hat f^0(-x,y):=f^0(x,y)$ from a curvature surface $f^0(x,y)$ on $D$ determined by $F^0(x,y)$.
We regard the surface $\hat f^0(x,y)$ on $D(-)$ as the back side of the surface $f^0(x,y)$ on $D$.
Then, we can define $\hat f^0(0,0)=f^0(0,0):=-(1/2){\bs{v}}_1(0)+A(0)$ by the continuity of $f^0(x,y)$ in the sense at Corollary \ref{cor:seq-f0}. In consequence, we obtain the following corollary.
%%%%%%%%%%%%%%%%%%%%%%%%%%%%%%
%%%%% Corollary 4.13
%%%%%%%%%%%%%%%%%%%%%%%%%%%%%%
\begin{cor}
Let $f^0(x,y)$ be a curvature surface on $D$.
When we attach one point $-(1/2){\bs{v}}_1(0)+A(0)$ to the surface formed by both sides of $f^0(x,y)$, the extended surface is curvature on $D\cup\{(0,0)\}\cup D(-)$ with the metric $g_0$.
\end{cor}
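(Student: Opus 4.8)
The plan is to read off the corollary from three facts already established: that $\hat f^0$ is a curvature surface on $D(-)$, that both $f^0$ and $\hat f^0$ tend to the single point $q:=-(1/2){\bs{v}}_1(0)+A(0)$ as $(x,y)\to(0,0)$, and that the metric $g_0$ of \eqref{def:g0} is already defined at the origin. Since a curvature surface is, by definition, required only over domains $V$ contained in the regular set, and the origin lies in the singular set $S_1$, adjoining the one point $q$ at $(0,0)$ cannot affect the curvature property; the whole content is therefore the gluing of the two sheets and the continuity of the extended map.

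First I would record that $\hat f^0(x,y)$ is an analytic curvature surface on $D(-)$ carrying $g_0|_{D(-)}$. By Lemma \ref{lemma:F0hat}, the frame $\hat F^0(-x,y):=F^0(x,y)$ satisfies the equations of Lemma \ref{lemma:dPhi} on $D(-)$, so the Maurer--Cartan equation holds on all of $D(-)$ and the argument of Theorem \ref{thm:extension} applies on $D(-)$ without change: $\hat f^0(-x,y):=f^0(x,y)$ is the integral surface of the corresponding pair of principal vector fields of $\hat F^0$, it carries $g_0|_{D(-)}$ through the isometry $j\colon(D,g_0|_D)\to(D(-),g_0|_{D(-)})$, and for each simply connected $V\subset D(-)\setminus S$ there is a generic conformally flat hypersurface restricting to it. In particular $\hat f^0(D(-))=f^0(D)$ as subsets of $\R^4$, which is the sense in which $\hat f^0$ is the back side of $f^0$.

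Next I would define the extended map $F$ on $D\cup\{(0,0)\}\cup D(-)$ by $f^0$ on $D$, $\hat f^0$ on $D(-)$ and $F(0,0):=q$, and verify its continuity at the origin. For a sequence $(x_n,y_n)\to(0,0)$ whose terms lie in $D$, Corollary \ref{cor:seq-f0} gives $f^0(x_n,y_n)\to q$; for terms lying in $D(-)$, the identity $\hat f^0(x_n,y_n)=f^0(-x_n,y_n)$ together with $(-x_n,y_n)\to(0,0)$ in $D$ yields the same limit by Corollary \ref{cor:seq-f0}. An arbitrary sequence converging to $(0,0)$ decomposes into terms of these two kinds together with terms equal to $(0,0)$, so every such sequence has image converging to the common value $q$; hence $F$ is continuous at the origin.

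The main obstacle is exactly this continuity and gluing at $(0,0)$: it works only because the limit in Corollary \ref{cor:seq-f0} is independent of the approaching sequence, so that the two parameter sheets $D$ and $D(-)$ close up to the same point $q$. Granting this, the conclusion follows: $g_0$ is defined on $D\cup\{(0,0)\}\cup D(-)$ (totally degenerate at the origin) and is realized on each open sheet by $f^0$ and $\hat f^0$, the curvature property holds on $D\setminus S$ and $D(-)\setminus S$ as noted above, and $F$ is continuous throughout. Thus $F$ is a curvature surface on $D\cup\{(0,0)\}\cup D(-)$ with metric $g_0$, as claimed.
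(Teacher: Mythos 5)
Your proposal is correct and takes essentially the same route as the paper: it uses Lemma \ref{lemma:F0hat} to transplant the frame field and obtain $\hat f^0(-x,y):=f^0(x,y)$ as a curvature surface on $D(-)$ carrying $g_0|_{D(-)}$ (via the isometry $j$), and then glues the two sheets at the origin using the sequence-independent limit of Corollary \ref{cor:seq-f0}. The details you add---reducing $D(-)$-sequences to $D$-sequences through $j$, and noting that the curvature property only concerns domains $V\subset D\setminus S$ so the attached point cannot affect it---are precisely what the paper leaves implicit in the discussion preceding the corollary.
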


\section{Approximation of frame field determining extended curvature surface}\label{sec:approx}

It would be difficult to express explicitly the curvature surfaces in $\R^4$ arising from the metric $\check g_H$ on $\R^2$, by Lemma \ref{lemma:dPhi} and \eqref{def:theta12}. However, in Sections \ref{sec:extension} and \ref{sec:structure}, we have given the approximate figures of curvature lines and the other curves in the curvature surface $f^0(x,y)$ on $D=\{(x,y)\in \R^2\;|\;x>0\}$. 
These figures helped us to capture the surface $f^0(x,y)$ in details: in particular, the entire pictures of the curvature lines were obtained.
In this section, for each positive integer $n$ we construct an approximation $F^{\delta_n}(x,y)$ of the orthonormal frame field $F^0(x,y)$ from the structure equation $dF^0=F^0\Omega$ in \eqref{eq:dF0}, which induces these approximate figures, in particular, for the coordinate lines by \eqref{eq:f0asxcurves} and \eqref{eq:f0asycurves}. 

Now, let $F^0(x,y)=[\phi,X^0_{\alpha},X^0_{\beta},\xi](x,y)$ be the orthonormal frame field determining a curvature surface $f^0(x,y)$ on $D$.
Then, we regard $\phi(x,y)$ as a (singular) surface in the standard unit $3$-sphere $\mathbb{S}^3$: $X^0_{\alpha}(x,y)$ and $X^0_{\beta}(x,y)$ are the principal curvature vectors and $\xi(x,y)$ is a normal vector field of $\phi$.
The frame $F^0(x,y)$ satisfies the following equations with the analytic functions $a_i,b_i,c_i$ on $D$ by Lemma \ref{lemma:dPhi}:
\begin{gather}\label{eq:dphi}
\begin{split}
d\phi=-(a_1dx)X^0_{\alpha}-(a_2dy)X^0_{\beta},&\quad
d\xi=(b_1dx)X^0_{\alpha}+(b_2dy)X^0_{\beta},\\
\partial X^0_{\beta}/\partial x=c_1X^0_{\alpha},&\quad
\partial X^0_{\alpha}/\partial y=c_2X^0_{\beta}.
\end{split}
\end{gather}

Then, we have the following lemma:
%%%%%%%%%%%%%%%%%%%%%%%%%%%%%%
%%%%% Lemma 5.1
%%%%%%%%%%%%%%%%%%%%%%%%%%%%%%
\begin{lemma}\label{lemma:pde-abc}
The functions $a_i,b_i$ and $c_i$ satisfy the following equations on $D$:
\begin{gather*}
(a_1)_y=a_2c_1,\quad
(a_2)_x=a_1c_2,\quad
(b_1)_y=b_2c_1,\quad
(b_2)_x=b_1c_2,\\
(c_2)_x+(c_1)_y+a_1a_2+b_1b_2=0.
\end{gather*}
\end{lemma}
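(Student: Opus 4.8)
The plan is to read off these five relations from the flatness of the canonical connection $\nabla'$ of $\R^4$, that is, from the equality of mixed second derivatives $\partial^2/\partial x\partial y=\partial^2/\partial y\partial x$ applied to the frame vectors $\phi$, $\xi$ and $X^0_{\beta}$. All the first derivatives needed are already in hand: the six relations of Lemma \ref{lemma:dPhi} together with the two supplementary ones $\partial X^0_{\alpha}/\partial x=a_1\phi-b_1\xi-c_1X^0_{\beta}$ and $\partial X^0_{\beta}/\partial y=a_2\phi-b_2\xi-c_2X^0_{\alpha}$ recorded just after it. Since $F^0$ is an orthonormal frame at every point of $D$, matching the coefficients of the four independent vectors $\phi,X^0_{\alpha},X^0_{\beta},\xi$ in any such identity is legitimate, and this turns each vector identity into scalar equations.

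First I would differentiate $\partial\phi/\partial x=-a_1X^0_{\alpha}$ in $y$ and $\partial\phi/\partial y=-a_2X^0_{\beta}$ in $x$, using $\partial X^0_{\alpha}/\partial y=c_2X^0_{\beta}$ and $\partial X^0_{\beta}/\partial x=c_1X^0_{\alpha}$. Equating the two results and comparing the $X^0_{\alpha}$- and $X^0_{\beta}$-components yields $(a_1)_y=a_2c_1$ and $(a_2)_x=a_1c_2$. Repeating the same computation with $\xi$ in place of $\phi$ (the derivatives now being $+b_iX^0_{\bullet}$) gives $(b_1)_y=b_2c_1$ and $(b_2)_x=b_1c_2$. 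These four are the Codazzi-type relations.

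The last, Gauss-type, equation I would obtain from the integrability of $X^0_{\beta}$. Differentiating $\partial X^0_{\beta}/\partial x=c_1X^0_{\alpha}$ in $y$ produces $(c_1)_yX^0_{\alpha}+c_1c_2X^0_{\beta}$, while differentiating $\partial X^0_{\beta}/\partial y=a_2\phi-b_2\xi-c_2X^0_{\alpha}$ in $x$ requires substituting $\partial\phi/\partial x$, $\partial\xi/\partial x$ and the supplementary $\partial X^0_{\alpha}/\partial x$. Collecting terms, the $\phi$- and $\xi$-components reproduce the already-derived $(a_2)_x=a_1c_2$ and $(b_2)_x=b_1c_2$ (a welcome consistency check), the $X^0_{\beta}$-components match identically, and the $X^0_{\alpha}$-component gives exactly $(c_2)_x+(c_1)_y+a_1a_2+b_1b_2=0$.

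There is no deep difficulty here: the whole statement is the Maurer--Cartan equation $d\Omega+\Omega\wedge\Omega=0$ for the $1$-form $\Omega$ of \eqref{def:Omega12}, whose validity on all of $D$ was already established in the proof of Theorem \ref{thm:extension}. The only thing to watch is the bookkeeping in the final step: the system coming from $X^0_{\beta}$ is overdetermined (four vector components, hence four scalar relations), so one must verify that the two components coinciding with Codazzi relations are genuinely consistent and that exactly one new relation, the Gauss equation, survives. One could instead check all five directly by inserting the explicit formulas for $a_i,b_i,c_i$ from Lemma \ref{lemma:dPhi} and using the third-order ODE \eqref{diffeq:X0-3rd} for $X_0$, but the frame-theoretic route above is shorter and structurally transparent.
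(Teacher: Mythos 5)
Your proof is correct, and for the four Codazzi-type relations it coincides with the paper's argument: the paper also obtains them from $d(d\phi)=d(d\xi)=0$ applied to the structure equations \eqref{eq:dphi}. Where you genuinely diverge is the fifth (Gauss-type) equation. You get it from $d(dX^0_{\beta})=0$, i.e.\ by equating $\partial_y(c_1X^0_{\alpha})$ with $\partial_x(a_2\phi-b_2\xi-c_2X^0_{\alpha})$ and reading off the $X^0_{\alpha}$-component; your bookkeeping is right (the $\phi$- and $\xi$-components reproduce $(a_2)_x=a_1c_2$ and $(b_2)_x=b_1c_2$, the $X^0_{\beta}$-components cancel identically, and the $X^0_{\alpha}$-component is exactly $(c_2)_x+(c_1)_y+a_1a_2+b_1b_2=0$). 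The paper instead regards $\phi$ as a surface in $\mathbb{S}^3$ on the open set where it immerses, invokes the intrinsic--extrinsic Gauss equation $K_{\phi}=-(a_1a_2)^{-1}\big[\left((a_2)_x/a_1\right)_x+\left((a_1)_y/a_2\right)_y\big]=1+\lambda_1\lambda_2$ with $\lambda_i=b_i/a_i$, and then extends the resulting identity to all of $D$ by analyticity of $a_i,b_i,c_i$. Your route buys uniformity and economy: it is a pointwise frame computation valid on all of $D$ at once, needing neither the immersion hypothesis nor the analytic-continuation step, and it is exactly the componentwise form of the Maurer--Cartan equation that both you and the paper acknowledge as the underlying content. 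The paper's route buys geometric meaning: it exhibits the fifth relation as the Gauss equation of $\phi\subset\mathbb{S}^3$, which is the viewpoint the approximation scheme of Section \ref{sec:approx} is built on. Both are complete proofs; the difference is one of emphasis, not of validity.
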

\begin{proof}
These equations are equivalent to the Maurer-Cartan equation $d\Omega+\Omega\wedge\Omega=0$.
Here, we give another proof.
The first four equations follow from \eqref{eq:dphi} by $d(d\phi)=d(d\xi)=0$.
For the last equation, suppose firstly that $\phi$ is a surface in $\mathbb{S}^3$.
Then, the Gauss curvature $K_{\phi}$ of $\phi$ is given by
\begin{gather*}
K_{\phi}=-(a_1a_2)^{-1}\big[\left((a_2)_x/a_1\right)_x+\left((a_1)_y/a_2\right)_y\big]
=1+\lambda_1\lambda_2,
\end{gather*}
where $\lambda_1:=b_1/a_1$ and $\lambda_2:=b_2/a_2$ are the principal curvatures of $\phi$.
Thus, the lemma holds good if $\phi$ is a surface.
Next, since the functions $a_i,b_i,c_i$ are analytic on $D$ and the domain for $\phi$ to be a surface is open, the last equation also holds on $D$.
\end{proof}

Now, for a point $(x_0,y_0)\in D$ and a constant $a(>0)$, let $[x_0,x_0+a]$ and $[y_0,y_0+a]$ be the closed intervals and $E:=[x_0,x_0+a]\times [y_0,y_0+a]$ be a compact domain in $D$.
From now on, we write $x_e:=x_0+a$ and $y_e:=y_0+a$.
For the domain $E$, let us fix an orthonormal frame $F^0(x_0,y_0)$ at $(x_0,y_0)$ arbitrarily.
For a point $(x_p,y_p)\in E$ and an integer $n>0$, we put $E_p:=[x_0,x_p]\times [y_0,y_p]$ and $\delta_n:=a/n$.
%%%%%%%%%%%%%%%%%%%%%%%%%%%%%%
%%%%% Definition 5.2
%%%%%%%%%%%%%%%%%%%%%%%%%%%%%%
\begin{defn}[Division of the domain $E_p$ and Path]\label{def:path}
(1)
For a point $(x_p,y_p)\in E$, we divide the intervals $[x_0,x_p]$ and $[x_0,y_p]$, respectively, into sub-intervals of equal length:
\begin{gather*}
x_0<x_1<\dots <x_s=x_p,\quad
y_0<y_1<\dots <y_t=y_p.
\end{gather*}
Here, we take the integers $s$ and $t$ such that $0<s,t\leq n$ and $(x_p-x_0)/s\leq \delta_n$, $(y_p-y_0)/t\leq \delta_n$.
Then, we denote $\Delta_{i,j}^n:=[x_i,x_{i+1}]\times [y_j,y_{j+1}] \ (\subset E_p)$.

(2) For two lattice points $P_{i,j}:=(x_i,y_j)$ and $P_{i+k,j+l}:=(x_{i+k},y_{j+l})$ in a division of $E_p$, where $k\geq 0$ and $l\geq 0$, let $m$ be a polygonal line in the division connecting the two points.
We express $m$ as
\begin{gather*}
m: P_{i,j}\rightarrow \dots \rightarrow P_{a,b}\rightarrow P_{c,d}\rightarrow \dots \rightarrow P_{i+k,j+l}
\end{gather*}
by pointing to lattice points through which $m$ passes, in order.
Then, we call $m$  a path from $P_{i,j}$ to $P_{i+k,j+l}$ if $c\geq a$ and $d\geq b$ are satisfied.
\end{defn}

Now, let us fix a division of $E_p$ for $(x_p,y_p)\in E$.
Under $F^{\delta_n}(x_0,y_0):=F^0(x_0,y_0)$, we construct an approximation $F^{\delta_n}(x,y)$ of $F^0(x,y)$ on any path from $(x_0,y_0)$ to $(x_p,y_p)$.
At the beginning, we state the program for the construction of $F^{\delta_n}(x,y)$.

\textbf{Step A}: We take a sub-domain $\Delta_{i,j}^n$ arbitrarily, and suppose that an orthonormal frame $F^{\delta_n}(x_i,y_j)$ at $P_{i,j}$ is determined.

(1)\;
We construct an orthonormal frame field $F^{\delta_n}(x,y_j)$ on $[x_i,x_{i+1}]\times \{y_j\}$ from $F^{\delta_n}(x_i,y_j)$.

(2)\;
We construct an orthonormal frame field $F^{\delta_n}(x_i,y)$ on $\{x_i\}\times[y_j,y_{j+1}]$ from $F^{\delta_n}(x_i,y_j)$.

In the constructions of $F^{\delta_n}(x_{i+1},y_j)$ and $F^{\delta_n}(x_i,y_{j+1})$ in (1) and (2), suppose that we start from the other orthonormal frame $\bar F^{\delta_n}(x_i,y_j)$ at $P_{i,j}$.
Then, we obtain distinct frames $\bar F^{\delta_n}(x_{i+1},y_j)$ and $\bar F^{\delta_n}(x_i,y_{j+1})$ from $F^{\delta_n}(x_{i+1},y_j)$ and $F^{\delta_n}(x_i,y_{j+1})$, respectively.
In this case, these frames will satisfy the equations
\begin{gather}\label{eqns:norm-Fdeltas}
\|(F^{\delta_n}-\bar F^{\delta_n})(x_i,y_j)\|
=\|(F^{\delta_n}-\bar F^{\delta_n}) (x_{i+1},y_j)\|
=\|(F^{\delta_n}-\bar F^{\delta_n}) (x_i,y_{j+1})\|,
\end{gather}
where $\| A\|=\sqrt{\sum (a_{ij})^2}$ for a square matrix $A=[a_{ij}]$, as in the previous section.

(3)\;
From $F^{\delta_n}(x_{i+1},y_j)$ obtained by (1), we firstly have an orthonormal frame field $F^{\delta_n}(x_{i+1},y)$ on $\{x_{i+1}\}\times[y_j,y_{j+1}]$ by (2): we denote by $F^{\underline{\delta}_n}(x_{i+1},y_{j+1})$ the frame at $P_{{i+1},{j+1}}$ determined in this way.
Next, from $F^{\delta_n}(x_i,y_{j+1})$ obtained by (2), we have an orthonormal frame field $F^{\delta_n}(x,y_{j+1})$ on $[x_i,x_{i+1}]\times \{y_{j+1}\}$ by (1): we denote by $F^{\overline{\delta}_n}(x_{i+1},y_{j+1})$ the frame at $P_{{i+1},{j+1}}$ determined in this way.
That is, the two frames $F^{\underline{\delta}_n}(x_{i+1},y_{j+1})$ and $F^{\overline{\delta}_n}(x_{i+1},y_{j+1})$ are determined at $P_{i+1,j+1}$.
Then, although $F^{\underline{\delta}_n}(x_{i+1},y_{j+1})$ and $F^{\overline{\delta}_n}(x_{i+1},y_{j+1})$ do not coincide, we shall have the following inequality,
\begin{gather}\label{ineq:norm-Fdeltas}
\| (F^{\underline{\delta}_n}-F^{\overline{\delta}_n})(x_{i+1},y_{j+1})\| \ \leq \ K(\delta_n)^3,
\end{gather}
if $n$ is sufficiently large, where $K$ is a constant on $E$ determined independently of $n$ (see Definition \ref{def:K} below).
In the proof of \eqref{ineq:norm-Fdeltas}, we shall use Lemma \ref{lemma:pde-abc} essentially.

\textbf{Step B}: Let $P_{i,j}$ be a lattice point of $E_p$ and $m$ be a path from $P_{0,0}$ to $P_{i,j}$.
By Step A, we have an orthonormal frame field $F^{\delta_n}$ on the path $m$.

%%%%%%%%%%%%%%%%%%%%%%%%%%%%%%
%%%%% Proposition 5.3
%%%%%%%%%%%%%%%%%%%%%%%%%%%%%%
\begin{prop}\label{prop:norm-Fdeltas}
Let $P:=(x_p,y_p)\in E$ and $F^0(x_0,y_0)$ be an orthogonal matrix fixed arbitrarily. 
For an integer $n$, we take a division of the domain $E_p$.
Let $m_i \ (i=1,2)$ be two paths from $P_{0,0}$ to $P$ in the division, and $F^{\delta_n}_i(x_p,y_p) \ (i=1,2)$ be the frames at $P$ determined from $m_i$, respectively.
Then, we have
\begin{gather*}
\| (F^{{\delta}_n}_1-F^{{\delta}_n}_2)(x_p,y_p)\| \leq Ka^2\delta_n,
\end{gather*}
if $n$ is sufficiently large.
\end{prop}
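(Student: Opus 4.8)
The plan is to reduce the comparison of two paths to a sequence of elementary one-cell modifications, to bound the effect of each such modification on the frame at the endpoint $P$ by $K\delta_n^3$, and then to control the number of modifications by the number of cells of $E_p$. A path in the sense of Definition \ref{def:path} is a monotone lattice path built from unit horizontal edges (on which $F^{\delta_n}$ is produced by Step~A\,(1)) and unit vertical edges (Step~A\,(2)). Hence any two paths $m_1,m_2$ from $P_{0,0}$ to $P=(x_p,y_p)$ can be joined by a finite chain $m_1=\mu_0,\mu_1,\dots,\mu_N=m_2$ in which consecutive $\mu_r$ and $\mu_{r+1}$ coincide except on a single cell $\Delta_{i,j}^n$, where one traverses $P_{i,j}\to P_{i+1,j}\to P_{i+1,j+1}$ and the other traverses $P_{i,j}\to P_{i,j+1}\to P_{i+1,j+1}$. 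This is the standard fact that the monotone lattice paths between two fixed corners are connected by elementary ``cell flips'', and the number $N$ of flips is at most the number of cells lying between the two paths, hence at most the total number $s\cdot t\le n^2$ of cells of $E_p$ (recall $0<s,t\le n$).

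Next I would estimate the effect of one flip on $F^{\delta_n}(x_p,y_p)$. Consecutive paths $\mu_r$ and $\mu_{r+1}$ agree, edge by edge, up to the near corner $P_{i,j}$ of the flipped cell, so Step~A produces the \emph{same} frame $F^{\delta_n}(x_i,y_j)$ there. Traversing the flipped cell the two ways yields precisely the frames $F^{\underline{\delta}_n}(x_{i+1},y_{j+1})$ and $F^{\overline{\delta}_n}(x_{i+1},y_{j+1})$ of Step~A\,(3), which by \eqref{ineq:norm-Fdeltas} differ by at most $K\delta_n^3$. After the flipped cell both paths follow one and the same sequence of edges from $P_{i+1,j+1}$ to $P$. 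The crucial point is that each single edge-construction of Step~A replaces a frame $F$ by $FR$ for an orthogonal matrix $R$ depending only on the edge, so the difference of two frames obtained from two distinct initial frames on the same edge has unchanged norm; this is exactly the content of \eqref{eqns:norm-Fdeltas}. Iterating \eqref{eqns:norm-Fdeltas} along the common tail therefore transports the discrepancy without amplification, and gives $\|(F^{\mu_r}-F^{\mu_{r+1}})(x_p,y_p)\|\le K\delta_n^3$ for every $r$.

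Summing over the chain by the triangle inequality then yields $\|(F^{\delta_n}_1-F^{\delta_n}_2)(x_p,y_p)\|\le \sum_{r=0}^{N-1}\|(F^{\mu_r}-F^{\mu_{r+1}})(x_p,y_p)\|\le N\,K\delta_n^3\le n^2K\delta_n^3$, and since $\delta_n=a/n$ one computes $n^2K\delta_n^3=Kn^2(a/n)^3=Ka^3/n=Ka^2\delta_n$, which is the asserted bound. The hypothesis ``$n$ sufficiently large'' enters only through the prior estimate \eqref{ineq:norm-Fdeltas}.

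Finally, I expect the genuine analytic difficulty of the whole construction to lie not in this proposition but in the already-granted cell estimate \eqref{ineq:norm-Fdeltas}, whose proof invokes the integrability relations of Lemma \ref{lemma:pde-abc}. Within the present proof the delicate steps are rather combinatorial and bookkeeping: first, that the local $O(\delta_n^3)$ error produced at a single flipped cell reaches the endpoint \emph{without amplification} — guaranteed by the isometry property \eqref{eqns:norm-Fdeltas}, which is what keeps each flip's contribution at $K\delta_n^3$; and second, that the flip count is $O(n^2)$, so that the $n^2$ cells combine with the $\delta_n^3$ per-cell error to produce exactly one power of $\delta_n$. The only point I expect to require slight care is verifying that any two paths of Definition \ref{def:path} are indeed connected by single-cell flips with flip count at most $s\cdot t$.
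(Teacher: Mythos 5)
Your proof is correct and follows essentially the same route as the paper's: both arguments bound the discrepancy from a single cell by $K\delta_n^3$ via \eqref{ineq:norm-Fdeltas}, use the isometry property \eqref{eqns:norm-Fdeltas} to transport that discrepancy to $P$ without amplification, and then count at most $n^2$ cells enclosed between the two paths so that $n^2 K\delta_n^3 = Ka^2\delta_n$. Your explicit ``cell flip'' decomposition is simply a more carefully articulated version of what the paper demonstrates on the small case $P_{i,j}\to P_{i+2,j+1}$ and then asserts in general.
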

\begin{proof}
In this proof, we suppose that \eqref{eqns:norm-Fdeltas} and \eqref{ineq:norm-Fdeltas} hold good and that $n$ is sufficiently large.
Now, under the assumption that a frame $F^{\delta_n}(x_i,y_j)$ is determined at a point $P_{i,j} \ (i\leq s-2, \ j\leq t-1)$, we study the frame at $P_{i+2,j+1}$.
In this case, there are three paths $m_i \ (i=1,2,3)$ from $P_{i,j}$ to $P_{i+2,j+1}$.
For each $m_i$, we point to the lattice points only on the way, and then we have
\begin{gather*}
m_1: P_{i+1,j}\rightarrow P_{i+2,j},\quad
m_2: P_{i+1,j}\rightarrow P_{i+1,j+1},\quad
m_3: P_{i,j+1}\rightarrow P_{i+1,j+1}.
\end{gather*}
Let $F^{\delta_n}_i(x_{i+2},y_{j+1})$ be the frames at $P_{i+2,j+1}$ determined from $m_i$, respectively.
Then, we firstly have $\| (F^{\delta_n}_2-F^{\delta_n}_1)(x_{i+2},y_{j+1})\| \leq  K\delta_n^3$ and $\| (F^{\delta_n}_3-F^{\delta_n}_2)(x_{i+2},y_{j+1})\| \leq  K\delta_n^3$ by \eqref{eqns:norm-Fdeltas} and \eqref{ineq:norm-Fdeltas}.
Furthermore, the closed polygonal line $m_3-m_1$ includes two sub-domains $\Delta_{i,j}^n$ and $\Delta_{i+1,j}^n$, and we have
\begin{gather*}
\|(F^{\delta_n}_3-F^{\delta_n}_1)(x_{i+2},y_{j+1})\|\\
\leq\| (F^{\delta_n}_3-F^{\delta_n}_2)(x_{i+2},y_{j+1})\|+\| (F^{\delta_n}_2-F^{\delta_n}_1)(x_{i+2},y_{j+1})\|
\leq 2 K\delta_n^3.
\end{gather*}

Next, we arbitrarily take two paths $m_i$ from $P_{0,0}$ to $P=(x_p,y_p)$.
Then, the closed polygonal line $m_1-m_2$ includes at most $n^2$ sub-domains $\Delta^n_{k,l}$.
In consequence, we obtain the lemma by \eqref{eqns:norm-Fdeltas}, \eqref{ineq:norm-Fdeltas} and the above fact.
\end{proof}

\textbf{Step C}: For a given integer $n$ and a point $(x_p,y_p)\in E$, we take a division of $E_p$.
For the division, let $\underline{m}$ and $\overline{m}$ be two paths  defined by
\begin{align}
\underline{m}:& P_{0,0}=(x_0,y_0)\rightarrow P_{s,0}=(x_p,y_0)\rightarrow P_{s,t}=(x_p,y_p),
\label{def:mlower}\\
\overline{m}:& P_{0,0}=(x_0,y_0)\rightarrow P_{0,t}=(x_0,y_p)\rightarrow P_{s,t}=(x_p,y_p).
\label{def:mupper}
\end{align}
Let $F^{\underline\delta_n}(x_p,y_p)$ and $F^{\overline\delta_n}(x_p,y_p)$ be the frames at $(x_p,y_p)$ determined from $\underline{m}$ and $\overline{m}$, respectively.
Then, we shall verify that $F^{\underline\delta_n}(x_p,y_p)$ converges to $F^0(x_p,y_p)$ uniformly for any $(x_p,y_p)\in E$ as $n$ tends to $\infty$, where $F^0(x,y)$ is the frame field with a given $F^0(x_0,y_0)$.
In consequence, $F^{\overline\delta_n}(x_p,y_p)$ also converges to the frame $F^0(x_p,y_p)$ uniformly for any $(x_p,y_p)\in E$ as $n$ tends to $\infty$, since $\|(F^{\underline\delta_n}-F^{\overline\delta_n})(x_p,y_p) \|\leq Ka^2\delta_n$ holds by Proposition \ref{prop:norm-Fdeltas}.\\

Now, we verify Steps A and C.
In these proofs, we fix an integer $n$, and study only the case of $E=[x_0,x_e]\times [y_0,y_e]$ for $E_p$, by Definition \ref{def:path}.
Hence, we have $s=t=n$, and denote $\delta:=\delta_n=a/n,$ $\Delta_{i,j}:=\Delta_{i,j}^n$ and $F^{\delta}:=F^{\delta_n}$.

For Step A: Under the assumption that an orthonormal frame $F^{\delta}(x_i,y_j)$ at $P_{i,j}$ is determined, we construct $F^{\delta}$ on boundary of the sub-domain $\Delta_{i,j}$ according to the ways (1), (2) and (3), and verify \eqref{eqns:norm-Fdeltas} and \eqref{ineq:norm-Fdeltas}.
Let $F^{\delta}=:[\phi^{\delta},X^{\delta}_{\alpha},X^{\delta}_{\beta},\xi^{\delta}]$.
We simply write $(x_i,y_j)$ as $(0,0)$ in the following Steps A-(1) and A-(2).

Step A-(1): For a given $F^{\delta}(0,0)$, we express $F^{\delta}(x,y_j)$ for $x\in [x_i,x_{i+1}]$ as
\begin{align}
\phi^{\delta}(x,y_j)-\phi^{\delta}(0,0)
&:=\label{def:phidelta-x}
-\textstyle\frac{x-x_i}{2}a_1(0,0)\left(X^{\delta}_{\alpha}(0,0)+X^{\delta}_{\alpha}(x,y_j)\right),\\
\xi^{\delta}(x,y_j)-\xi^{\delta}(0,0)
&:=\label{def:xidelta-x}
\textstyle\frac{x-x_i}{2}b_1(0,0)\left(X^{\delta}_{\alpha}(0,0)+X^{\delta}_{\alpha}(x,y_j)\right),\\
X^{\delta}_{\beta}(x,y_j)-X^{\delta}_{\beta}(0,0)
&:=\label{def:Xbetadelta-x}
\textstyle\frac{x-x_i}{2}c_1(0,0)\left(X^{\delta}_{\alpha}(0,0)+X^{\delta}_{\alpha}(x,y_j)\right),
\end{align}
and find out a suitable $X^{\delta}_{\alpha}(x,y_j)$ from these equations.
Now, let $s^{\delta}_1(x,y_j)$ and $t^{\delta}_1(x,y_j)$ be the functions on $(x,y_j)\in [x_i,x_{i+1}]\times \{y_j\}$ defined by
\begin{gather}\label{def:sdelta1}
s^{\delta}_1(x,y_j):=\langle X^{\delta}_{\alpha}(x,y_j),X^{\delta}_{\alpha}(0,0)\rangle,\quad
t^{\delta}_1(x,y_j):=\left(1+s^{\delta}_1(x,y_j)\right)/2,
\end{gather}
respectively.
%%%%%%%%%%%%%%%%%%%%%%%%%%%%%%
%%%%% Lemma 5.4
%%%%%%%%%%%%%%%%%%%%%%%%%%%%%%
\begin{lemma}\label{lemma:s1t1}
Suppose that the frame field $F^{\delta}(x,y_j)$ in \eqref{def:phidelta-x}--\eqref{def:Xbetadelta-x} is orthonormal at each point $(x,y_j)\in[x_i,x_{i+1}]\times \{y_j\}$.
Then, for $x\in [x_i,x_{i+1}]$ we have the following equations:
\begin{align*}
\langle X^{\delta}_{\alpha}(x,y_j),\phi^{\delta}(0,0)\rangle
=-\langle \phi^{\delta}(x,y_j),X^{\delta}_{\alpha}(0,0)\rangle
&=a_1(0,0)(x-x_i)t^{\delta}_1(x,y_j),\\
\langle X^{\delta}_{\alpha}(x,y_j),\xi^{\delta}(0,0)\rangle
=-\langle \xi^{\delta}(x,y_j),X^{\delta}_{\alpha}(0,0)\rangle
&=-b_1(0,0)(x-x_i)t^{\delta}_1(x,y_j),\\
\langle X^{\delta}_{\alpha}(x,y_j),X^{\delta}_{\beta}(0,0)\rangle
=-\langle X^{\delta}_{\beta}(x,y_j),X^{\delta}_{\alpha}(0,0)\rangle
&=-c_1(0,0)(x-x_i)t^{\delta}_1(x,y_j),
\end{align*}
\begin{align*}
s^{\delta}_1(x,y_j)=\frac{4-(x-x_i)^2(a_1^2+b_1^2+c_1^2)(0,0)}{4+(x-x_i)^2(a_1^2+b_1^2+c_1^2)(0,0)},\quad
t^{\delta}_1(x,y_j)=\frac{4}{4+(x-x_i)^2(a_1^2+b_1^2+c_1^2)(0,0)}.
\end{align*}
\end{lemma}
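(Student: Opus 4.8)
The plan is to fix the subinterval, abbreviate the base point $(x_i,y_j)$ as $(0,0)$ as in the statement, and set $\tau:=x-x_i$, $a:=a_1(0,0)$, $b:=b_1(0,0)$, $c:=c_1(0,0)$, $\rho:=(a_1^2+b_1^2+c_1^2)(0,0)$, together with $e:=X^\delta_\alpha(0,0)$, $E:=X^\delta_\alpha(x,y_j)$, $s:=s^\delta_1(x,y_j)$ and $t:=t^\delta_1(x,y_j)$, so that $1+s=2t$ and $\langle E,e\rangle=s$. In this notation the defining relations \eqref{def:phidelta-x}--\eqref{def:Xbetadelta-x} read $\phi^\delta(x,y_j)-\phi^\delta(0,0)=-\tfrac{\tau}{2}a(e+E)$, $\xi^\delta(x,y_j)-\xi^\delta(0,0)=\tfrac{\tau}{2}b(e+E)$ and $X^\delta_\beta(x,y_j)-X^\delta_\beta(0,0)=\tfrac{\tau}{2}c(e+E)$.

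First I would pair each of these three relations with $e=X^\delta_\alpha(0,0)$ and use that $F^\delta(0,0)$ is orthonormal, so that $\langle \phi^\delta(0,0),e\rangle=\langle \xi^\delta(0,0),e\rangle=\langle X^\delta_\beta(0,0),e\rangle=0$ and $\langle e,e\rangle=1$; combined with $\langle E,e\rangle=s$ and $1+s=2t$ this gives $\langle \phi^\delta(x,y_j),X^\delta_\alpha(0,0)\rangle=-a\tau t$, $\langle \xi^\delta(x,y_j),X^\delta_\alpha(0,0)\rangle=b\tau t$ and $\langle X^\delta_\beta(x,y_j),X^\delta_\alpha(0,0)\rangle=c\tau t$, which are the middle members of the three claimed equalities.

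Next I would pair each relation with $E=X^\delta_\alpha(x,y_j)$ and invoke the standing hypothesis that $F^\delta(x,y_j)$ is orthonormal, so $\langle \phi^\delta(x,y_j),E\rangle=\langle \xi^\delta(x,y_j),E\rangle=\langle X^\delta_\beta(x,y_j),E\rangle=0$ and $\langle E,E\rangle=1$. For instance, pairing $\phi^\delta(x,y_j)=\phi^\delta(0,0)-\tfrac{\tau}{2}a(e+E)$ with $E$ yields $0=\langle \phi^\delta(0,0),E\rangle-\tfrac{\tau}{2}a(s+1)$, hence $\langle X^\delta_\alpha(x,y_j),\phi^\delta(0,0)\rangle=a\tau t$; the analogous pairings give $\langle X^\delta_\alpha(x,y_j),\xi^\delta(0,0)\rangle=-b\tau t$ and $\langle X^\delta_\alpha(x,y_j),X^\delta_\beta(0,0)\rangle=-c\tau t$. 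These are the first members of the three equalities, and comparison with the previous step shows each equals minus the corresponding middle member; recalling $a\tau t=a_1(0,0)(x-x_i)t^\delta_1$, and likewise for $b$ and $c$, this establishes the first block of the lemma.

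Finally, I would expand the unit vector $E$ in the orthonormal basis $F^\delta(0,0)$: by the preceding step its four coordinates are $\langle E,\phi^\delta(0,0)\rangle=a\tau t$, $\langle E,e\rangle=s$, $\langle E,X^\delta_\beta(0,0)\rangle=-c\tau t$ and $\langle E,\xi^\delta(0,0)\rangle=-b\tau t$, so $\|E\|^2=1$ becomes $s^2+\tau^2 t^2\rho=1$. Substituting $s=2t-1$ turns this into $4t(1-t)=\tau^2 t^2\rho$, i.e. $t\big(4(1-t)-\tau^2 t\rho\big)=0$; the spurious root $t=0$ (which forces $E=-e$) is discarded because $E=X^\delta_\alpha(x,y_j)$ depends continuously on $x$ with $E=e$, hence $t=1$, at $x=x_i$, so that $t$ stays near $1$. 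The other factor then gives $t=4/(4+\tau^2\rho)$ and $s=2t-1=(4-\tau^2\rho)/(4+\tau^2\rho)$, which are exactly the claimed closed forms for $t^\delta_1$ and $s^\delta_1$ with $\tau=x-x_i$ and $\rho=(a_1^2+b_1^2+c_1^2)(0,0)$. The computation is entirely elementary; the only points requiring care are keeping straight which orthonormality (at $(0,0)$ versus at $(x,y_j)$) produces which inner product, and the short continuity argument excluding $t=0$, so I anticipate no genuine obstacle.
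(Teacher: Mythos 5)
Your proof is correct and follows essentially the same route as the paper: use orthonormality at the base point $(x_i,y_j)$ (written $(0,0)$) and at $(x,y_j)$ together with the defining relations \eqref{def:phidelta-x}--\eqref{def:Xbetadelta-x} to compute the six inner products, then expand $X^{\delta}_{\alpha}(x,y_j)$ in the orthonormal frame $F^{\delta}(0,0)$ and equate its norm to $1$ to solve for $s^{\delta}_1$ and $t^{\delta}_1$. The only differences are minor and both in your favor: pairing the relations with $X^{\delta}_{\alpha}(0,0)$ and $X^{\delta}_{\alpha}(x,y_j)$ yields the two members of each equality directly, avoiding the paper's division by $a_1(0,0)$ and its separate treatment of the case $a_1(0,0)=0$, and you explicitly rule out the spurious root $t^{\delta}_1=0$ (i.e.\ $X^{\delta}_{\alpha}(x,y_j)=-X^{\delta}_{\alpha}(0,0)$) of the quadratic by continuity from $t^{\delta}_1(x_i,y_j)=1$, a point the paper passes over silently.
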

\begin{proof}
Let $x\in [x_i,x_{i+1}]$.
Suppose that $F^{\delta}(x,y_j)$ in \eqref{def:phidelta-x}--\eqref{def:Xbetadelta-x} is an orthonormal frame field at each point of $[x_i,x_{i+1}]\times\{y_j\}$.
Firstly, we verify the first equation under the assumption $a_1(0,0)\neq 0$.
By \eqref{def:phidelta-x}--\eqref{def:Xbetadelta-x}, we have
\begin{align*}
&-((x-x_i)/2)\,a_1(0,0)\,\langle X^{\delta}_{\alpha}(x,y_j),\phi^{\delta}(0,0)\rangle\\
&=\langle \phi^{\delta}(x,y_j)-\phi^{\delta}(0,0),\phi^{\delta}(0,0)\rangle
=\langle \phi^{\delta}(x,y_j),\phi^{\delta}(0,0)\rangle -1,
\end{align*}
and
\begin{align*}
&-((x-x_i)/2)\,a_1(0,0)\,\langle \phi^{\delta}(x,y_j),X^{\delta}_{\alpha}(0,0)\rangle\\
&=\langle \phi^{\delta}(x,y_j)-\phi^{\delta}(0,0),\phi^{\delta}(x,y_j)\rangle
=1-\langle \phi^{\delta}(x,y_j),\phi^{\delta}(0,0)\rangle.
\end{align*}
Hence, we have
$\langle X^{\delta}_{\alpha}(x,y_j),\phi^{\delta}(0,0)\rangle
=-\langle \phi^{\delta}(x,y_j),X^{\delta}_{\alpha}(0,0)\rangle$ by $a_1(0,0)\neq 0$. Furthermore, since
\begin{align*}
-((x-x_i)/2)\,a_1(0,0)\,(1+s^{\delta}_1(x,y_j))
=\langle \phi^{\delta}(x,y_j)-\phi^{\delta}(0,0),X^{\delta}_{\alpha}(0,0)\rangle
=\langle \phi^{\delta}(x,y_j),X^{\delta}_{\alpha}(0,0)\rangle,
\end{align*}
the first equation is obtained:
\begin{gather*}
\langle X^{\delta}_{\alpha}(x,y_j),\phi^{\delta}(0,0)\rangle
=-\langle \phi^{\delta}(x,y_j),X^{\delta}_{\alpha}(0,0)\rangle
=\textstyle\frac{x-x_i}{2} a_1(0,0) \left(1+s^{\delta}_1(x,y_j)\right).
\end{gather*}
In the case $a_1(0,0)=0$, the equation is also satisfied from \eqref{def:phidelta-x}--\eqref{def:Xbetadelta-x} by $\phi^{\delta}(x,y_j)=\phi^{\delta}(0,0)$.
In the same way, we can verify the equations for $\langle X^{\delta}_{\alpha}(x,y_j),\xi^{\delta}(0,0)\rangle$ and $\langle X^{\delta}_{\alpha}(x,y_j),X^{\delta}_{\beta}(0,0)\rangle$.

Next, from these equations verified above, we have
\begin{gather}\label{eq:Xalphadelta-x}
X^{\delta}_{\alpha}(x,y_j)
=s^{\delta}_1(x,y_j)X^{\delta}_{\alpha}(0,0)
+\textstyle\frac{x-x_i}{2} \left(1+s^{\delta}_1(x,y_j)\right)\left(a_1\phi^{\delta}-b_1\xi^{\delta}-c_1X^{\delta}_{\beta}\right)(0,0).
\end{gather}
Taking the norm of both sides of the equation, we obtain the equation for $s^{\delta}_1(x,y_j)$.
Then, the equation for $t^{\delta}_1(x,y_j)$ follows directly from the definition.

In consequence, we have completed the proof.
\end{proof}

The following lemma follows from \eqref{eq:Xalphadelta-x} and the definition of $t^{\delta}_1(x,y_j)$.
%%%%%%%%%%%%%%%%%%%%%%%%%%%%%%
%%%%% Lemma 5.5
%%%%%%%%%%%%%%%%%%%%%%%%%%%%%%
\begin{lemma}\label{lemma:Xalphadelta}
Suppose that the frame field $F^{\delta}(x,y_j)$ in \eqref{def:phidelta-x}--\eqref{def:Xbetadelta-x} is orthonormal at each point $(x,y_j)\in[x_i,x_{i+1}]\times \{y_j\}$.
Then, for $x\in[x_i,x_{i+1}]$, we have
\begin{gather*}
X^{\delta}_{\alpha}(0,0)+X^{\delta}_{\alpha}(x,y_j)
=t^{\delta}_1(x,y_j)\left(2X^{\delta}_{\alpha}(0,0)+(x-x_i)(a_1\phi^{\delta}-b_1\xi^{\delta}-c_1X^{\delta}_{\beta})(0,0)\right).
\end{gather*}
\end{lemma}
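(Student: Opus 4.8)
The plan is to read the claimed identity directly off equation \eqref{eq:Xalphadelta-x}, which already expresses $X^{\delta}_{\alpha}(x,y_j)$ as a linear combination of $X^{\delta}_{\alpha}(0,0)$ and the fixed vector $(a_1\phi^{\delta}-b_1\xi^{\delta}-c_1X^{\delta}_{\beta})(0,0)$ with scalar coefficients $s^{\delta}_1(x,y_j)$ and $\frac{x-x_i}{2}(1+s^{\delta}_1(x,y_j))$. The only work is to add $X^{\delta}_{\alpha}(0,0)$ to that expression and to recognize the two resulting scalar factors as multiples of $t^{\delta}_1(x,y_j)$. No new geometric or analytic input is required beyond the formula derived in Lemma \ref{lemma:s1t1} and the defining relation \eqref{def:sdelta1}.

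First I would start from \eqref{eq:Xalphadelta-x} and add $X^{\delta}_{\alpha}(0,0)$ to both sides; the coefficient of $X^{\delta}_{\alpha}(0,0)$ then becomes $1+s^{\delta}_1(x,y_j)$, while the coefficient of $(a_1\phi^{\delta}-b_1\xi^{\delta}-c_1X^{\delta}_{\beta})(0,0)$ is still $\frac{x-x_i}{2}(1+s^{\delta}_1(x,y_j))$. Both terms now share the common factor $1+s^{\delta}_1(x,y_j)$, so pulling it out gives
\[
X^{\delta}_{\alpha}(0,0)+X^{\delta}_{\alpha}(x,y_j)
=\bigl(1+s^{\delta}_1(x,y_j)\bigr)\left(X^{\delta}_{\alpha}(0,0)+\tfrac{x-x_i}{2}(a_1\phi^{\delta}-b_1\xi^{\delta}-c_1X^{\delta}_{\beta})(0,0)\right).
\]
Finally I would substitute $1+s^{\delta}_1(x,y_j)=2\,t^{\delta}_1(x,y_j)$ from the definition \eqref{def:sdelta1}; distributing the factor $2$ into the bracket turns $2\cdot\tfrac{x-x_i}{2}$ into $(x-x_i)$, which produces exactly the asserted identity.

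There is essentially no obstacle here: the lemma is a purely algebraic rearrangement of \eqref{eq:Xalphadelta-x}, and the choice to define $t^{\delta}_1=(1+s^{\delta}_1)/2$ is precisely what makes the factor of $2$ cancel so that the coefficient of the second vector comes out as $(x-x_i)$ rather than $\tfrac{x-x_i}{2}(1+s^{\delta}_1)$. The only point needing a word of care is that \eqref{eq:Xalphadelta-x} was itself established under the standing assumption that $F^{\delta}(x,y_j)$ is orthonormal on $[x_i,x_{i+1}]\times\{y_j\}$, so that hypothesis must be carried into the present statement — which it is.
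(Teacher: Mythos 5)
Your proof is correct and is exactly the paper's own argument: the paper derives this lemma directly from \eqref{eq:Xalphadelta-x} together with the definition $t^{\delta}_1=(1+s^{\delta}_1)/2$, which is precisely the algebraic rearrangement you carry out. Nothing further is needed.
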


By Lemma \ref{lemma:Xalphadelta}, if the frame field $F^{\delta}(x,y_j)$ in \eqref{def:phidelta-x}--\eqref{def:Xbetadelta-x} is orthonormal at each point $(x,y_j)\in[x_i,x_{i+1}]\times \{y_j\}$, then we have
\begin{gather}\label{formula:Xalphadelta-x}
X^{\delta}_{\alpha}(x,y_j)-X^{\delta}_{\alpha}(0,0)
=(x-x_i)t^{\delta}_1(x,y_j)\left(
Y_1^{\delta}(0,0)
-\textstyle\frac{x-x_i}{2}\left((a_1^2+b_1^2+c_1^2)X^{\delta}_{\alpha}\right)(0,0)\right),
\end{gather}
where $Y_1^{\delta}(0,0):=(a_1\phi^{\delta}-b_1\xi^{\delta}-c_1X_{\beta}^{\delta})(0,0)$,
and
\begin{align}
\phi^{\delta}(x,y_j)-\phi^{\delta}(0,0)
&=\label{formula:phidelta-x}
-(x-x_i)t^{\delta}_1(x,y_j)a_1(0,0)\left(X^{\delta}_{\alpha}(0,0)+\textstyle\frac{x-x_i}{2}
Y_1^{\delta}(0,0)\right),\\
\xi^{\delta}(x,y_j)-\xi^{\delta}(0,0)
&=\label{formula:xidelta-x}
(x-x_i)t^{\delta}_1(x,y_j)b_1(0,0)\left(X^{\delta}_{\alpha}(0,0)+\textstyle\frac{x-x_i}{2}
Y_1^{\delta}(0,0)\right),\\
X^{\delta}_{\beta}(x,y_j)-X^{\delta}_{\beta}(0,0)
&=\label{formula:Xbetadelta-x}
(x-x_i)t^{\delta}_1(x,y_j)c_1(0,0)\left(X^{\delta}_{\alpha}(0,0)+\textstyle\frac{x-x_i}{2}
Y_1^{\delta}(0,0)\right).
\end{align}
Namely, with $\Omega_1(x,y)$ in \eqref{def:Omega12},
\begin{gather*}
F^{\delta}(x,y_j)-F^{\delta}(0)=(x-x_i)t^{\delta}_1(x,y_j)F^{\delta}(0)\left(\Omega_1(0)+\textstyle\frac{x-x_i}{2}(\Omega_1)^2(0)\right)
\end{gather*}
holds.
Conversely, we have the following theorem.
%%%%%%%%%%%%%%%%%%%%%%%%%%%%%%
%%%%% Theorem 5.6
%%%%%%%%%%%%%%%%%%%%%%%%%%%%%%
\begin{thm}\label{thm:Fdelta-x}
Let the functions $s^{\delta}_1(x,y_j)$ and $t^{\delta}_1(x,y_j)$ in \eqref{def:sdelta1} on $[x_i,x_{i+1}]\times \{y_j\}$ be given by Lemma \ref{lemma:s1t1}.
Let $F^{\delta}(x_i,y_j)(=F^{\delta}(0,0))$ be an orthonormal frame.
Then, the frame field $F^{\delta}(x,y_j)$ defined by \eqref{formula:Xalphadelta-x}--\eqref{formula:Xbetadelta-x} is orthonormal at each point $(x,y_j)\in [x_i,x_{i+1}]\times\{y_j\}$.
Furthermore, for a transformation $QF^{\delta}(x_i,y_j)$ of $F^{\delta}(x_i,y_j)$ by an orthogonal matrix $Q$, $F^{\delta}(x,y_j)$ in \eqref{formula:Xalphadelta-x}--\eqref{formula:Xbetadelta-x} changes into $QF^{\delta}(x,y_j)$.
In particular, for the unit matrix $\mathrm{Id}$, we have
\begin{gather*}
\|(Q-\mathrm{Id})F^{\delta}(x_i,y_j)\|
=\|(Q-\mathrm{Id})F^{\delta}(x_{i+1},y_j)\|
=\| Q-\mathrm{Id}\|.
\end{gather*}
\end{thm}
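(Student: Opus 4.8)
The plan is to collapse the entire construction \eqref{formula:Xalphadelta-x}--\eqref{formula:Xbetadelta-x} into the single matrix identity stated just before the theorem: with $h:=x-x_i$ and $\Omega_1:=\Omega_1(x_i,y_j)$ from \eqref{def:Omega12}, it reads $F^{\delta}(x,y_j)=F^{\delta}(x_i,y_j)M(h)$, where $M(h):=\mathrm{Id}+h\,t^{\delta}_1\,\Omega_1+\tfrac{h^2}{2}\,t^{\delta}_1\,\Omega_1^2$ and $t^{\delta}_1$ is the explicit function of Lemma \ref{lemma:s1t1}. Since $F^{\delta}(x_i,y_j)$ is orthogonal by hypothesis, everything reduces to showing that $M(h)$ is orthogonal for the value $t^{\delta}_1=4/(4+h^2\rho^2)$, where $\rho^2:=(a_1^2+b_1^2+c_1^2)(x_i,y_j)$. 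The one structural input I would record first is the rank-two form of $\Omega_1$: writing $w:=(a_1,0,-c_1,-b_1)^{T}(x_i,y_j)$ and letting $e_2$ be the second standard basis vector, one checks directly that $\Omega_1=w\,e_2^{T}-e_2\,w^{T}$ with $e_2\perp w$ and $\|w\|^2=\rho^2$. In particular $\Omega_1$ is skew-symmetric, $\Omega_1^2$ is symmetric, and $\Omega_1^3=-\rho^2\Omega_1$, hence $\Omega_1^4=-\rho^2\Omega_1^2$; these are the only facts about $\Omega_1$ that will be used.

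For part (1), I would set $p:=h\,t^{\delta}_1$ and $q:=\tfrac{h^2}{2}t^{\delta}_1$, so that $M=B+p\Omega_1$ with $B:=\mathrm{Id}+q\Omega_1^2$. Because $\Omega_1^{T}=-\Omega_1$ and $\Omega_1^2$ is symmetric, $M^{T}=B-p\Omega_1$, and since $B$ is a polynomial in $\Omega_1$ it commutes with $\Omega_1$; therefore $M^{T}M=B^2-p^2\Omega_1^2=\mathrm{Id}+(2q-q^2\rho^2-p^2)\Omega_1^2$, where I used $\Omega_1^4=-\rho^2\Omega_1^2$. A short computation gives $2q-q^2\rho^2-p^2=h^2 t^{\delta}_1\big(1-t^{\delta}_1-\tfrac{h^2\rho^2}{4}t^{\delta}_1\big)$, and this vanishes precisely because $t^{\delta}_1=4/(4+h^2\rho^2)$ is equivalent to $t^{\delta}_1\big(1+\tfrac{h^2\rho^2}{4}\big)=1$. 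Thus $M^{T}M=\mathrm{Id}$, so $M(h)$ is orthogonal and $F^{\delta}(x,y_j)=F^{\delta}(x_i,y_j)M(h)$ has orthonormal columns at every $x\in[x_i,x_{i+1}]$.

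Part (2) is then formal: the functions $a_1,b_1,c_1$, and hence $\Omega_1$, $\rho$, and $t^{\delta}_1$, depend only on the base point $(x_i,y_j)$ and not on the frame chosen there, so $M(h)$ is frame-independent; replacing $F^{\delta}(x_i,y_j)$ by $QF^{\delta}(x_i,y_j)$ turns $F^{\delta}(x,y_j)=F^{\delta}(x_i,y_j)M(h)$ into $QF^{\delta}(x_i,y_j)M(h)=QF^{\delta}(x,y_j)$. Part (3) is immediate from part (1): both $F^{\delta}(x_i,y_j)$ and $F^{\delta}(x_{i+1},y_j)$ are orthogonal, and right multiplication by an orthogonal matrix preserves the Frobenius norm $\|\cdot\|$ (as $\|NR\|^2=\mathrm{tr}(R^{T}N^{T}NR)=\mathrm{tr}(N^{T}N)$ for orthogonal $R$), so $\|(Q-\mathrm{Id})F^{\delta}(x_i,y_j)\|=\|(Q-\mathrm{Id})F^{\delta}(x_{i+1},y_j)\|=\|Q-\mathrm{Id}\|$. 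This is exactly the $x$-edge instance of \eqref{eqns:norm-Fdeltas}.

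I do not expect a serious obstacle: the content lies entirely in recognizing the cubic relation $\Omega_1^3=-\rho^2\Omega_1$ forced by the rank-two skew structure of $\Omega_1$, which makes $M^{T}M$ collapse to $\mathrm{Id}+(\,\cdot\,)\Omega_1^2$, and in matching the scalar coefficient to the prescribed $t^{\delta}_1$. The one point demanding care is that this theorem is the \emph{converse} of the derivation preceding it—here $t^{\delta}_1$ is fixed by Lemma \ref{lemma:s1t1} and \eqref{formula:Xalphadelta-x}--\eqref{formula:Xbetadelta-x} are taken as the \emph{definition} of the frame—so orthonormality must be verified a posteriori, which is precisely what the identity $M^{T}M=\mathrm{Id}$ delivers.
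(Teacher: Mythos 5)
Your proof is correct, but it takes a genuinely different route from the paper's. The paper verifies orthonormality vector by vector: it first derives $\| X^{\delta}_{\alpha}(0,0)+X^{\delta}_{\alpha}(x,y_j)\|^2=4t^{\delta}_1$ and $\langle X^{\delta}_{\alpha}(0,0)+X^{\delta}_{\alpha}(x,y_j),Y_1^{\delta}(0,0)\rangle=(x-x_i)t^{\delta}_1\rho^2$ from Lemma \ref{lemma:Xalphadelta}, and then checks by hand that each of $\phi^{\delta},X^{\delta}_{\alpha},X^{\delta}_{\beta},\xi^{\delta}$ has unit norm and that all pairwise inner products vanish, each check being a rational-function cancellation in $(x-x_i)$ and $\rho^2$. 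You instead package \eqref{formula:Xalphadelta-x}--\eqref{formula:Xbetadelta-x} into the single identity $F^{\delta}(x,y_j)=F^{\delta}(x_i,y_j)M(h)$ (which the paper itself records immediately before the theorem), and reduce everything to orthogonality of $M(h)=\mathrm{Id}+p\Omega_1+q\Omega_1^2$; the whole content then sits in the structural facts $\Omega_1^{T}=-\Omega_1$, $\Omega_1=w\,e_2^{T}-e_2\,w^{T}$ with $e_2\perp w$, hence $\Omega_1^3=-\rho^2\Omega_1$, which collapse $M^{T}M$ to $\mathrm{Id}+(2q-q^2\rho^2-p^2)\Omega_1^2$ and let the choice $t^{\delta}_1=4/(4+h^2\rho^2)$ kill the scalar coefficient in one line. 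I checked the algebra: $w=(a_1,0,-c_1,-b_1)^{T}$ does reproduce $\Omega_1$ in the column order $[\phi^{\delta},X^{\delta}_{\alpha},X^{\delta}_{\beta},\xi^{\delta}]$, the matrix identity is column-by-column equivalent to \eqref{formula:Xalphadelta-x}--\eqref{formula:Xbetadelta-x}, and the coefficient computation is right. Your parts (2) and (3) coincide in substance with the paper's (frame-independence of $\Omega_1$, $\rho$, $t^{\delta}_1$, and invariance of the Frobenius norm under right multiplication by orthogonal matrices). What your approach buys is economy and transparency — it exposes that the construction is a rational (Pad\'{e}-type) substitute for $\exp(h\Omega_1)$ exploiting the minimal-polynomial relation of $\Omega_1$, and the same argument transfers verbatim to Theorem \ref{thm:Fdelta-y} with $\Omega_2$; what the paper's componentwise computation buys is that it never needs the rank-two structure of $\Omega_1$ and stays entirely at the level of the frame vectors used in the rest of Section \ref{sec:approx}.
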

\begin{proof}
Note that the equations in \eqref{formula:Xalphadelta-x}--\eqref{formula:Xbetadelta-x} are induced from \eqref{def:phidelta-x}--\eqref{def:Xbetadelta-x} and \eqref{eq:Xalphadelta-x} (or Lemma \ref{lemma:Xalphadelta}).
Now, let the frame $F^{\delta}(0,0)(=F^{\delta}(x_i,y_j))$ be orthonormal.
The theorem follows from \eqref{def:phidelta-x}--\eqref{def:Xbetadelta-x} and \eqref{eq:Xalphadelta-x} by direct calculation as follows.
We firstly take the norm of the both sides in the equation of Lemma \ref{lemma:Xalphadelta}, and then we have
\begin{gather*}
\| X^{\delta}_{\alpha}(0,0)+X^{\delta}_{\alpha}(x,y_j)\|^2
=4t^{\delta}_1(x,y_j).
\end{gather*}
By the equation, we can show that all vector fields of $F^{\delta}(x,y_j)$ have unit norm: for example, by \eqref{eq:Xalphadelta-x} we have
\begin{align*}
\| X^{\delta}_{\alpha}(x,y_j)\|^2
&=(s^{\delta}_1(x,y_j))^2+(x-x_i)^2(t^{\delta}_1(x,y_j))^2(a_1^2+b_1^2+c_1^2)(0,0)\\
&=\frac{\left(1-((x-x_i)/2)^2(a_1^2+b_1^2+c_1^2)(0,0)\right)^2+(x-x_i)^2(a_1^2+b_1^2+c_1^2)(0,0)}{\left(1+((x-x_i)/2)^2(a_1^2+b_1^2+c_1^2)(0,0)\right)^2}=1.
\end{align*}
In the same way, we can verify that $\phi^{\delta}(x,y_j)$, $\xi^{\delta}(x,y_j)$ and $X^{\delta}_{\beta}(x,y_j)$ are also unit vectors.

Next, we have
\begin{gather*}
\langle X^{\delta}_{\alpha}(0,0)+X^{\delta}_{\alpha}(x,y_j), (a_1\phi^{\delta}-b_1\xi^{\delta}-c_1X^{\delta}_{\beta})(0,0)\rangle=(x-x_i)t^{\delta}_1(x,y_j)(a_1^2+b_1^2+c_1^2)(0,0)
\end{gather*}
by Lemma \ref{lemma:Xalphadelta}.
By the equation, we can show that all vector fields of $F^{\delta}(x,y_j)$ are orthogonal to each other: for example, by \eqref{def:phidelta-x}--\eqref{def:Xbetadelta-x} and \eqref{eq:Xalphadelta-x} we have
\begin{align*}
\langle X^{\delta}_{\alpha},\phi^{\delta}\rangle(x,y_j)
=\;&-((x-x_i)/2)s^{\delta}_1(x,y_j)(1+s^{\delta}_1(x,y_j))a_1(0,0)\\
&+(x-x_i)t^{\delta}_1(x,y_j)a_1(0,0)-((x-x_i)^3/2)(t^{\delta}_1(x,y_j))^2\left((a_1^2+b_1^2+c_1^2)a_1\right)(0,0)\\
=\;&-(x-x_i)(t^{\delta}_1(x,y_j))^2\left(1-((x-x_i)/2)^2(a_1^2+b_1^2+c_1^2)(0,0)\right)a_1(0,0)\\
&+(x-x_i)(t^{\delta}_1(x,y_j))^2\left(1+((x-x_i)/2)^2(a_1^2+b_1^2+c_1^2)(0,0)\right)a_1(0,0)\\
&-((x-x_i)^3/2)(t^{\delta}_1(x,y_j))^2\left((a_1^2+b_1^2+c_1^2)a_1\right)(0,0)=0.
\end{align*}
The other orthogonality for these vectors is also obtained in the same way.

The assertion for transformation $QF^{\delta}(x_i,y_j)$ of the initial condition follows directly from \eqref{formula:Xalphadelta-x}--\eqref{formula:Xbetadelta-x}. In consequence, we have verified the theorem.
\end{proof}

Step A-(2): For a given $F^{\delta}(0,0)$, we express the frame field $F^{\delta}(x_i,y)$ on $\{x_i\}\times[y_j,y_{j+1}]$ as a similar form to \eqref{def:phidelta-x}--\eqref{def:Xbetadelta-x}:
\begin{align}
\phi^{\delta}(x_i,y)-\phi^{\delta}(0,0)
&:=\label{def:phidelta-y}
-\textstyle\frac{y-y_j}{2}a_2(0,0)\left(X^{\delta}_{\beta}(0,0)+X^{\delta}_{\beta}(x_i,y)\right),\\
\xi^{\delta}(x_i,y)-\xi^{\delta}(0,0)
&:=\label{def:xidelta-y}
\textstyle\frac{y-y_j}{2}b_2(0,0)\left(X^{\delta}_{\beta}(0,0)+X^{\delta}_{\beta}(x_i,y)\right),\\
X^{\delta}_{\alpha}(x_i,y)-X^{\delta}_{\alpha}(0,0)
&:=\label{def:Xalphadelta-y}
\textstyle\frac{y-y_j}{2}c_2(0,0)\left(X^{\delta}_{\beta}(0,0)+X^{\delta}_{\alpha}(x_i,y)\right).
\end{align}
Let $s^{\delta}_2(x_i,y)$ and $t^{\delta}_2(x_i,y)$ be the functions on $\{x_i\}\times[y_j,y_{j+1}]$ defined by
\begin{gather}\label{def:sdelta2}
s^{\delta}_2(x_i,y):=\langle X^{\delta}_{\beta}(x_i,y),X^{\delta}_{\beta}(0,0)\rangle,\quad
t^{\delta}_2(x_i,y):=\left(1+s^{\delta}_2(x_i,y)\right)/2.
\end{gather}
Then, we have the following lemma and theorem in same way as in Step A-(1).
%%%%%%%%%%%%%%%%%%%%%%%%%%%%%%
%%%%% Lemma 5.7
%%%%%%%%%%%%%%%%%%%%%%%%%%%%%%
\begin{lemma}\label{lemma:s2t2}
Suppose that the frame field $F^{\delta}(x_i,y)$ in \eqref{def:phidelta-y}--\eqref{def:Xalphadelta-y} is orthonormal on $\{x_i\}\times[y_j,y_{j+1}]$. Then, we have
\begin{gather*}
s^{\delta}_2(x_i,y)=\frac{4-(y-y_j)^2(a_2^2+b_2^2+c_2^2)(0,0)}{4+(y-y_j)^2(a_2^2+b_2^2+c_2^2)(0,0)},\quad
t^{\delta}_2(x_i,y)=\frac{4}{4+(y-y_j)^2(a_2^2+b_2^2+c_2^2)(0,0)},\\
X^{\delta}_{\beta}(0,0)+X^{\delta}_{\beta}(x_i,y)=t^{\delta}_2(x_i,y)\left(2X^{\delta}_{\beta}(0,0)+(y-y_j)(a_2\phi^{\delta}-b_2\xi^{\delta}-c_2X^{\delta}_{\alpha})(0,0)\right).
\end{gather*}
\end{lemma}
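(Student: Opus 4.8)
The plan is to reproduce verbatim the proof of Lemma~\ref{lemma:s1t1} together with the computation leading to Lemma~\ref{lemma:Xalphadelta}, performing the substitutions $X^{\delta}_{\alpha}\leftrightarrow X^{\delta}_{\beta}$, $(a_1,b_1,c_1)\leftrightarrow(a_2,b_2,c_2)$ and $x-x_i\leftrightarrow y-y_j$, and replacing the $x$-direction relations by the $y$-direction ones of Lemma~\ref{lemma:dPhi}, namely $\nabla'_{\partial/\partial y}\phi=-a_2X^0_{\beta}$, $\nabla'_{\partial/\partial y}\xi=b_2X^0_{\beta}$, $\nabla'_{\partial/\partial y}X^0_{\alpha}=c_2X^0_{\beta}$ and $\nabla'_{\partial/\partial y}X^0_{\beta}=a_2\phi-b_2\xi-c_2X^0_{\alpha}$. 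Abbreviate $Y_2^{\delta}(0,0):=(a_2\phi^{\delta}-b_2\xi^{\delta}-c_2X^{\delta}_{\alpha})(0,0)$, the $y$-direction analogue of $Y_1^{\delta}(0,0)$; since $F^{\delta}(0,0)$ is orthonormal, $Y_2^{\delta}(0,0)$ is orthogonal to $X^{\delta}_{\beta}(0,0)$ and has squared norm $(a_2^2+b_2^2+c_2^2)(0,0)$. This single algebraic fact drives the whole argument.

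First I would carry out the polarization step of Lemma~\ref{lemma:s1t1}. Pairing each of \eqref{def:phidelta-y}--\eqref{def:Xalphadelta-y} with $\phi^{\delta}(0,0)$, $\xi^{\delta}(0,0)$, $X^{\delta}_{\alpha}(0,0)$ and using the unit-norm relations $\|\phi^{\delta}(x_i,y)\|=\|\phi^{\delta}(0,0)\|=1$ (and likewise for $\xi^{\delta}$ and $X^{\delta}_{\alpha}$), I obtain, under the running nonvanishing hypothesis on the relevant coefficient, the skew identities
\[
\langle X^{\delta}_{\beta}(x_i,y),\phi^{\delta}(0,0)\rangle=-\langle\phi^{\delta}(x_i,y),X^{\delta}_{\beta}(0,0)\rangle=a_2(0,0)(y-y_j)t^{\delta}_2(x_i,y),
\]
together with the two analogous identities carrying coefficients $-b_2$ (for $\xi^{\delta}$) and $-c_2$ (for $X^{\delta}_{\alpha}$); the degenerate cases $a_2(0,0)=0$, $b_2(0,0)=0$, $c_2(0,0)=0$ are disposed of exactly as in Step~A-(1), where the corresponding vector field is then constant along $\{x_i\}\times[y_j,y_{j+1}]$. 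Substituting these inner products into the expansion of $X^{\delta}_{\beta}(x_i,y)$ in the orthonormal basis $F^{\delta}(0,0)$ gives the $y$-analogue of \eqref{eq:Xalphadelta-x},
\[
X^{\delta}_{\beta}(x_i,y)=s^{\delta}_2(x_i,y)\,X^{\delta}_{\beta}(0,0)+\tfrac{y-y_j}{2}\bigl(1+s^{\delta}_2(x_i,y)\bigr)Y_2^{\delta}(0,0).
\]

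Taking the squared Euclidean norm of this identity, using $\|X^{\delta}_{\beta}(x_i,y)\|=1$, the orthogonality of $X^{\delta}_{\beta}(0,0)$ and $Y_2^{\delta}(0,0)$, and $\|Y_2^{\delta}(0,0)\|^2=(a_2^2+b_2^2+c_2^2)(0,0)$, yields $1=(s^{\delta}_2)^2+\tfrac{(y-y_j)^2}{4}(1+s^{\delta}_2)^2(a_2^2+b_2^2+c_2^2)(0,0)$; factoring out $(1+s^{\delta}_2)$ and solving the resulting linear equation produces the stated closed form for $s^{\delta}_2(x_i,y)$, and hence, by its definition, for $t^{\delta}_2(x_i,y)$. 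Finally, adding $X^{\delta}_{\beta}(0,0)$ to both sides of the displayed expansion and using $1+s^{\delta}_2=2t^{\delta}_2$ gives the last assertion $X^{\delta}_{\beta}(0,0)+X^{\delta}_{\beta}(x_i,y)=t^{\delta}_2(x_i,y)\bigl(2X^{\delta}_{\beta}(0,0)+(y-y_j)Y_2^{\delta}(0,0)\bigr)$. I expect no genuine analytic obstacle: the construction is just the rational (Cayley-type) parametrization of a one-parameter rotation fixing the plane orthogonal to $\mathrm{span}\{X^{\delta}_{\beta}(0,0),Y_2^{\delta}(0,0)\}$, and the only point demanding care is the sign and role bookkeeping in the $y$-direction structure equations, together with the separate treatment of the coefficient-vanishing cases, precisely as in Step~A-(1).
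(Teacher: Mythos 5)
Your proposal is correct and follows exactly the route the paper intends: the paper's own ``proof'' of Lemma~\ref{lemma:s2t2} is just the remark that it is obtained ``in same way as in Step A-(1)'', i.e.\ by repeating the polarization argument of Lemma~\ref{lemma:s1t1} and the norm computation behind Lemma~\ref{lemma:Xalphadelta} with the substitutions $X^{\delta}_{\alpha}\leftrightarrow X^{\delta}_{\beta}$, $(a_1,b_1,c_1)\to(a_2,b_2,c_2)$, $x-x_i\to y-y_j$ and $Y_1^{\delta}\to Y_2^{\delta}$, which is precisely what you carry out (including the separate treatment of vanishing coefficients).
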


By Lemma \ref{lemma:s2t2}, if the frame field $F^{\delta}(x_i,y)$ in \eqref{def:phidelta-y}--\eqref{def:Xalphadelta-y} is orthonormal at each point $(x_i,y)\in\{x_i\}\times [y_j,y_{j+1}]$, then we have
\begin{gather}\label{formula:Xbetadelta-y}
X^{\delta}_{\beta}(x_i,y)-X^{\delta}_{\beta}(0,0)
=(y-y_j)t^{\delta}_2(x_i,y)\left(
Y_2^{\delta}(0,0)-\textstyle\frac{y-y_j}{2}\left((a_2^2+b_2^2+c_2^2)X^{\delta}_{\beta}\right)(0,0)\right),
\end{gather}
where  $Y_2^{\delta}(0,0):=(a_2\phi^{\delta}-b_2\xi^{\delta}-c_2X_{\alpha}^{\delta})(0,0)$, and
\begin{align}
\phi^{\delta}(x_i,y)-\phi^{\delta}(0,0)
&=\label{formula:phidelta-y}
-(y-y_j)t^{\delta}_2(x_i,y)a_2(0,0)\left(X^{\delta}_{\beta}(0,0)+\textstyle\frac{y-y_j}{2}
Y_2^{\delta}(0,0)\right),\\
\xi^{\delta}(x_i,y)-\xi^{\delta}(0,0)
&=\label{formula:xidelta-y}
(y-y_j)t^{\delta}_2(x_i,y)b_2(0,0)\left(X^{\delta}_{\beta}(0,0)+\textstyle\frac{y-y_j}{2}
Y_2^{\delta}(0,0)\right),\\
X^{\delta}_{\alpha}(x_i,y)-X^{\delta}_{\alpha}(0,0)
&=\label{formula:Xalphadelta-y}
(y-y_j)t^{\delta}_2(x_i,y)c_2(0,0)\left(X^{\delta}_{\beta}(0,0)+\textstyle\frac{y-y_j}{2}Y_2^{\delta}(0,0)\right).
\end{align}
Namely, with $\Omega_2(x,y)$ in \eqref{def:Omega12},
\begin{gather*}
F^{\delta}(x_i,y)-F^{\delta}(0)=(y-y_j)t^{\delta}_2(x_i,y)F^{\delta}(0)\left(\Omega_2(0)+\textstyle\frac{y-y_j}{2}(\Omega_2)^2(0)\right)
\end{gather*}
holds.
Conversely, we have the following theorem.
%%%%%%%%%%%%%%%%%%%%%%%%%%%%%%
%%%%% Theorem 5.8
%%%%%%%%%%%%%%%%%%%%%%%%%%%%%%
\begin{thm}\label{thm:Fdelta-y}
Let the functions $s^{\delta}_2(x_i,y)$ and $t^{\delta}_2(x_i,y)$ in \eqref{def:sdelta2} on $\{x_i\}\times[y_j,y_{j+1}]$ be given in Lemma \ref{lemma:s2t2}.
Let $F^{\delta}(x_i,y_j)(=F^{\delta}(0,0))$ be an orthonormal frame.
Then, the frame field $F^{\delta}(x_i,y)$ defined by \eqref{formula:Xbetadelta-y}--\eqref{formula:Xalphadelta-y} is orthonormal at each point $(x_i,y)\in\{x_i\}\times [y_j,y_{j+1}]$.
Furthermore, for a transformation $QF^{\delta}(x_i,y_j)$ of $F^{\delta}(x_i,y_j)$ by an orthogonal matrix $Q$, $F^{\delta}(x_i,y)$ in \eqref{formula:Xbetadelta-y}--\eqref{formula:Xalphadelta-y} changes into $QF^{\delta}(x_i,y)$.
In particular, we have
\begin{gather*}
\|(Q-\mathrm{Id})F^{\delta}(x_i,y_j)\|
=\|(Q-\mathrm{Id})F^{\delta}(x_i,y_{j+1})\|
=\| Q-\mathrm{Id}\|.
\end{gather*}
\end{thm}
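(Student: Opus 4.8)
The plan is to mirror the proof of Theorem~\ref{thm:Fdelta-x} verbatim under the role-exchange between the two principal directions. The whole of Step~A-(2) is the image of Step~A-(1) under the substitution $X^{\delta}_{\alpha}\leftrightarrow X^{\delta}_{\beta}$, $(a_1,b_1,c_1)\leftrightarrow(a_2,b_2,c_2)$ and $(x-x_i)\leftrightarrow(y-y_j)$; in particular the driving vector is now $X^{\delta}_{\beta}$, to which $\phi^{\delta}$, $\xi^{\delta}$ and $X^{\delta}_{\alpha}$ couple exactly as $\Omega_2$ in \eqref{def:Omega12} prescribes. I would take the explicit formulas \eqref{formula:Xbetadelta-y}--\eqref{formula:Xalphadelta-y} as the \emph{definition} of the frame field $F^{\delta}(x_i,y)$ and prove orthonormality by direct computation, so that no orthonormality is assumed a priori; these are the exact analogues of \eqref{eq:Xalphadelta-x} and \eqref{formula:phidelta-x}--\eqref{formula:Xbetadelta-x}, and every algebraic identity from the $x$-direction transfers without any change of sign.

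First I would establish orthonormality. Taking the norm of the second identity of Lemma~\ref{lemma:s2t2} gives $\|X^{\delta}_{\beta}(0,0)+X^{\delta}_{\beta}(x_i,y)\|^2=4t^{\delta}_2(x_i,y)$. Inserting \eqref{formula:Xbetadelta-y} and the closed forms of $s^{\delta}_2,t^{\delta}_2$ from Lemma~\ref{lemma:s2t2}, a direct computation yields $\|X^{\delta}_{\beta}(x_i,y)\|^2=1$; the underlying algebra is the single identity
\[
\bigl(1-\sigma^2 Q\bigr)^2+4\sigma^2 Q=\bigl(1+\sigma^2 Q\bigr)^2,\qquad \sigma:=\tfrac{y-y_j}{2},\ \ Q:=(a_2^2+b_2^2+c_2^2)(0,0),
\]
precisely as in the proof of Theorem~\ref{thm:Fdelta-x}. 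The unit length of $\phi^{\delta}(x_i,y)$, $\xi^{\delta}(x_i,y)$ and $X^{\delta}_{\alpha}(x_i,y)$ follows in the same way from \eqref{formula:phidelta-y}--\eqref{formula:Xalphadelta-y}. For orthogonality I would compute the pairwise inner products from \eqref{def:phidelta-y}--\eqref{def:Xalphadelta-y}, the sum identity of Lemma~\ref{lemma:s2t2}, and the orthonormality of $F^{\delta}(x_i,y_j)$; for instance $\langle X^{\delta}_{\beta},\phi^{\delta}\rangle(x_i,y)$ collapses to $0$ after grouping terms exactly as in the displayed computation of $\langle X^{\delta}_{\alpha},\phi^{\delta}\rangle$ in Theorem~\ref{thm:Fdelta-x}, and the remaining five pairs are identical by symmetry.

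Next I would treat the transformation property. Since \eqref{formula:Xbetadelta-y}--\eqref{formula:Xalphadelta-y} express each vector of $F^{\delta}(x_i,y)$ as a fixed linear combination of the columns of $F^{\delta}(x_i,y_j)$, with coefficients depending only on $a_2,b_2,c_2,t^{\delta}_2$ evaluated at $(x_i,y_j)$, replacing the initial frame by $QF^{\delta}(x_i,y_j)$ multiplies every resulting vector on the left by $Q$; equivalently, the compact form $F^{\delta}(x_i,y)-F^{\delta}(0)=(y-y_j)t^{\delta}_2\,F^{\delta}(0)\bigl(\Omega_2(0)+\tfrac{y-y_j}{2}(\Omega_2)^2(0)\bigr)$ is left-equivariant under $Q$. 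The norm equalities are then immediate: $F^{\delta}(x_i,y_j)$ is orthogonal, so $\|(Q-\mathrm{Id})F^{\delta}(x_i,y_j)\|=\|Q-\mathrm{Id}\|$ by invariance of the Frobenius norm under right multiplication by an orthogonal matrix, and the same holds at $y_{j+1}$ once $F^{\delta}(x_i,y_{j+1})$ has been shown orthogonal in the first step.

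There is no genuine obstacle here, as the statement is the verbatim $y$-direction counterpart of Theorem~\ref{thm:Fdelta-x}. The only point demanding a moment's care is a consistency check on the implicit relation \eqref{def:Xalphadelta-y}: to keep $X^{\delta}_{\beta}$ in the role that $X^{\delta}_{\alpha}$ played in Step~A-(1) (and to match the coupling dictated by $\Omega_2$), its last bracket must be read as $X^{\delta}_{\beta}(0,0)+X^{\delta}_{\beta}(x_i,y)$, after which the role-exchange is exact and the computation closes as above.
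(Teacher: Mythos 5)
Your proposal is correct and matches the paper's approach exactly: the paper gives no separate proof for this theorem, stating only that it follows ``in the same way as in Step A-(1),'' i.e.\ by mirroring the proof of Theorem~\ref{thm:Fdelta-x} under the substitution $X^{\delta}_{\alpha}\leftrightarrow X^{\delta}_{\beta}$, $(a_1,b_1,c_1)\leftrightarrow(a_2,b_2,c_2)$, $(x-x_i)\leftrightarrow(y-y_j)$, which is precisely what you do. Your consistency check on \eqref{def:Xalphadelta-y} is also well taken: the final bracket there should indeed read $X^{\delta}_{\beta}(0,0)+X^{\delta}_{\beta}(x_i,y)$ (a typo in the source), as the analogy with \eqref{def:Xbetadelta-x} and the coupling dictated by $\Omega_2$ require.
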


Step A-(3): For a sub-domain $\Delta_{i,j}$, the orthonormal frames $F^{\delta}(x_{i+1},y_j)$ and $F^{\delta}(x_i,y_{j+1})$ have been determined from $F^{\delta}(x_i,y_j)$.
Hence, we can construct the frames $F^{\delta}(x_{i+1},y)$ on $\{x_{i+1}\}\times[y_j,y_{j+1}]$ and $F^{\delta}(x,y_{j+1})$ on $[x_i,x_{i+1}]\times\{y_{j+1}\}$ by (2) and (1) of Step A, respectively.
Thus, we have two frames $F^{\delta}(x_{i+1},y_{j+1})$ at the point $P_{i+1,j+1}$, since $F^{\delta}(x_{i+1},y_{j+1})$ is defined for each path from $P_{i,j}$ to $P_{i+1,j+1}$.
We study the difference of these two frames and verify \eqref{ineq:norm-Fdeltas}. 

In this step, we simply write
\begin{gather*}
0:=(x_i,y_j),\quad
1:=(x_{i+1},y_j),\quad
2:=(x_i,y_{j+1}),\quad
3:=(x_{i+1},y_{j+1})
\end{gather*}
and denote by $F^{\underline\delta}(3)$ and $F^{\overline\delta}(3)$, respectively, the frames  determined by the paths $\underline{m}:0\rightarrow 1\rightarrow 3$ and $\overline{m}:0\rightarrow 2\rightarrow 3$.
Note that $t^{\delta}_1$ (resp.\ $t^{\delta}_2$) is defined on $[x_i,x_{i+1}]\times\{y_j\}$ and $[x_i,x_{i+1}]\times\{y_{j+1}\}$ (resp.\ on $\{x_i\}\times[y_j,y_{j+1}]$ and $\{x_{i+1}\}\times[y_j,y_{j+1}]$).
Then, we have
\begin{gather*}
t^{\delta}_1(1)=1/\left(1+(\delta/2)^2(a_1^2+b_1^2+c_1^2)(0)\right),\quad
t^{\delta}_2(3)=1/\left(1+(\delta/2)^2(a_2^2+b_2^2+c_2^2)(1)\right),\\
t^{\delta}_2(2)=1/\left(1+(\delta/2)^2(a_2^2+b_2^2+c_2^2)(0)\right),\quad
t^{\delta}_1(3)=1/\left(1+(\delta/2)^2(a_1^2+b_1^2+c_1^2)(2)\right),
\end{gather*}
and hence $t^{\delta}_i(k)=1+O(\delta^2)$ hold for the integers $i$ and $k$ above.
Furthermore, since $a_i$, $b_i$ and $c_i$ are analytic functions on $E$, we have $ t^{\delta}_1(3)-t^{\delta}_1(1)=O(\delta^3)$ and $t^{\delta}_2(3)-t^{\delta}_2(2)=O(\delta^3)$.

Now, we have
\begin{align*}
F^{\underline\delta}(3)-F^{\delta}(0)&=(F^{\underline\delta}(3)-F^{\delta}(1))+(F^{\delta}(1)-F^{\delta}(0)),\\
F^{\overline\delta}(3)-F^{\delta}(0)&=(F^{\overline\delta}(3)-F^{\delta}(2))+(F^{\delta}(2)-F^{\delta}(0)).
\end{align*}
We define $G_x$ and $G_y$ by
\begin{align*}
G_x:=(F^{\overline\delta}(3)-F^{\delta}(2))-(F^{\delta}(1)-F^{\delta}(0)),\quad
G_y:=(F^{\underline\delta}(3)-F^{\delta}(1))-(F^{\delta}(2)-F^{\delta}(0)).
\end{align*}
Then, we have only to verify that $G_x=G_y+O(\delta^3)$ holds.
Hence, we study the third degree for $\delta$ of $G_x-G_y$: in the asymptotic expansion $G_x-G_y\approx\sum_{k=0}^3p_k\delta^k$ for $\delta$, we show $p_k=0 \ (k=0,1,2)$.
In the estimate of each $G_x$ and $G_y$, the frames $F^{\overline\delta}(3)$ and $F^{\underline\delta}(3)$ are naturally distinguished by \eqref{formula:Xalphadelta-x}--\eqref{formula:Xbetadelta-x} and \eqref{formula:Xbetadelta-y}--\eqref{formula:Xalphadelta-y}, and hence we can write them as $F^{\delta}(3)$.

Now, for a function or a vector field $k(x,y)$, we denote
$[k]^{1}_{0}:=k(1)-k(0)$ and so on.
Let $Y_1^{\delta}:=a_1\phi^{\delta}-b_1\xi^{\delta}-c_1X_{\beta}^{\delta}$ and $Y_2^{\delta}:=a_2\phi^{\delta}-b_2\xi^{\delta}-c_2X_{\alpha}^{\delta}$.
%%%%%%%%%%%%%%%%%%%%%%%%%%%%%%
%%%%% Lemma 5.9
%%%%%%%%%%%%%%%%%%%%%%%%%%%%%%
\begin{lemma}
With the second degree for $\delta$ of $[Y_1^{\delta}]^2_0$ and $[Y_2^{\delta}]^1_0$, we have
\begin{align*}
[Y_1^{\delta}]^2_0
\approx\;& \delta\left(c_1c_2X_{\alpha}^{\delta}+(c_2)_xX_{\beta}^{\delta}\right)(0)
+(\delta^2/2)\left(((c_2)_x-(c_1)_y)Y_2^{\delta}-c_1(a_2^2+b_2^2-c_2^2)X_{\beta}^{\delta}\right)(0)\\
&+(\delta^2/2)\left((a_1)_{yy}\phi^{\delta}-(b_1)_{yy}\xi^{\delta}-(c_1)_{yy}X_{\beta}^{\delta}\right)(0),\\
[Y_2^{\delta}]^1_0
\approx\;& \delta\left((c_1)_yX_{\alpha}^{\delta}+c_1c_2X_{\beta}^{\delta}\right)(0)
+(\delta^2/2)\left(((c_1)_y-(c_2)_x)Y_1^{\delta}-c_2(a_1^2+b_1^2-c_1^2)X_{\alpha}^{\delta}\right)(0)\\
&+(\delta^2/2)\left((a_2)_{xx}\phi^{\delta}-(b_2)_{xx}\xi^{\delta}-(c_2)_{xx}X_{\alpha}^{\delta}\right)(0).
\end{align*}
\end{lemma}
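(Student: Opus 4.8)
The plan is to read the statement as a second–order Taylor expansion, in the step size $\delta$, of the discrete quantity $Y_1^{\delta}$ along the $y$-edge from the corner $0=(x_i,y_j)$ to $2=(x_i,y_{j+1})$, and symmetrically of $Y_2^{\delta}$ along the $x$-edge from $0$ to $1=(x_{i+1},y_j)$. First I would observe that $[Y_1^{\delta}]^2_0=Y_1^{\delta}(2)-Y_1^{\delta}(0)$ involves only motion in the $y$-direction, so the governing discrete rules are \eqref{formula:Xbetadelta-y}, \eqref{formula:phidelta-y} and \eqref{formula:xidelta-y}. Since $t^{\delta}_2(x_i,y)=1+O(\delta^2)$ by Lemma \ref{lemma:s2t2}, putting $y-y_j=\delta$ these rules reduce, modulo $O(\delta^3)$ and all evaluated at $0$, to
\begin{align*}
\phi^{\delta}(2)-\phi^{\delta}(0)&=-\delta a_2X^{\delta}_{\beta}-(\delta^2/2)a_2Y_2^{\delta},\\
\xi^{\delta}(2)-\xi^{\delta}(0)&=\delta b_2X^{\delta}_{\beta}+(\delta^2/2)b_2Y_2^{\delta},\\
X^{\delta}_{\beta}(2)-X^{\delta}_{\beta}(0)&=\delta Y_2^{\delta}-(\delta^2/2)(a_2^2+b_2^2+c_2^2)X^{\delta}_{\beta}.
\end{align*}

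Next I would expand the genuine analytic coefficients by ordinary Taylor's theorem, $a_1(2)=a_1+\delta(a_1)_y+(\delta^2/2)(a_1)_{yy}+O(\delta^3)$ and likewise for $b_1,c_1$ (all derivatives at $0$), and substitute both expansions into $Y_1^{\delta}(2)=a_1(2)\phi^{\delta}(2)-b_1(2)\xi^{\delta}(2)-c_1(2)X^{\delta}_{\beta}(2)$. Collecting powers of $\delta$ and retaining the cross terms between the linear coefficient corrections and the linear frame increments, the $\delta^0$ part reproduces $Y_1^{\delta}(0)$ and cancels, leaving everything in the orthonormal frame at $0$ (with $Y_2^{\delta}$ kept as an abbreviation). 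The decisive step is then the simplification by the Maurer--Cartan relations of Lemma \ref{lemma:pde-abc}. At first order, unpacking $-c_1Y_2^{\delta}=(-c_1a_2)\phi^{\delta}+(c_1b_2)\xi^{\delta}+(c_1c_2)X^{\delta}_{\alpha}$, the $\phi^{\delta}$-coefficient $(a_1)_y-c_1a_2$ and the $\xi^{\delta}$-coefficient $-(b_1)_y+c_1b_2$ vanish by $(a_1)_y=a_2c_1$ and $(b_1)_y=b_2c_1$, the $X^{\delta}_{\beta}$-coefficient $-a_1a_2-b_1b_2-(c_1)_y$ collapses to $(c_2)_x$ by the relation $(c_2)_x+(c_1)_y+a_1a_2+b_1b_2=0$, and the surviving $X^{\delta}_{\alpha}$-coefficient is $c_1c_2$; this is exactly the claimed $\delta$-term.

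At second order the coefficient of $Y_2^{\delta}$ is $-\tfrac12(a_1a_2+b_1b_2)-(c_1)_y$, which the same relation converts into $\tfrac12((c_2)_x-(c_1)_y)$, while the residual $X^{\delta}_{\beta}$-coefficient $-(a_1)_ya_2-(b_1)_yb_2+\tfrac12c_1(a_2^2+b_2^2+c_2^2)$ becomes $-\tfrac12c_1(a_2^2+b_2^2-c_2^2)$ after substituting $(a_1)_y=a_2c_1$ and $(b_1)_y=b_2c_1$; together with the untouched second-derivative terms $\tfrac12\big((a_1)_{yy}\phi^{\delta}-(b_1)_{yy}\xi^{\delta}-(c_1)_{yy}X^{\delta}_{\beta}\big)$ this is precisely the stated $\delta^2$-term. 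The expansion of $[Y_2^{\delta}]^1_0$ then follows verbatim after interchanging the roles of $x$ and $y$ — hence of the two index groups, of $(a_1,b_1,c_1)$ with $(a_2,b_2,c_2)$, and of $X^{\delta}_{\alpha}$ with $X^{\delta}_{\beta}$ — now using \eqref{formula:Xalphadelta-x}--\eqref{formula:Xbetadelta-x} in place of the $y$-rules and the relations $(a_2)_x=a_1c_2$, $(b_2)_x=b_1c_2$.

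The main obstacle I anticipate is purely organizational: every contribution to the $\delta^2$ coefficient arises from three distinct sources — the quadratic term of each frame increment, the product of a linear coefficient correction with a linear frame increment, and the quadratic coefficient correction — and one must keep all of them while recognizing exactly which combinations Lemma \ref{lemma:pde-abc} simplifies. A further point requiring care is that $a_1,b_1,c_1$ denote the true analytic functions evaluated at the lattice corners, whereas $\phi^{\delta},\xi^{\delta},X^{\delta}_{\beta}$ are the \emph{approximate} frame vectors, so the two Taylor expansions are of different natures and must be carried out independently before being multiplied together.
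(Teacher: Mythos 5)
Your proposal is correct and follows essentially the same route as the paper's proof: expand the discrete increments via the Step A rules \eqref{formula:phidelta-y}--\eqref{formula:Xalphadelta-y} with $t^{\delta}_2=1+O(\delta^2)$, Taylor-expand the analytic coefficients $a_1,b_1,c_1$ at the corner, multiply, and simplify the resulting coefficients with the relations of Lemma \ref{lemma:pde-abc}. The only difference is bookkeeping — you collect terms by frame vector at each order of $\delta$, while the paper computes $[a_1\phi^{\delta}]^2_0$, $[b_1\xi^{\delta}]^2_0$, $[c_1X^{\delta}_{\beta}]^2_0$ separately and sums — and your order-by-order coefficient identities (vanishing of the $\phi^{\delta}$, $\xi^{\delta}$ terms at first order, $\tfrac12((c_2)_x-(c_1)_y)$ for $Y_2^{\delta}$ and $-\tfrac12 c_1(a_2^2+b_2^2-c_2^2)$ for $X^{\delta}_{\beta}$ at second order) all check out against the paper's computation.
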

\begin{proof}
We only prove the equation for $Y_1^{\delta}$, since the equation for $Y_2^{\delta}$ is obtained in the same way.
Now, for the element $[a_1\phi^{\delta}]^2_0$ of $[Y_1^{\delta}]^2_0$, we have
\begin{align*}
[a_1\phi^{\delta}]^2_0/\delta
&=\left(a_1(2)[\phi^{\delta}]^2_0+(a_1(2)-a_1(0))\phi^{\delta}(0)\right)/\delta\\
&\approx -(a_2(a_1+\delta(a_1)_y))(0)\left(X_{\beta}^{\delta}+(\delta/2)Y_2^{\delta}\right)(0)
+((a_1)_y+(\delta/2)(a_1)_{yy})(0)\phi^{\delta}(0)\\
&\approx \left(-a_1a_2X_{\beta}^{\delta}+a_2c_1\phi^{\delta}\right)(0)+(\delta/2)\left(-a_1a_2Y_2^{\delta}-2a_2^2c_1X_{\beta}^{\delta}+(a_1)_{yy}\phi^{\delta}\right)(0),
\end{align*}
by $(a_1)_y=a_2c_1$ in Lemma \ref{lemma:pde-abc}. In the same way, we have
\begin{gather*}
[b_1\xi^{\delta}]^2_0/\delta\approx\left(b_1b_2X_{\beta}^{\delta}+b_2c_1\xi^{\delta}\right)(0)+(\delta/2)\left(b_1b_2Y_2^{\delta}+2b_2^2c_1X_{\beta}^{\delta}+(b_1)_{yy}\xi^{\delta}\right)(0),\\
[c_1X_{\beta}^{\delta}]^2_0/\delta\approx\left(c_1Y_2^{\delta}+(c_1)_yX_{\beta}^{\delta}\right)(0)+(\delta/2)\left(2(c_1)_yY_2^{\delta}-c_1(a_2^2+b_2^2+c_2^2)X_{\beta}^{\delta}+(c_1)_{yy}X_{\beta}^{\delta}\right)(0).
\end{gather*}
Then, by $(c_2)_x+(c_1)_y+a_1a_2+b_1b_2=0$ in Lemma \ref{lemma:pde-abc}, we obtain the equation for $[Y_1^{\delta}]^2_0$.
\end{proof}
%%%%%%%%%%%%%%%%%%%%%%%%%%%%%%
%%%%% Lemma 5.10
%%%%%%%%%%%%%%%%%%%%%%%%%%%%%%
\begin{lemma}\label{lemma:delta3}
With the third degree for $\delta$ of $G_x-G_y$, we have
\begin{align*}
([\phi^{\delta}]^3_2-[\phi^{\delta}]^1_0)
&-([\phi^{\delta}]^3_1-[\phi^{\delta}]^2_0)\\
\approx\;& - (\delta^3/2)\left(c_1((a_2)_y-a_1c_2)X_{\alpha}^{\delta}-c_2((a_1)_x-a_2c_1)X_{\beta}^{\delta}\right)(0),\\
([\xi^{\delta}]^3_2-[\xi^{\delta}]^1_0)
&-([\xi^{\delta}]^3_1-[\xi^{\delta}]^2_0)\\
\approx\;& (\delta^3/2)\left(c_1((b_2)_y-b_1c_2)X_{\alpha}^{\delta}
-c_2((b_1)_x-b_2c_1)X_{\beta}^{\delta}\right)(0),\\
([X_{\alpha}^{\delta}]^3_2-[X_{\alpha}^{\delta}]^1_0)
&-([X_{\alpha}^{\delta}]^3_1-[X_{\alpha}^{\delta}]^2_0)\\
\approx\;& -(\delta^3/2)\left(-c_1((a_2)_y-a_1c_2)\phi^{\delta}+c_1((b_2)_y-b_1c_2)\xi^{\delta}\right)(0)\\
&-(\delta^3/2)\left((c_2)_{xx}+(c_1)_{yy}+c_1(a_2^2+b_2^2)+c_2(a_1^2+b_1^2)\right)(0)X_{\beta}^{\delta}(0),\\
([X_{\beta}^{\delta}]^3_2-[X_{\beta}^{\delta}]^1_0)
&-([X_{\beta}^{\delta}]^3_1-[X_{\beta}^{\delta}]^2_0)\\
\approx\;& (\delta^3/2)\left(-c_2((a_1)_x-a_2c_1)\phi^{\delta}+c_2((b_1)_x-b_2c_1)\xi^{\delta}\right)(0)\\
&+(\delta^3/2)\left((c_2)_{xx}+(c_1)_{yy}+c_1(a_2^2+b_2^2)+c_2(a_1^2+b_1^2)\right)(0)X_{\alpha}^{\delta}(0).
\end{align*}
\end{lemma}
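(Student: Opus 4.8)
The plan is to compute each of the four vector-components of $G_x-G_y$ directly from the explicit transport formulas \eqref{formula:Xalphadelta-x}--\eqref{formula:Xbetadelta-x} (for steps in the $x$-direction) and \eqref{formula:Xbetadelta-y}--\eqref{formula:Xalphadelta-y} (for steps in the $y$-direction), and to collect the result by powers of $\delta$ up to order three. The four corner steps are of two kinds: $[\,\cdot\,]^1_0$ and $[\,\cdot\,]^2_0$ issue from the base frame $F^\delta(0)$, whereas $[\,\cdot\,]^3_2$ (an $x$-step based at $2$) and $[\,\cdot\,]^3_1$ (a $y$-step based at $1$) issue from the transported frames $F^\delta(2)$, $F^\delta(1)$ and use the coefficients $a_i,b_i,c_i$ evaluated at $2$ and $1$. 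A first uniform reduction is available: by Lemmata \ref{lemma:s1t1} and \ref{lemma:s2t2} we have $t^\delta_i=1+O(\delta^2)$ together with $t^\delta_1(3)-t^\delta_1(1)=O(\delta^3)$ and $t^\delta_2(3)-t^\delta_2(2)=O(\delta^3)$, so to the order $\delta^3$ we need, every $t^\delta$-factor may be replaced by its common leading value and contributes nothing new.

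First I would Taylor-expand the shifted data around $0$. For the $x$-step based at $2$ I would write $a_1(2)=a_1(0)+\delta(a_1)_y(0)+(\delta^2/2)(a_1)_{yy}(0)+O(\delta^3)$ (and likewise for $b_1,c_1$), replace $X^\delta_\alpha(2)=X^\delta_\alpha(0)+[X^\delta_\alpha]^2_0$ by \eqref{formula:Xalphadelta-y}, and replace $Y^\delta_1(2)=Y^\delta_1(0)+[Y^\delta_1]^2_0$ by the preceding lemma; symmetrically for the $y$-step based at $1$, with $x$- and $y$-roles interchanged and the preceding lemma's formula for $[Y^\delta_2]^1_0$. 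Substituting into the transport formulas and subtracting gives, for each component, an expansion $\sum_{k}p_k\delta^k$. At order $\delta$ the two $x$-steps share the leading term $-\delta\,a_1(0)X^\delta_\alpha(0)$ (resp. $-\delta\,a_2(0)X^\delta_\beta(0)$ for the $y$-steps), so $p_1=0$ at once. At order $\delta^2$ the surviving terms are exactly those carrying one first derivative of a coefficient or one first-order transport increment; here the first four identities of Lemma \ref{lemma:pde-abc}, namely $(a_1)_y=a_2c_1$, $(a_2)_x=a_1c_2$, $(b_1)_y=b_2c_1$, $(b_2)_x=b_1c_2$, force the pairwise cancellation and yield $p_2=0$.

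The remaining task is to read off $p_3$. For the $\phi^\delta$- and $\xi^\delta$-components the diagonal second-derivative contributions $(a_1)_{yy},(b_1)_{yy}$ from the $x$-branch and $(a_2)_{xx},(b_2)_{xx}$ from the $y$-branch cancel against the second-order pieces of $[Y^\delta_1]^2_0$ and $[Y^\delta_2]^1_0$ furnished by the preceding lemma, and what remains organizes into the curl-type combinations $(a_2)_y-a_1c_2$ and $(a_1)_x-a_2c_1$ (resp. with $b$ in place of $a$), multiplying $X^\delta_\alpha,X^\delta_\beta$; this produces the first two displayed identities. For the $X^\delta_\alpha$- and $X^\delta_\beta$-components the same substitution leaves those curl terms projected onto $\phi^\delta,\xi^\delta$, together with the scalar factor $(c_2)_{xx}+(c_1)_{yy}+c_1(a_2^2+b_2^2)+c_2(a_1^2+b_1^2)$ multiplying $X^\delta_\beta(0)$ (resp. $X^\delta_\alpha(0)$), which is obtained by differentiating the Gauss--Codazzi relation $(c_2)_x+(c_1)_y+a_1a_2+b_1b_2=0$ of Lemma \ref{lemma:pde-abc} and applying the first four identities once more.

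I expect the main obstacle to be the third-order bookkeeping itself: each of the four components requires expanding four edge-steps to order $\delta^3$, with the base frames $F^\delta(1),F^\delta(2)$ and the vector fields $Y^\delta_1,Y^\delta_2$ re-expressed at $0$, so that the cross terms between a first-derivative coefficient and a first-order transport increment are numerous and sign-sensitive. The decisive structural input is Lemma \ref{lemma:pde-abc}: its first four identities clear the $\delta^2$ terms and recombine the mixed first-order products at $\delta^3$ into the stated curl expressions, while its last (Gauss--Codazzi) identity, differentiated once in $x$ or $y$, is what collapses the diagonal second-derivative terms into the single scalar factor appearing in the $X^\delta_\alpha$- and $X^\delta_\beta$-components.
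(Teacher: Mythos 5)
Your plan is essentially the paper's own proof: expand the four edge-steps via the transport formulas with the $t^{\delta}_i$ estimates, Taylor-expand the coefficients at $0$, insert the preceding lemma's expansions of $[Y_1^{\delta}]^2_0$ and $[Y_2^{\delta}]^1_0$, clear the $\delta$- and $\delta^2$-terms with the first four identities of Lemma \ref{lemma:pde-abc}, and recombine the $\delta^3$-terms via the differentiated identities such as $(a_1)_{yy}-a_2(c_1)_y-a_1c_1c_2=c_1\left((a_2)_y-a_1c_2\right)$, exactly as the paper does. One bookkeeping prediction is off but harmless to the method: in the $X_{\alpha}^{\delta}$- and $X_{\beta}^{\delta}$-components the Gauss relation $(c_2)_x+(c_1)_y+a_1a_2+b_1b_2=0$ is used undifferentiated (multiplied by $2c_1$, resp.\ $2c_2$, after rewriting $(a_1^2+b_1^2+c_1^2)_y$, resp.\ $(a_2^2+b_2^2+c_2^2)_x$, with the first four identities) to annihilate the tangential self-term, while the scalar factor $(c_2)_{xx}+(c_1)_{yy}+c_1(a_2^2+b_2^2)+c_2(a_1^2+b_1^2)$ simply survives uncollapsed rather than arising from a differentiated Gauss--Codazzi relation.
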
 
\begin{proof}
For the first equation, we have
\begin{align*}
([\phi^{\delta}]^3_2-[\phi^{\delta}]^1_0)/\delta
=\;&-[t_1^{\delta}]^3_1 \left(a_1(X_{\alpha}^{\delta}+(\delta/2)Y_1^{\delta})\right)(2)-t_1^{\delta}(1)\left[a_1(X_{\alpha}^{\delta}+(\delta/2)Y_1^{\delta})\right]^2_0\\
\approx\;& -\left[a_1(X_{\alpha}^{\delta}+(\delta/2)Y_1^{\delta})\right]^2_0\\
=\;& -a_1(2)\left[X_{\alpha}^{\delta}+(\delta/2)Y_1^{\delta}\right]^2_0-[a_1]^2_0 \left(X_{\alpha}^{\delta}+(\delta/2)Y_1^{\delta}\right)(0)\\
\approx\;& -\delta(a_1+\delta(a_1)_y)(0)\left(c_2X_{\beta}^{\delta}+(\delta/2)(c_2Y_2^{\delta}+c_1c_2X_{\alpha}^{\delta}+(c_2)_xX_{\beta}^{\delta})\right)(0)\\
&-\delta\left(((a_1)_y+(\delta/2)(a_1)_{yy})\,(X_{\alpha}^{\delta}+(\delta/2)Y_1^{\delta})\right)(0).
\end{align*}
Hence, we have
\begin{gather*}
\begin{split}
&([\phi^{\delta}]^3_2-[\phi^{\delta}]^1_0)/\delta^2\\
\approx
&-\left(a_2c_1X_{\alpha}^{\delta}+a_1c_2X_{\beta}^{\delta}\right)(0)
-(\delta/2)\left(a_1a_2(c_1+c_2)\phi^{\delta}-(a_1b_2c_2+a_2b_1c_1)\xi^{\delta}\right)(0)\\
&-(\delta/2)\left(\left(a_1c_2(c_1-c_2)+(a_1)_{yy}\right)X_{\alpha}^{\delta}
+\left(a_1(c_2)_x+2a_2c_1c_2-a_2c_1^2\right)X_{\beta}^{\delta}\right)(0).
\end{split}
\end{gather*}
In the same way, we have
\begin{gather*}
\begin{split}
&([\phi^{\delta}]^3_1-[\phi^{\delta}]^2_0)/\delta^2\\
\approx
&-\left(a_2c_1X_{\alpha}^{\delta}+a_1c_2X_{\beta}^{\delta}\right)(0)
-(\delta/2)\left(a_1a_2(c_1+c_2)\phi^{\delta}-(a_1b_2c_2+a_2b_1c_1)\xi^{\delta}\right)(0)\\
&-(\delta/2)\left(\left(-a_1c_2^2+2a_1c_1c_2+a_2(c_1)_y\right)X_{\alpha}^{\delta}
+\left(-a_2c_1^2+a_2c_1c_2+(a_2)_{xx}\right)X_{\beta}^{\delta}\right)(0).
\end{split}
\end{gather*}
From these equations, we have
\begin{align*}
&([\phi^{\delta}]^3_2-[\phi^{\delta}]^1_0)
-([\phi^{\delta}]^3_1-[\phi^{\delta}]^2_0)\\
&\approx -(\delta^3/2)\left(\left((a_1)_{yy}-a_2(c_1)_y-a_1c_1c_2\right)X_{\alpha}^{\delta}-\left((a_2)_{xx}-a_1(c_2)_x-a_2c_1c_2\right)X_{\beta}^{\delta}\right)(0).
\end{align*}
Then, by Lemma \ref{lemma:pde-abc}, we have
\begin{align*}
(a_1)_{yy}-a_2(c_1)_y-a_1c_1c_2=c_1\left((a_2)_y-a_1c_2\right),\quad
(a_2)_{xx}-a_1(c_2)_x-a_2c_1c_2=c_2\left((a_1)_x-a_2c_1\right).
\end{align*}
Hence, we obtain the first equation.
The second equation is also obtained in the same way.

Next, we prove the third equation. We have
\begin{align*}
& ([X^{\delta}_{\alpha}]^3_2-[X^{\delta}_{\alpha}]^1_0)/\delta
\approx \left[Y_1^{\delta}-(\delta/2)(a_1^2+b_1^2+c_1^2)X_{\alpha}^{\delta}\right]^2_0\\
&\approx [Y_1^{\delta}]^2_0-(\delta/2)(a_1^2+b_1^2+c_1^2)(2)[X_{\alpha}^{\delta}]^2_0-(\delta^2/2)(a_1^2+b_1^2+c_1^2)_y(0)X_{\alpha}^{\delta}(0)\\
&\approx [Y_1^{\delta}]^2_0-(\delta/2)(a_1^2+b_1^2+c_1^2)(0)[X_{\alpha}^{\delta}]^2_0-(\delta^2/2)(a_1^2+b_1^2+c_1^2)_y(0)X_{\alpha}^{\delta}(0)\\
&\approx \delta\left(c_1c_2X_{\alpha}^{\delta}+(c_2)_xX_{\beta}^{\delta}\right)(0)+(\delta^2/2)\left(((c_2)_x-(c_1)_y)Y_2^{\delta}-(a_1^2+b_1^2+c_1^2)_yX_{\alpha}^{\delta}\right)(0)\\
&+(\delta^2/2)\left(-\left(c_1(a_2^2+b_2^2-c_2^2)+c_2(a_1^2+b_1^2+c_1^2)\right)X_{\beta}^{\delta}
+(a_1)_{yy}\phi^{\delta}-(b_1)_{yy}\xi^{\delta}-(c_1)_{yy}X_{\beta}^{\delta}\right)(0).
\end{align*}
In the same way, we have
\begin{align*}
& ([X^{\delta}_{\alpha}]^3_1-[X^{\delta}_{\alpha}]^2_0)/\delta
\approx
\delta\left(c_1c_2X_{\alpha}^{\delta}+(c_2)_xX_{\beta}^{\delta}\right)(0)\\
&+(\delta^2/2)\left(c_1c_2Y_1^{\delta}+(c_2)_xY_2^{\delta}+(c_2(c_1)_y+2c_1(c_2)_x)X_{\alpha}^{\delta}
+(c_1c_2^2+(c_2)_{xx})X_{\beta}^{\delta}\right)(0).
\end{align*}
From these equations, we have
\begin{align*}
& ([X_{\alpha}^{\delta}]^3_2-[X_{\alpha}^{\delta}]^1_0)-([X_{\alpha}^{\delta}]^3_1-[X_{\alpha}^{\delta}]^2_0)\\
& \approx -(\delta^3/2)\left(c_1c_2(a_1\phi^{\delta}-b_1\xi^{\delta})+(c_1)_y(a_2\phi^{\delta}-b_2\xi^{\delta})-(a_1)_{yy}\phi^{\delta}+(b_1)_{yy}\xi^{\delta}+(c_1)_{yy}X_{\beta}^{\delta}\right)(0)\\
&-(\delta^3/2)\left(\left(2c_1(c_2)_x+(a_1^2+b_1^2+c_1^2)_y\right)X_{\alpha}^{\delta}+\left(c_1(a_2^2+b_2^2)+c_2(a_1^2+b_1^2)+(c_2)_{xx}\right)X_{\beta}^{\delta}\right)(0).
\end{align*}
Then, by Lemma \ref{lemma:pde-abc}, we have
\begin{gather*}
(b_1)_{yy}-b_2(c_1)_y-b_1c_1c_2=c_1\left((b_2)_y-b_1c_2\right),\\
2c_1(c_2)_x+2\left(a_1(a_1)_y+b_1(b_1)_y+c_1(c_1)_y\right)
=2c_1\left((c_2)_x+(c_1)_y+a_1a_2+b_1b_2\right)=0.
\end{gather*}
Hence, we obtain the third equation.
The last equation is also obtained in the same way.
\end{proof}
%%%%%%%%%%%%%%%%%%%%%%%%%%%%%%
%%%%% Definition 5.11
%%%%%%%%%%%%%%%%%%%%%%%%%%%%%%
\begin{defn}[Choice of the constant $K$ in \eqref{ineq:norm-Fdeltas}]\label{def:K}
(1)  For the four equations of Lemma \ref{lemma:delta3}, we take norm of the coefficient vectors for $\delta^3$.
Then, let $K_1$ be the maximum of these four norms on the domain $E$.

(2) For the matrices $\Omega_1$ and $\Omega_2$ in \eqref{def:Omega12}, let
\begin{gather*}
K_2:=\Max_{p\in E}\left\{\|\Omega_i(p)\|,\ \|(\Omega_1)_x(p)\|/2,\ \|(\Omega_2)_y(p)\|/2\right\}.
\end{gather*}
Then, we define $K$ by $K:=\Max\{K_1,K_2\}+1$.
Here, we add $1$ to $\Max\{K_1,K_2\}$, since $(F^{\underline\delta_n}-F^{\overline\delta_n})(x_{i+1},y_{j+1})$ also includes the terms of degree $\delta_n^i \ (i>3)$, in (1).
\end{defn}

By Lemma \ref{lemma:delta3} and the definition of $K$, we have \eqref{ineq:norm-Fdeltas}:
%%%%%%%%%%%%%%%%%%%%%%%%%%%%%%
%%%%% Theorem 5.12
%%%%%%%%%%%%%%%%%%%%%%%%%%%%%%
\begin{thm}\label{thm:norm-Fdeltas}
For an integer $n$ and $(x_p,y_p)\in E$, we take a division of $E_p$.
With a sub-domain $\Delta^n_{i,j}$, suppose that an orthonormal frame $F^{\delta_n}(x_i,y_j)$ is determined.
Then, we have
\begin{gather*}
\| (F^{\underline \delta_n}-F^{\overline\delta_n})(x_{i+1,j+1})\|\leq K\delta_n^3,
\end{gather*}
if $n$ is sufficiently large.
\end{thm}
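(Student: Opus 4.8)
The plan is to reduce the two-frame discrepancy at $P_{i+1,j+1}$ to the quantity $G_y-G_x$ already isolated in Step A-(3), and then to read off its size from Lemma \ref{lemma:delta3}; throughout I keep the local convention $\delta=\delta_n$. First I would record the purely algebraic identity
\begin{gather*}
(F^{\underline\delta}-F^{\overline\delta})(x_{i+1},y_{j+1})=G_y-G_x.
\end{gather*}
Indeed, inserting the definitions of $G_x$ and $G_y$ into the two telescoping decompositions $F^{\underline\delta}(3)-F^{\delta}(0)=(F^{\underline\delta}(3)-F^{\delta}(1))+(F^{\delta}(1)-F^{\delta}(0))$ and $F^{\overline\delta}(3)-F^{\delta}(0)=(F^{\overline\delta}(3)-F^{\delta}(2))+(F^{\delta}(2)-F^{\delta}(0))$ and subtracting, the common increments $F^{\delta}(1)-F^{\delta}(0)$ and $F^{\delta}(2)-F^{\delta}(0)$ cancel and only $G_y-G_x$ survives. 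Hence it suffices to bound $\|G_x-G_y\|$.

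Next I would match $G_x-G_y$ with Lemma \ref{lemma:delta3}. By the definitions of $G_x$ and $G_y$, the columns of $G_x-G_y$ belonging to $\phi^{\delta}$, $\xi^{\delta}$, $X^{\delta}_{\alpha}$ and $X^{\delta}_{\beta}$ are exactly the four mixed differences $([\,\cdot\,]^3_2-[\,\cdot\,]^1_0)-([\,\cdot\,]^3_1-[\,\cdot\,]^2_0)$ evaluated on these four vector fields, i.e. precisely the expressions computed in Lemma \ref{lemma:delta3}. That lemma shows that in each of the four differences the coefficients of $\delta^0$, $\delta^1$ and $\delta^2$ vanish and the first surviving term is the displayed cubic, whose coefficient vector has norm at most $K_1\le K$ by Definition \ref{def:K}-(1). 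Thus each column of $G_x-G_y$ equals its cubic term, of size $\le K_1\delta^3$, plus a remainder of degree $\ge 4$ in $\delta$.

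Finally I would absorb the degree-$\ge 4$ remainder. The one-step formulas \eqref{formula:Xalphadelta-x}--\eqref{formula:Xbetadelta-x} and \eqref{formula:Xbetadelta-y}--\eqref{formula:Xalphadelta-y} present each column as a rational function of $\delta$ whose coefficients are built from the analytic functions $a_i,b_i,c_i$ and their derivatives on the compact set $E$ together with the bounded factors $t^{\delta}_i=1+O(\delta^2)$; consequently the tail $\sum_{k\ge 4}p_k\delta^k$ is dominated by $C\delta^4$ with $C$ controlled by the $K_2$-type bounds of Definition \ref{def:K}-(2). For $n$ large enough one has $C\delta^4\le\delta^3$, which is exactly the reason the constant $1$ is added in $K=\Max\{K_1,K_2\}+1$. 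Combining the three steps gives $\|(F^{\underline\delta}-F^{\overline\delta})(x_{i+1},y_{j+1})\|=\|G_y-G_x\|\le K\delta^3$, as asserted.

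I expect the real difficulty to lie not in this theorem but in Lemma \ref{lemma:delta3} underneath it: there one must Taylor-expand the rational one-step formulas to second order in $\delta$, assemble the mixed differences, and invoke all four integrability relations of Lemma \ref{lemma:pde-abc} (the Maurer--Cartan, equivalently Gauss--Codazzi, equations for $\phi$) to force the cancellation of the $\delta^0$-, $\delta^1$- and $\delta^2$-terms. Once those cancellations are secured --- so that the cell-level path dependence of $F^{\delta}$ is genuinely of order $\delta^3$ --- the present theorem reduces to the bookkeeping and the norm packaging recorded in Definition \ref{def:K}.
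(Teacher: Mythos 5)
Your proposal is correct and follows essentially the same route as the paper: the identity $(F^{\underline\delta}-F^{\overline\delta})(x_{i+1},y_{j+1})=G_y-G_x$, the column-by-column identification of $G_x-G_y$ with the four mixed differences computed in Lemma \ref{lemma:delta3} (so that $p_0=p_1=p_2=0$ and the cubic coefficient has norm at most $K_1$), and the absorption of the degree-$\geq 4$ tail into the extra $+1$ in $K=\Max\{K_1,K_2\}+1$ for $n$ sufficiently large is exactly how the paper deduces the theorem from Lemma \ref{lemma:delta3} and Definition \ref{def:K}. The only cosmetic difference is that the paper justifies the tail absorption simply by the uniform boundedness of the higher-order coefficients on the compact set $E$ (analyticity of $a_i$, $b_i$, $c_i$), rather than attributing it to the $K_2$ bounds as you do, but this does not affect the argument.
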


By Theorems \ref{thm:Fdelta-x}, \ref{thm:Fdelta-y} and \ref{thm:norm-Fdeltas}, the proof of Proposition \ref{prop:norm-Fdeltas} also has been completed.

For Step C: As in the proofs of Step A, we fix an integer $n$ and study only the case of $E=[x_0,x_e]\times [y_0,y_e]$, where $x_e=x_0+a$ and $y_e=y_0+a$.
Hence, we have $s=t=n$ and denote $\delta:=\delta_n=a/n$.
Let $F^0(x,y)$ be the solution to \eqref{eq:dF0} on $D$ under a given initial condition $F^0(x_0,y_0)$.
Let $\underline{m}$ be the path from $P_{0,0}$ to $P_{n,n}$ such that  
$\underline{m}: P_{0,0}\rightarrow \dots\rightarrow P_{n,0}\rightarrow \dots\rightarrow P_{n,n}=(x_e,y_e),$ and $F^{\delta}(x,y):=F^{\underline\delta}(x,y)$ be the orthonormal frame field on $\underline{m}$ determined from $F^{\delta}(x_0,y_0)=F^0(x_0,y_0)$ by Step A.
Firstly, note that we have
\begin{gather*}
\int_{\underline{m}}F^0(x,y)\Omega=\int_{\underline{m}}dF^0=\left[F^0\right]^{(x_n,x_n)}_{(x_0,y_0)},
\end{gather*}
and
\begin{align*}
\left[F^{\delta}\right]_{(x_i,y_0)}^{(x_{i+1},y_0)}
=\int_{(x_i,y_0)}^{(x_{i+1},y_0)}\frac{dF^{\delta}}{dx}(x,y_0)dx,\quad
\left[F^{\delta}\right]_{(x_n,y_j)}^{(x_n,y_{j+1)}}
=\int_{(x_n,y_j)}^{(x_n,y_{j+1)}}\frac{dF^{\delta}}{dy}(x_n,y)dy
\end{align*}
on each sub-interval of $\underline m$.
  
Now, we study the norm $\| (F^0-F^{\delta_n})(x_n,y_n)\|$.
For the second degree Taylor polynomials of both frames $F^0(x,y)$ and $F^{\delta}(x,y)$ at each point $(x_i,y_0)$ and $(x_n,y_j)$, we have the following lemma.
%%%%%%%%%%%%%%%%%%%%%%%%%%%%%%
%%%%% Lemma 5.13
%%%%%%%%%%%%%%%%%%%%%%%%%%%%%%
\begin{lemma}\label{lemma:taylor-F0-Fdelta}
For $x\in[x_i,x_{i+1}]$, the first degree Taylor polynomial of $\frac{d(F^0-F^{\delta})}{dx}{(x,y_0)}$ at $x=x_i$ is given by
\begin{gather*}
\begin{split}
\frac{d}{dx}(F^0-F^{\delta})_{(x,y_0)}
\approx\;& (x-x_i)F^0_{(x_i,y_0)}(\Omega_1)_x(x_i,y_0)\\
&+(F^0-F^{\delta})_{(x_i,y_0)}\left(\Omega_1(x_i,y_0)+(x-x_i)\Omega_1^2(x_i,y_0)\right).
\end{split}
\end{gather*}
For $y\in[y_j,y_{j+1}]$, the first degree Taylor polynomial of $\frac{d(F^0-F^{\delta})}{dy}{(x_n,y)}$ at $y=y_j$ is given by
\begin{gather*}
\begin{split}
\frac{d}{dy}(F^0-F^{\delta})_{(x_n,y)}
\approx\;& (y-y_j)F^0_{(x_n,y_j)}(\Omega_2)_y(x_n,y_j)\\
&+(F^0-F^{\delta})_{(x_n,y_j)}\left(\Omega_2(x_n,y_j)+(y-y_j)\Omega_2^2(x_n,y_j)\right).
\end{split}
\end{gather*}
\end{lemma}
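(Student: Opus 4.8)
The plan is to reduce both assertions to the closed-form expressions for $F^{\delta}$ on a single edge, recorded just before Theorems \ref{thm:Fdelta-x} and \ref{thm:Fdelta-y}, and then to Taylor-expand every object to first order in the increment $x-x_i$ (resp.\ $y-y_j$). I will carry out only the $x$-direction identity in detail; the $y$-direction one follows verbatim after replacing $(\Omega_1, t^{\delta}_1, x, x_i)$ by $(\Omega_2, t^{\delta}_2, y, y_j)$ and the base point $(x_i,y_0)$ by $(x_n,y_j)$.

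First I would start from the identity
\[
F^{\delta}(x,y_0)-F^{\delta}(x_i,y_0)=(x-x_i)\,t^{\delta}_1(x,y_0)\,F^{\delta}(x_i,y_0)\left(\Omega_1(x_i,y_0)+\tfrac{x-x_i}{2}\Omega_1^2(x_i,y_0)\right)
\]
and differentiate in $x$. Writing $h:=x-x_i$ and $\kappa:=(a_1^2+b_1^2+c_1^2)(x_i,y_0)$, Lemma \ref{lemma:s1t1} gives $t^{\delta}_1=4/(4+h^2\kappa)=1+O(h^2)$, whence $\tfrac{d}{dh}(h\,t^{\delta}_1)=1+O(h^2)$ and $\tfrac{d}{dh}(\tfrac{h^2}{2}t^{\delta}_1)=h+O(h^3)$. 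Thus, to first order in $h$, the rational factor $t^{\delta}_1$ drops out and
\[
\frac{dF^{\delta}}{dx}(x,y_0)\approx F^{\delta}(x_i,y_0)\left(\Omega_1(x_i,y_0)+(x-x_i)\Omega_1^2(x_i,y_0)\right).
\]

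Next I would expand the exact derivative $\tfrac{dF^0}{dx}=F^0\Omega_1$ coming from the structure equation \eqref{eq:dF0}. Using $F^0(x,y_0)=F^0(x_i,y_0)\bigl(\mathrm{Id}+h\,\Omega_1(x_i,y_0)\bigr)+O(h^2)$ and $\Omega_1(x,y_0)=\Omega_1(x_i,y_0)+h\,(\Omega_1)_x(x_i,y_0)+O(h^2)$, multiplying out and discarding terms of order $h^2$ gives
\[
\frac{dF^0}{dx}(x,y_0)\approx F^0(x_i,y_0)\Omega_1(x_i,y_0)+h\,F^0(x_i,y_0)(\Omega_1)_x(x_i,y_0)+h\,F^0(x_i,y_0)\Omega_1^2(x_i,y_0).
\]
Subtracting the preceding display for $F^{\delta}$ and regrouping the $\Omega_1(x_i,y_0)$ and $\Omega_1^2(x_i,y_0)$ terms into a common left factor $(F^0-F^{\delta})(x_i,y_0)$ yields exactly the asserted first-degree polynomial, with the isolated $(\Omega_1)_x$-term carrying the coefficient $F^0(x_i,y_0)$ alone (since $(\Omega_1)_x$ never enters the approximate side).

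The computation is elementary once these two expansions are assembled; the one point I would flag as the main obstacle is the order-bookkeeping around the factor $t^{\delta}_1$. One must confirm that its deviation from $1$ is genuinely $O(h^2)$, so that it contributes nothing to the first-degree polynomial, while keeping the base-point value $F^{\delta}(x_i,y_0)$ strictly separate from $F^0(x_i,y_0)$—the two differ because $F^{\delta}$ has accumulated error along the path—since it is precisely their difference that must survive as the coefficient of $\Omega_1(x_i,y_0)+(x-x_i)\Omega_1^2(x_i,y_0)$.
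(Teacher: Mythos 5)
Your proposal is correct and takes essentially the same route as the paper's proof: both expand the exact derivative $dF^0/dx=F^0\Omega_1$ (equivalently, via $\partial^2F^0/\partial x^2=F^0[\Omega_1^2+(\Omega_1)_x]$) and the closed-form edge expression for $F^{\delta}$ to first order in $x-x_i$, using that $t^{\delta}_1$ equals $1$ to second order (i.e.\ $t^{\delta}_1(x_i,y_0)=1$ and $(dt^{\delta}_1/dx)(x_i,y_0)=0$), then subtract and regroup so that $(F^0-F^{\delta})(x_i,y_0)$ multiplies $\Omega_1+(x-x_i)\Omega_1^2$ and the $(\Omega_1)_x$-term carries $F^0(x_i,y_0)$ alone. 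The differences are purely presentational.
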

\begin{proof}
Since $dF^0=F^0\Omega$ and $\partial^2 F^0/\partial x^2=F^0[\Omega_1^2+(\Omega_1)_x]$, we have the polynomial at $x=x_i$ of $d (F^0(x,y_0))/dx$:
\begin{gather*}
d (F^0(x,y_0))/dx \approx F^0(x_i,y_0)\left((\Omega_1)_{(x_i,y_0)}+(x-x_i)\left(\Omega_1^2+(\Omega_1)_x\right)_{(x_i,y_0)}\right)
\end{gather*}
for $x\in [x_i,x_{i+1}]$. 
Next, with the function $t^{\delta}_1(x,y_0)$ on $x\in [x_i,x_{i+1}]$, we have $t^{\delta}_1(x_i,y_0)=1$ and $(dt^{\delta}_1/dx)(x_i,y_0)=0$.
From these equations, we obtain the polynomial at $x=x_i$ of $d (F^{\delta}(x,y_0))/dx$:
\begin{gather*}
d (F^{\delta}(x,y_0))/dx \approx F^{\delta}(x_i,y_0)\left((\Omega_1)_{(x_i,y_0)}+(x-x_i)\Omega_1^2|_{(x_i,y_0)}\right)
\end{gather*}
for $x\in [x_i,x_{i+1}]$.
In the same way, we obtain the polynomials at $y=y_j$ of $d (F^0(x_n,y))/dy$ and $d (F^{\delta}(x_n,y))/dy$ for $y\in [y_j,y_{j+1}]$.
In consequence, we have verified the lemma.
\end{proof}

Now, we put $T^{\delta}(x,y):=(F^0-F^{\delta})(x,y)$.
Under the condition $T^{\delta}(x_0,y_0)=(F^0-F^{\delta})(x_0,y_0)=0$, we integrate $(dT^{\delta}/dx){(x,y_0)}$ from $x_i$ to $x_{i+1}$ in order of $i=0,1,\dots,n-1$.
Then, by Lemma \ref{lemma:taylor-F0-Fdelta} and $\Max \left\{\|\Omega_i(p)\|, \|(\Omega_1)_x(p)\|/2, \|(\Omega_2)_y(p)\|/2 \;|\; p\in E\right\}+1\leq  K$, we have
\begin{gather*}
\| T^{\delta}(x_1,y_0)\|\leq K\delta^2,\quad
\| T^{\delta}(x_{i+1},y_0)-T^{\delta}(x_i,y_0)\|\leq K\delta^2+\| T^{\delta}(x_i,y_0)\| K\delta \ \ \text{for} \ i\geq 1,
\end{gather*}
if $n$ is sufficiently large.
Hence, we have
\begin{gather*}
\| T^{\delta}(x_{i+1},y_0)\|+\delta \leq (\| T^{\delta}(x_{i},y_0)\|+\delta)(1+K\delta) \ \ \ \text{for} \ i\geq 1
\end{gather*}
by $\| T^{\delta}(x_{i+1},y_0)\|\leq K\delta^2+\| T^{\delta}(x_{i},y_0)\|(1+K\delta)$.
Therefore, we have
\begin{gather}\label{ineqs:norm-Tdelta}
\| T^{\delta}(x_n,y_0)\|+\delta \leq (\| T^{\delta}(x_1,y_0)\|+\delta)(1+K\delta)^{n-1}\leq \delta(1+K\delta)^n.
\end{gather}
Here, we have $(1+K\delta)^n<\exp(Ka).$

In the same way, by the integral of $(dT^{\delta}/dy){(x_n,y)}$ from $y_j$ to $y_{j+1}$, we have
\begin{gather*}
\| T^{\delta}(x_n,y_{j+1})-T^{\delta}(x_n,y_j)\|\leq K\delta^2+\| T^{\delta}(x_n,y_j)\| K\delta \ \ \ \text{for} \ j\geq 0,
\end{gather*}
if $n$ is sufficiently large.  
In consequence, we have
\begin{gather}\label{ineq:normT-K}
\| T^{\delta}(x_n,y_n)\|
\leq (\| T^{\delta}(x_n,y_0)\| +\delta)(1+K\delta)^n-\delta
\leq \delta \left(\exp(2Ka)-1\right)
\end{gather}
by \eqref{ineqs:norm-Tdelta}.
The inequality \eqref{ineq:normT-K} implies that $F^{\delta_n}(x_e,y_e)(=F^{\delta_n}(x_n,y_n))$ converges to $F^0(x_e,y_e)$ as $n$ tends to $\infty$.

In the argument above, we can replace $(x_e,y_e)$ with an arbitrary point $(x_p,y_p)\in E$, by Definition \ref{def:path}.
Thus, Step C has been verified by the above argument and Proposition \ref{prop:norm-Fdeltas}:
%%%%%%%%%%%%%%%%%%%%%%%%%%%%%%
%%%%% Theorem 5.14
%%%%%%%%%%%%%%%%%%%%%%%%%%%%%%
\begin{thm}\label{thm:Fdelta-F0}
Let $F^0(x_0,y_0)$ be an orthonormal frame at $(x_0,y_0)$ given arbitrarily.
Let $F^0(x,y)$ be the orthonormal frame field satisfying \eqref{eq:dF0} under the initial condition $F^0(x_0,y_0)$.
For an integer $n$ and $(x_p,y_p)\in E$, we take a division of $E_p$.
Let $\underline{m}$ and $\overline{m}$ be the two paths from $(x_0,y_0)$ to $(x_p,y_p)$ given by \eqref{def:mlower}--\eqref{def:mupper}.
Let $F^{\underline\delta_n}(x_p,y_p)$ and $F^{\overline\delta_n}(x_p,y_p)$ be the orthonormal frame determined from $\underline{m}$ and $\overline{m}$ by Step A.
Then, $F^{\underline\delta_n}(x_p,y_p)$ and $F^{\overline\delta_n}(x_p,y_p)$ converge to $F^0(x_p,y_p)$ uniformly for all $(x_p,y_p)\in E$, as $n$ tends to $\infty$.
\end{thm}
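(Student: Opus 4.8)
The plan is to reduce the two-path claim to a single path and then control the global error of the scheme by a discrete Gronwall-type argument driven by the local per-edge estimate that has already been prepared. First I would note that by Proposition \ref{prop:norm-Fdeltas} we have $\|(F^{\underline\delta_n}-F^{\overline\delta_n})(x_p,y_p)\|\le Ka^2\delta_n$, a bound independent of the endpoint $(x_p,y_p)$ that tends to $0$ as $n\to\infty$. Hence it suffices to establish that $F^{\underline\delta_n}(x_p,y_p)$ converges to $F^0(x_p,y_p)$ uniformly on $E$; the corresponding statement for $\overline{m}$ then follows immediately by the triangle inequality. Throughout, $F^0(x,y)$ denotes the exact solution of \eqref{eq:dF0} with the prescribed initial value, and the orthonormality of both $F^0$ and $F^{\delta_n}$ along edges is guaranteed by Theorems \ref{thm:Fdelta-x} and \ref{thm:Fdelta-y}.

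Next I would set $T^{\delta}(x,y):=(F^0-F^{\delta})(x,y)$ along $\underline{m}$, starting from $T^{\delta}(x_0,y_0)=0$, and estimate its growth edge by edge. On each horizontal edge $[x_i,x_{i+1}]\times\{y_0\}$, Lemma \ref{lemma:taylor-F0-Fdelta} supplies the first-degree Taylor polynomial of $d(F^0-F^{\delta})/dx$ at $x_i$; the decisive structural facts are $t^{\delta}_1(x_i,y_0)=1$ and $(t^{\delta}_1)_x(x_i,y_0)=0$, which make the rational edge-approximation agree with $F^0$ to one order higher than a naive polygonal (linear) scheme and thereby force a local error of size $O(\delta^2)$ per step rather than $O(\delta)$. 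Integrating this polynomial over $[x_i,x_{i+1}]$ and bounding the coefficient norms by the constant $K$ of Definition \ref{def:K} yields, for $n$ large, the recursion $\|T^{\delta}(x_{i+1},y_0)\|\le K\delta^2+\|T^{\delta}(x_i,y_0)\|(1+K\delta)$.

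I would then convert this into the multiplicative recursion $\|T^{\delta}(x_{i+1},y_0)\|+\delta\le(\|T^{\delta}(x_i,y_0)\|+\delta)(1+K\delta)$ and iterate, obtaining $\|T^{\delta}(x_s,y_0)\|\le\delta((1+K\delta)^s-1)$; using $(1+K\delta)^s\le(1+K\delta)^n<\exp(Ka)$ this is at most $\delta(\exp(Ka)-1)$. Running the identical argument along the vertical leg $\{x_s\}\times[y_0,y_p]$, now seeded with the already-accumulated horizontal error, produces the uniform bound $\|T^{\delta}(x_p,y_p)\|\le\delta(\exp(2Ka)-1)$ recorded in \eqref{ineq:normT-K}. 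Since $K$ and $a$ are fixed and this bound depends neither on $(x_p,y_p)$ nor on the particular division, letting $\delta_n=a/n\to0$ gives uniform convergence on $E$; combined with the reduction of the first paragraph, $F^{\overline\delta_n}$ converges uniformly as well.

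The main obstacle is not the Gronwall accumulation itself but securing its two uniform ingredients. The local $O(\delta^2)$ accuracy rests on the integrability relations of Lemma \ref{lemma:pde-abc} (the Maurer-Cartan conditions), which underpin the compatibility of the $x$- and $y$-edge constructions and the $O(\delta^3)$ commutativity estimate of Theorem \ref{thm:norm-Fdeltas}; and the uniformity of the constant $K$ across all edges and all endpoints relies on the compactness of $E$ together with the analyticity of $a_i,b_i,c_i$, which ensures that $\|\Omega_i\|$, $\|(\Omega_1)_x\|/2$ and $\|(\Omega_2)_y\|/2$ are all dominated by a single $K$ on $E$. Once these are in hand, the convergence statement is a bookkeeping assembly of the preceding results.
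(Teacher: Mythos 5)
Your proposal is correct and follows essentially the same route as the paper's own proof of Step C: reduction to the single path $\underline{m}$ via Proposition \ref{prop:norm-Fdeltas}, the edge-by-edge Taylor comparison of Lemma \ref{lemma:taylor-F0-Fdelta} (exploiting $t^{\delta}_1(x_i,y_0)=1$ and $(t^{\delta}_1)_x(x_i,y_0)=0$), and the discrete Gronwall iteration $\|T^{\delta}(x_{i+1},y_0)\|+\delta\le(\|T^{\delta}(x_i,y_0)\|+\delta)(1+K\delta)$ leading to the uniform bound $\delta(\exp(2Ka)-1)$ of \eqref{ineq:normT-K}. The only slight inaccuracy is attributing the local $O(\delta^2)$ edge error to Lemma \ref{lemma:pde-abc}; that lemma is needed for the $O(\delta_n^3)$ commutativity estimate behind Proposition \ref{prop:norm-Fdeltas}, while the edge error itself follows from the Taylor expansion alone, but this does not affect the validity of your argument.
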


By the argument above, all Steps A, B and C have been verified: for an integer $n$ and $(x,y)\in E$, we have obtained two approximations $F^{\underline\delta_n}(x,y)$ and $F^{\overline\delta_n}(x,y)$ of $F^0(x,y)$.
In Theorem \ref{thm:Fdelta-F0}, note that any frame $F^{\delta_n}(x_p,y_p)$ determined by a path from $(x_0,y_0)$ to $(x_p,y_p)$ is also an approximation of $F^0(x_p,y_p)$ if $n$ is sufficiently large, by Proposition \ref{prop:norm-Fdeltas}.

Next, we construct an approximation of several coordinate curves in the curvature surface $f^0(x,y)$ on $D$.
Before the construction, we state a remark.
For an $x$-curve $f^0(x,y_0)$ with fixed $y_0> 0$ on an interval $I=[x_0,x_e]:=[y_0-a,y_0+a]$, where $a>0$ and $ y_0-a>0$, we fix a division of $I$ with equal length $\delta_n=a/n$ and make the frame field $F^{\delta_n}(x,y_0)$ on $I$ under the condition $F^{\delta_n}(x_0,y_0)=\mathrm{Id}$, by Theorem \ref{thm:Fdelta-F0}.
Then, for a vector
\begin{gather*}
{\bs{f}}^{\delta_n}(x,y_0):=(1+y_0^2)^{-1}\left(X^{\delta_n}_{\beta}-2(1+y^2)\xi^{\delta_n}+y\phi^{\delta_n}\right)(x,y_0)
\end{gather*}
on each sub-interval $[x_i,x_{i+1}]$, one might guess from Theorem \ref{thm:xcurve} if the curve
\begin{gather}\label{def:fdelta-x}
f^{\delta_n}(x,y_0):=\frac{1}{2\sqrt{5}}\left(\left[{\bs{f}}^{\delta_n}\right]_{(x_i,y_0)}^{(x,y_0)}+\sum_{k=1}^i\left[{\bs{f}}^{\delta_n}\right]_{(x_{k-1},y_0)}^{(x_k,y_0)}\right)
\ \ \ \text{for} \ x_i< x\leq x_{i+1},
\end{gather}
is an approximation of the $x$-curve (where we adopt the non-unit vector ${\bs{f}}^{\delta_n}(x,y_0)$ in Theorem \ref{thm:xcurve}).
However, it is not an approximation of the $x$-curve.
In fact, for \eqref{def:fdelta-x} we have ${\bs{f}}^{\delta_n}(x,y_0)-{\bs{f}}^{\delta_n}(y_0,y_0)\equiv \bs{0}$ for $y_0=x_n\leq x\leq x_{n+1}$, since $(y_0,y_0)$ is in the singularity $D\cap S_1$ of the metric $g_0$ on $D$ (or by \eqref{formula:Xalphadelta-x}--\eqref{formula:Xbetadelta-x}).
In consequence, it is necessary for the interval $[y_0,x_e]$ that we make the frame $F^{\delta_n}(x,y_0)$ and the vector ${\bs{f}}^{\delta_n}(x,y_0)$ in the direction that $x$ decreases, because $f^{\delta_n}(y_0,y_0)$ is determined as the limit of $f^{\delta_n}(p)$ for $p\in D\setminus S_1$.
The approximation $F^{\delta_n}(x,y_0)$ on $[x_{i-1},x_i]$ is also defined from $F^{\delta_n}(x_i,y_0)$ by \eqref{formula:Xalphadelta-x}--\eqref{formula:Xbetadelta-x} as $(x-x_i)\leq 0$, and then ${\bs{f}}^{\delta_n}(x,y_0)$ on $[x_{i-1},x_i]$ is determined by the frame $F^{\delta_n}(x,y_0)$ in the inverse direction.

\subsection{Approximation of $x$-curve $f^0(x,y_0)$}\label{subsec:approx-x}

Let $f^0(x,y_0)$ be an $x$-curve with $y_0> 0$ on $[y_0-a,y_0+a]$.
We divide the interval $[y_0-a,y_0+a]$ as above.
On the interval $[y_0-a,y_0]$, we make $F^{\delta_n}_1(x,y_0):=F^{\delta_n}(x,y_0)$ from $F^{\delta_n}_1(y_0-a,y_0)=\mathrm{Id}$ by \eqref{formula:Xalphadelta-x}--\eqref{formula:Xbetadelta-x} and determine $f^{\delta_n}_1(x,y_0):=f^{\delta_n}(x,y_0)$ by \eqref{def:fdelta-x}, which satisfies $f^{\delta_n}_1(y_0-a,y_0)=\bs{0}$: we denote by $(F^{\delta_n}_1,f^{\delta_n}_1)$ the pair $(F^{\delta_n}_1(y_0,y_0), f^{\delta_n}_1(y_0,y_0))$ obtained in this way.
Next, on the interval $[y_0,y_0+a]$, we make $F^{\delta_n}_2(x,y_0):=F^{\delta_n}(x,y_0)$ from $F^{\delta_n}_2(y_0+a,y_0)=\mathrm{Id}$ by \eqref{formula:Xalphadelta-x}--\eqref{formula:Xbetadelta-x} and determine ${\bs{f}}^{\delta_n}_2(x,y_0):={\bs{f}}^{\delta_n}(x,y_0)$ and $f^{\delta_n}_2(x,y_0):=f^{\delta_n}(x,y_0)$ in the inverse direction, which satisfies $f^{\delta_n}_2(y_0+a,y_0)=\bs{0}$: we denote by $(F^{\delta_n}_2,f^{\delta_n}_2)$ the pair $(F^{\delta_n}_2(y_0,y_0),f^{\delta_n}_2(y_0,y_0))$ obtained in this way.
Hence, the curve $f^{\delta_n}_2(x,y_0)-f^{\delta_n}_2$ satisfies $f^{\delta_n}_2(y_0,y_0)-f^{\delta_n}_2=\bs{0}$.
For the matrix $C$ such that $CF^{\delta_n}_2=F^{\delta_n}_1$, we define a curve $k^{\delta_n}(x,y_0)$ for $x\in[y_0,y_0+a]$ by $k^{\delta_n}(x,y_0):=C(f^{\delta_n}_2(x,y_0)-f^{\delta_n}_2)+f^{\delta_n}_1$.

Now, two curves $f^{\delta_n}_1(x,y_0)$ on $[y_0-a,y_0]$ and $k^{\delta_n}(x,y_0)$ on $[y_0,y_0+a]$ connect continuously and have the same frame $F^{\delta_n}_1$ at $x=y_0$.
The connected curve at $f^{\delta_n}_1$ is an approximation of the $x$-curve.
Then, since a vector $(yX^{\delta_n}_{\beta}-\phi^{\delta_n})(x,y_0)$ does not depend on $x$ and $\langle yX^{\delta_n}_{\beta}-\phi^{\delta_n},{\bs{f}}^{\delta_n}\rangle(x,y_0)=0$ holds, the curve is contained in a hyperplane $\R_{y_0}^3(\ni \bs{0})$ perpendicular to the vector $(yX^{\delta_n}_{\beta}-\phi^{\delta_n})(x,y_0)$ by $f^{\delta_n}(y_0-a,y_0)=\bs{0}$, and in particular the curve lies on a $2$-sphere of radius $(2\sqrt{5})^{-1}\sqrt{(5+4y_0^2)/(1+y_0^2)}$ in $\R_{y_0}^3$.

%%%%%%%%%%%%%%%%%%%%%%%%%%%%%%
%%%%% Figure 8
%%%%%%%%%%%%%%%%%%%%%%%%%%%%%%
\noindent
\begin{figure}[H]
\renewcommand{\thesubfigure}{\alph{subfigure}}
\hfill
\begin{minipage}{0.45\linewidth}
\centering
\includegraphics[width=\linewidth,keepaspectratio]{./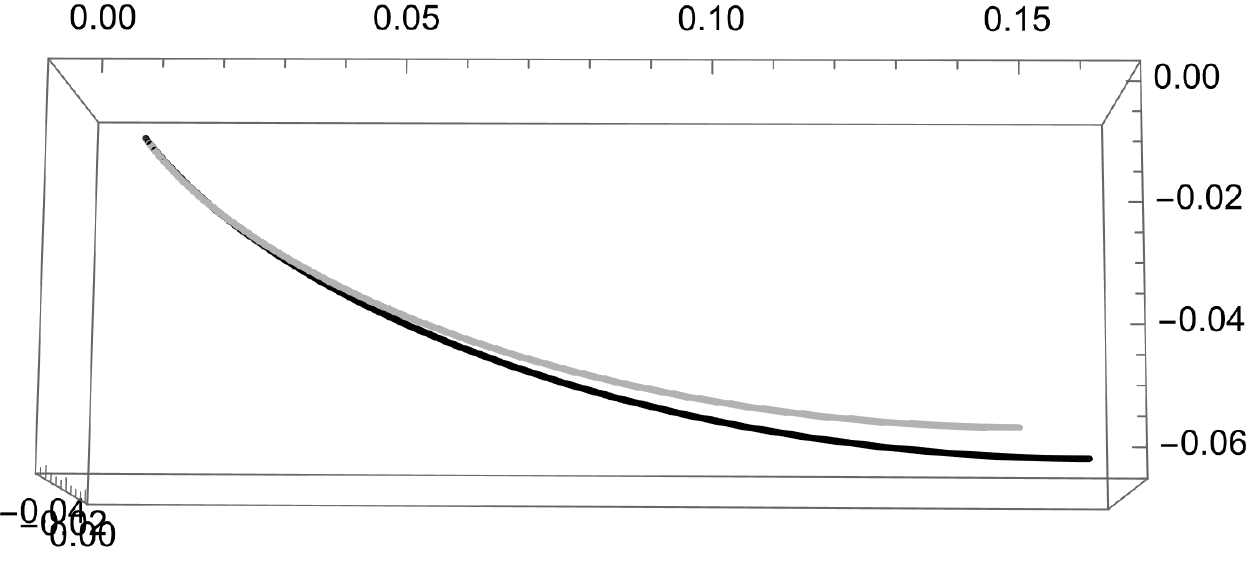}
\subcaption{$x$-curves when $n=5$}
\end{minipage}
\hfill
\begin{minipage}{0.45\linewidth}
\centering
\includegraphics[width=\linewidth,keepaspectratio]{./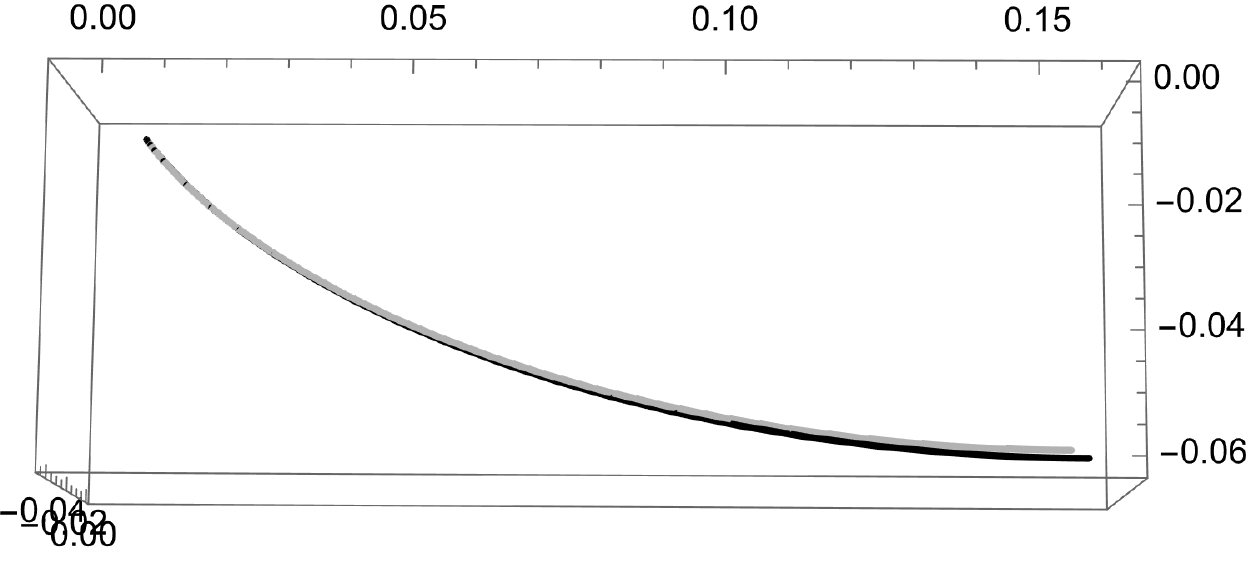}
\subcaption{$x$-curves when $n=20$}
\end{minipage}
\hfill
\caption{
\small
These show the differences between two $x$-curves $f^{\delta_n}(x,1)$ and ${\bar f}^{\delta_n}(x,1)$ on $[2,3]$ under the conditions $f^{\delta_n}(2,1) = {\bar f}^{\delta_n}(2,1)$ and $F^{\delta_n}(2,1) = {\bar F}^{\delta_n}(2,1)$: the $x$-curve $f^{\delta_n}(x,1)$ of black (resp.\ ${\bar f}^{\delta_n}(x,1)$ of gray) is constructed in the direction that $x$ increases (resp.\ decreases).
Here $F^{\delta_n}(x,1)$ (resp.\ ${\bar F}^{\delta_n}(x,1)$) is the frame field determining $f^{\delta_n}(x,1)$ (resp.\ ${\bar f}^{\delta_n}(x,1)$).
\normalsize}
%\label{fig:}
\end{figure}

\subsection{Approximation of $y$-curve $f^0(x_0,y)$ with fixed $x_0(>0)$}\label{subsec:approx-y}

Let $f^0(x_0,y)$ be a $y$-curve for $y\in I=[-a,a]$, where $a>0$.
We take a division of $I$ with equal length $\delta_n:=a/n$.
On the interval $[-a,0]$, we make $F^{\delta_n}(x_0,y)$ from $F^{\delta_n}(x_0,-a)=\mathrm{Id}$ and determine a vector $\tilde{\bs{f}}{}^{\delta_n}(x_0,y)$ on each $[y_j,y_{j+1}]$ by
\begin{gather*}
\tilde{\bs{f}}{}^{\delta_n}(x_0,y):=(B_2\xi^{\delta_n}+C_2X^{\delta_n}_{\alpha})(x_0,y).
\end{gather*}
Then, the approximation $f^{\delta_n}(x_0,y)$ of $f^0(x_0,y) \ (-a\leq y\leq 0)$ is given by
\begin{gather*}
f^{\delta_n}(x_0,y):=
\frac{1}{(B_2^2+C_2^2)(x_0)}
\left(\left[\tilde{\bs{f}}{}^{\delta_n}\right]_{(x_0,y_j)}^{(x_0,y)}+\sum_{k=1}^j\left[\tilde{\bs{f}}{}^{\delta_n}\right]_{(x_0,y_{k-1})}^{(x_0,y_k)}\right) \ \ \ \text{for} \ y_j< y\leq y_{j+1},
\end{gather*}
where $ y_{j+1}\leq y_n=0$.
On the interval $[0,a]$, we make $F^{\delta_n}(x_0,y)$ from $F^{\delta_n}(x_0,a)=\mathrm{Id}$ and determine $\tilde{\bs{f}}{}^{\delta_n}(x_0,y)$ and $ f^{\delta_n}(x_0,y)$ in the inverse direction.
Then, we obtain the approximation of the $y$-curve by the connection of these two curves, in the same way as in the case of $x$-curves.
Then, since $(-B_2X^{\delta_n}_{\alpha}+C_2\xi^{\delta_n})(x_0,y)$ does not depend on $y$ and $\langle -B_2X^{\delta_n}_{\alpha}+C_2\xi^{\delta_n}, f^{\delta_n}\rangle(x_0,y)=0$ holds, the connected curve $f^{\delta_n}(x_0,y)$ is contained a hyperplane $\R_{x_0}^3$ perpendicular to the vector $(-B_2X^{\delta_n}_{\alpha}+C_2\xi^{\delta_n})(x_0,y)$, and in particular it lies on a $2$-sphere of radius $1/\sqrt{B_2^2+C_2^2}(x_0)$ in $\R_{x_0}^3$.
Furthermore, we have $\lim_{y\to\infty}f^{\delta_n}(x_0,y)=\lim_{y\to\infty}f^{\delta_n}(x_0,-y)$ as mentioned in Corollary \ref{cor:5.3}.
We transform the curve $f^{\delta_n}(x_0,y)$ to $\hat{f}^{\delta_n}(x_0,y)$ isometrically by $\hat{f}^{\delta_n}(x_0,y)={F^{\delta_n}(x_0,0)}^{-1}(f^{\delta_n}(x_0,y)-f^{\delta_n}(x_0,0))$. 
Then, $\hat{f}^{\delta_n}(x_0,0)(=\hat{f}^{\delta_n}(x_0,y_n))=\bs{0}$ and $\hat{F}^{\delta_n}(x_0,0)=\mathrm{Id}$ hold, and hence 
we have $(B\circ \hat{f}^{\delta_n})(x,-y)=\hat{f}^{\delta_n}(x,y)$ with respect to the reflection $B$ defined in Corollary \ref{cor:Af0}.

\subsection{Approximation of the curve $f^0(y,y)$ for $b\geq y\geq a(>0)$}

Let $F^0(x,y)$ be a solution to \eqref{eq:dF0} with $F^0(a,a)=\mathrm{Id}$.
Let us take a division $a=y_0<y_1<y_2<\cdots <y_n=b$ of equal length $\delta_n=(b-a)/n$ and a path $m_n: (a,a)=(y_0,y_0)\rightarrow(y_0,y_1)\rightarrow(y_1,y_1)\rightarrow(y_1,y_2)\rightarrow\cdots \rightarrow(y_{n-1},y_{n-1})\rightarrow(y_{n-1},y_n)\rightarrow(y_n,y_n)=(b,b)$.
Let $F^{\delta_n}(x,y)$ be an orthonormal frame on $m_n$ determined by $F^{\delta_n}(a,a)=\mathrm{Id}$ and the path $m_n$.
For $F^{\delta_n}(x,y)$, the vectors $\tilde{\bs{f}}{}^{\delta_n}(y_i,y)$ in \S \ref{subsec:approx-y} and ${\bs{f}}^{\delta_n}(x,y_{i+1})$ in \S \ref{subsec:approx-x} are determined  on each edge $\{y_i\}\times[y_i,y_{i+1}]$ and $[y_i,y_{i+1}]\times\{y_{i+1}\}$, respectively.
Then, since each lattice point $(y_i,y_i)$ for $y$-curve $(y_i,y)$ is not singular for the metric $g_0$ on $D$, the curve $f^{\delta_n}(x,y)$ determined from these vectors is an approximation of the curve $f^0(x,y)$ on $m_n$.
In consequence, we obtain a sequence $f^{\delta_n}(y_i,y_i) \ (i=0,1,\dots, n)$ of points, which approximates to the curve $f^0(y,y)$ as $n\rightarrow\infty$.

Next, we shall attach the $x$-curve $f^{\delta_n}(x,y_i)$ and $y$-curve $f^{\delta_n}(y_i,y)$ to each points $f^{\delta_n}(y_i,y_i)$.
Then, in order that the frames $F^{\delta_n}(x,y_i)$ on $x$-line and $F^{\delta_n}(y_i,y)$ on $y$-line satisfy \eqref{ineq:norm-Fdeltas}, these frames must be taken in the direction that $x$ and $y$ increase. We can make the frames $F^{\delta_n}(x,y_i)$ and $F^{\delta_n}(y_i,y)$ such that $F^{\delta_n}(y_i,y_i)$ is consistent with one obtained above, in similar way to \S \ref{subsec:approx-x}.
Thus, the $x$-curve $f^{\delta_n}(x,y_i) \ (x\leq y_i)$ and the $y$-curve $f^{\delta_n}(y_i,y)$ are determined as in \S \ref{subsec:approx-x} and \S \ref{subsec:approx-y}.

However, only for the $x$-curve on $[y_i,y_{i+1}]\times \{y_i\}$, we have $f^{\delta_n}(x,y_i)\equiv f^{\delta_n}(y_i,y_i)$ for $x\in[y_i,y_{i+1}]$, if we make $f^{\delta_n}(x,y_i)$ in the direction that $x$ increases.  
Therefore, for the $x$-curve on $[y_i,y_{i+1}]\times\{y_i\}$ we substitute $k^{\delta_n}(x,y_i)$ made by the method described in \S \ref{subsec:approx-x}, which approximates $f^0(x,y_i)$ on $[y_i,y_{i+1}]\times \{y_j\}$ in the inverse direction of $x$. For $x\geq y_{i+1}$ we make the $x$-curve $f^{\delta_n}(x,y_i)$ from the point $k^{\delta_n}(y_{i+1},y_i)$ and the frame $F^{\delta_n}(y_{i+1},y_i)$ determined above.


\begin{thebibliography}{99}
\bibitem{bu}
F. E. Burstall:
\textit{Isothermic surfaces: conformal geometry, Clifford algebras and integrable systems},
Integrable systems, geometry, and topology, 2006, 1--82. 

\bibitem{bc}
F. E. Burstall and D. Calderbank:
\textit{Conformal submanifold geometry I--III},
arXiv:1006.5700v1 (2010).

\bibitem{bhs}
F. E. Burstall, U. Hertrich-Jeromin U and Y. Suyama:
\textit{Curvilinear coordinates on generic conformally flat hypersurfaces and constant curvature $2$-metrics},
J Math Soc Japan \textbf{70} (2018), 617--649.

\bibitem{ct}
S. Canevari and R. Tojeiro:
\textit{Hypersurfaces of two space forms and conformally flat hypersurfaces},
Ann Mat Pura Appl (4), 2018, 197: 1--20.
      
\bibitem{ct2}
S. Canevari and R. Tojeiro:
\textit{The Ribaucour transformation for hypersurfaces of two space forms and conformally flat hypersurfaces},
Bull Braz Math Soc (N.S.) \textbf{49} (2018), 593--613.

\bibitem{ca}
E. Cartan:
\textit{La d\'{e}formation des hypersurfaces dans l'\'{e}space conforme \`{a} $n \geq 5$ dimensions},
Bull Soc Math France \textbf{45} (1917), 57--121.

\bibitem{fsuy}
S. Fujimori, K. Saji, M. Umehara and K. Yamada:
\textit{Singularities of maximal surfaces},
Math Z. \textbf{259} (2008), 827--848.

\bibitem{he1}
U. Hertrich-Jeromin:
\textit{On conformally flat hypersurfaces and Guichard's nets},
Beitr Alg Geom.~\textbf{35} (1994), 315--331.

\bibitem{he2}
U. Hertrich-Jeromin:
Introduction to M\"obius Differential Geometry,
London Math Soc Lect Note Ser.~300, Cambridge University Press, 2003.

\bibitem{hs1}
U. Hertrich-Jeromin and Y. Suyama:
\textit{Conformally flat hypersurfaces with cyclic Guichard net},
Int J Math.~\textbf{18} (2007), 301--329.

\bibitem{hs2}
U. Hertrich-Jeromin and Y. Suyama:
\textit{Conformally flat hypersurfaces with Bianchi-type Guichard net},
Osaka J Math.~\textbf{50} (2013), 1--30.

\bibitem{hs3}
U. Hertrich-Jeromin and Y. Suyama:
\textit{Ribaucour pairs corresponding to dual pairs of conformally flat hypersurfaces},
Progr Math.~\textbf{308} (2015), 449--469.

\bibitem{hsuy}
U. Hertrich-Jeromin, Y. Suyama, M. Umehara and K. Yamada:
\textit{A duality for conformally flat hypersurfaces},
Beitr Alg Geom.~\textbf{56} (2015), 655--676.

\bibitem{krsuy}
M. Kokubu, W. Rossman, K. Saji, M. Umehara and K. Yamada:
\textit{Singularities of flat fronts in hyperbolic space},
Pacific J Math.~\textbf{221} (2005), 303--351.

\bibitem{la}
J. Lafontaine:
\textit{Conformal geometry from Riemannian viewpoint},
In Kulkarni R S and Pinkall U, eds. Conformal Geometry. Aspects of Math, Vol. E12, Max-Plank-Ins. f\"{u}r Math, 1988, 65--92.

\bibitem{lw}
Y. Lin and R. Wong:
\textit{Asymptotics of generalized hypergeometric functions},
Frontiers in orthogonal polynomials and {$q$}-series,
Contemp Math Appl Monogr Expo Lect Notes, vol.~1,
World Sci Publ, Hackensack, NJ, 2018: 497--521.

\bibitem{lu}
Y. L. Luke:
The special functions and their approximations, {V}ol.~{I},
Mathematics in Science and Engineering, Vol.~53,
Academic Press, New York-London, 1969.

\bibitem{spt}
D. Santos, J. Paulo and R. Tojeiro:
\textit{Cyclic conformally flat hypersurfaces revisited},
Mat Contemp.~\textbf{49} (2022), 188--211.

\bibitem{su2}
Y. Suyama:
\textit{Conformally flat hypersurfaces in Euclidean $4$-space II},
Osaka J Math.~\textbf{42} (2005), 573--598.

\bibitem{su3}
Y. Suyama:
\textit{A classification and non-existence theorem for conformally flat hypersurfaces in Euclidean $4$-space},
Int J Math.~\textbf{16} (2005), 53--85.

\bibitem{su4}
Y. Suyama:
\textit{Generic conformally flat hypersurfaces and surfaces in $3$-sphere},
Sci China Math.~\textbf{63} (2020), 2439--2474.

\bibitem{cpw}
C. P. Wang:
\textit{M\"{o}bius geometry for hypersurfaces in $S^4$},
Nagoya Math J. \textbf{139} (1995), 1--20.
\end{thebibliography}
\end{document}